\let\oldproofname=\proofname
\renewcommand{\proofname}{\rm\bf{\oldproofname}}
\newtheorem{theorem}{Theorem}[section]
\newtheorem{lemma}[theorem]{Lemma}
\newtheorem{proposition}[theorem]{Proposition}
\newtheorem{corollary}[theorem]{Corollary}
\newtheorem*{thm}{Theorem}
\theoremstyle{definition}
\newtheorem*{assumptions}{Assumptions}
\newtheorem{assumption}[theorem]{Assumption}
\newtheorem{definition}[theorem]{Definition}
\newtheorem{remark}[theorem]{Remark}
\newtheorem{example}[theorem]{Example}
\DeclareMathOperator{\loc}{loc}
\DeclareMathOperator{\e}{e}
\DeclareMathOperator{\Div}{div}
\DeclareMathOperator{\supp}{supp}
\DeclareMathOperator{\cc}{c}
\newcommand{\C}{\mathbb{C}}
\newcommand{\R}{\mathbb{R}}
\newcommand{\N}{\mathbb{N}}
\newcommand{\Z}{\mathbb{Z}}
\renewcommand{\H}{\mathrm{H}}
\newcommand{\W}{\mathrm{W}}
\DeclareRobustCommand{\Cdot}{\dot{\rC}\protect{\vphantom{C}}}
\renewcommand{\L}{\mathrm{L}}
\newcommand{\rC}{\mathrm{C}}
\newcommand{\VMO}{\mathrm{VMO}}
\newcommand{\BMO}{\mathrm{BMO}}
\newcommand{\D}{\mathsf{D}}
\newcommand{\sR}{\mathsf{R}}
\newcommand{\sN}{\mathsf{N}}
\newcommand{\ball}{\mathrm{B}}
\renewcommand{\d}{\mathrm{d}}
\newcommand{\diam}{\mathrm{diam}}
\newcommand{\E}{\mathcal{E}}
\newcommand{\les}{\lesssim}
\DeclareMathOperator{\re}{Re}
\renewcommand{\t}{\boldsymbol{t}}
\newcommand{\1}{\boldsymbol{1}}
\newcommand{\sub}{\subseteq}
\newcommand{\Miyachi}{\mathrm{Mi}}
\newcommand{\CW}{\mathrm{CW}}
\newcommand{\restrict}{r}
\newcommand{\zero}{z}
\newcommand{\glob}{\mathrm{glob}}
\newcommand{\dO}{{\mathrm{d}(O)}}
\newcommand{\OD}{O_D}
\newcommand{\ON}{O_N}
\newcommand{\ISet}{I} %
\renewcommand{\a}{\alpha}
\renewcommand{\j}{\varphi} %
\newcommand*\dif{\mathop{}\!\mathrm{d}}
\newcommand{\smooth}[1][]{\rC_{\cc}^{\infty}(#1)}
\def\XXint#1#2#3{{\setbox0=\hbox{$#1{#2#3}{%
				\int}$ }
		\vcenter{\hbox{$#2#3$ }}\kern-.6\wd0}}
\title[Hardy spaces adapted to elliptic operators on open sets]{Hardy spaces adapted to elliptic operators \\ on open sets}
\author{Sebastian Bechtel}
\author{Tim B\"ohnlein}
\address{Delft Institute of Applied Mathematics, Delft University of Technology, P.O. Box 5031, 2600 GA Delft, The Netherlands}
\email{S.Bechtel@tudelft.nl}
\address{Fachbereich Mathematik, Technische Universit\"at Darmstadt, Schlossgartenstr. 7, 64289 Darmstadt, Germany}
\email{boehnlein@mathematik.tu-darmstadt.de}
\subjclass[2020]{Primary: 35J25, 47F10. Secondary: 35B65, 46E35.}
\date{\today}
\dedicatory{}
\keywords{Hardy spaces, atomic spaces, elliptic operators, square functions, geometric analysis}
\begin{document}
	\begin{abstract}
		Let $L= - \Div (A \nabla \cdot)$ be an elliptic operator defined on an open subset of $\R^d$, complemented with mixed boundary conditions. Under suitable assumptions on the operator and the geometry, we derive an atomic characterization (depending only on the boundary conditions) for the Hardy space $\H^1_L$ defined using an adapted square function for $L$. This generalizes known results of Auscher and Russ~\cite{Auscher-Russ} in the case of pure Dirichlet/Neumann boundary conditions on Lipschitz domains. In particular, we develop a connection between the harmonic analysis of $L$ and its underlying geometry.
	\end{abstract}
	\maketitle

	\makeatletter
	\providecommand\@dotsep{5}
	\makeatother
	\listoftodos\relax

	\section{Introduction} \label{Section: Introduction}

	Let us start with the classical Hardy space $\H^1(\R^d)$. There are many equivalent descriptions for it, including atomic, maximal and square function characterizations. All these perspectives are valuable and prove their merit in various applications. For instance, both the maximal and square function characterizations are essential tools in the treatment of harmonic boundary value problems on the half-space: the maximal characterization allows a better understanding in which sense boundary values can actually be attained, whereas the square function characterization gives rise to solutions of boundary value problems using techniques from evolution equations. The atomic characterization makes cancellation properties of the Hardy space visible and accessible in the treatment of general singular integrals.

	The space $\H^1(\R^d)$ can be seen as a space adapted to the negative Laplacian $-\Delta$ on the Euclidean space $\R^d$. Consequently, if one is interested in studying boundary value problems with respect to other elliptic operators than the Laplacian~\cite{Block}, or maybe even boundary value problems in cylindrical domains~\cite{AE-mixed}, it seems logical to study Hardy spaces that are adapted to a different elliptic operator than the Laplacian.

	Let $d \geq 2$, $O \sub \R^d$ be open, and $A$ a complex, bounded, elliptic coefficient matrix on $O$. Consider the elliptic operator in divergence form $L = - \Div (A \nabla \cdot)$ subject to mixed boundary conditions.
	For the moment, the reader can keep a bounded domain subject to pure Dirichlet or Neumann boundary conditions in mind. Nevertheless, we mention already here that the interplay between different boundary conditions and components of $O$ leads to interesting effects that we are going to discuss in a minute. For the precise geometric and operator theoretic setup we refer to Section~\ref{Section: Operator theoretic and geometric setup}.
	We consider the operator adapted Hardy space $\H^1_L \coloneqq \H^1_L(O)$ that consists of all $f\in \L^1(O)$ such that the square function
	\begin{align}
		\label{eq:intro_sqf}
		(S_L f)(x) = \Bigl( \int_0^\infty \fint_{\ball(x,t) \cap O} |t \partial_t \e^{-t \sqrt{L}} f(y)|^2 \,\d y \,\frac{\d t}{t} \Bigr)^\frac{1}{2}, \qquad x\in O,
	\end{align}
	is again in $\L^1(O)$. We are going to omit the underlying set $O$ in the notation of function space from now on.

	In the application to boundary value problems, adapted Hardy spaces describe the space of admissible boundary data. Therefore, it is desirable to have a description of $\H^1_L$ which is independent of the concrete operator $L$. Atomic spaces can provide such a description.
	Examples of atomic Hardy spaces on open sets include the space $\H^1_\CW$ studied by Coifman--Weiss~\cite{Coifman-Weiss} and $\H^1_\Miyachi$ of Miyachi~\cite{Miyachi-Atoms}. It turns out~\cite{Auscher-Russ,JFA-Hardy} that the space $\H^1_\CW$ corresponds to pure Neumann and $\H^1_\Miyachi$ to pure Dirichlet boundary conditions, in other words: under suitable assumptions, $\H^1_\CW$ coincides with $\H^1_L$ if $L$ is subject to pure Neumann boundary conditions and $\H^1_\Miyachi$ coincides with $\H^1_L$ if $L$ is subject to pure Dirichlet boundary condition. We are going to review Hardy spaces on open sets in more depth in Section~\ref{Section: Review of Hardy spaces on open sets}.
	To the best of our knowledge, if the matrix $A$ is not self-adjoint then only the results from~\cite{Auscher-Russ} relate square function based spaces with ($L$-independent) atomic spaces.
	Their main geometric assumption is that $O$ is connected with Lipschitz boundary.
	The purpose of this paper is a generalization of~\cite{Auscher-Russ} in the following two directions:
	\begin{enumerate}
		\item Can we work with a rough and unconnected open set $O$ far below the Lipschitz class?
		\item Can we replace pure Dirichlet and Neumann boundary conditions by mixed boundary conditions?
	\end{enumerate}
	Let us start with an example that highlights that some geometric conditions are necessary.
	\begin{example}
		\label{eq:punctured_plane}
		Consider the punctured plane $O = \R^2 \setminus \{ 0 \}$ with boundary $\partial O = \{ 0 \}$. Then $\W^{1,2}_0(O) = \W^{1,2}(\R^2)$. Hence, the negative Dirichlet Laplacian $-\Delta_0$ on $O$ coincides with the usual negative Laplacian $-\Delta$ on $\R^2$. Consequently, $\H^1_{-\Delta_0} = \H^1(\R^2)$ by~\cite{Auscher-Russ}. But $\H^1_\Miyachi$ contains functions that are not mean value free, therefore $\H^1_{-\Delta_0} \neq \H^1_\Miyachi$. This show that a fatness condition for $\partial O$ cannot be avoided.
	\end{example}
	Pushing the example further, if $D \subseteq \R^d$ is of Hausdorff dimension less than $d-2$, then an application of Netrusov's theorem~\cite[Ch.~10]{Hedberg} gives a counterexample for $O = \R^d \setminus D$. On the other hand, if the boundary has dimension larger than $d-2$, our theory gives an answer to the affirmative.

	As a first gentle step into mixed boundary conditions, consider an open, bounded set $O = O_1 \cup O_2$ that consists of two separated connected components. Consider an operator $L$ on $O$ subject to pure Dirichlet boundary conditions on one component, and pure Neumann boundary conditions on the other component. A natural guess for the corresponding atomic Hardy space is
	\begin{align}
		\label{eq:intro_atom_decoupling}
		\H^1_D(O) = \H^1_\CW(O_1) \oplus \H^1_\Miyachi(O_2),
	\end{align}
	where $O_1$ is the Neumann component and $O_2$ is the Dirichlet component. Taking a look at the square function~\eqref{eq:intro_sqf}, such a decoupling is not to be expected as the square function always integrates over both components for $t$ large enough.
	Nevertheless,~\eqref{eq:intro_atom_decoupling} was the correct guess and with its help we eventually even find
	\begin{align}
		\H^1_D(O) = \H^1_L = \H^1_{L_1} \oplus \H^1_{L_2},
	\end{align}
	where $L_1$ and $L_2$ are restrictions of the elliptic operator $L$ to the components $O_1$ and $O_2$.
	Going one step further, we even want to study the situation where $D \subseteq \partial O$ is a non-trivial subset of the boundary of a, say connected, open set $O$. We have indicated in Example~\ref{eq:punctured_plane} that Dirichlet boundary conditions can lead to a lack of cancellation for atoms near the boundary in the sense that they are not mean value free anymore, in contrast to the case of classical atoms in the sense of Coifman--Weiss. The right definition for an atomic space $\H^1_D$ therefore has to \enquote{interpolate} the definitions for $\H^1_\CW$ and $\H^1_\Miyachi$ in a non-trivial way. We comment in Remark~\ref{rem:miyachi} on the upcoming challenges in this quest. Eventually, we come up with a suitable notion of $\H^1_D$-atoms in Definition~\ref{def:H1D_atoms}.
	The abstract main result of this article is the following. The exact assumptions on geometry and elliptic operator will be introduced in Section~\ref{Section: Operator theoretic and geometric setup} and we  give a precise formulation of the result in Theorem~\ref{Main result and applications: Theorem: H_D^1=H_L^1}.
	\begin{thm}
		Let $O \subseteq \R^d$ open and $D \subseteq \partial O$ closed.
		Assume that $O$ is interior thick and locally uniform near $N \coloneqq \partial O \setminus D$, that $D$ is porous and that there is some fatness condition away from $N$. Suppose that $L$ satisfies Hölder-type Gaussian estimates. Then
		\begin{align}
			\H^1_L = \H^1_D.
		\end{align}
	\end{thm}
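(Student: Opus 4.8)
The plan is to follow the Auscher--Russ template~\cite{Auscher-Russ}, proving the two continuous inclusions $\H^1_D \hookrightarrow \H^1_L$ and $\H^1_L \hookrightarrow \H^1_D$ on dense subspaces and extending by completeness, with the two directions of the generalization handled through one common mechanism: the boundary condition on $N$ produces genuine cancellation for atoms, the boundary condition on $D$ produces decay of the semigroup kernel near $D$, and the roughness of $O$ is absorbed by working in the space of homogeneous type $(O, |\cdot|, \d y)$ whose doubling, Poincaré and capacity estimates are supplied by interior thickness, local uniformity near $N$, porosity of $D$ and the fatness hypothesis away from $N$.

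\textbf{Inclusion $\H^1_D \hookrightarrow \H^1_L$.} It suffices to bound $\norm[\L^1]{S_L a} \les 1$ uniformly over $\H^1_D$-atoms $a$ adapted to a ball $B = \ball(x_0,r)$. Split $O$ into $4B \cap O$ and the dyadic annuli $C_j = (2^{j+1}B \setminus 2^j B) \cap O$, $j \ge 2$. On $4B \cap O$ one combines the $\L^2$-boundedness of $S_L$ (from the bounded holomorphic functional calculus of $L$, available under the Gaussian bounds), Cauchy--Schwarz, and the normalization $\norm[\L^2]{a} \les |B|^{-1/2}$. On $C_j$ the Hölder-type Gaussian estimates for $t\partial_t \e^{-t\sqrt{L}}$ give off-diagonal decay of $S_L a$; if $2B$ is away from $D$ one additionally uses $\int_{B \cap O} a = 0$ together with the Hölder regularity of the kernel up to the Neumann boundary (this is where local uniformity near $N$ enters), whereas if $B$ is close to $D$ one instead uses that the kernel of $\e^{-t\sqrt{L}}$ decays near $D$ because it inherits the Dirichlet condition, quantified by a capacity lower bound stemming from porosity of $D$ and the fatness condition. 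Summing the resulting geometric series in $j$, convergent by doubling of $\d y$ on $O$, yields the uniform estimate, and hence the inclusion via the atomic decomposition.

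\textbf{Inclusion $\H^1_L \hookrightarrow \H^1_D$.} Given $f \in \H^1_L$, set $F(t,y) = t\partial_t \e^{-t\sqrt{L}} f(y)$, so that $\norm[T^1]{F} \approx \norm[\L^1]{S_L f}$ where $T^1 = T^1(O)$ is the tent space over $(O, |\cdot|, \d y)$. The atomic decomposition of $T^1$ gives $F = \sum_k \lambda_k A_k$ with $T^1$-atoms $A_k$ supported in Carleson boxes over balls $B_k$ and $\sum_k |\lambda_k| \les \norm[\L^1]{S_L f}$. Inserting this into a Calderón reproducing formula $f = c_\psi \int_0^\infty \psi(t\sqrt{L})\, F(t,\cdot)\, \tfrac{\d t}{t}$, with $\psi$ holomorphic on a sector and vanishing to order $2M$ at $0$, yields $f = \sum_k \lambda_k m_k$ with $m_k = c_\psi \int_0^\infty \psi(t\sqrt{L}) A_k(t,\cdot)\, \tfrac{\d t}{t}$. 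One then checks that each $m_k$ is, up to a uniform constant, an $\H^1_D$-molecule (the $\L^2$-decaying variant of Definition~\ref{def:H1D_atoms}): localization to a dilate $CB_k$ with rapidly decaying $\L^2$-tails follows from $\L^2$ off-diagonal estimates for $\psi(t\sqrt{L})$, the size bound $\norm[\L^2]{m_k} \les |B_k|^{-1/2}$ from the normalization of $A_k$, and the cancellation from the fact that $m_k$ lies in the range of $L^M$ — which yields $\int_{CB_k \cap O} m_k = 0$ through the Neumann condition when $2B_k \cap D = \emptyset$, while when $B_k$ is close to $D$ only the relaxed Dirichlet-type size condition of an $\H^1_D$-atom is required and holds automatically. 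A separate molecular lemma then reassembles $\H^1_D$-molecules into $\H^1_D$-atoms with summable coefficients. When a box straddles $D$, $N$ and/or several components of $O$, one decomposes $m_k$ by a partition adapted to these regions and treats each piece accordingly; this is where the decoupling~\eqref{eq:intro_atom_decoupling} is made quantitative.

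\textbf{Main obstacle.} The crux is the verification, in the second inclusion, that the molecules $m_k$ obtained from $T^1$-atoms meet the \emph{mixed} requirements of $\H^1_D$ uniformly: true mean-value cancellation when the underlying ball is away from $D$, the relaxed Dirichlet-type condition when it is close to $D$, and a correct treatment of the intermediate, "interpolating" scales flagged in Remark~\ref{rem:miyachi}. Size and off-diagonal decay are routine consequences of the Hölder-type Gaussian estimates, but the cancellation must be extracted from the boundary condition encoded in the form domain of $L$, and making this quantitative on a rough $O$ forces one to convert porosity of $D$, interior thickness, local uniformity near $N$ and the fatness hypothesis into capacity and Poincaré-type inequalities valid up to $N$ and near $D$. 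A secondary difficulty is the bookkeeping for tents meeting $D$, $N$ and distinct connected components simultaneously, where the decoupling phenomenon must be realized already at the level of a single molecule rather than only for the global space.
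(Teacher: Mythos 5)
Your first inclusion, $\H^1_D \subseteq \H^1_L$, is essentially the paper's own argument: a uniform bound $\Vert S^{\loc}_{\j,L}(a)\Vert_1 \les 1$ over atoms, with the local part ($4\ball\cap O$) handled by the $\L^2$ square function estimate and Cauchy--Schwarz, and the global part by interpolating the pointwise Poisson bound against the H\"older bound, using either the moment condition of a usual atom or the vanishing of the kernel $q_t(x,\cdot)$ for $x\in D$ (Lemma~\ref{The geometric setup: Lemma: Kernel vanishes in D}) for atoms near $D$. That half is correct and matches the paper.

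The converse inclusion is where you take a genuinely different route, and where the proposal has a real gap rather than a deferred technicality. The paper deliberately avoids the tent-space-atom/molecule machinery: it proves $(\VMO_D)^*=\H^1_D$ (Theorem~\ref{Predual of H1: Theorem: VMO* = H1}) and reduces $\H^1_L\subseteq\H^1_D$ to the $\L^\infty$-boundedness of the local Carleson functional of $\j(\t^2L^*)$ on $\VMO_D$, where the constant part $(\phi)_{\widetilde B}$ is controlled by the $D$-adapted conservation property, porosity of $D$, and a telescoping estimate. Your route founders precisely at the point you yourself label the ``main obstacle'' and do not resolve. First, the molecules $m_k=\int_0^\infty \psi(t\sqrt L)A_k(t,\cdot)\,\frac{\d t}{t}$ must be shown to have \emph{exact} mean value zero when the underlying ball is a usual ball: on a pure Neumann component this follows from the conservation property, but on the component $\OD$ the conservation property is only approximate, $|\int_{\OD}q_t(x,y)\,\d x|\les (1+\d_{D}(y)/t)^{-1}$ (Lemma~\ref{Inclusion of Operator-adapted space into H_D^1(O): Lemma: D-adapted conservation property}), so a molecule sitting over a usual ball in $\OD$ far from $D$ carries a nonzero integral, and you give no mechanism to redistribute this deficit into atoms near $D$. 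Second, the decomposition of $L$-adapted molecules into operator-free atoms is exactly the step that in~\cite{Hardy-Gaffney} relies heavily on self-adjointness, and the detour through a maximal function in~\cite{Auscher-Russ} is what forces the Lipschitz hypothesis there; for complex, non-self-adjoint $A$ on a rough $O$ neither tool is available, which is the raison d'\^etre of the paper's duality argument. Third, the reproducing formula cannot be applied directly to a general $f\in\H^1_L\subseteq\L^1$: one needs the approximation $P_sf\to f$ with uniform $\H^1_L$-bounds together with weak-$*$ compactness of bounded sets in $\H^1_D$ (the paper's Step 2), which your outline omits. As written, the second half is an outline of the approach the paper explains it had to abandon, not a proof.
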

	We give some sufficient conditions for our geometric framework.
	\begin{example}
		\label{ex:intro_geom}
		Interior thickness ensures that $O$ is at least a space of homogeneous type. Being a locally uniform domain near $N$ is a generalization of Jones' notion of an $(\varepsilon,\delta)$-domains~\cite{Jones}. All open sets whose boundary is represented by a graph are admissible, but also fractals like the von Koch snowflake, see Figure~\ref{fig:snowflake}. The condition for $D$ means that the Dirichlet part is not of the same dimension as $O$ and is essentially always fulfilled. It is in particular the case when $D$ is Ahlfors--David regular, in which case also the fatness condition away from the Neumann part follows. Also exterior thick sets (see Proposition~\ref{prop:restrict}) satisfy the fatness condition.
	\end{example}

	Our definition also unifies the cases of pure Dirichlet and Neumann boundary conditions. See Proposition~\ref{prop:atomic_pure_cases} for the precise assumptions.
	\begin{thm}
		Under minimal geometric conditions one has
		\begin{align}
			\H^1_\varnothing = \H^1_\CW \qquad \& \qquad \H^1_{\partial O} = \H^1_\Miyachi.
		\end{align}
	\end{thm}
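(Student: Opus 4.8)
Since both identities concern only the $L$-independent atomic spaces, the plan is to compare Definition~\ref{def:H1D_atoms} directly with the definitions of Coifman--Weiss atoms~\cite{Coifman-Weiss} and of Miyachi atoms~\cite{Miyachi-Atoms}, rather than to invoke any of the operator theory developed elsewhere in the paper. The only geometric input I would use is that interior thickness makes $O$, equipped with Lebesgue measure and the restriction of the Euclidean distance, a space of homogeneous type, so that all three atomic spaces are defined; the remaining hypotheses recorded in Proposition~\ref{prop:atomic_pure_cases} serve only to make the comparison constants uniform. In each case I would show that the relevant atom class agrees, up to a fixed multiplicative constant and the usual enlargement of supporting balls, with the $\H^1_D$-atoms for the given $D$, and then read off equivalence of the atomic norms.

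For $D=\varnothing$ this is immediate: the near/far dichotomy built into Definition~\ref{def:H1D_atoms} can never place a ball ``close to'' the empty set, so a $\H^1_\varnothing$-atom is nothing but a function supported in a ball $B\sub O$, normalised in size, and with vanishing integral --- exactly a Coifman--Weiss atom on the homogeneous-type space $O$; and conversely every Coifman--Weiss atom is admissible in Definition~\ref{def:H1D_atoms}. If the two definitions use different Lebesgue exponents in the size bound, I would pass between them by the standard Calder\'on--Zygmund splitting, which only needs doubling. This gives $\H^1_\varnothing=\H^1_\CW$ with equivalent norms.

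For $D=\partial O$ the dichotomy becomes genuine and, I claim, matches Miyachi's two-tier family precisely: a $\H^1_{\partial O}$-atom whose supporting ball (suitably dilated) avoids $\partial O$ is forced to be mean value free and is thus an interior, i.e.\ classical, atom, whereas a $\H^1_{\partial O}$-atom whose ball is comparable to its distance from $\partial O$ carries only a size bound and is a Miyachi boundary atom. To turn this into an equality I would carry out the routine geometric bookkeeping: a ball $\ball(x,r)$ with $\mathrm{dist}(x,\partial O)\lesssim r$ can be swapped for one centred on $\partial O$ of comparable radius and conversely, at the price of a fixed constant in the size normalisation and of replacing the ball by a fixed dilate, with doubling controlling the normalisation throughout; and I would check that the exceptional atom $|O|^{-1}\1_O$, relevant when $|O|<\infty$, qualifies as a boundary atom in both descriptions. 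Hence $\H^1_{\partial O}=\H^1_\Miyachi$ with equivalent norms.

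The main obstacle is not conceptual but bookkeeping: one must reconcile all the normalisation choices --- which dilation constant quantifies ``close to $D$'', whether $2B$, $4B$ or a general $C_0B$ appears, and $\L^2$ versus $\L^\infty$ atoms --- so that the two atomic norms are genuinely comparable, and one must verify at the borderline scales, where a ball's radius is of the order of its distance to $\partial O$, that the near/far alternative of Definition~\ref{def:H1D_atoms} reproduces Miyachi's split without gap or overlap. Once $O$ is a space of homogeneous type and the value of $D$ is inserted, all of this is standard.
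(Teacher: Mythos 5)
Your outline for the pure Dirichlet case is broadly in line with the paper's argument, modulo two pieces of bookkeeping you flag but do not resolve: the paper must first replace the $2\ball$ in the ``near $D$'' condition by $5\ball$ so that Miyachi's boundary atoms (which have $2\ball\subseteq O$ but $5\ball\cap O^c\neq\varnothing$, hence $2\ball\cap\partial O=\varnothing$) even qualify as near-$\partial O$ in the adapted sense, and it must invoke the Vitali-covering decomposition from Proposition~\ref{prop:Miyachi_coincide1} when a near-$\partial O$ ball actually crosses $\partial O$. Neither of these is just ``choosing a dilation constant''; the first is a nontrivial equivalence the paper asserts, and the second is a genuine decomposition step.

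The pure Neumann case is where your proposal has a real gap. You write that an $\H^1_\varnothing$-atom ``is nothing but'' a Coifman--Weiss atom, ``so conversely every Coifman--Weiss atom is admissible.'' This overlooks the component structure built into Definition~\ref{def:H1D_atoms}: an $\H^1_D$-atom lives on $B=\ball\cap O'$ for a single component $O'$, while a Coifman--Weiss atom is supported and normalised on $\ball\cap O$, ignoring components. The discrepancy is not cosmetic. When $O$ is bounded, every element of $\H^1_\varnothing$ has vanishing mean on each bounded pure-Neumann component (it is a sum of usual atoms on single components), whereas $\H^1_\CW$ contains constants as the paper explicitly notes in Section~\ref{subsec:atomic_open}; hence $\1_O\in\H^1_\CW\setminus\H^1_\varnothing$ and the claim $\H^1_\varnothing=\H^1_\CW$ is simply false there. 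The correct statement --- what Proposition~\ref{prop:atomic_pure_cases} actually proves --- is $\H^1_\varnothing\oplus\ell^1(\ISet)=\H^1_\CW$, and the paper has to construct the isomorphism by hand: building the indicator of a component out of a Coifman--Weiss atom plus a constant in one direction, and splitting a Coifman--Weiss atom supported across components into per-component mean-free pieces plus a locally constant function in the other. Your proof needs this component bookkeeping and the $\ell^1(\ISet)$ correction; as written it proves something that is not true in the bounded case.
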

	An essential tool to prove our main result is a full duality theory for $\H^1_D$. We are going to introduce $D$-adapted spaces $\VMO_D$ and $\BMO_D$. A key feature of our approach is the fact that the $\BMO$-seminorm is defined by bounded means and not bounded mean oscillation near the Dirichlet boundary part. The following theorem recaptures our duality theory for $\H^1_D$. Precise versions are given in Theorems~\ref{H1-BMO-duality: Theorem: Main result} and~\ref{Predual of H1: Theorem: VMO* = H1}.
	\begin{thm}
		Under minimal geometric assumptions one has
		\begin{align}
			(\VMO_D)^* = \H^1_D \qquad \& \qquad (\H^1_D)^* = \BMO_D.
		\end{align}
	\end{thm}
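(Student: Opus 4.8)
The plan is to establish the two dualities in turn: first $(\H^1_D)^* = \BMO_D$ by an atomic argument, and then $(\VMO_D)^* = \H^1_D$ as a predual statement that builds on it. Throughout, \enquote{minimal geometric assumptions} is understood to guarantee that $O$ with its intrinsic geometry is a space of homogeneous type — so that the Coifman--Weiss machinery applies and $\H^1_D$ is a complete, non-trivial space — and that $D$ is porous enough that every boundary ball centred on $D$ carries admissible $\H^1_D$-atoms which need not be mean-value free.

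\emph{Step 1: $\BMO_D \hookrightarrow (\H^1_D)^*$.} Fix $b\in\BMO_D$ and an $\H^1_D$-atom $a$ adapted to a ball $\ball$, and estimate $|\inn{b}{a}|$ by a dichotomy on $\ball$. If $\ball$ lies far from $D$, then $a$ is a genuine mean-value-free atom, so $\inn{b}{a}=\inn{b-\fint_{\ball\cap O}b}{a}$, and the oscillation part of $\norm[\BMO_D]{b}$ together with the size bound $\norm[\L^\infty]{a}\les|\ball|^{-1}$ gives $|\inn{b}{a}|\les\norm[\BMO_D]{b}$. If $\ball$ is close to $D$, then $a$ need not be cancellative, but now $|\inn{b}{a}|\les\fint_{\ball\cap O}|b|\les\norm[\BMO_D]{b}$, precisely because the $\BMO_D$-seminorm is defined through bounded means near $D$. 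Summing over an atomic decomposition $f=\sum_j\lambda_j a_j$ with $\sum_j|\lambda_j|\les\norm[\H^1_D]{f}$ yields $|\inn{b}{f}|\les\norm[\BMO_D]{b}\,\norm[\H^1_D]{f}$; this is first read on the dense subspace of finite atomic combinations, where the pairing is unambiguous, and then extended by density.

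\emph{Step 2: $(\H^1_D)^*\hookrightarrow\BMO_D$.} Given $\ell\in(\H^1_D)^*$, fix a ball $\ball$ and let $\L^2_D(\ball)$ denote the space of $\L^2$-functions supported in $\overline{\ball}\cap O$ that are mean-value free when $\ball$ lies far from $D$; this space embeds continuously into $\H^1_D$, so the restriction of $\ell$ to it is represented by some $b_\ball\in\L^2(\ball\cap O)$, unique modulo additive constants on the far-from-$D$ balls. Exhausting $O$ by an increasing sequence of such balls and gluing the $b_\ball$ modulo this ambiguity produces a single $b\in\L^1_{\loc}$ — the gluing is consistent precisely because constants are permitted on the Neumann-like pieces but not near $D$. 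Running Step 1 in reverse, i.e.\ testing $b$ against normalized atoms (near $D$, against a suitable multiple of $\1_{\ball\cap O}$, which is an admissible $\H^1_D$-atom by the porosity of $D$), shows $\norm[\BMO_D]{b}\les\norm{\ell}$, while by construction $\inn{b}{a}=\ell(a)$ on atoms, hence $\ell=\inn{b}{\cdot}$ throughout.

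\emph{Step 3: $(\VMO_D)^*=\H^1_D$.} Here $\VMO_D$ is the closure of $\smooth[\overline{O}]$ in $\BMO_D$, so $\VMO_D\subseteq\BMO_D=(\H^1_D)^*$ and the pairing defines a bounded map $\H^1_D\to(\VMO_D)^*$. It is bounded below because $\VMO_D$ norms $\H^1_D$: for $f\in\H^1_D$ and a near-extremal $b\in\BMO_D$ one truncates and mollifies $b$ on the compact essential support of $f$ to obtain $\beta\in\VMO_D$ with $\inn{f}{\beta}=\inn{f}{b}$ and $\norm[\BMO_D]{\beta}\les\norm[\BMO_D]{b}$; the delicate point is that this truncation must be compatible with the bounded-means part of the seminorm near $D$. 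For surjectivity, take $\ell\in(\VMO_D)^*$; as in Step 2 it is represented locally by some $f\in\L^2_{\loc}$ (now using that $\L^2$-bumps lie in $\VMO_D$), and one bounds $\norm[\H^1_D]{f_N}$ uniformly in $N$ for the truncations $f_N=f\1_{\ball_N}$ — corrected to be admissible where cancellation is required — by writing $\norm[\H^1_D]{f_N}\simeq\sup\{|\inn{f_N}{b}|:b\in\BMO_D,\ \norm[\BMO_D]{b}\le1\}$ and replacing each competitor $b$ by a $\VMO_D$-function $\beta$ as above, so that $|\inn{f_N}{b}|=|\inn{f_N}{\beta}|=|\ell(\beta)|\les\norm{\ell}$. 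A lower-semicontinuity argument then yields $f\in\H^1_D$ with $\norm[\H^1_D]{f}\les\norm{\ell}$, and $\ell=\inn{f}{\cdot}$ on the dense set $\smooth[\overline{O}]$.

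The main obstacle throughout is the non-standard definition of the $\BMO_D$-seminorm by bounded means rather than bounded mean oscillation near the Dirichlet part $D$. This forces a scale-by-scale bookkeeping of the dichotomy \enquote{near $D$ / away from $D$} in every Calder\'on--Zygmund and John--Nirenberg step, and, most critically, in Step 2 one must derive the bounded-means bound near $D$ by hand — this is where the porosity of $D$ and the fatness condition away from $N$ enter, guaranteeing that enough admissible non-cancellative atoms concentrate near $D$ to pin down $\fint_{\ball\cap O}|b|$. The compatibility of the $\VMO_D$-truncation with this modified seminorm in Step 3 is the second genuinely new point compared with the classical Fefferman--Stein and Coifman--Weiss theory.
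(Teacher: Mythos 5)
Your Steps 1 and 2 follow essentially the same line as the paper. Step 1 is exactly Lemma~\ref{Precise Definitions: Lemma: H1-BMO-pairing} with the stated dichotomy (the paper uses an $\L^2$-size condition on atoms, not $\L^\infty$, but Cauchy--Schwarz closes the gap --- minor). Step 2 mirrors the proof of Theorem~\ref{H1-BMO-duality: Theorem: Main result}(ii): restrict $\ell$ to $\L^2$ on balls, obtain Riesz representers, glue modulo constants. The paper is more explicit about pinning the constant --- it fixes a reference ball $B'$ near $D$ and shows that the representer $g_{\widetilde B}$ on balls near $D$ satisfies $g = f + (g)_{B'}$ --- and you would need that precision, but the idea is the same. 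One mis-attribution: the existence of non-cancellative atoms near $D$ needs no porosity of $D$; it is built into Definition~\ref{def:H1D_atoms}. Porosity enters entirely elsewhere (Lemma~\ref{The geometric setup: Lemma: Assumption (D), upper estimate for measure}, used in Section~\ref{Subsection: Inclusion of Operator-adapted space into H_D^1(O)}). Also, the precise geometric input for the predual result is only $(\mathrm{UITC})$, and the $\H^1_D$--$\BMO_D$ duality needs none; you invoke much more.

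Step 3 is where you genuinely diverge, and where there is a real gap. You run the classical Coifman--Weiss route: prove that $\VMO_D$ norms $\H^1_D$ by truncating and mollifying a near-extremal $b\in\BMO_D$, then estimate truncations $f_N$ and pass to the limit. The paper deliberately avoids this. It extends $\ell$ via Hahn--Banach to $(\BMO_D)^*$, uses Theorem~\ref{H1-BMO-duality: Theorem: Main result} to regard this extension as an element of a ball in $(\H^1_D)^{**}$, applies Goldstine's theorem plus separability of $\VMO_D$ to obtain a \emph{sequence} $(f_n)\subset\H^1_D$ that is bounded and weak-$*$ convergent to $\ell$, and then closes with a weak-$*$ compactness statement (Lemma~\ref{Predual of H1: Lemma: Bounded sequence in H1}). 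That lemma is non-trivial: it rests on the structured dyadic atomic decomposition of Lemma~\ref{Predual of H1: Lemma: H1-representation of f via dyadic grid structure}, a diagonalization, weak $\L^2$-compactness in each dyadic block, and a $3\varepsilon$-argument with a careful separate treatment of small cubes near $D$ (where $\phi\in\rC_\cc$ vanishes on $D$, permitting the bound without a moment condition). Your plan suppresses exactly the two places where the hard work occurs. First, the norming property of $\VMO_D$: you state that one truncates and mollifies $b$ so that $\inn{f_N}{b}=\inn{f_N}{\beta}$ with $\beta\in\VMO_D$ and $\|\beta\|_{\BMO_D}\lesssim\|b\|_{\BMO_D}$, and then flag this compatibility with the bounded-means part of the seminorm near $D$ as ``the delicate point''. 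It is indeed the whole point, and it is not proved; the paper introduces the Goldstine route precisely to bypass it. Second, your ``lower-semicontinuity argument'' at the end is not a one-liner: one has to show that a bounded sequence of $\H^1_D$-functions, weak-$*$ converging in $(\VMO_D)^*$, has a limit that again admits an atomic decomposition. This is the content of Lemma~\ref{Predual of H1: Lemma: Bounded sequence in H1}, which requires the uniform dyadic organization of Lemma~\ref{Predual of H1: Lemma: H1-representation of f via dyadic grid structure} that your sketch does not produce. Without supplying both of these pieces, Step 3 does not go through.
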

	We have seen in Example~\ref{ex:intro_geom} that our geometric framework comprises many relevant situations. So far, we have not discussed the Hölder-type Gaussian estimates (see Definition~\ref{Property G(mu): Definition: G(mu)}) that are imposed on $L$ in our main result.
	In Lipschitz domains they were pioneered in~\cite{Auscher_Tchamitchian_Domains}.
	Recently, they were established in an article by the second-named author together with Ciani and Egert~\cite{BCE-Gauss-for-MBC} for mixed boundary conditions.
	As a combination of our main result, the Gaussian estimates from~\cite{BCE-Gauss-for-MBC} and the recent article~\cite{JFA-Hardy} in a self-adjoint setting, we give a full picture (including maximal characterizations) for $-\Delta$ subject to mixed boundary conditions on a bounded open set. The following theorem contains the special case where $O$ is connected and $D$ is non-trivial. The precise formulation in the general case can be found in Theorem~\ref{thm:laplace}.
	\begin{thm}
		Let $O \subseteq \R^d$ be a bounded domain that is interior thick and let $D\subseteq \partial O$ be non-trivial, closed and porous. Suppose that $O$ is locally uniform near $N \coloneqq \partial O \setminus D$ and fat away from $N$. Then
		\begin{align}
			\{ f\in \L^1 \colon S_{-\Delta_D}(f) \in \L^1 \} = \H^1_D = \H^1_{-\Delta_D,\mathrm{max}} = \H^1_{-\Delta_D,\mathrm{rad}},
		\end{align}
		where $-\Delta_D$ is the negative Laplacian subject to mixed boundary conditions. The spaces $\H^1_{-\Delta_D,\mathrm{max}}$ and $\H^1_{-\Delta_D,\mathrm{rad}}$ are non-tangential and radial maximal spaces.
	\end{thm}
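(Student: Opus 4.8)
The plan is to obtain the asserted chain of identities by combining three ingredients already at our disposal: the abstract identification $\H^1_L = \H^1_D$ of Theorem~\ref{Main result and applications: Theorem: H_D^1=H_L^1}, the heat kernel bounds for mixed boundary conditions from~\cite{BCE-Gauss-for-MBC}, and the self-adjoint Hardy space theory of~\cite{JFA-Hardy}. Concretely, I would first verify that $L = -\Delta_D$ satisfies \emph{all} the hypotheses of Theorem~\ref{Main result and applications: Theorem: H_D^1=H_L^1}, then read off the first two equalities, and finally feed the self-adjoint structure of $-\Delta_D$ into~\cite{JFA-Hardy} to append the two maximal characterizations.

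For the hypotheses: the geometric conditions assumed here---$O$ bounded and interior thick, $D$ closed, non-trivial and porous, $O$ locally uniform near $N$ and fat away from $N$---are by construction exactly those of Theorem~\ref{Main result and applications: Theorem: H_D^1=H_L^1}, and in particular make $O$ a space of homogeneous type; here nothing has to be done beyond invoking the implications collected in Example~\ref{ex:intro_geom}. The one operator-theoretic condition left to check is that $-\Delta_D$ satisfies the Hölder-type Gaussian estimates of Definition~\ref{Property G(mu): Definition: G(mu)}. Since the coefficient matrix is the identity it is trivially bounded, elliptic and Hölder continuous, so the real content is the pointwise Gaussian upper bound together with its Hölder refinement near the boundary. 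This is precisely what~\cite{BCE-Gauss-for-MBC} provides; I would check that the geometric hypotheses used there are implied by ours, using boundedness of $O$ (which yields a spectral gap and hence control of the semigroup for large $t$---this is also where non-triviality of $D$ enters naturally) together with interior thickness, local uniformity near $N$, porosity of $D$ and fatness away from $N$.

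Granting Definition~\ref{Property G(mu): Definition: G(mu)}, Theorem~\ref{Main result and applications: Theorem: H_D^1=H_L^1} applies verbatim to $L = -\Delta_D$ and gives $\{f\in\L^1 \colon S_{-\Delta_D}f \in \L^1\} = \H^1_{-\Delta_D} = \H^1_D$. For the remaining two equalities I would note that $-\Delta_D$ is a non-negative self-adjoint operator on $\L^2$ whose semigroup $(\e^{t\Delta_D})_{t>0}$ enjoys pointwise Gaussian bounds (a fortiori from the previous step), acting on the space of homogeneous type $O$; this is exactly the setting in which~\cite{JFA-Hardy} establishes $\H^1_{-\Delta_D} = \H^1_{-\Delta_D,\mathrm{max}} = \H^1_{-\Delta_D,\mathrm{rad}}$. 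Chaining the identities then yields the theorem.

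The main obstacle, as is typical for such synthesis results, is the hypothesis bookkeeping in the middle step: translating our conditions ``locally uniform near $N$'' and ``fat away from $N$'' into whatever two-sided corkscrew/thickness assumptions~\cite{BCE-Gauss-for-MBC} demands, and confirming that the Hölder exponent $\mu$ they produce falls in the admissible range for Theorem~\ref{Main result and applications: Theorem: H_D^1=H_L^1}. A secondary point worth care is that the maximal characterization borrowed from~\cite{JFA-Hardy} must be available for genuinely mixed (as opposed to purely Dirichlet or Neumann) boundary data, where atoms near $D$ need not be mean-value free; one should make sure the cited statements are formulated---or can be reformulated---at the level of generality of $\H^1_D$ and under the present geometric assumptions.
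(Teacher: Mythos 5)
Your high-level strategy matches the paper's: verify $\mathrm{G}(\mu)$ for $-\Delta_D$ via \cite{BCE-Gauss-for-MBC}, apply Theorem~\ref{Main result and applications: Theorem: H_D^1=H_L^1} to get the square function identification with $\H^1_D$, and appeal to the self-adjoint theory of \cite{JFA-Hardy} for the maximal characterizations. However, there is a real gap in the last step. The results in \cite{JFA-Hardy} (and \cite{Hardy-Gaffney}) are formulated for \emph{pre-adapted} spaces built over $\overline{\sR(L)} \subseteq \L^2$, with atomic decompositions in terms of $(1,2,M)$-atoms adapted to $L$ (functions of the form $a = L^M b$); they do not directly speak about the $\L^1$-based spaces $\H^1_L$ or the geometric atomic space $\H^1_D$. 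To bridge the two worlds, the paper introduces $\mathbb{H}^1_L \coloneqq \{f\in\overline{\sR(L)} : S_L f \in \L^1\}$, proves the chain $\mathbb{H}^1_{L,\mathrm{at},M} = \mathbb{H}^1_L$ (from \cite{Hardy-Gaffney}) and $\mathbb{H}^1_{L,\mathrm{at},M}\oplus\ell^1(\ISet) = \mathbb{H}^1_{L,\mathrm{max}} = \mathbb{H}^1_{L,\mathrm{rad}}$ (from \cite{JFA-Hardy}), and then establishes that $\H^1_L$ is a \emph{completion} of $\mathbb{H}^1_L$ (Proposition~\ref{prop:completion}). This last completeness argument is not automatic: it uses the duality $(\VMO_D)^* = \H^1_D$ (Theorem~\ref{Predual of H1: Theorem: VMO* = H1}) and the identification $\H^1_D = \H^1_L$, together with the density inclusion $\H^1_{D,a}\subseteq\mathbb{H}^1_L$. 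Simply ``feeding $-\Delta_D$ into \cite{JFA-Hardy}'' without this bridge leaves the third and fourth equalities unproved. Your final cautionary paragraph gestures at this worry but does not name the resolution.

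Two minor points. First, boundedness of $O$ is used by the paper not for a ``spectral gap'' but because the Gaussian estimates of \cite{BCE-Gauss-for-MBC} come with a growth factor $\e^{\omega t}$, $\omega>0$ possibly, which one removes by restricting to $t < \dO$; your motivation for boundedness is off. Second, you should note explicitly why no $\ell^1(\ISet)$-correction appears in the stated identity: since $O$ is a connected domain and $D$ is non-trivial, the only component in $\Sigma$ is $\OD = O$, so $\ISet = \varnothing$, $\ell^1(\ISet) = 0$ and $\L^1_0 = \L^1$. This is precisely what turns the general formula $\H^1_D \oplus \ell^1(\ISet)$ of Theorem~\ref{thm:laplace} into $\H^1_D$ here and also makes the exchange of $S^{\loc}_{\j,L}$ for the full $S_{-\Delta_D}$ innocuous via Proposition~\ref{prop:H1L_renorming}; neither reduction is stated in your proposal.
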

	In the preceding result we can always replace the Laplacian by an elliptic operator with a real and self-adjoint coefficient matrix. In $d=2$ the first identity stays true if all restrictions but ellipticity on the coefficients are dropped~\cite[Thm.~10.1]{BCE-Gauss-for-MBC}.

	Let us emphasize that all assumptions of the last theorem as well as the space $\H^1_D$ appearing in its conclusion are purely geometrical.

\subsection*{Notation}

The space dimension $d \geq 2$ is fixed and $O \subseteq \R^d$ is open with diameter $\dO \coloneqq \diam(O)$. Fix also some closed set $D\subseteq \partial O$ called the \emph{Dirichlet part}. Abbreviate function spaces over $O$ by $\mathrm{X} \coloneqq \mathrm{X} (O)$, for instance write $\L^2$ instead of $\L^2(O)$. Let $\ball = \ball(x,r) \sub \R^d$ denote the Euclidean ball with center $x \in \R^d$ and radius $r = r(\ball) > 0$. For $c > 0$ write $c\ball(x,r) = \ball(x, cr)$ and for $j \geq 1$ put $C_j(\ball) \coloneqq 2^{j+1} \ball \setminus 2^j \ball$. If $E\subseteq \R^d$ set $E(x,r) \coloneqq \ball(x,r) \cap E$. Let $\d(E,F)$ denote the distance between sets $E, F\subseteq \R^d$. Write $\boldsymbol{t}$ and $\boldsymbol{z}$ for the identity functions $t \mapsto t$ and $z\mapsto z$ defined either on a real interval or a complex sector, respectively. If not mentioned otherwise, all space equalities are understood in the set theoretic sense with equivalence of norms.

\section*{Acknowledgment}
The first-named author was partially supported by the ANR project \enquote{RAGE: ANR-18-CE-0012-01} and a \enquote{Feodor--Lynen} grant of the Humboldt foundation. The second-named author thanks the Technical University of Delft for hospitality during a research stay in March 2023. The first-named author is grateful to Pascal Auscher who proposed the subject to him.

	\section{Geometric framework and the elliptic operator}  \label{Section: Operator theoretic and geometric setup}

	\subsection{The geometric setup}  \label{Subsection: The geometric setup}

	We start out with a list of geometric concepts.

	\begin{assumptions} \label{Assumption: H_D^1=H_L^1}
		Throughout this paper we make use of the following assumptions.
		\begin{enumerate}

			\item[(UITC)] $O$ is \textbf{uniformly interior thick}:
			\begin{equation}  \label{item:ITC}
				\exists \, c > 0 \; \forall \, x \in O, r < \dO \colon \quad |\ball(x,r) \cap O| \geq c r^d.
			\end{equation}

			\item[$(D)$] $D$ is \textbf{uniformly porous}:
			\begin{equation*}
				\exists \, \kappa \in (0,1] \; \forall \, x \in D, r < \dO \, \exists \, y \in \R^d \colon \quad  \ball(y, \kappa r) \sub \ball(x,r) \setminus \partial O.
			\end{equation*}
			\item[(LU)] $O$ is \textbf{locally uniform near $\boldsymbol{N}$}. There are $\varepsilon \in (0, 1]$ and $\delta \in (0, \infty]$ such that the following properties hold.
			\begin{enumerate}
				\item[(i)] Set $N_\delta = \{ x \in \R^d \colon \d(x, N) < \delta \}$. All points $x,y \in N_{\delta} \cap O$ with $|x-y| < \delta$ can be joined in $O$ by an $\varepsilon$-cigar with respect to $\partial N_{\delta} \cap O$: There is a rectifiable curve $\gamma \sub O$ of length $\ell(\gamma) \leq \nicefrac{|x-y|}{\varepsilon}$ such that we have for all $z \in \mathrm{tr}(\gamma)$ that
				\begin{equation}
					\dif(z, \partial N_{\delta} \cap O) \geq \frac{\varepsilon |z-x||z-y|}{|x-y|}. \label{eq: The geometric setup: Epsilon-Delta Condition}
				\end{equation}
				\item[(ii)] $O$ has positive radius near $N$: There is some $C > 0$ such that all connected components $O'$ of $O$ with $\partial O' \cap N \neq \varnothing$ satisfy $\diam (O') \geq C \delta$.
			\end{enumerate}

			\item[(Fat)]  $O^c$ is \textbf{locally $\boldsymbol{2}$-fat away from $\boldsymbol{N}$}.
		\end{enumerate}
	\end{assumptions}

	\begin{remark}
		Assumption (LU) is a quantitative local connectedness condition near the Neumann part that has been introduced in \cite[Def.~2.3]{Kato_Mixed} and is motivated from Jones' definition of an $(\varepsilon, \delta)$-domain \cite{Jones}. Assumption (Fat) imposes a lower bound on the relative $2$-capacity on small scales, see \cite[Sec.~3]{BCE-Gauss-for-MBC} for the precise definition. Eventually, it turns out \cite[Prop.~3.12]{BCE-Gauss-for-MBC} that (Fat) is under the mere assumption (LU) equivalent to a weak Poincaré inequality for boundary balls (P).
	\end{remark}

	\begin{remark}
		There is a common variation of (UITC) in which only radii $r \leq 1$ are considered. We refer to it as (ITC). It turns out that the constraint $r \leq 1$ in (ITC) can be changed to $r \leq r_0$ for any strictly positive $r_0$ up to a change of the implied constant $c$. Therefore, (UITC) implies (ITC). The converse is false in general, but holds if $O$ is bounded.   We are going to write (ITC) instead of (UITC) whenever we want to make explicit that $O$ is bounded.
		Observe that due to (UITC), $O$ is bounded if and only if $O$ has finite measure.
	\end{remark}

	We work with a concept of mean value free and locally constant functions that is adapted to $D$. Note that this terminology and notation is non-standard, but turns out to be very natural and convenient for our setting.

	\begin{definition}
		\label{def:locally_constant}
		We enumerate the bounded connected components $O'$ of $O$ that satisfy $\partial O' \cap D = \varnothing$ by $\{ O_m \}_m$ and write $\ISet$ for the countable index set.

		\begin{enumerate}
			\item Let $1 \leq p < \infty$. The \textbf{space $\boldsymbol{\L^p_0}$} consists of $f\in \L^p$ such that $\int_{O_m} f \, \d x = 0$ for every $m$.
			\item  We say that $f \in \L^1$ is \textbf{locally constant} if $f$ restricted to $O_m$ is constant for every $m$ and is zero otherwise. We identify this space with $\ell^1(\ISet)$.
		\end{enumerate}
	\end{definition}

	In the situation of Definition~\ref{def:locally_constant} put
	$\ON \coloneqq \bigcup_m O_m$ and $\OD \coloneqq O \setminus O_N$.
	We elaborate on the preceding definition by introducing a notion of \enquote{components}.

	\begin{definition}
		\label{def:components}
		The elements of $\Sigma \coloneqq \{ \OD, O_1, \dots, O_m \}$ are called \textbf{components} of $O$. Note that one has the disjoint decomposition $O = \bigcup_{O' \in \Sigma} O'$ by construction.
	\end{definition}

	\begin{remark}
		\label{rem:components}
		Some remarks are in order.
		\begin{enumerate}
			\item We stress that components need not be connected components. This is only ensured for the pure Neumann components $O_m$ and may fail for the component $\OD$. The components reflect a decoupling of the elliptic operator $L$ that stems from the presence of boundary conditions. We will see in Proposition~\ref{prop:kernel} why it is natural to have such a decoupling. In this sense, locally constant functions in the sense of Definition~\ref{def:locally_constant} are functions that are constant on decoupled parts of the operator $L$ and that respect themselves the Dirichlet boundary condition in $D$.
			\item Consider for instance the case where $\OD$ consists of two connected components, both equipped with pure Dirichlet boundary conditions. Large parts of our theory work if we consider the two connected components of $\OD$ as components of $O$. The notable exception is Lemma~\ref{lem:ITC_for_components} below. It is of course possible that $\OD$ satisfies (ITC) whereas this is false for all its connected components. To make our theory robust for such generalizations, we often write $D \cap \partial O'$, even though this intersection is always $D$ or $\varnothing$ with the choice of components from Definition~\ref{def:components}.
		\end{enumerate}
	\end{remark}

	\begin{lemma}
		\label{lem:ITC_for_components}
		Suppose that $O$ is bounded and satisfies $(\mathrm{ITC})$ and $(\mathrm{LU})$. Then the components of $O$ have a strictly positive distance to each other and satisfy $(\mathrm{ITC})$.
	\end{lemma}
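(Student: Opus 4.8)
The plan is to establish the two assertions — strictly positive separation of the components, and validity of (ITC) for each component — in this order, since the first fact is what makes the geometry of a single component "see" only its own Dirichlet/Neumann structure on small scales. For the separation, recall that by Definition~\ref{def:components} the components are $\OD$ together with the bounded pure-Neumann connected components $\{O_m\}_{m}$. Two distinct $O_m, O_{m'}$ are distinct connected components of the open set $O$, hence disjoint; I would argue that if they were at distance zero there would be a common boundary point $x\in \partial O_m \cap \partial O_{m'} \subseteq \partial O$. Since $x\notin D$ (because $\partial O_m\cap D=\varnothing$), we have $x\in N$, so $x\in N_\delta\cap\overline O$ and, using (LU)(i) together with boundedness, points of $O_m$ near $x$ and points of $O_{m'}$ near $x$ could be joined by an $\varepsilon$-cigar inside $O$ — contradicting that $O_m$ and $O_{m'}$ lie in different connected components. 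The same cigar argument rules out $\d(O_m,\OD)=0$: a putative common boundary point again lies in $N$ (it cannot lie in $D$ since it is a boundary point of $O_m$), and the cigar connecting a nearby point of $O_m$ to a nearby point of $\OD$ would force $O_m$ to meet $\OD$, impossible. There are only countably many $O_m$ and the diameters of $O$ and hence of each component are finite; together with (LU)(ii), which forces each component touching $N$ to have diameter at least $C\delta$, one gets a uniform positive lower bound on the pairwise distances, not merely positivity for each pair.

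For (ITC) on a component $O'$, fix $x\in O'$ and $r<\diam(O)$. The point is that $O(x,r)=\ball(x,r)\cap O$ decomposes over the components, and I want to show the contribution of the components other than $O'$ is negligible once $r$ is small, while the global (ITC) bound $|\ball(x,r)\cap O|\ge c r^d$ is available from the hypothesis. Let $\rho$ be the uniform separation constant from the previous paragraph. If $r\le \rho/2$ then $\ball(x,\rho/2)$ — and a fortiori $\ball(x,r)$ — meets only the component $O'$ (any other component is at distance $\ge\rho$ from $O'\ni x$), so $|\ball(x,r)\cap O'| = |\ball(x,r)\cap O|\ge c r^d$ directly. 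For $\rho/2 < r < \diam(O)$ one has $|\ball(x,r)\cap O'|\ge |\ball(x,\rho/2)\cap O'|\ge c(\rho/2)^d = c(\rho/2)^d \diam(O)^{-d}\cdot\diam(O)^d \ge c' r^d$ with $c' \coloneqq c(\rho/(2\diam(O)))^d$, using $r<\diam(O)$. Since $\diam(O')\le\diam(O)$, this is exactly (ITC) for $O'$ with constant $\min\{c,c'\}$; here I use the "bounded" version of (ITC) where radii are capped by the diameter, and the preceding remark that the cap can be changed freely up to adjusting the constant.

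The main obstacle is the separation step, specifically making precise that a common boundary point of two components must lie in $N$ and then extracting an actual cigar from (LU)(i): one must check that both candidate endpoints can be taken in $N_\delta\cap O$ with mutual distance $<\delta$ (true once they are close enough to the shared boundary point $x\in N$, since $\d(\cdot,N)$ is continuous and vanishes at $x$), and that the resulting rectifiable curve $\gamma\subseteq O$, being connected, is contained in a single connected component of $O$ — which contradicts the endpoints lying in two different ones. The role of (LU)(ii) is only to upgrade "each pair has positive distance" to "all pairs are uniformly separated," which is what the subsequent scaling argument for (ITC) needs; a careful write-up should isolate this uniformity as the crux rather than the cigar construction itself, which is by now standard.
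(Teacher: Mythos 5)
Your overall strategy matches the paper's — use (LU)(i) to join points of two putatively close components by a curve in $O$ and derive a contradiction with connectedness, then use the resulting separation to localize (ITC) to each component — and your treatment of the second assertion is correct. The issue lies in how you obtain the separation. You only run the cigar argument at distance zero, by extracting a common boundary point and observing it lies in $N$; this yields \emph{pairwise} positivity $\d(O',O'')>0$, and you then try to upgrade to a \emph{uniform} lower bound by invoking countability, finite diameters, and (LU)(ii). That upgrade is not justified: even if every pure-Neumann component has diameter $\geq C\delta$, nothing in those observations rules out infinitely many such components inside the bounded set $O$ with pairwise distances accumulating at $0$ (thin ``fingers'' of definite length but vanishing width are not excluded by diameter bounds alone), and (LU)(ii) plays no role in the paper's proof of this lemma.

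The fix is simple and is exactly what the paper does: run the same cigar argument under the hypothesis $\d(O',O'')<\delta/2$ rather than $\d(O',O'')=0$. Pick $a\in O'$, $b\in O''$ with $|a-b|<\delta/2$; the segment from $a$ to $b$ exits the pure-Neumann component (WLOG $O'$) through some $z\in\partial O'\subseteq N$, so both $a$ and $b$ lie in $N_\delta\cap O$ and $|a-b|<\delta$; (LU)(i) then produces a curve in $O$ joining them, a contradiction. This gives the quantitative bound $\d(O',O'')\geq\delta/2$ for all pairs of distinct components directly, without any finiteness considerations, and your scaling argument for (ITC) with $\rho=\delta/2$ is then correct as written. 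A careful write-up should also note why one of any two distinct components is a pure-Neumann connected component (there is only one $\OD$), which is what guarantees the exit point is in $N$ and not in $D$.
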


	\begin{proof}
		For the first claim, let $O'$ and $O''$ be components of $O$. Assume without loss of generality that $O'$ is a pure Neumann component. If $\d(O', O'') < \nicefrac{\delta}{2}$, then there are $x \in O' \cap N_\delta$ and $y\in O'' \cap N_\delta$ with $|x-z| < \delta$. Then (LU) yields a path $\gamma \subseteq O$ that connects $x$ with $y$. Hence, $O' = O''$ since $O'$ is connected and the components are disjoint. Consequently, $\d(O', O'') \geq \delta$ for distinct components.

		We come to the second claim. Since $O$ is bounded, it suffices to check (ITC) for balls with radius $ r \leq r_0$ up to our disposal. We take $r_0 = \delta$.
		Let $O'$ be a component of $O$. Balls centered in $O'$ cannot hit other components of $O$ according to the first established claim from above. Therefore, (ITC) can be transferred from $O$ to $O'$.
	\end{proof}

	\begin{lemma}  \label{The geometric setup: Lemma: Assumption (D), upper estimate for measure}
		Assume $(D)$. Then there are $c > 0$ and $\eta \in (0,1)$ such that
		\begin{equation*}
			|\{ x\in O(x_0, r) : \d_D(x) < s \}| \leq c s^{\eta} r^{d - \eta}
		\end{equation*}
		holds true for all $x_0 \in O$, $r < \dO$ and $s > 0$.
	\end{lemma}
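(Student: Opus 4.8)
The plan is to establish the measure estimate by first reducing to a dyadic covering argument and then exploiting the scale-invariant room that porosity $(D)$ provides. Fix $x_0 \in O$, $r < \dO$ and $s > 0$. Without loss of generality we may assume $s < r$, since otherwise $\{x \in O(x_0,r) : \d_D(x) < s\} \subseteq O(x_0,r)$ has measure $\leq |\ball(x_0,r)| \les r^d \leq s^\eta r^{d-\eta}$ trivially (the case $s \geq r$ absorbs the scale $r$ into $s$). The set we want to control is the part of $O(x_0,r)$ lying within distance $s$ of $D$, so the first step is to cover a neighborhood of $D \cap \ball(x_0, 2r)$ by balls of radius comparable to $s$ and count them.

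The heart of the matter is the following dimension-type statement: if $D$ is uniformly porous with constant $\kappa$, then $D$ cannot be too fat, and quantitatively, for each scale $\rho < \dO$, the set $D \cap \ball(x_0, r)$ can be covered by at most $C (r/\rho)^{d-\eta}$ balls of radius $\rho$, where $\eta = \eta(\kappa, d) \in (0,1)$. To see this I would run a standard iteration on dyadic subcubes: start with one cube of side $\sim r$; each time we refine by a fixed factor $K$ (to be chosen in terms of $\kappa$), porosity guarantees that inside every subcube that still meets $D$ there is a ball of radius $\sim \kappa \cdot (\text{sidelength})$ disjoint from $\partial O \supseteq D$, hence a definite proportion of the $K^d$ children can be discarded. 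After $k$ refinement steps the number of surviving cubes of side $r K^{-k}$ is at most $(K^d - 1)^k$ up to a constant (more precisely, at most $c\,(K^d - c')^k$ for suitable $c' \geq 1$ depending on $\kappa$), which gives the covering bound with $\eta$ determined by $\rho^{-\eta} \sim (K^d - c')^{k}$ at $\rho = rK^{-k}$, i.e. $\eta = d - \log_K(K^d - c') > 0$.

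Granting this covering bound, the estimate follows by summing volumes. Choose the integer $k$ with $r 2^{-k} \sim s$ (here $2$ in place of $K$ after passing to comparable scales) and cover $D \cap \ball(x_0, 2r)$ by $N \les (r/s)^{d-\eta}$ balls $\ball(z_i, s)$. Every point $x \in O(x_0,r)$ with $\d_D(x) < s$ lies within distance $s$ of $D$, hence within distance $2s$ of some $z_i$, so
\begin{equation*}
	\{ x \in O(x_0,r) : \d_D(x) < s \} \subseteq \bigcup_{i=1}^{N} \ball(z_i, 2s),
\end{equation*}
and therefore the left-hand side has measure at most $N \cdot |\ball(0,2s)| \les (r/s)^{d-\eta}\, s^d = s^\eta r^{d-\eta}$, which is the claim. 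I expect the main obstacle to be the first, combinatorial step: making the refinement argument genuinely scale-uniform for all radii below $\dO$ (not just $r \leq 1$), keeping track that the discarded volume at each generation is a fixed fraction independent of the generation, and extracting the exponent $\eta \in (0,1)$ cleanly — the volume-summation step is then routine. A subtlety worth flagging is that porosity is stated with the removed ball avoiding all of $\partial O$, which is exactly what we need so that the discarded children sit inside $O$ or in $O^c$ but in any case away from $D$; this is harmless for the counting since we only need to discard a definite fraction of children at each stage.
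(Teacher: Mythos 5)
Your proposal is correct and follows the same route the paper takes: the paper's proof is essentially a pointer to the dyadic-refinement argument in Bechtel's thesis (porosity discards a definite fraction of children at each generation, yielding a Minkowski-content bound, then sum volumes), which is exactly what you spell out. The one small detail the paper handles slightly more cleanly is ensuring $\eta\in(0,1)$: rather than tracking it through the choice of $K$, they observe that once the estimate holds for \emph{some} $\eta_0>0$, one can always shrink to any $0<\eta<\eta_0$ because $\alpha\mapsto(s/r)^{\alpha}$ is decreasing for $s\le r$.
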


	\begin{proof}
		We can take $c = |\ball(0,1)|$ and any $\eta > 0$ for $r \leq s$. Now, assume that $s \leq r$. Since $\a \mapsto x^{\a}$ is decreasing on $(0, \infty)$ for $x \in (0, 1]$, we can ensure that $\eta \in (0,1)$ as soon as we have found some $\eta > 0$. Using that $D$ is uniformly porous, we can mimic the argument in \cite[Lem.~5.4.2 \& Lem.~A.1.8]{Bechtel_PhD} to prove the claim. Notice that we do not need the case distiction whether or not $r$ is at most $\nicefrac{1}{4}$.
	\end{proof}

	\subsection{Operator theoretic setup}  \label{Subsection: Operator theoretic setup}

	Here, we introduce the elliptic operator $L = - \Div (A \nabla \cdot)$ subject to mixed boundary conditions.

	To encode mixed boundary conditions, put $\rC^\infty_D \coloneqq \{ \j |_O : \j \in \smooth[\R^d \setminus D]\}$
	and define $\W_D^{1,2}$ as the closure of $\rC^\infty_D$ in $\W^{1,2}$.

	Let $A \in \L^{\infty}(O; \C^{d \times d})$. We realize the divergence form operator $L = - \Div (A \nabla \cdot )$ in $\L^2$ via the form
	\begin{equation*}
		a(u,v) \coloneqq \int_O A \nabla u \cdot \overline{\nabla v} \, \d x \qquad (u,v \in \W^{1,2}_D),
	\end{equation*}
	for which we impose a \textbf{homogeneous Gårding inequality}:
	\begin{align}
		\label{eq:garding}
			\exists \, \lambda > 0 \, \forall \, u \in \W_D^{1,2} \colon \quad \re a (u,u) \geq \lambda \Vert \nabla u \Vert_2^2.
	\end{align}
	Its domain is given by
	\begin{equation*}
		\D(L) \coloneqq \left\{ u \in \W_D^{1,2} : \exists \, Lu \in \L^2 \, \forall \, v \in \W_D^{1,2} : (Lu \, | \, v)_2 = a(u,v)  \right\}.
	\end{equation*}

	In general, $L$ is not injective. We characterize its null space in the following proposition. A similar observation was indicated in~\cite[p.~5]{Carbonaro-Dragicevic_Sg_Max_Operator}.

	\begin{proposition}
		\label{prop:kernel}
		Assume $(\mathrm{Fat})$ and $(\mathrm{LU})$. Then the space $\sN(L)$ consists exactly of those $f\in \L^2$ that are locally constant (in the sense of Definition~\ref{def:locally_constant}). Consequently, $\overline{\sR(L)} = \L^2_0$ and one has the topological splitting $\L^2 = \L^2_0 \oplus \sN(L)$.
		If $D$ is empty, the same result holds without the geometric assumptions.
	\end{proposition}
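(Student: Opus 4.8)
The plan is to prove the three assertions in order, with the first—identifying $\sN(L)$—being the heart of the matter; the other two are formal consequences.

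\textbf{Step 1: locally constant functions lie in $\sN(L)$.} Let $f \in \L^2$ be locally constant in the sense of Definition~\ref{def:locally_constant}, i.e.\ $f = \sum_m c_m \1_{O_m}$ where each $O_m$ is a bounded connected component of $O$ with $\partial O_m \cap D = \varnothing$ and the sequence $(c_m)$ is in $\ell^1$ (hence, since the $O_m$ are bounded of finite total measure inside $O$, also in $\L^2$). I must check $f \in \D(L)$ with $Lf = 0$, which amounts to $f \in \W^{1,2}_D$ and $a(f,v) = 0$ for all $v \in \W^{1,2}_D$. The latter is immediate once $f \in \W^{1,2}_D$, because $\nabla f = 0$ a.e. The point is therefore that $\1_{O_m} \in \W^{1,2}_D$: since $O_m$ is a connected component of $O$ that is clopen relative to $O$ and does not meet $D$, one can separate $\overline{O_m}$ from $D$ and from the other components; a cutoff $\psi \in \smooth[\R^d \setminus D]$ that equals $1$ on $O_m$ and vanishes near the rest of $O$ restricts to $\1_{O_m}$ on $O$, giving membership in $\rC^\infty_D$ hence in $\W^{1,2}_D$. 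Summing over $m$ (the supports being essentially disjoint and the tails controlled in $\W^{1,2}$ by $\ell^1$-summability of $(c_m)$) yields $f \in \W^{1,2}_D$ and $Lf = 0$.

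\textbf{Step 2: every $f \in \sN(L)$ is locally constant.} Let $f \in \D(L)$ with $Lf = 0$. Then $a(f,f) = (Lf\,|\,f)_2 = 0$, so the Gårding inequality~\eqref{eq:garding} forces $\nabla f = 0$ in $\L^2$; hence $f$ is (a.e.) constant on every connected component of $O$. It remains to show that $f$ vanishes on every connected component $O'$ with $\partial O' \cap D \neq \varnothing$ and on every unbounded one. This is where the geometry enters. For a component $O'$ touching $D$, the idea is: $f|_{O'} = c\,\1_{O'}$, and $\1_{O'} \in \W^{1,2}_D$ would mean a constant function can be approximated by $\rC^\infty_D$-functions near a nontrivial piece of Dirichlet boundary. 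Porosity of $D$ (Assumption $(D)$), (LU), and (Fat) — equivalently the boundary Poincaré inequality (P) referenced in the remark after the Assumptions — rule this out: a weak Poincaré inequality on boundary balls centered at a point of $\partial O' \cap D$ controls $\|f\|_{\L^2}$ on such a ball by $\|\nabla f\|_{\L^2} = 0$ there, forcing $c = 0$. For an unbounded component $O'$, the constant $c$ must be $0$ simply because $\1_{O'} \notin \L^2$ unless $c = 0$. This pins down $f$ as locally constant.

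\textbf{Step 3: the splitting.} Since $L$ is the (maximal-accretive, sectorial) operator associated with a form satisfying a Gårding inequality, $\L^2 = \overline{\sR(L)} \oplus \sN(L^*)$, and the same structural facts hold for $L^*$ (same form domain, adjoint coefficient matrix still bounded and Gårding), so by Steps~1–2 applied to $L^*$ one gets $\sN(L^*) = \sN(L) = \{\text{locally constant functions}\}$. Hence $\overline{\sR(L)}$ is the annihilator of the locally constant functions in $\L^2$, which is exactly $\L^2_0$ (the mean-value-zero condition on each $O_m$), and the topological direct sum $\L^2 = \L^2_0 \oplus \sN(L)$ follows, the sum being of a closed subspace and a closed complemented subspace (indeed $\sN(L)$ is isometric to a weighted $\ell^2(\ISet)$, and the projection onto it is $f \mapsto \sum_m \bigl(\tfrac{1}{|O_m|}\int_{O_m} f\bigr)\1_{O_m}$, which is bounded on $\L^2$ by Cauchy–Schwarz). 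When $D = \varnothing$, Step~2's geometric input is vacuous and only the $\L^2$-integrability obstruction for unbounded components remains, so no assumptions on $O$ are needed.

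\textbf{Main obstacle.} The delicate point is Step~2, specifically showing that a component meeting $D$ carries no nonzero constant in $\W^{1,2}_D$. Pointwise, $\rC^\infty_D$-functions need not vanish on $D$ in any literal trace sense when $D$ is rough, so one genuinely needs the quantitative machinery: porosity of $D$ to produce, at every scale and every point of $D$, a ball inside $O^c$ where a $\W^{1,2}_D$-function is "seen" to be small, combined with the Poincaré inequality (P)/(Fat) near $N$ to propagate this into an $\L^2$-bound on $f$ over boundary balls. Assembling these into a clean argument — and correctly handling the case where $\OD$ is disconnected, as flagged in Remark~\ref{rem:components} — is the technical core; everything else is soft functional analysis.
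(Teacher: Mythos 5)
Your proposal is correct and follows essentially the same route as the paper: the Gårding inequality forces $\nabla u = 0$ for $u \in \sN(L)$, then the boundary Poincaré inequality furnished by $(\mathrm{Fat})$ and $(\mathrm{LU})$ (the paper cites \cite[Prop.~3.12]{BCE-Gauss-for-MBC}) kills the constants on components meeting $D$, while the converse and the kernel--range splitting for sectorial operators close the argument. Your write-up is in fact a bit more detailed than the paper's on the converse inclusion and on treating unbounded components separately, but the content and key ingredients coincide.
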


	\begin{proof}
		Let $u \in \sN(L)$. Using the ellipticity condition~\eqref{eq:garding} and the definition of $L$, we get
		\begin{align}
			\lambda \int_O |\nabla u|^2 \, \d x \leq \re a(u,u) \leq |(Lu \, | \, u)_2| = 0.
		\end{align}
		Therefore, $u$ is constant on each connected component of $O$. If $\OD \neq \varnothing$, let $x\in \partial \OD \cap D$. According to~\cite[Prop.~3.12]{BCE-Gauss-for-MBC}, there are $c,r>0$ depending on the geometry such that
		\begin{align}
			\int_{\OD(x,r)} |u|^2 \, \d x \lesssim \int_{O(x,cr)} |\nabla u|^2 \,  \d x = 0.
		\end{align}
		Therefore, $u$ is locally constant in the sense of Definition~\ref{def:locally_constant}. To the converse, let $u \in \W_D^{1,2}$ be locally constant. Then, $a(u,v) = 0$ for all $v \in \W^{1,2}_D$ and hence $u \in \sN(L)$.

		The remaining assertions follow readily from the well-known kernel-range splitting for sectorial operators.
	\end{proof}

	By Kato's form method \cite[Chap.~6]{Kato} this defines an m-accretive operator, which generates the holomorphic $\rC_0$-contraction semigroup $(\e^{-t^2 L})_{t \geq 0}$. Moreover, $L$ admits a maximal accretive square root $\sqrt{L}$ which generates the \textit{Poisson semigroup}
	$(P_t)_{t \geq 0} \coloneqq (\e^{- t \sqrt{L}})_{t \geq 0}$.

	\subsection{Property \boldmath$\mathrm{G(} \mu \mathrm{)}$}   \label{Subsection: Property G(mu)}

	We continue with the precise notion of kernel bounds that we will employ in order to prove our main result, Theorem~\ref{Main result and applications: Theorem: H_D^1=H_L^1}.

	\begin{definition}  \label{Property G(mu): Definition: G(mu)}
		Let $\mu \in (0,1]$. Say that $L$ has \textbf{property} $\boldsymbol{\mathrm{G}(\mu)}$ if the following holds:
		\begin{enumerate}
			\item[(G1)] For any $t > 0$ there is a measurable function $K_t \colon O \times O \to \C$ such that
			\begin{equation*}
				(\e^{-t^2 L} f)(x) = \int_O K_t(x,y) f(y) \dif y \qquad (f \in \L^2, \text{ a.e.} \; x \in O).
			\end{equation*}
			\item[(G2)] There are $b,c  > 0$ such that for each $0 < t < \dO$ one has
			\begin{equation*}
				|K_t(x,y)| \leq c t^{-d} \e^{- b \left(\frac{|x-y|}{t}\right)^2}.
			\end{equation*}
			\item[(G3)] There is $c>0$ such that for every $x,x',y,y' \in O$ and $0 < t < \dO$ one has
			\begin{equation*}
				|K_t(x,y) - K_t(x',y')| \leq c t^{-d - \mu} ( |x-x'| + |y-y'| )^{\mu}.
			\end{equation*}
		\end{enumerate}
	\end{definition}

	Let us mention that~$\mathrm{G}(\mu)$ originally goes back to~\cite{Auscher_Heat-Kernel}.

		\begin{remark} \label{Property G(mu): Remark: G(mu) stability}
		The following facts will be useful in this paper.
		\begin{enumerate}
			\item Property $\mathrm{G}(\mu)$ is stable under taking adjoints since the kernel of the adjoint semigroup is given by $K_t^*(x,y) = \overline{K_t(y,x)}$.

			\item Logarithmic convex combinations of (G2) and (G3) yield for all $\nu \in (0, \mu)$, each $x,y \in O$, $h \in \R^d$ with $y+h \in O$ and $0 < t < \dO$ the bound
			\begin{equation*}
				|K_t(x,y+h) - K_t(x,y)| \leq c t^{-d} \left( \frac{|h|}{t} \right)^{\nu} \e^{- b \left(\frac{|x-y|}{t}\right)^2}
			\end{equation*}
			provided that $|h| \leq \frac{|x-y|}{2}$, with different constants $b, c$ as before. A similar estimate holds true in the $x$-variable.

			\item For any $x, y \in O$ the function $z \mapsto K_z (x,y)$ defines an analytic map in the open sector $\{ z \in \C \setminus \{ 0 \} \colon |\arg(z)| < \nicefrac{\pi}{4} - \nicefrac{\omega_L}{2} \}$, where $\omega_L \in [0, \frac{\pi}{2})$ is the angle of sectoriality of $L$.
			Then a straightforward calculation shows that $\frac{1}{2} t^2 \partial_t K_t$ is the kernel associated with $- t^2 L \e^{-t^2 L}$ and satisfies the same kernel bounds as $K_t$ owing to Cauchy's theorem.
		\end{enumerate}
	\end{remark}

	The following lemma relates $(\mathrm{G}(\mu))$ with the null space of $L$.

	\begin{lemma}
		\label{lem:Gmu_kernel}
		Assume that $O$ is unbounded and that $L$ satisfies $(\mathrm{G}(\mu))$. Then $\sN(L)$ is trivial.
		In fact, we only use that $(\mathrm{G}(\mu))$ holds for $0 < t < \infty$, which is a consequence of $O$ unbounded.
	\end{lemma}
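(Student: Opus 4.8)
The plan is to bypass the geometric description of $\sN(L)$ from Proposition~\ref{prop:kernel} (which here is unavailable, since we do not assume $(\mathrm{Fat})$ or $(\mathrm{LU})$) and instead argue directly from the kernel bounds. The mechanism is simple: the semigroup acts as the identity on $\sN(L)$, while $(\mathrm{G2})$ forces its kernel to decay in $\L^2$-to-$\L^\infty$ fashion as the time parameter grows. The decisive point is that, since $O$ is unbounded, $\dO = \infty$, so $(\mathrm{G2})$ is at our disposal for \emph{all} $t \in (0,\infty)$; this is the only place where unboundedness enters.

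Concretely, I would fix $u \in \sN(L)$. As $L$ is m-accretive with $Lu = 0$, the orbit $t \mapsto \e^{-t^2 L} u$ solves the abstract Cauchy problem with the constant solution $u$, so by uniqueness $\e^{-t^2 L} u = u$ for every $t > 0$. Invoking $(\mathrm{G1})$ gives the representation $u(x) = \int_O K_t(x,y) u(y) \dif y$ for a.e.\ $x \in O$ and every $t>0$; fixing a sequence $t_n \to \infty$ and discarding a null set, one may assume this identity holds simultaneously for all $t_n$ at every admissible $x$.

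Next, I would insert $(\mathrm{G2})$ and apply Cauchy--Schwarz: for such $x$ and any $t>0$,
\begin{align*}
	|u(x)| \leq c\, t^{-d} \int_O \e^{-b(|x-y|/t)^2} |u(y)| \dif y \leq c\, t^{-d} \Bigl( \int_{\R^d} \e^{-2b(|x-y|/t)^2} \dif y \Bigr)^{1/2} \norm[2]{u} = C\, t^{-d/2} \norm[2]{u},
\end{align*}
where the Gaussian integral is evaluated by the substitution $y = x + tz$ and does not depend on $x$. Letting $t = t_n \to \infty$ and using $d \geq 2$ forces $u(x) = 0$ for a.e.\ $x$, hence $\sN(L) = \{0\}$.

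This is a soft computation rather than a hard theorem; the only thing that demands a moment's attention is the measurability bookkeeping in the second paragraph, i.e.\ arranging that the pointwise representation can be exploited along a single sequence $t_n \to \infty$ outside one common null set. I do not anticipate any genuine obstacle.
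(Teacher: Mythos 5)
Your argument is correct and coincides in substance with the paper's: both use that the semigroup fixes $\sN(L)$ and that the Gaussian kernel bound (G2), valid for all $t>0$ precisely because $\dO=\infty$, yields an $\L^2\to\L^\infty$ estimate decaying as $t\to\infty$. Your Cauchy--Schwarz on the kernel is exactly the $p=2$ instance of Young's convolution inequality invoked in the paper, so the two proofs are the same up to phrasing (and your measurability-bookkeeping remark is a harmless technicality).
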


	\begin{proof}
		Let $u \in \ker(L)$. Standard theory for strongly continuous semigroups %
		gives
		\begin{align}
			\e^{-tL} u - u = \int_0^t \e^{-sL} L u \,\d s = 0, \quad t>0.
		\end{align}
		Therefore, $(\mathrm{G}(\mu))$ joint with Young's convolution inequality gives
		\begin{align}
			\| u \|_\infty = \| \e^{-t L} u \|_\infty \lesssim t^{-\frac{d}{2}} \| u \|_2 \to 0 \quad \text{as } t \to \infty.
		\end{align}
		Note that we use $\dO = \infty$ when taking the limit.
		Therefore indeed $u = 0$ as claimed.
	\end{proof}

	Surprisingly, $(\mathrm{G}(\mu))$ also imposes the following crucial geometrical constraint in the case of mixed boundary conditions.

	\begin{corollary}
		\label{cor:ON_implies_bounded}
		Assume $(\mathrm{Fat})$, $(\mathrm{LU})$ and $(\mathrm{G}(\mu))$. Then $\ON \neq \varnothing$ implies that $O$ is bounded.
	\end{corollary}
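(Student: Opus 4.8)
The plan is to argue by contradiction, simply combining the two results that immediately precede the statement. First I would assume that $O$ is unbounded. Then Lemma~\ref{lem:Gmu_kernel} applies verbatim and tells us that the null space $\sN(L)$ is trivial.

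Next I would produce a non-trivial element of $\sN(L)$ from the hypothesis $\ON \neq \varnothing$. Indeed, $\ON \neq \varnothing$ means that the index set $\ISet$ of Definition~\ref{def:locally_constant} is non-empty, so one may fix a bounded connected component $O_m$ of $O$ with $\partial O_m \cap D = \varnothing$. The function $\mathbf{1}_{O_m}$ — equal to $1$ on $O_m$ and $0$ on the rest of $O$ — is locally constant in the sense of Definition~\ref{def:locally_constant}, and it lies in $\L^2$ precisely because $O_m$ is bounded; it is clearly non-zero. Now I would invoke Proposition~\ref{prop:kernel}, which is available since $(\mathrm{Fat})$ and $(\mathrm{LU})$ are assumed and which identifies $\sN(L)$ with the space of locally constant $\L^2$-functions. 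This places $\mathbf{1}_{O_m}$ in $\sN(L)$, so $\sN(L) \neq \{ 0 \}$, contradicting the first step. Hence $O$ is bounded.

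There is essentially no obstacle intrinsic to this corollary: all the weight is carried by Proposition~\ref{prop:kernel} (the geometric description of $\sN(L)$) and Lemma~\ref{lem:Gmu_kernel} (triviality of $\sN(L)$ for unbounded $O$, via the $\L^2$–$\L^\infty$ decay of the semigroup). The only points that warrant a line of justification are that the candidate function $\mathbf{1}_{O_m}$ genuinely belongs to $\L^2$ — immediate from boundedness of the pure Neumann component $O_m$ — and that such indicator functions are among the locally constant functions listed by Proposition~\ref{prop:kernel}, which is built into Definition~\ref{def:locally_constant}.
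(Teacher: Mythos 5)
Your argument is correct and matches the paper's own proof: both argue by contradiction, using Proposition~\ref{prop:kernel} to produce a non-trivial locally constant element of $\sN(L)$ from $\ON \neq \varnothing$, and Lemma~\ref{lem:Gmu_kernel} to show $\sN(L)$ is trivial when $O$ is unbounded. The only addition you make is to spell out explicitly that $\mathbf{1}_{O_m} \in \L^2$, which the paper leaves implicit.
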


	\begin{proof}
		Assume for the sake of contradicton that $\ON \neq \varnothing$, but $O$ is unbounded. By Proposition~\ref{prop:kernel}, $\sN(L)$ consists of the locally constant functions. Therefore, $\sN(L)$ is non-trivial by hypothesis. On the other hand, Lemma~\ref{lem:Gmu_kernel} asserts that $\sN(L)$ is trivial when $O$ is unbounded, a contradiction.
	\end{proof}

	The following fact was also stated in~\cite{Auscher-Russ}. We present a proof based on the previous two results. A more direct proof could be given with the conservation property.

	\begin{corollary}
		\label{cor:NBC_Gmu_unbounded}
		Assume that $L$ is subject to pure Neumann boundary conditions and satisfies $(\mathrm{G}(\mu))$ for $0<t<\infty$. Then $O$ is unbounded.
	\end{corollary}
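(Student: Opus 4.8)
The plan is to combine the identification of the null space of $L$ in the pure Neumann case with the $\L^2$--$\L^\infty$ smoothing of the semigroup encoded in $\mathrm{G}(\mu)$; this mirrors the argument for Corollary~\ref{cor:ON_implies_bounded} and is why the statement can be obtained from the preceding results.

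First I would use that pure Neumann boundary conditions mean $D = \varnothing$, so that Proposition~\ref{prop:kernel} applies \emph{without} any geometric assumption and identifies $\sN(L)$ with the space of locally constant $\L^2$-functions from Definition~\ref{def:locally_constant}. In particular, if $O$ possessed a bounded connected component $O'$, then $\1_{O'} \in \L^2$ (because $|O'| < \infty$) would be locally constant and nonzero, whence $\sN(L) \neq \{0\}$. So it suffices to show that, under $(\mathrm{G}(\mu))$ on the full range $0 < t < \infty$, the null space $\sN(L)$ is trivial, and to conclude that $O$ then has no bounded connected component; being nonempty and open, it must therefore contain an unbounded connected component, hence $O$ is unbounded.

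The triviality of $\sN(L)$ is exactly Lemma~\ref{lem:Gmu_kernel}, but I would rerun its proof being careful to invoke only the part that does not already presuppose $O$ unbounded: for $u \in \ker(L)$ one has $\e^{-t^2 L} u = u$ for every $t > 0$, and the Gaussian bound (G2) together with Cauchy--Schwarz gives $\|u\|_\infty = \|\e^{-t^2 L} u\|_\infty \les t^{-d/2}\|u\|_2 \to 0$ as $t \to \infty$. This estimate only needs $(\mathrm{G}(\mu))$ for all $t \in (0,\infty)$, which is precisely the hypothesis here, so $u = 0$.

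The one delicate point — and the one I would flag — is avoiding a circular use of Lemma~\ref{lem:Gmu_kernel}, which is stated under the standing convention that $O$ is unbounded: one must explicitly quote the version of its argument that relies solely on the kernel bounds holding for all $t \in (0,\infty)$, not on unboundedness of $O$. A shorter alternative bypasses $\sN(L)$ altogether via the conservation property $\e^{-t^2 L}\1 = \1$: were $O$ bounded, then $\1 \in \L^2$ and $1 = \|\e^{-t^2 L}\1\|_\infty \les t^{-d/2}\|\1\|_2 \to 0$, a contradiction.
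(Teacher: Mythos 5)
Your proposal is correct and follows essentially the same route as the paper: Lemma~\ref{lem:Gmu_kernel} (whose statement already records that only $(\mathrm{G}(\mu))$ for $0<t<\infty$ is used, so there is no circularity) gives $\sN(L)=\{0\}$, while Proposition~\ref{prop:kernel} with $D=\varnothing$ would force $\sN(L)$ to contain the nonzero locally constant functions if $O$ were bounded. Your shorter alternative via the conservation property is precisely the "more direct proof" the paper alludes to in the remark preceding the corollary.
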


	\begin{proof}
		According to Lemma~\ref{lem:Gmu_kernel}, $\sN(L)$ is trivial. Assume to the contrary that $O$ is bounded. Then all its connected components are bounded and Proposition~\ref{prop:kernel} yields that $\sN(L)$ consists of the locally constant functions on $O$, a contradiction.
	\end{proof}

	Next, we introduce the notion of $\L^2 - \Cdot^{\nu}$-boundedness. To do so, we put
	\begin{equation*}
		\Vert u \Vert_{\Cdot^{\nu}} \coloneqq \sup_{\substack{x,y \in O, \\ x \neq y}} \frac{|u(x) - u(y)|}{|x-y|^{\nu}}.
	\end{equation*}

	\begin{definition} \label{Property G(mu): Definition: Off-diagonal estimates}
		Let $1 \leq p \leq \infty$, $\nu \in (0,1]$ and $\mathcal{T} \coloneqq (T(z))_{z\in U} \sub \mathcal{L}(\L^2)$, where $U \subseteq \C \setminus \{ 0 \}$. Say that $\mathcal{T}$ is \textbf{$\boldsymbol{\L^p - \Cdot^{\nu}}$-bounded} if $T(z) f \in \Cdot^{\nu}$ for all $z \in U$ and $f \in \L^p \cap \L^2$, with the estimate
		\begin{equation*}
			\Vert T(z) f \Vert_{\Cdot^{\nu}} \les |z|^{- \nu - \frac{d}{p}} \Vert f \Vert_p.
		\end{equation*}
	\end{definition}

	The following lemma can be found in~\cite[Cor.~5.11]{BCE-Gauss-for-MBC}. They also state a converse statement which we will not need in the course of this article.

	\begin{lemma}
		\label{lem:Hoelder_boundedness}
		If $L$ satisfies $(\mathrm{G}(\mu))$, then $( \e^{-t^2 L} )_{0 < t < \d(O)}$ is $\L^2 - \Cdot^\nu$-bounded for every $\nu \in (0,\mu)$.
	\end{lemma}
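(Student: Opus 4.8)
The plan is to reduce the claim to the kernel bounds in property $\mathrm{G}(\mu)$ by a direct pointwise estimate of the difference $(\e^{-t^2 L} f)(x) - (\e^{-t^2 L} f)(x')$. Fix $\nu \in (0,\mu)$, $0 < t < \dO$, $f \in \L^2$, and two points $x, x' \in O$. By (G1) we may write the difference of the semigroup values as $\int_O (K_t(x,y) - K_t(x',y)) f(y) \,\d y$, so that by Cauchy--Schwarz it suffices to bound $\| K_t(x,\cdot) - K_t(x',\cdot) \|_{\L^2(O)}$ by a constant times $t^{-\nu - d/2} |x-x'|^\nu$; then taking the supremum over $x \neq x'$ gives $\| \e^{-t^2 L} f \|_{\Cdot^\nu} \lesssim t^{-\nu - d/2} \| f \|_2$, which is exactly $\L^2 - \Cdot^\nu$-boundedness with $p = 2$.

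The core of the argument is the kernel estimate. I distinguish the regime $|x - x'| \geq t$ from $|x-x'| < t$. In the first case one simply uses (G2) for each of the two kernels separately: $\| K_t(x,\cdot) \|_{\L^2} \lesssim t^{-d} \big( \int_O \e^{-2b(|x-y|/t)^2} \,\d y \big)^{1/2} \lesssim t^{-d} \cdot t^{d/2} = t^{-d/2}$, and likewise for $x'$, so that $\| K_t(x,\cdot) - K_t(x',\cdot) \|_{\L^2} \lesssim t^{-d/2} \leq t^{-d/2} (|x-x'|/t)^\nu = t^{-\nu - d/2} |x-x'|^\nu$, using $|x-x'|/t \geq 1$. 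In the second case $|x-x'| < t$, I split $O$ into the region $O \cap \ball(x, 2|x-x'|)$ and its complement. On the small ball one again estimates the two kernels separately by (G2), bounding each $\L^2$-norm over a ball of radius $2|x-x'|$ by $\lesssim t^{-d} |x-x'|^{d/2} = t^{-\nu - d/2} |x-x'|^\nu \cdot (|x-x'|/t)^{d/2 - \nu + \nu} $; more carefully, $t^{-d} |x-x'|^{d/2} = t^{-d/2} (|x-x'|/t)^{d/2} \le t^{-d/2}(|x-x'|/t)^\nu = t^{-\nu-d/2}|x-x'|^\nu$ since $d/2 \ge 1 > \nu$ and $|x-x'|/t < 1$. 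On the complement one has $|x - y| \geq 2|x-x'|$, hence $|x-x'| \leq \tfrac12 |x-y|$, and also $|x'-y| \geq |x-y| - |x-x'| \geq \tfrac12|x-y|$, so the interpolated bound from Remark~\ref{Property G(mu): Remark: G(mu) stability}(ii) (applied in the $x$-variable) applies and gives $|K_t(x,y) - K_t(x',y)| \lesssim t^{-d} (|x-x'|/t)^\nu \e^{-b(|x-y|/t)^2}$; integrating the square of this Gaussian over $O$ contributes the usual factor $t^{d/2}$, yielding the desired $t^{-\nu - d/2}|x-x'|^\nu$.

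I do not expect a genuine obstacle here: the only points requiring minor care are (a) checking that Remark~\ref{Property G(mu): Remark: G(mu) stability}(ii) is stated for the $y$-variable but, as noted there, holds equally in the $x$-variable, which is what the splitting above uses; (b) making sure all Gaussian integrals $\int_{\R^d} \e^{-c(|x-y|/t)^2}\,\d y \lesssim t^d$ are taken over $O \subseteq \R^d$, so the bound only improves; and (c) handling the restriction $0 < t < \dO$ correctly, which is harmless since every estimate is used only in that range, exactly the range in which (G2) and (G3) are assumed. The result is the claimed $\L^2 - \Cdot^\nu$-boundedness of $(\e^{-t^2 L})_{0 < t < \dO}$.
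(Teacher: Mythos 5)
Your argument is correct. Note first that the paper does not prove this lemma at all: it is imported verbatim from \cite[Cor.~5.11]{BCE-Gauss-for-MBC}, so there is no in-paper proof to compare against. Your proposal supplies a self-contained derivation from (G1)--(G3), and it is the standard one: reduce via Cauchy--Schwarz to the bound $\| K_t(x,\cdot)-K_t(x',\cdot)\|_{\L^2(O)} \lesssim t^{-\nu-\frac{d}{2}}|x-x'|^{\nu}$, handle $|x-x'|\geq t$ by the triangle inequality and (G2) alone, and for $|x-x'|<t$ split $O$ into $\ball(x,2|x-x'|)$ and its complement, using the crude bound $|K_t|\lesssim t^{-d}$ on the small ball and the interpolated estimate of Remark~\ref{Property G(mu): Remark: G(mu) stability}(ii) in the $x$-variable on the complement (where $|x-x'|\leq \frac{1}{2}|x-y|$ is guaranteed). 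All exponent bookkeeping checks out: on the small ball $t^{-d}|x-x'|^{\frac{d}{2}} = t^{-\frac{d}{2}}(|x-x'|/t)^{\frac{d}{2}} \leq t^{-\frac{d}{2}}(|x-x'|/t)^{\nu}$ since $d\geq 2$ and $\nu<1$, and on the complement the Gaussian integral contributes $t^{\frac{d}{2}}$ as claimed. The only blemish is the garbled intermediate exponent \enquote{$(|x-x'|/t)^{d/2-\nu+\nu}$} in the small-ball step, which you immediately and correctly redo; in a final write-up just keep the second computation. Your points (a)--(c) about the $x$-variable version of the remark, integrating over $O\subseteq\R^d$, and the restriction $0<t<\dO$ are exactly the right things to verify and are all handled correctly.
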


	Now, \cite[Cor.~5.15]{BCE-Gauss-for-MBC} can be used to prove the following.

	\begin{lemma} \label{Property G(mu): Lemma: Poisson kernel}
	Assume that $L$ has property $\mathrm{G}(\mu)$. Then the subordination formula for the Poisson semigroup \cite[p.~169]{Fattorini-Subordination}
	\begin{equation*}
		\e^{- t \sqrt{L}} f = \frac{1}{\sqrt{\pi}} \int_0^{\infty}  \e^{-u} \e^{- \frac{t^2}{4 u} L} f \, \frac{\d u}{\sqrt{u}}  \qquad (t > 0, f \in \L^2)
	\end{equation*}
	implies that its kernel $p_t$ is given by the $\mu$-Hölder continuous function
	\begin{equation}
		p_t(x,y) = \frac{1}{\sqrt{\pi}} \int_0^{\infty} K_{\frac{t}{2 \sqrt{u}}}(x,y) \e^{-u} \, \frac{\d u}{\sqrt{u}},   \label{eq: Property G(mu): Subordination for kernel}
	\end{equation}
	that satisfies the estimates
	\begin{equation} \label{eq: Kernel bound for Poisson semigroup}
		|p_t(x,y)| \les t^{-d} \left( 1 + \frac{|x-y|}{t} \right)^{-(d+1)}  \quad (0 < t < \dO, x,y \in O),
	\end{equation}
	and
	\begin{equation} \label{eq: Hoelder Kernel bound for Poisson semigroup}
		|p_t(x,y) - p_t(x', y')| \les t^{-d - \mu} ( |x-x'| + |y-y'| )^{\mu} \quad (0 < t < \dO, x,x', y, y' \in O).
	\end{equation}
	\end{lemma}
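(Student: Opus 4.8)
The plan is to substitute the heat kernel representation from (G1) into the subordination formula, interchange the two integrals by Fubini, and then read off both kernel bounds from elementary one-dimensional integrals of Gamma type.

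First I would record that, since $\e^{-s^2 L}$ has kernel $K_s$ by (G1), the operator $\e^{-\frac{t^2}{4u}L}$ has kernel $K_{t/(2\sqrt u)}$; plugging this into the subordination formula and applying Fubini then gives the representation~\eqref{eq: Property G(mu): Subordination for kernel}, once absolute convergence of the double integral is justified. For $u$ with $0 < \frac{t}{2\sqrt u} < \dO$ this follows from the Gaussian bound (G2) together with $\int_0^\infty \e^{-u}\,\frac{\d u}{\sqrt u} = \sqrt\pi$. The complementary range — non-empty only when $O$ is bounded — is where I would need a crude size bound $|K_s(x,y)| \les \dO^{-d}$ for $s \ge \dO$, and, for the Hölder estimate below, a companion bound $|K_s(x,y) - K_s(x',y')| \les \dO^{-d-\mu}(|x-x'| + |y-y'|)^\mu$ for $s \ge \dO$. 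Both follow by factoring $\e^{-s^2 L}$ through $\e^{-\dO^2 L/4}$, estimating the outer factors with (G2), (G3) and Lemma~\ref{lem:Hoelder_boundedness} and using that the inner factor is an $\L^2$-contraction, together with $|O| \les \dO^d$; alternatively one invokes~\cite[Cor.~5.15]{BCE-Gauss-for-MBC} directly.

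For the size bound~\eqref{eq: Kernel bound for Poisson semigroup} I would insert (G2) with $s = \frac{t}{2\sqrt u}$ into~\eqref{eq: Property G(mu): Subordination for kernel}, so that $s^{-d} = 2^d u^{d/2} t^{-d}$ and $(|x-y|/s)^2 = 4u\,|x-y|^2/t^2$. After pulling $t^{-d}$ out, the $u$-integral is $\int_0^\infty u^{(d-1)/2} \e^{-u\beta}\,\d u = \Gamma(\tfrac{d+1}{2})\,\beta^{-(d+1)/2}$ with $\beta = 1 + 4b\,|x-y|^2/t^2$ (substitute $v = \beta u$), and since $\beta \simeq (1 + |x-y|/t)^2$ this gives the claim. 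When $O$ is bounded, the leftover range contributes at most $\dO^{-d}\int_0^{t^2/(4\dO^2)} u^{-1/2}\,\d u \simeq t\,\dO^{-d-1}$, which is $\les t^{-d}(1 + |x-y|/t)^{-(d+1)}$ since $|x-y| \le \dO$ and $t < \dO$. For the Hölder bound~\eqref{eq: Hoelder Kernel bound for Poisson semigroup} I would instead insert (G3), with $s^{-d-\mu} = 2^{d+\mu} u^{(d+\mu)/2} t^{-d-\mu}$; pulling out $t^{-d-\mu}(|x-x'| + |y-y'|)^\mu$ leaves the convergent integral $\int_0^\infty u^{(d+\mu-1)/2}\e^{-u}\,\d u = \Gamma(\tfrac{d+\mu+1}{2})$, and the bounded-case leftover is handled as before using the large-time Hölder bound for $K_s$. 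The $\mu$-Hölder continuity of $(x,y) \mapsto p_t(x,y)$ is then immediate from~\eqref{eq: Hoelder Kernel bound for Poisson semigroup}.

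The part I expect to take the most care is the bookkeeping for bounded $O$: since (G2) and (G3) are only assumed on $0 < s < \dO$, the portion of the subordination integral with $\frac{t}{2\sqrt u} \ge \dO$ is not directly controlled and has to be fed large-time heat kernel bounds. This step is soft — it uses only the semigroup law, $\L^2$-contractivity and the small-time bounds (or~\cite[Cor.~5.15]{BCE-Gauss-for-MBC}) — but it is the one place where the argument is more than a formal manipulation of the subordination formula, so I would isolate it cleanly at the outset.
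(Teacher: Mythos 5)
Your proof is essentially correct and reconstructs what the paper delegates to a citation: the text preceding the lemma simply says that~\cite[Cor.~5.15]{BCE-Gauss-for-MBC} can be used, and gives no argument of its own. Your derivation — kernel of the subordinated semigroup via Fubini, then (G2) and (G3) plugged in, with Gamma-type $u$-integrals — is the standard route and all the arithmetic checks out: for~\eqref{eq: Kernel bound for Poisson semigroup} the main range gives $t^{-d}\bigl(1+4b|x-y|^2/t^2\bigr)^{-(d+1)/2}\simeq t^{-d}(1+|x-y|/t)^{-(d+1)}$, and for~\eqref{eq: Hoelder Kernel bound for Poisson semigroup} the $u$-integral reduces to a $\Gamma$-value. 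You also correctly spot the one genuinely nontrivial point, namely that for bounded $O$ the hypotheses (G2)/(G3) only cover $s=\tfrac{t}{2\sqrt u}<\dO$, so the range $u<\tfrac{t^2}{4\dO^2}$ must be fed a separate large-time bound, and your comparison $t\,\dO^{-d-1}\les t^{-d}(1+|x-y|/t)^{-(d+1)}$ (using $|x-y|\le\dO$, $t<\dO$) is correct, as is the analogous step for the H\"older piece.

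One small cleanup for the large-time bounds: a single factoring $\e^{-s^2L}=\e^{-\dO^2L/4}\,\e^{-(s^2-\dO^2/4)L}$ leaves a factor whose $\L^2(\d z)$-norm is not directly controlled; you should factor symmetrically, $\e^{-s^2L}=\e^{-\dO^2L/8}\,\e^{-(s^2-\dO^2/4)L}\,\e^{-\dO^2L/8}$, so that (G2)/(G3) apply to both outer factors, the middle one is an $\L^2$-contraction, and $|K_s(x,y)|\leq\Vert K_{\dO/(2\sqrt2)}(x,\cdot)\Vert_2\,\Vert K_{\dO/(2\sqrt2)}(\cdot,y)\Vert_2\les\dO^{-d}$, and analogously for the H\"older increment. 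Also, for the $\mu$-H\"older large-time bound you should work with (G3) directly through this factorization rather than Lemma~\ref{lem:Hoelder_boundedness}, since the latter only furnishes exponents $\nu<\mu$. With these minor adjustments the argument is complete and matches what the cited corollary provides.
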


	\begin{remark} \label{Property G(mu): Remark: Heat kernel inherits properties to poisson kernel}
		Similar observations as in Remark~\ref{Property G(mu): Remark: G(mu) stability} for the heat semigroup can be made for the Poisson semigroup, too. %
	\end{remark}

	\begin{lemma}  \label{Preliminary results: Lemma: Linfty-extension of the semigroup}
		Let $0 < t < \dO$ and let either $\j(z) = \sqrt{z} \e^{-\sqrt{z}}$ or $\j(z) = z \e^{-z}$.
		Then there is a kernel $q_t \colon O \times O \to \C$ such that the operator $f \mapsto \int_O q_t(\cdot,y) f(y) \, \d y$ is bounded on $\L^p$ for all $1\leq p \leq \infty$ and one has the identity
		\begin{equation*}
			\j(t^2 L) f = \int_O q_t (\cdot,y) f(y) \, \d y \qquad (f \in \L^p \cap \L^2).
		\end{equation*}
	\end{lemma}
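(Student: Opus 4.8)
The plan is to identify $q_t$ with the integral kernel of a time-derivative of the relevant semigroup and then to deduce $\L^p$-boundedness from a Schur-test estimate combined with interpolation. First I would treat the two choices of $\j$ separately. For $\j(z) = z\,\e^{-z}$ one has $\j(t^2 L) = t^2 L\,\e^{-t^2 L} = -\tfrac{t}{2}\,\partial_t\e^{-t^2 L}$, and Remark~\ref{Property G(mu): Remark: G(mu) stability} shows that this operator is represented by a measurable kernel $q_t$ on $O\times O$ obeying the Gaussian bound
\begin{equation*}
	|q_t(x,y)| \le c\,t^{-d}\,\e^{-b(|x-y|/t)^2}\qquad(x,y\in O)
\end{equation*}
for suitable $b,c>0$. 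For $\j(z) = \sqrt{z}\,\e^{-\sqrt{z}}$ one uses $\sqrt{t^2 L} = t\sqrt{L}$ to write $\j(t^2 L) = t\sqrt{L}\,\e^{-t\sqrt{L}} = -t\,\partial_t P_t$, and Lemma~\ref{Property G(mu): Lemma: Poisson kernel} together with Remark~\ref{Property G(mu): Remark: Heat kernel inherits properties to poisson kernel} provides a kernel $q_t$ with
\begin{equation*}
	|q_t(x,y)| \les t^{-d}\bigl(1+\tfrac{|x-y|}{t}\bigr)^{-(d+1)}\qquad(x,y\in O).
\end{equation*}
In both cases, differentiating the kernel representation of the semigroup under the integral sign is legitimate because the size bounds for $K_{t'}$, resp.\ $p_{t'}$, are locally uniform in $t'$; concretely this uses the analyticity of $z\mapsto K_z(x,y)$, resp.\ $z\mapsto p_z(x,y)$, in a complex sector together with Cauchy's integral formula, exactly as in the cited remarks, so that the derivative kernel inherits the size estimates of $K_t$, resp.\ $p_t$.

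Next I would pass from the kernel bounds to $\L^p$-boundedness. Both bounds above are symmetric under $x\leftrightarrow y$, and a scaling change of variables yields
\begin{equation*}
	\sup_{x\in O}\int_O|q_t(x,y)|\,\d y \les 1\qquad\text{and}\qquad\sup_{y\in O}\int_O|q_t(x,y)|\,\d x \les 1,
\end{equation*}
since $\int_{\R^d}t^{-d}\e^{-b(|z|/t)^2}\,\d z$ and $\int_{\R^d}t^{-d}(1+|z|/t)^{-(d+1)}\,\d z$ are finite and independent of $t$ (the second integrand decaying like $|z|^{-(d+1)}$ at infinity). By the Schur test the operator $T_t f\coloneqq\int_O q_t(\cdot,y)f(y)\,\d y$ is then bounded on $\L^1$ and on $\L^\infty$, hence on $\L^p$ for every $1\le p\le\infty$ by the Riesz--Thorin interpolation theorem. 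Finally, for $f\in\L^2$ the identity $\j(t^2 L)f = T_t f$ holds by the construction of $q_t$, and restricting to $f\in\L^p\cap\L^2$ gives the lemma.

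I do not anticipate a genuine obstacle here: the statement is essentially a packaging of the kernel estimates of Section~\ref{Subsection: Property G(mu)} with a Schur-test argument. The only mildly delicate point is verifying that the time-derivative kernels inherit the size bounds of $K_t$ and $p_t$ — this is where analyticity in the complex time variable and Cauchy's formula enter — but it has already been recorded in Remarks~\ref{Property G(mu): Remark: G(mu) stability} and~\ref{Property G(mu): Remark: Heat kernel inherits properties to poisson kernel}, so it suffices to invoke them.
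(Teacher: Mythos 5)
Your proposal is correct and follows essentially the same route as the paper's proof: identify $q_t$ as the kernel of the time-derivative of the heat (resp.\ Poisson) semigroup via Remark~\ref{Property G(mu): Remark: G(mu) stability} and Lemma~\ref{Property G(mu): Lemma: Poisson kernel}/Remark~\ref{Property G(mu): Remark: Heat kernel inherits properties to poisson kernel}, then read off uniform $\L^p$-boundedness from the pointwise kernel bounds. The only cosmetic difference is that the paper cites Young's convolution inequality where you invoke the Schur test (plus Riesz--Thorin, which is actually superfluous since Schur's test already covers all $1\le p\le\infty$); since the kernel is dominated by an integrable function of $x-y$, the two are interchangeable here.
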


	\begin{proof}
		Consider $\j(z) = z \e^{-z}$ first. Then $\j(t^2 L) = t^2 L \e^{-t^2 L}$ and its kernel is given by $\frac{1}{2} t^2 \partial_t K_t$ and satisfies the bound (G2) from Definition~\ref{Property G(mu): Definition: G(mu)}, see Remark~\ref{Property G(mu): Remark: G(mu) stability}.
		Now the $\L^p$-estimate
		\begin{align}
			\Vert \j(t^2 L) f \Vert_p \les \Vert f \Vert_p
		\end{align}
		follows from the kernel bounds by Young's convolution inequality.
		Consistency follows by construction, compare also with (G1).

		The argument for $\j(z) = \sqrt{z} \e^{-z}$ is similar. Indeed, the kernel bounds can be obtained in the same way, see Lemma~\ref{Property G(mu): Lemma: Poisson kernel} and Remark~\ref{Property G(mu): Remark: Heat kernel inherits properties to poisson kernel}.
		Note that $q_t$ satisfies only the Poisson bound~\eqref{eq: Kernel bound for Poisson semigroup} in this case, which is nevertheless sufficient to invoke Young's convolution inequality.
	\end{proof}

	\begin{lemma}  \label{The geometric setup: Lemma: Kernel vanishes in D}
		Assume $(\mathrm{Fat})$, $(\mathrm{LU})$ and $(\mathrm{G}(\mu))$, and let again either $\j(z) = \sqrt{z} \e^{-\sqrt{z}}$ or $\j(z) = z \e^{-z}$. Then the kernel $q_t$ from Lemma~\ref{Preliminary results: Lemma: Linfty-extension of the semigroup} satisfies $q_t (x,y) = 0$ for all $x \in D$ and $y \in O$.
	\end{lemma}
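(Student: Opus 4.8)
The plan is to reduce the claim to a qualitative vanishing statement about the semigroup in $D$ and then exploit the Hölder continuity up to the boundary. Fix $x\in D$ and $y\in O$. Since $q_t(x,\cdot)$ is, by Lemma~\ref{Preliminary results: Lemma: Linfty-extension of the semigroup} and the kernel bounds, a (bounded, integrable) function, it suffices to show that for every $f\in\smooth$ one has $(\j(t^2L)f)(x)=0$, where $x\in D$ and the value at $x$ is taken in the sense of the continuous representative. Indeed, $\j(t^2L)f\in\Cdot^\nu$ by Lemma~\ref{lem:Hoelder_boundedness} (respectively its Poisson analogue via Remark~\ref{Property G(mu): Remark: Heat kernel inherits properties to poisson kernel}), so $\j(t^2L)f$ has a genuine continuous extension to $\overline O\supseteq D$, and evaluating at $x\in D$ is meaningful. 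If we know this value is $0$ for all $x\in D$, then integrating against $f$ and using Fubini forces $q_t(x,y)=0$ for a.e.\ $y$, hence for all $y$ by the continuity of $y\mapsto q_t(x,y)$.

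Next I would establish that $\j(t^2L)f$, viewed as an element of $\W^{1,2}_D$, actually vanishes on $D$ in the trace sense, and then upgrade this to pointwise vanishing of its continuous representative. The membership $\j(t^2L)f\in\D(L^k)\subseteq\W^{1,2}_D$ for all $k$ is immediate from holomorphic functional calculus: $\j(z)=z\e^{-z}$ or $\sqrt z\,\e^{-z}$ decays fast enough that $\j(t^2L)$ maps $\L^2$ into $\bigcap_k\D(L^k)$, and $\D(L)\subseteq\W^{1,2}_D$ by definition of $L$ through the form $a$ on $\W^{1,2}_D$. By definition, $\W^{1,2}_D$ is the closure of $\rC^\infty_D=\{\j|_O:\j\in\smooth[\R^d\setminus D]\}$ in $\W^{1,2}$; functions in $\rC^\infty_D$ vanish in a neighbourhood of $D$, so every element of $\W^{1,2}_D$ has zero trace on $D$ in the appropriate (capacitary) sense. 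Combining this with the Hölder continuity: $u\coloneqq\j(t^2L)f$ is continuous on $O$ with a modulus of continuity that extends it continuously to $\overline O$, and $u=0$ quasi-everywhere on $D$ with respect to the same capacity that controls $\W^{1,2}_D$-traces. Under $(\mathrm{Fat})$ and $(\mathrm{LU})$ the set $D$ is not capacity-negligible near any of its points (porosity of $D$ plus fatness of $O^c$ away from $N$ guarantee $D$ carries positive relative capacity at all scales), so the quasi-everywhere zero set of the continuous function $u$ on $D$ is in fact all of $D$; that is, $u(x)=0$ for every $x\in D$.

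Putting the pieces together: for $f\in\smooth$ and $x\in D$ we get $\int_O q_t(x,y)f(y)\,\d y=(\j(t^2L)f)(x)=0$. Letting $f$ range over $\smooth$ yields $q_t(x,\cdot)=0$ a.e., and since $y\mapsto q_t(x,y)$ is continuous (it inherits the bounds and Hölder regularity of $\frac12 t^2\partial_tK_t$ or of the Poisson kernel, cf.\ Remarks~\ref{Property G(mu): Remark: G(mu) stability} and~\ref{Property G(mu): Remark: Heat kernel inherits properties to poisson kernel}), we conclude $q_t(x,y)=0$ for all $y\in O$, as claimed.

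I expect the main obstacle to be the passage from "zero trace on $D$ in the $\W^{1,2}_D$ sense" to "the continuous representative vanishes at every point of $D$". The trace vanishing is only quasi-everywhere with respect to capacity, so one must rule out that $u$ is nonzero on a capacity-null but topologically large subset of $D$; this is exactly where porosity of $D$ and the fatness of $O^c$ away from $N$ enter, ensuring every point of $D$ is a limit of a set of positive capacity on which $u=0$, so that continuity forces $u(x)=0$ there. Making this argument clean will likely lean on the equivalence of $(\mathrm{Fat})$ with a boundary Poincaré inequality $(\mathrm{P})$ recorded after the Assumptions, which lets one pass from an $\L^2$-smallness of $u$ on small Dirichlet balls (coming from zero trace) to a pointwise bound via the Hölder modulus.
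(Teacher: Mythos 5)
Your proposal follows essentially the same route as the paper: observe that $\j(t^2L)f$ lies in $\W^{1,2}_D\cap \Cdot^{\nu}$ by Lemma~\ref{lem:Hoelder_boundedness} and the definition of $L$, conclude pointwise vanishing on $D$ via a geometric capacity/Poincaré argument, and finish with an approximate identity. The paper cites \cite[Lem.~4.8]{BCE-Gauss-for-MBC} for the key geometric step (that $\W^{1,2}_D \cap \Cdot^{\nu}$-functions vanish at every point of $D$ under $(\mathrm{Fat})$ and $(\mathrm{LU})$), whereas you sketch a proof of it; your sketch captures the right ingredients (quasi-everywhere vanishing plus positive relative capacity at all scales, or equivalently the boundary Poincaré inequality $(\mathrm{P})$ combined with the Hölder modulus) but leaves the quasi-continuity bookkeeping informal, which you acknowledge.
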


	\begin{proof}
		As before, consider first $\j(z) = z \e^{-z}$.
		Let $f \in \L^2$ and $0 < t < \dO$. One the one hand, Lemma~\ref{lem:Hoelder_boundedness} yields $\j(t^2 L) f \in \Cdot^{\mu}$. On the other hand, $\j(t^2 L) f \in \D(L) \subseteq \W_D^{1,2}$. Hence, $\j(t^2 L) f \in \W_D^{1,2} \cap \Cdot^{\mu}$. Now,
		\cite[Lem.~4.8]{BCE-Gauss-for-MBC} implies that
		\begin{equation*}
			\int_O  q_t (x,y) f(y) \, \d y = \j(t^2 L) f(x) = 0, \qquad x \in D.
		\end{equation*}
		Recall that $q_t(x, \cdot)$ is continuous. Thus, we infer $q_t (x,y) = 0$ for all $y \in O$ by replacing $f$ with an approximation of the identity in $y$.

		Now if $\j(z) = \sqrt{z} \e^{-\sqrt{z}}$, then the claim follows from the first case in conjunction with~\eqref{eq: Property G(mu): Subordination for kernel} and Remark~\ref{Property G(mu): Remark: Heat kernel inherits properties to poisson kernel}.
	\end{proof}

	The conclusions of Lemmas~\ref{Preliminary results: Lemma: Linfty-extension of the semigroup} and~\ref{The geometric setup: Lemma: Kernel vanishes in D} are the central properties of the operators $\j(t^2 L)$. Thus, we can usually ignore the precise choice of $\j$, which we manifest in the following definition.

	\begin{definition}[kernel $q_t$]
		\label{def:qt}
		In both cases $\j(z) = \sqrt{z} \e^{-\sqrt{z}}$ or $\j(z) = z \e^{-z}$ simply write $q_t$ for the kernel of the operator $\j(t^2 L)$.
	\end{definition}

	\section{Review of Hardy spaces on open sets} \label{Section: Review of Hardy spaces on open sets}

	There appear different notions of Hardy spaces on an open set $O$ in the literature. On the one hand, there are several atomic descriptions, notably those by Coifman--Weiss~\cite{Coifman-Weiss} and Miyachi~\cite{Miyachi-Atoms}, and on the other hand also \enquote{global} constructions in which $\H^1(\R^d)$ is the point of departure to define new spaces as subspaces or quotient spaces~\cite{CKS}.
	It turns out that all these spaces can be sorted into two categories: spaces that are associated with an operator subject to pure Dirichlet or pure Neumann boundary conditions. \newline To find a reasonable definition for the case of mixed boundary conditions, it is intrusive to understand these constructions better. Also, our very general geometric framework allows us to sharpen results on the pure Dirichlet and Neumann cases in the literature.

	\subsection{Atomic spaces on an open set} \label{subsec:atomic_open}

	All atomic $\H^1$-spaces consist of functions $f$ that can be represented by an \textbf{atomic representation} $f = \sum_j \lambda_j a_j$, where $(\lambda_j)_j \in \ell^1$ and the $a_j$ are \enquote{atoms}. The decisive question here is the definition of an atom. In the case of the space $\H^1_\CW$ introduced by Coifman--Weiss~\cite{Coifman-Weiss}, a function $a \colon O \to \C$ is called \textbf{atom} if there is a ball $\ball \subseteq \R^d$ centered in $O$ such that
	\begin{align}
		(\mathrm{i}) \; \supp(a) \subseteq \ball \cap \overline{O}, \quad (\mathrm{ii}) \; \| a \|_2 \leq |\ball \cap O|^{-\frac{1}{2}} \quad \& \quad (\mathrm{iii}) \int_O a \; \d x = 0.
	\end{align}
	We refer to the properties (i), (ii) and (iii) as \emph{localization}, \emph{size condition} and \emph{cancellation condition}. Here, the size condition is formulated with $\L^2$. In both the Coifman--Weiss and Miyachi cases, a more powerful result with a size condition in $\L^\infty$ is feasible, but for the scope of our paper the $\L^2$-perspective is the right one.
	Observe that for $O = \R^d$ we recover $\H^1_\CW(\R^d) = \H^1(\R^d)$.
	Let us remark that in the special case where $O$ is of finite measure, constant functions are also members of $\H^1_\CW$ even though they are not mean value free.

	For the space $\H^1_\Miyachi$ by Miyachi~\cite{Miyachi-Atoms}, the notion of an atom is relaxed. %
	First, if $A \colon \R^d \to \C$ is a function and $\ball \subseteq \R^d$ a ball centered in $O$ such that the localization, size and cancellation conditions are fulfilled, then $a=A|_O$ is an atom. In particular, all Coifman--Weiss atoms are also Miyachi atoms under (UITC).
	Second, if $a$ is a function and $\ball$ is a ball centered in $O$ that fulfill the localization and size conditions, and $2\ball \subseteq O$ but $5\ball \cap O^c \neq \varnothing$, then $a$ is also a Miyachi atom.

	There appears another variation of Miyachi atoms in the literature, employed for instance in~\cite[Def.~4.3]{JFA-Hardy} and implicitly used in~\cite{Auscher-Russ}. Given $a$ and $\ball$ satisfying the localization and size condition as before, they either demand that $2\ball \subseteq O$ and $4\ball \cap \partial O \neq \varnothing$, or that $4\ball \subseteq O$ and $a$ satisfies the cancellation condition.

	\begin{proposition}[both Miyachi spaces coincide]
		\label{prop:Miyachi_coincide1}
		The Miyachi spaces in the sense of~\cite{Miyachi-Atoms} coincide with those in the sense of~\cite{JFA-Hardy}.
	\end{proposition}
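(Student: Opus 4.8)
The plan is to show the two notions of Miyachi atom generate the same space by checking that every atom of one kind is a finite linear combination (with $\ell^1$-controlled coefficients) of atoms of the other kind, which is clearly enough since both spaces are defined by atomic representations with $\ell^1$-summable coefficients. The only place the two definitions genuinely differ is in the "boundary atoms" that are not required to be mean value free: Miyachi in the sense of~\cite{Miyachi-Atoms} allows an atom attached to a ball $\ball$ with $2\ball \subseteq O$ but $5\ball \cap O^c \neq \varnothing$, whereas the variant of~\cite{JFA-Hardy} allows such an atom when $2\ball \subseteq O$ but $4\ball \cap \partial O \neq \varnothing$. The "interior atoms" (those carrying the cancellation condition, with $4\ball \subseteq O$ resp.\ $5\ball \subseteq O$) as well as the genuine Coifman--Weiss atoms restricted from $\R^d$ are common to both families up to a harmless scaling of the dilation factor, so the whole content is a comparison of these two boundary conditions on $\ball$.

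First I would treat the easy inclusion. If $a$ is a boundary atom in the sense of~\cite{JFA-Hardy} associated with $\ball$, so $2\ball \subseteq O$ and $4\ball \cap \partial O \neq \varnothing$, pick $x_0 \in 4\ball \cap \partial O$; then $x_0 \notin O$, so $x_0 \in O^c$, and $x_0 \in 4\ball \subseteq 5\ball$, whence $5\ball \cap O^c \neq \varnothing$ and $a$ is already a Miyachi atom in the sense of~\cite{Miyachi-Atoms}. (If instead $a$ is an interior atom of~\cite{JFA-Hardy} with $4\ball \subseteq O$ and the cancellation condition, then it is a Coifman--Weiss atom, hence a Miyachi atom.) So $\H^1_{\Miyachi,\cite{JFA-Hardy}} \hookrightarrow \H^1_\Miyachi$ with no loss.

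For the reverse inclusion I would argue as follows. Let $a$ be a boundary atom of~\cite{Miyachi-Atoms} with associated ball $\ball = \ball(x,r)$, so the localization $\supp a \subseteq \ball \cap \overline{O}$ and size $\|a\|_2 \leq |\ball \cap O|^{-1/2}$ hold, together with $2\ball \subseteq O$ and $5\ball \cap O^c \neq \varnothing$. There are two cases. If already $2\ball \cap \partial O \neq \varnothing$ — more precisely if $\ball$ can be replaced by a slightly larger concentric ball $\ball' = \ball(x, \rho r)$ with a fixed $\rho \in (1,2]$ such that $2\ball' \cap \partial O = 4\ball \cap \partial O$ contains a boundary point while still $2\ball' \subseteq O$ — then $a$, viewed with the ball $\ball'$, satisfies localization and size (the size bound only improves since $|\ball' \cap O| \geq |\ball \cap O|$, possibly at the cost of an absolute constant absorbed by rescaling $a$) and is a boundary atom of~\cite{JFA-Hardy}. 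The remaining case is when $\partial O$ stays far from $2\ball$ relative to $5\ball$, i.e.\ $4\ball \subseteq O$ but $5\ball \cap O^c \neq \varnothing$; here one must produce cancellation by hand. Write $\widehat a = a - c_\ball \1_{\ball \cap O}$ with $c_\ball = |\ball \cap O|^{-1}\int_O a\,\d x$, so that $\widehat a$ is mean value free, supported in $\ball\cap\overline O$, and, by Cauchy--Schwarz, $|c_\ball| \leq |\ball\cap O|^{-1/2}\|a\|_2 \leq |\ball\cap O|^{-1}$, whence $\|\widehat a\|_2 \leq 2|\ball\cap O|^{-1/2}$; thus $\widehat a/2$ is (up to the absolute constant $2$) a Coifman--Weiss atom, hence a Miyachi atom of~\cite{JFA-Hardy}. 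The leftover term $c_\ball \1_{\ball \cap O}$ must be rewritten: since $4\ball \subseteq O$, the normalized indicator $|\ball\cap O|^{-1}\1_{\ball\cap O} = |4\ball|^{-1}\cdot\frac{|4\ball|}{|\ball\cap O|}\1_{\ball\cap O}$, and $\frac{|4\ball|}{|\ball\cap O|} = \frac{|4\ball|}{|\ball|}\le 4^d$ using $\ball\subseteq O$; this exhibits $|\ball\cap O|^{-1}\1_{\ball\cap O}$ as $4^d$ times a function supported in $\ball \subseteq 4\ball \subseteq O$ with $\L^2$-norm $\leq |4\ball|^{-1/2}\cdot 4^{d/2}\cdot(\text{const})$, i.e.\ a bounded multiple of a function satisfying localization and size for the ball $4\ball \subseteq O$. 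Since $4\ball\subseteq O$ but $5\ball \cap O^c\neq\varnothing$, a suitable concentric enlargement $\widetilde\ball$ of $4\ball$ with $\widetilde\ball \cap \partial O \neq\varnothing$ and $2(\tfrac12\widetilde\ball)\subseteq O$ makes this leftover a boundary atom of~\cite{JFA-Hardy}. Summing, $a$ is a finite ($\ell^1$-bounded, with a dimensional constant) combination of~\cite{JFA-Hardy}-atoms, giving $\H^1_\Miyachi \hookrightarrow \H^1_{\Miyachi,\cite{JFA-Hardy}}$ and hence equality of norms up to constants.

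The main obstacle is the bookkeeping of dilation factors and the verification that one can always find the intermediate concentric ball $\ball'$ or $\widetilde\ball$ interpolating between "$2\ball \subseteq O$" and "$5\ball$ meets $O^c$": one needs that the annulus of radii between $2r$ and $5r$ around $x$ necessarily contains a boundary point (which is immediate from connectedness of $[2,5]\ni s\mapsto \ball(x,sr)$ and the intermediate value property for $s\mapsto \d(x,O^c)$), and then that rescaling the ball by a bounded factor only perturbs the size constant by a dimensional factor — harmless because atomic spaces are insensitive to replacing the constant $1$ in the size condition by any fixed constant (this uses~(UITC), exactly as in the remark that Coifman--Weiss atoms are Miyachi atoms). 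I would isolate this "$\ball$ versus $c\ball$" robustness as a short preliminary observation and then the rest is the routine splitting above.
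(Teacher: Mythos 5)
Your argument misreads the two atom families, and this produces a genuine gap precisely where the substance of the proposition lies.

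In the convention of~\cite{Miyachi-Atoms}, one atom type is $a=A|_O$ where $A$ is a classical $\H^1(\R^d)$-atom on a ball $\ball$ centered in $O$ \emph{with no constraint on how $\ball$ sits relative to $O$}; in particular, $\ball$ may straddle $\partial O$, in which case $a$ is \emph{not} mean value free over $O$ (mass escapes to $O^c$). In the variant of~\cite{JFA-Hardy}, however, there is no ``restriction-of-a-global-atom'' type at all: every atom is either of the boundary type ($2\ball\subseteq O$, $4\ball\cap\partial O\neq\varnothing$, no cancellation) or of the interior type ($4\ball\subseteq O$, cancellation). Your opening paragraph asserts that ``the genuine Coifman--Weiss atoms restricted from $\R^d$ are common to both families up to a harmless scaling of the dilation factor'' and you then restrict the entire reverse inclusion to ``boundary atoms of~\cite{Miyachi-Atoms}'' with $2\ball\subseteq O$. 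This leaves untouched the case $a=A|_O$ with $A$ a classical atom whose ball satisfies $2\ball\cap\partial O\neq\varnothing$; such an $a$ satisfies neither~\cite{JFA-Hardy} atom condition (it has no cancellation over $O$ and $2\ball\not\subseteq O$), and your concentric-dilation and constant-subtraction tricks do not apply, since dilating outward only worsens $2\ball\not\subseteq O$, and after subtracting a constant you cannot land in the interior type because $4\ball\not\subseteq O$. This is exactly the case the paper handles via a Vitali covering of $\ball$ by balls $\ball_j=\ball(x_j,5c\,\delta(x_j))$ with $\delta(x)=\d(x,\partial O)$ and $c=\nicefrac1{10}$, so that $2\ball_j\subseteq\R^d\setminus\partial O$ and $4\ball_j\cap\partial O\neq\varnothing$, and then localizing $a$ to these balls; the $\ell^1$-summability of the resulting coefficients uses (UITC) through the bounded overlap of the $\ball_j$ and a doubling estimate $|C\ball\cap O|\lesssim|\ball\cap O|$.

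Two smaller points. Your splitting of a Miyachi-type-two atom with $4\ball\subseteq O$ into an interior atom plus a boundary piece $c_\ball\1_{\ball\cap O}$ is a valid way to absorb the $4$-versus-$5$ discrepancy, and it is somewhat more explicit than the paper's one-line remark that the dilation constant is negligible; that part of your argument is fine once you fix the slip in the last sentence of that paragraph, where you simultaneously require $\widetilde\ball\cap\partial O\neq\varnothing$ and $2(\tfrac12\widetilde\ball)=\widetilde\ball\subseteq O$, which is contradictory for an open ball; you really want a ball $\ball''$ of radius between $\nicefrac{5r}{4}$ and $2r$, so that $2\ball''\subseteq 4\ball\subseteq O$ and $4\ball''\supseteq 5\ball$ meets $\partial O$. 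But the central gap is the missing Vitali decomposition for global atoms with balls near or across the boundary; without it the reverse inclusion does not go through.
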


	\begin{proof}
		Up to a change of implied constants, it is negligible that one definition is with respect to $4\ball$ and the other one with respect to $5\ball$. In particular, the Miyachi space in the sense of~\cite{JFA-Hardy} embeds into the space in the sense of~\cite{Miyachi-Atoms}.

		For the converse inclusion, let $a=A|_O$ be an atom verifying the cancellation condition and such that either $2 \ball \subseteq O$ or $5 \ball \cap O^c \neq \varnothing$. In the first case, $a$ is immediately an atom in the sense of~\cite{JFA-Hardy}. Otherwise, we can decompose $a$ into a sum of atoms without a cancellation condition, for instance using a localization with the Vitali covering lemma. The details are presented in~\cite[Proof of Lem.~4.7]{JFA-Hardy}.
		In a nutshell, we start with the trivial covering $$\ball \subseteq \bigcup_{x\in \ball} \ball(x, c\delta(x)),$$
		where $\delta(x)$ is the distance from $x$ to $\partial O$ and $c = \nicefrac{1}{10}$ is a scaling constant. Vitali's covering lemma yields a countable subcollection $\{ x_j \}_j$ such that the balls $\ball_j \coloneqq \ball(x_j, 5 c \delta(x_j))$ are essentially disjoint
		and they satisfy $2 \ball_j \subseteq \R^d \setminus \partial O$ as well as $4 \ball_j \cap \partial O \neq \varnothing$ by construction. The decomposition of $a$ is then defined by localization in virtue of indicator functions to $\ball_j$.

		We remark that if the spaces were defined using cubes instead of balls, this decomposition could be achieved using a local Whitney decomposition relative to $2Q$. This alternative argument is employed in~\cite[p.\,167]{Auscher-Russ}.
	\end{proof}

	The following remark explains why neither of the two definitions is right away suitable to define spaces adapted to mixed boundary conditions.

	\begin{remark}
		\label{rem:miyachi}
		The Miyachi space in the sense of~\cite{JFA-Hardy} has a priori a better localization of atoms. This is needed in the identification of $L$-adapted spaces, which becomes apparent from the references given in the proof. A naive idea to treat mixed boundary conditions might be to simply replace the distance function from $x$ to $\partial O$ by the distance function from $x$ to $D$. Unfortunately, this does not work. The reason is that balls defined with respect to $\partial O$ are either contained completely inside $O$ or completely outside of $O$. Balls completely outside $O$ can be \enquote{thrown away}. This is not true anymore in the case of mixed boundary conditions.
	\end{remark}

	The definition that we are going to give in Section~\ref{Section: Duality theory for D-adapted spaces} is a third alternative way to define the Miyachi space $\H^1_\Miyachi$ as becomes apparent from Proposition~\ref{prop:atomic_pure_cases}.

	\subsection{Constructions based on the Euclidean Hardy space}
	\label{subsec:global_constructions}

	The first construction is by means of restriction. Let $|_O$ be the pointwise restriction of functions on $\R^d$ to $O$. The \textbf{restriction Hardy space} $\H^1_\restrict$ is defined by
	\begin{align}
		\H^1_\restrict \coloneqq \bigl\{ f \colon f = F|_O, \; F\in \H^1(\R^d) \bigr\}, \qquad \| f \|_{\H^1_\restrict} \coloneqq \inf \| F \|_{\H^1(\R^d)},
	\end{align}
	where the infimum is taken over all $F\in \H^1(\R^d)$ with $f = F|_O$. Under suitable geometric assumptions, $\H^1_\restrict$ corresponds to the Miyachi Hardy space. For instance, using that $O$ is uniform in a certain sense, $\H^1_\restrict = \H^1_\Miyachi$ follows from~\cite[Sec.~6]{Miyachi-Atoms}. Another approach uses thickness of $O^c$. The main argument was already employed in~\cite[p.\,304]{CKS}, even though the formulation was less general.
	\begin{proposition}
		\label{prop:restrict}
		Assume that $O$ is uniformly exterior thick, that is to say, there exists a constant $c>0$ such that for any ball $\ball$ centered in $\partial O$ one has $|\ball \cap O^c| \geq c |\ball|$. Then $\H^1_\restrict = \H^1_{\mathrm{Mi}}$.
	\end{proposition}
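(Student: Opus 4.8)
The plan is to establish the two inclusions $\H^1_\restrict \hookrightarrow \H^1_{\mathrm{Mi}}$ and $\H^1_{\mathrm{Mi}} \hookrightarrow \H^1_\restrict$ separately, using the uniform exterior thickness of $O$ only for the first one.

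For the inclusion $\H^1_\restrict \hookrightarrow \H^1_{\mathrm{Mi}}$, I would start from an $F \in \H^1(\R^d)$ with an atomic decomposition $F = \sum_j \lambda_j A_j$ in the classical Euclidean sense, where each $A_j$ is supported in a ball $\ball_j \subseteq \R^d$, satisfies the size bound $\|A_j\|_2 \le |\ball_j|^{-1/2}$, and is mean value free. Restricting, $f = F|_O = \sum_j \lambda_j (A_j|_O)$. The issue is that $\ball_j$ need not be centered in $O$. I would split the index set according to whether $\ball_j \cap O = \varnothing$ (those terms vanish and can be discarded), whether the \emph{doubled} or suitably enlarged ball is contained in $O$ (in which case $A_j|_O$ is already essentially a Coifman--Weiss atom, hence a Miyachi atom), or whether the enlarged ball meets $\partial O$. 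In the last case one uses uniform exterior thickness: if $c\ball_j$ meets $\partial O$ then, after enlarging the ball by a fixed factor and recentering at a boundary point, one obtains a ball $\ball_j'$ comparable to $\ball_j$, centered on $\partial O$, with $A_j|_O$ supported in $\ball_j' \cap \overline O$. The size condition $\|A_j|_O\|_2 \le |\ball_j|^{-1/2} \lesssim |\ball_j' \cap O|^{-1/2}$ needs a \emph{lower} bound on $|\ball_j' \cap O|$, which is exactly what (UITC) supplies (since $\ball_j'$ is centered in $O$ only after a further small recentering into $O$, or one argues directly with (UITC) applied near the boundary point). One then verifies these restricted pieces are Miyachi atoms (of the type with $4\ball \cap \partial O \neq \varnothing$ and no cancellation), after possibly absorbing a bounded overlap factor into the $\ell^1$ coefficients; the dropped boundary cancellation is harmless precisely because Miyachi atoms near $\partial O$ are not required to be mean value free. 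This part does \emph{not} need exterior thickness if one is content to use (UITC) together with the fact that balls meeting $\partial O$ can be recentered; exterior thickness enters to control $|\ball_j' \cap O^c|$ from below, which I expect to be needed to show the recentered ball genuinely captures a definite portion of $O^c$ so that the Miyachi atom is of the boundary type and not accidentally interior.

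For the converse inclusion $\H^1_{\mathrm{Mi}} \hookrightarrow \H^1_\restrict$, I would show that every Miyachi atom $a$ on $O$ admits an extension $A \in \H^1(\R^d)$ with $\|A\|_{\H^1(\R^d)} \lesssim 1$; by linearity and completeness this yields $f = F|_O$ with $F \in \H^1(\R^d)$ and $\|F\|_{\H^1(\R^d)} \lesssim \|f\|_{\H^1_{\mathrm{Mi}}}$. For an interior-type Miyachi atom (one coming from a genuine Euclidean atom restricted to $O$) there is nothing to do: extend by the original Euclidean atom. For a boundary-type atom $a$ supported in $\ball \cap \overline O$ with $2\ball \subseteq O$ but $5\ball \cap O^c \neq \varnothing$ and no cancellation, extend $a$ by zero to all of $\R^d$; call the extension $A$. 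Then $A$ is supported in $\ball$, has $\|A\|_2 \le |\ball \cap O|^{-1/2} \lesssim |\ball|^{-1/2}$ using (UITC), but fails mean value freeness. However $A$ is supported in a ball $\ball$ that contains a point of $O^c$ at comparable scale, so $A$ is (a multiple of) a classical Euclidean \emph{$(1,2)$-atom in the sense that avoids cancellation but is supported near a ``missing'' region} --- more precisely, one writes $A = (A - c_\ball \mathbf{1}_\ball/|\ball|\cdot|\!\int A|) + \text{correction}$, or, cleaner, one invokes the standard fact that a function supported in $\ball$ with controlled $\L^2$-norm whose support avoids a fixed fraction of $\ball$ (here, the part of $\ball$ lying in $O^c$, which by $5\ball \cap O^c \neq \varnothing$ together with (UITC) applied to $O^c$ --- i.e. exactly exterior thickness --- has measure $\gtrsim |\ball|$) is a fixed multiple of a classical $\H^1(\R^d)$-atom-plus-atom. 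This is the argument sketched in~\cite[p.\,304]{CKS}: exterior thickness guarantees $|\ball \cap O^c| \gtrsim |\ball|$, so $A$ can be corrected to be mean value free by subtracting a bounded multiple of $\mathbf{1}_{\ball \cap O^c}$, normalized, which is itself a bounded multiple of a Euclidean atom; the corrected function is then a genuine atom, and $A$ is a sum of two $\H^1(\R^d)$-atoms with $\ell^1$-bounded coefficients.

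The main obstacle I anticipate is the bookkeeping in the first inclusion: controlling the recentering of Euclidean balls that straddle $\partial O$ while simultaneously keeping the size normalization (which requires the lower volume bound from (UITC)) and the finite-overlap count so that the resulting coefficient sequence stays in $\ell^1$ with the right norm. The second inclusion is more robust and is essentially the $\CW$-to-Euclidean extension argument of~\cite{CKS} combined with the observation that exterior thickness converts a ``zero-extended atom supported near the boundary'' into a sum of two genuine Euclidean atoms. I would also remark that, as in~\cite[p.\,167]{Auscher-Russ}, working with cubes and a Whitney decomposition in place of Vitali would streamline the decomposition step but is not essential.
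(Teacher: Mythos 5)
Your overall strategy matches the paper's: the easy direction restricts and decomposes Euclidean atoms, the harder direction extends Miyachi atoms and repairs the moment condition by modifying on $O^c$ using exterior thickness. Two points deserve correction.

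First, for the inclusion $\H^1_\restrict \subseteq \H^1_{\mathrm{Mi}}$ you anticipate needing exterior thickness (or at least $(\mathrm{UITC})$), but this direction actually requires \emph{no} geometric assumptions. The reason is that the boundary-type Miyachi condition ($5\ball \cap O^c \neq \varnothing$) is a nonemptiness condition on $O^c$, not a lower volume bound, so there is nothing to capture quantitatively. Moreover the Miyachi size condition is already normalized by $|\ball \cap O|^{-1/2}$, which only \emph{increases} when you pass from the Euclidean ball to a comparable one centered in $O$: if $\ball \subseteq \ball'$ with $r(\ball') \simeq r(\ball)$ then $|\ball' \cap O| \leq |\ball'| \lesssim |\ball|$, so $\|A|_O\|_2 \leq |\ball|^{-1/2} \lesssim |\ball' \cap O|^{-1/2}$ with no thickness needed. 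The paper simply refers this direction to \cite[Proof of Thm.~2.7]{CKS} and to the Vitali covering argument from Proposition~\ref{prop:Miyachi_coincide1}, valid with no geometric assumptions at all.

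Second, in the converse inclusion you write that the cancellation defect is absorbed by subtracting a normalized multiple of $\1_{\ball \cap O^c}$, claiming $|\ball \cap O^c| \gtrsim |\ball|$. But for a boundary-type Miyachi atom you have $2\ball \subseteq O$ by definition, so $\ball \cap O^c = \varnothing$ and this correction set is empty. The fix — which is exactly what the paper does — is to view the zero extension as supported in the enlarged ball $6\ball$, pick $x \in 5\ball \cap \partial O$, and modify on $O^c \cap \ball(x,r(\ball)) \subseteq 6\ball$; exterior thickness applied to the ball $\ball(x,r(\ball))$ centered on $\partial O$ then supplies the lower volume bound $|\,\ball(x,r(\ball)) \cap O^c| \gtrsim r(\ball)^d \simeq |\ball|$ that makes the correction term a bounded multiple of an $\H^1(\R^d)$-atom. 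With this change your argument is correct and coincides with the paper's.
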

	\begin{proof}
		The inclusion $\H^1_\restrict \subseteq \H^1_\Miyachi$ is explained in~\cite[Proof of Thm.~2.7]{CKS} without imposing any geometric assumptions. We mention that this inclusion reuses the argument presented in the proof of Proposition~\ref{prop:Miyachi_coincide1}.

		For the converse inclusion, we only have to consider atoms $a$ supported in a ball $\ball$ such that $2\ball \subseteq O$ but $5\ball \cap O^c \neq \varnothing$, because the other atoms are by definition restrictions of atoms for $\H^1(\R^d)$. The idea is to consider $a$ as an atom localized in the enlarged ball $6\ball$. Thanks to the exterior thickness condition, we can ensure the cancellation condition by modifying $a$ on $O^c  \cap \ball(x,r)$, where $x \in 5\ball \cap \partial O$ and $r=r(\ball)$.
	\end{proof}

	The result already appeared on Lipschitz domains as a sideproduct in~\cite{Auscher-Russ}. The following example shows that we can go even below the class of Lipschitz regular domains.

	\begin{example}
		Let $\bigstar \subseteq \R^2$ denote the interior of the von Koch snowflake depicted in Figure~\ref{fig:snowflake} below. Then one has $$\H^1_\restrict(\bigstar) = \H^1_\Miyachi(\bigstar).$$ Indeed, the assumptions of Proposition~\ref{prop:restrict} are satisfied, see~\cite[p.~349]{MMM}.
	\end{example}

	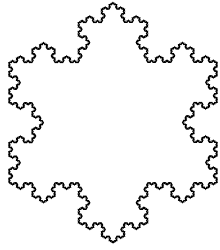
\begin{figure}
		\label{fig:snowflake}
		\begin{tikzpicture}[scale=0.8,decoration=Koch snowflake]
			\draw[] decorate{decorate{decorate{decorate{decorate{ (-1.732,0) -- (0,3) -- (1.732, 0) -- (-1.732, 0)}}}}} ;
		\end{tikzpicture}
		\caption{Sketch of a von Koch snowflake.}
	\end{figure}

	The second construction is based on zero extension. Write $\E_0$ for the zero extension operator. The \textbf{zero extension Hardy space} $\H^1_\zero$ is defined by
	\begin{align}
		\H^1_\zero \coloneqq \bigl\{ f \colon \E_0 f \in \H^1(\R^d) \bigr\}, \qquad \| f \|_{\H^1_\zero} \coloneqq \| \E_0 f \|_{\H^1(\R^d)}.
	\end{align}
	It is clear that $\H^1_\CW \subseteq \H^1_\zero$. The converse inclusion is less trivial. Using operator adapted spaces, $\H^1_\zero = \H^1_\CW$ was shown in~\cite{Auscher-Russ}. We are going to sharpen this result in our less restrictive geometric framework.

	\begin{theorem}
		\label{thm:H1z}
		Suppose that $O\subseteq \R^d$ is open and satisfies $(\mathrm{UITC})$ and $(\mathrm{LU})$. Then one has $\H^1_\zero = \H^1_\CW$. If $O$ is bounded, then already $(\mathrm{LU})$ is sufficient.
	\end{theorem}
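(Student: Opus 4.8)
The plan is to establish the two inclusions separately, the easy one being $\H^1_\CW \subseteq \H^1_\zero$, which was already noted in the text and only needs that a Coifman--Weiss atom, extended by zero, is (a fixed multiple of) a classical $\H^1(\R^d)$-atom — its support sits in $\ball \cap \overline O$, its $\L^2$-size is controlled by $|\ball \cap O|^{-1/2} \geq c^{-1/2} |\ball|^{-1/2}$ thanks to $(\mathrm{UITC})$, and its integral over $\R^d$ equals its integral over $O$, which vanishes. Summing an atomic representation and using $\|\cdot\|_{\H^1(\R^d)} \lesssim \ell^1$-norm of coefficients gives $\|\E_0 f\|_{\H^1(\R^d)} \lesssim \|f\|_{\H^1_\CW}$.

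The substance is the reverse inclusion $\H^1_\zero \subseteq \H^1_\CW$. Here I would take $f$ with $\E_0 f \in \H^1(\R^d)$, fix a classical atomic decomposition $\E_0 f = \sum_j \lambda_j b_j$ in $\H^1(\R^d)$ with atoms $b_j$ supported in balls $\ball_j$, and restrict back: $f = \sum_j \lambda_j (b_j|_O)$. The problem is that $b_j|_O$ need not be a Coifman--Weiss atom — it may fail the cancellation condition (since $\int_O b_j \neq \int_{\R^d} b_j = 0$ once $\ball_j$ meets $O^c$) and, more seriously, its support need not be a ball centered \emph{in} $O$. The standard remedy, and the point where $(\mathrm{LU})$ and $(\mathrm{UITC})$ enter, is to treat the two types of balls differently. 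If $\ball_j$ is "deep inside" (say $2\ball_j \subseteq O$), then $b_j|_O = b_j$ is already a $\CW$-atom. If $\ball_j$ is a "boundary ball", one splits $b_j|_O$ into a part supported near $N = \partial O \setminus D$ — wait, here $D = \partial O$ plays no role since we are in the pure Neumann situation of $\H^1_\CW$, so in fact $N = \partial O$ — and uses the local uniformity of $O$ near $\partial O$ to connect $\ball_j \cap O$ to a genuine Whitney-type ball centered in $O$, carrying the lost mean value along a chain of overlapping balls whose radii are comparable and whose count is controlled by the cigar condition \eqref{eq: The geometric setup: Epsilon-Delta Condition}. This is the classical "telescoping/correction" argument: one produces $\CW$-atoms $a_{j,k}$ and scalars $\mu_{j,k}$ with $\sum_k \mu_{j,k} a_{j,k} = b_j|_O$ and $\sum_k |\mu_{j,k}| \lesssim |\lambda_j|$, uniformly in $j$. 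Interior thickness is used at each step to compare $|\ball \cap O|$ with $|\ball|$ so that the $\L^2$-size conditions survive the chaining.

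I would organize the boundary-ball case as a lemma: for a ball $\ball$ centered in $\partial O$ (or close to it) and a function $g$ supported in $\ball \cap \overline O$ with $\|g\|_2 \leq |\ball|^{-1/2}$, there is a decomposition $g = \sum_k \mu_k a_k$ into $\CW$-atoms with $\sum_k |\mu_k| \lesssim 1$, where the implied constant depends only on $d$, the $(\mathrm{LU})$-data and the $(\mathrm{UITC})$-constant. The proof of this lemma is where almost all the work sits: one builds a Whitney-type covering of $\ball \cap O$ by balls $\ball(x_i, c\,\d(x_i, \partial O))$ adapted to $O$ (not to $\R^d$), uses $(\mathrm{LU})$ to string any two of them together through $O$ with a bounded number of intermediate balls, and then runs the standard mean-value correction along these strings; $(\mathrm{UITC})$ guarantees the size normalizations at each node. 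The bounded-case statement (only $(\mathrm{LU})$ needed) follows because on a bounded $O$ one only has to handle radii $r \lesssim \dO$, and for $r \gtrsim \diam(O')$ of each component one can absorb the ball into $\overline{O'}$ and invoke that constants are $\CW$-atoms on sets of finite measure; here the second clause of $(\mathrm{LU})$, namely $\diam(O') \geq C\delta$ for components touching $N$, ensures there are only finitely many "large-scale" components to treat. The main obstacle, as always in this circle of ideas, is the careful bookkeeping of the telescoping correction so that the $\ell^1$-bound on the new coefficients is genuinely uniform; I expect the geometric input to be exactly Lemma \ref{lem:ITC_for_components} together with the cigar condition, and the rest to be a (lengthy but routine) adaptation of the Coifman--Weiss machinery.
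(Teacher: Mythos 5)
Your plan takes a genuinely different route from the paper: you attempt a direct geometric proof (classical atomic decomposition on $\R^d$, restrict, re-decompose into Coifman--Weiss atoms by chaining/telescoping), whereas the paper bypasses this by passing through the operator-adapted space: it first establishes the inclusion $\H^1_\zero \subseteq \H^1_L$ by extending the kernel $q_t$ to all of $\R^d$ and running the square-function estimate on the classical $\R^d$-atomic decomposition of $\E_0 f$ (Corollary~\ref{cor:H1z_inclusion}), and then appeals to the main identification $\H^1_L = \H^1_D = \H^1_\varnothing$, whose proof rests on the entire $\BMO_D/\VMO_D$ duality machinery of Section~\ref{Section: Duality theory for D-adapted spaces}. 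So the paper's route is operator-theoretic; yours is purely geometric.

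Unfortunately your direct argument has a genuine gap, and it sits exactly in the step you describe as \enquote{classical telescoping/correction}: you claim an auxiliary lemma producing, for each boundary atom $b_j$, a decomposition $b_j|_O = \sum_k \mu_{j,k} a_{j,k}$ into $\CW$-atoms with $\sum_k |\mu_{j,k}| \lesssim 1$ uniformly in $j$. This lemma is false. If $\ball_j$ meets $O^c$, then in general $\int_O b_j = -\int_{O^c} b_j \neq 0$; but when $|O| = \infty$ every element of $\H^1_\CW$ has zero integral over $O$ (any atomic series converges in $\L^1$ and each $\CW$-atom is mean value free), so $b_j|_O$ is not even an element of $\H^1_\CW$, let alone of norm $\lesssim 1$. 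Concretely, if you try to telescope the correction $c_j \1_{\ball_j\cap O}$ (with $c_j$ chosen to restore cancellation) over dyadic dilates, the coefficient at each scale stays comparable to $|\int_O b_j|$ and does not decay, so the $\ell^1$-sum diverges. The only way to salvage a direct argument is to exploit the global identity $\sum_j \lambda_j \int_O b_j = \int_{\R^d}\E_0 f = 0$ so that the per-atom losses cancel across all $j$'s; this is a much more delicate combinatorial scheme than a per-atom chain inside a fixed set of scales controlled by $(\mathrm{LU})$, and it is not what your lemma asserts. Passing to $\H^1_L$ avoids this issue entirely, because the square function is insensitive to such constants in a way that atomic decompositions are not: applied to a genuine $\R^d$-atom (which does have cancellation), the estimate in Lemma~\ref{Inclusion of H_D^1(O) into Operator-adapted space: Lemma: Interpolation bound} goes through, and the identification with $\H^1_\CW$ then comes from $\H^1_L \subseteq \H^1_D$, proved by duality rather than by re-decomposing atoms.

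Two smaller remarks. First, your observation that $N = \partial O$ in the pure Neumann case and that $(\mathrm{LU})$ is the source of interior thickness in the bounded case are both correct and track the paper's framing. Second, the easy inclusion $\H^1_\CW \subseteq \H^1_\zero$ is exactly as you say and matches the paper.
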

	We postpone the proof to the end of Section~\ref{Section: Inclusion of H_D^1(O) into Operator-adapted space}.

	\subsection{Dirichlet vs Neumann Hardy spaces}
	\label{subsec:dir_vs_neumann}

	Write $-\Delta_0$ and $-\Delta_N$ for the Dirichlet and Neumann Laplacians on $O$. If $O$ is a Lipschitz domain, then the following classification was derived in~\cite{Auscher-Russ}:

	\setlength{\arrayrulewidth}{0.3mm}
	\setlength{\tabcolsep}{18pt}
	\renewcommand{\arraystretch}{1.5}

	\begin{center}
		\begin{tabular}{ c | c }
			Dirichlet & Neumann \\
			\hline
			$\H^1_{-\Delta_0}$ & $\H^1_{-\Delta_N}$ \\
			$\H^1_\Miyachi$ & $\H^1_\CW$ \\
			$\H^1_\restrict$ &  $\H^1_\zero$
		\end{tabular}
	\end{center}

	Our main result (Theorem~\ref{Main result and applications: Theorem: H_D^1=H_L^1}) can be interpreted as adding a third column that corresponds to mixed boundary conditions to this classification. The third line, a global characterization, has yet to remain empty in the case of mixed boundary conditions, but gives rise to subsequent research.

\section{Duality theory for $D$-adapted spaces}  \label{Section: Duality theory for D-adapted spaces}

We introduce atomic Hardy spaces $\H^1_D$ subject to a cancellation condition in $D$ and provide duality relations with corresponding spaces $\VMO_D$ and $\BMO_D$.

\subsection{Philosophy behind our atoms}   \label{Subsection: Philosophy behind the atoms}

So far we have seen that classical atoms in the sense of Coifman--Weiss correspond to elliptic operators subject to pure Neumann boundary conditions and atoms in the sense of Miyachi to elliptic operators subject to pure Dirichlet boundary conditions. While classical atoms have a cancellation condition up to the boundary, for Miyachi atoms this is not the case anymore. Instead, the boundary conditions are supposed to compensate for a lack of cancellation of the atoms. Indeed, our Lemma~\ref{The geometric setup: Lemma: Kernel vanishes in D} applied to the pure Dirichlet case gives that the kernel of the operator $\j(t^2 L)$ vanishes in $\partial O$. These operators will be the starting point to define operator-adapted spaces in Section~\ref{Section: Identification of operator-adapted and atomic Hardy spaces} and we will see, for instance in the proof of the central Lemma~\ref{Inclusion of H_D^1(O) into Operator-adapted space: Lemma: Interpolation bound}, that the cancellation of atoms can often be replaced by a vanishing kernel in a straightforward way.

In the realm of mixed boundary conditions, Lemma~\ref{The geometric setup: Lemma: Kernel vanishes in D} shows that the kernel of $\j(t^2 L)$ vanishes in $D$. Consequently, atoms for a space $\H^1_D$ should have the cancellation condition provided they are not close to $D$. We have already discussed in Remark~\ref{rem:miyachi} that it is not sufficient to simply replace $\partial O$ in the definition of a Miyachi atom by $D$. Instead, we provide another variation of Miyachi's definition that works in the situation of mixed boundary conditions and at the same time allows to treat the pure Dirichlet and Neumann cases in a unified way.

\subsection{Precise definitions}   \label{Subsection: Precise definitions}
Let $x \in O$ and $O'$ the component of $O$ that contains $x$. Write $B(x,r) = \ball(x,r) \cap O'$ for the localization of $\ball(x,r)$ to its component. More generally, if $\ball$ is a ball centered in a component $O'$, write $B = \ball \cap O'$.

\begin{definition}
	\label{def:balls}
	Any ball $B = B(x,r)$ with $x \in O$ is called \textbf{usual}. If $x \in O'$ for $O' \in \Sigma$ and $ \ball(x, 2r) \cap (D \cap \partial O') \neq \varnothing$, then say that $B$ is \textbf{near $\boldsymbol{D}$}.
\end{definition}

\begin{definition}
	\label{def:H1D_atoms}
	Let $B$ be a (usual) ball and $a \colon O \to \C$ be measurable.

	\begin{enumerate}

		\item We call $a$ a \textbf{usual $\boldsymbol{\H_D^1}$-atom}, if
		\begin{equation*}
			\supp(a) \sub \overline{B}, \quad \Vert a \Vert_2 \leq |B|^{- \frac{1}{2}} \quad \& \quad \int_O a \, \d x = 0.
		\end{equation*}

		\item We call $a$ an  \textbf{$\boldsymbol{\H_D^1}$-atom near $\boldsymbol{D}$}, if $B$ is near $D$,
		\begin{equation*}
			\supp(a) \sub \overline{B} \quad \& \quad \Vert a \Vert_2 \leq |B|^{- \frac{1}{2}}.
		\end{equation*}

	\end{enumerate}
\end{definition}

\begin{definition}
	\label{Definition: H_D^1}
	The \textbf{atomic Hardy space $\boldsymbol{\H_D^1}$} consists of all measurable functions $f \colon O \to \C$ for which there is a sequence $(\lambda_j)_j \in \ell^1$ and a sequence of $\H^1_D$-atoms $(a_j)_j$ such that $f = \sum_j \lambda_j a_j$ pointwise almost everywhere. We equip this space with the norm
	\begin{equation*}
		\Vert f \Vert_{\H_D^1} \coloneqq \inf \Vert (\lambda_j)_j \Vert_{\ell^1},
	\end{equation*}
	where the infimum is taken over all atomic decompositions of $f$.

	In addition, we denote by $\boldsymbol{\H_{D,a}^1}$ the subspace of all \textbf{finite linear combinations of $\boldsymbol{\H_D^1}$-atoms}. This space is by construction dense in $\H_D^1$.
\end{definition}

\begin{remark} \label{rem:atomic_convergence}
	Note that every atomic decomposition also converges in $\L^1$ and $\Vert f \Vert_1 \leq \Vert f \Vert_{\H_D^1}$ for all $f \in \H_D^1$. This means that the inclusion map $\H_D^1 \sub \L^1$ is a contraction.
\end{remark}

The following proposition shows that Definition~\ref{def:H1D_atoms} indeed extends and unifies the pure Dirichlet and Neumann cases in our geometric setting.

\begin{proposition}[compatibility with the Coifman--Weiss and Miyachi spaces]
	\label{prop:atomic_pure_cases}
	Suppose (UITC).
	In the pure Neumann case $D = \varnothing$ and if the component have positive distance to each other (compare with Lemma~\ref{lem:ITC_for_components}) one finds $\H^1_\varnothing \oplus \ell^1(\ISet) = \H^1_\CW$. In the pure Dirichlet case $D = \partial O$ we recover $\H^1_{\partial O} = \H^1_\Miyachi$.
\end{proposition}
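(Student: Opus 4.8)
The plan is to prove the two set-equalities separately, in each case showing the two inclusions by comparing the respective atoms. The guiding principle is that an $\H^1_D$-atom, up to a uniformly bounded factor, is a finite sum of atoms of the target space, and conversely; since all spaces are atomic with $\ell^1$-coefficient norm, such a comparison of atoms immediately yields the continuous inclusions, and then the identity of norms up to constants.

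\emph{The Dirichlet case $D = \partial O$.} Here $\ISet = \varnothing$, so $\Sigma = \{\OD\} = \{O\}$, every component is $O$ itself, there are no usual (mean-value-free) localized balls forced away from the boundary, and a ball $B = B(x,r) = \ball(x,r) \cap O$ is near $D$ precisely when $\ball(x,2r) \cap \partial O \neq \varnothing$. I would first observe that a usual $\H^1_{\partial O}$-atom is a Coifman--Weiss atom (localization, $\L^2$-size, cancellation over $O$), and Coifman--Weiss atoms are Miyachi atoms under (UITC) by the discussion in Section~\ref{subsec:atomic_open}; hence such atoms lie in $\H^1_\Miyachi$. An $\H^1_{\partial O}$-atom near $D$ is supported in $\overline{B}$ with $\ball(x,2r)$ meeting $\partial O$ and no cancellation: this is, after a harmless rescaling of the ball by a bounded factor (to pass between the $2\ball/5\ball$ convention of Miyachi and the $\ball(x,2r)$ convention here, as in Proposition~\ref{prop:Miyachi_coincide1}), exactly a Miyachi atom without cancellation. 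Conversely, a Miyachi atom of the first kind (a restriction of a classical atom with cancellation) is a usual $\H^1_{\partial O}$-atom up to a bounded constant, and a Miyachi atom of the second kind, supported in $\ball$ with $2\ball \subseteq O$ but $5\ball \cap O^c \neq \varnothing$, can be viewed as localized in a bounded enlargement $cB$ which is then near $D$; here one must double-check that enlarging the ball keeps the size condition intact up to a constant, which follows from (UITC) since $|cB| \gtrsim |B|$. This gives $\H^1_{\partial O} = \H^1_\Miyachi$ with equivalence of norms.

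\emph{The Neumann case $D = \varnothing$.} Now every ball is usual and the notion ``near $D$'' is vacuous, so every $\H^1_\varnothing$-atom carries the full cancellation condition $\int_O a \, \d x = 0$; the only subtlety is that the support and size are measured relative to the \emph{component} $O'$ containing the center, i.e. $B = \ball \cap O'$ with the bounded connected components $O_m$ singled out. Since we assume the components have pairwise positive distance (so, by Lemma~\ref{lem:ITC_for_components}'s mechanism, balls small enough stay within one component and large balls can be compared to the whole space), a usual $\H^1_\varnothing$-atom with $B = \ball \cap O'$ is, for $O' = \OD$, literally a Coifman--Weiss atom; for $O' = O_m$ a bounded connected component, it is a Coifman--Weiss atom supported in a ball hitting $O$ plus, when the support is the whole of $O_m$, a multiple of the indicator $\boldsymbol{1}_{O_m}$ — which is why the $\ell^1(\ISet)$ summand appears on the Coifman--Weiss side. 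Concretely I would argue: given a Coifman--Weiss atom $a$ with defining ball $\ball$, if $\ball \cap O$ lies inside a single component it is already an $\H^1_\varnothing$-atom; otherwise decompose $a = \sum_{O' } a \boldsymbol{1}_{O'}$ over the (finitely many, by the positive-distance assumption) components met by $\ball$, subtract from each piece its mean over the relevant bounded component $O_m$ (this mean is controlled because $|O_m| \gtrsim r^d$ once $\ball$ reaches into $O_m$, thanks to positive separation and (UITC)), absorbing the correcting constants into the $\ell^1(\ISet)$-part, and note each corrected piece is an $\H^1_\varnothing$-atom up to a bounded factor. The reverse inclusion $\H^1_\varnothing \oplus \ell^1(\ISet) \subseteq \H^1_\CW$ needs that $\boldsymbol{1}_{O_m}/|O_m| \in \H^1_\CW$ (true since $O_m$ is bounded, so constants on it are Coifman--Weiss members as recalled in Section~\ref{subsec:atomic_open}) and that a usual $\H^1_\varnothing$-atom supported in $B = \ball \cap O'$ is a Coifman--Weiss atom — immediate, since extending $\supp(a) \subseteq \overline{\ball}$ only enlarges the admissible class.

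\emph{Main obstacle.} The routine inclusions and rescalings are cheap; the one genuinely delicate point is the bookkeeping in the Neumann direction when a Coifman--Weiss ball $\ball$ overlaps several components of $O$. One must (i) control how many components $\ball$ can meet and ensure the mean-value corrections over the bounded pieces $O_m$ are summable with $\ell^1$-control, and (ii) verify the size bounds survive splitting and subtracting means, both of which hinge on the positive-distance hypothesis together with (UITC) forcing $|O_m(x,r)| \gtrsim \min(r,\diam O_m)^d$. I would isolate this as the key estimate and treat the rest as direct atom-by-atom comparison.
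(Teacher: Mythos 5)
Your outline identifies the correct target inclusions and correctly observes that everything reduces to comparing atoms, but two steps that you treat as ``harmless'' are in fact nontrivial, and one auxiliary claim is misquoted.

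\textbf{Dirichlet case: the ``harmless rescaling'' fails.} For the inclusion $\H^1_{\partial O} \subseteq \H^1_\Miyachi$ you assert that an $\H^1_{\partial O}$-atom near $D$ is, after rescaling the ball by a bounded factor, ``exactly a Miyachi atom without cancellation''. This cannot be fixed by rescaling: a near-$D$ atom's defining ball $\ball$ is only required to satisfy $2\ball \cap \partial O \neq \varnothing$, so $\ball$ is allowed to straddle $\partial O$; a Miyachi atom of the non-cancelling type requires $2\ball \subseteq O$, which is an \emph{opposite} constraint and no dilation by a fixed factor can change $2\ball \cap \partial O \neq \varnothing$ into $2\ball \subseteq O$. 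The paper handles this with the Vitali-covering localization of Proposition~\ref{prop:Miyachi_coincide1}: decompose the atom into pieces supported on balls $\ball_j$ of radius comparable to $\d(x_j,\partial O)$, which automatically satisfy $2\ball_j \subseteq O$ and $4\ball_j \cap \partial O \neq \varnothing$. That covering argument is the content of Proposition~\ref{prop:Miyachi_coincide1}; it is not merely a $2$-versus-$5$ bookkeeping change, and referencing it as such leaves a real gap.

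\textbf{Neumann case: two issues.} First, your justification that $\1_{O_m}/|O_m| \in \H^1_\CW$ misreads the remark in Section~\ref{subsec:atomic_open}: that remark says that when $|O|<\infty$, constant functions \emph{on all of $O$} lie in $\H^1_\CW$, not that indicators of components (which are not constant on $O$ and have nonzero integral over $O$) do. You need a construction, e.g. take $a$ equal to $h$ on $O_m$ and $-h\,|O_m|/|O\setminus O_m|$ on $O\setminus O_m$ (a Coifman--Weiss atom associated with the whole of $O$, having cancellation over $O$), so that $a$ plus the constant $h\,|O_m|/|O\setminus O_m|$ equals a multiple of $\1_{O_m}$; the constant is then absorbed using the cited remark. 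Second, your claim that ``extending $\supp(a) \subseteq \overline{\ball}$ only enlarges the admissible class'' addresses only the support requirement but not the size requirement. An $\H^1_\varnothing$-atom satisfies $\|a\|_2 \leq |\ball \cap O'|^{-1/2}$, whereas a Coifman--Weiss atom must satisfy $\|a\|_2 \leq |\ball \cap O|^{-1/2}$; when $\ball$ crosses into other components, $|\ball \cap O|>|\ball \cap O'|$ and the latter bound is strictly stronger, so it is not automatic. To conclude, you have to shrink the ball to one of comparable radius that stays inside the single component $O'$ (which is possible by the positive-distance hypothesis when $r$ is small, and is trivial when $r\gtrsim \dO$), and only then the two size bounds agree up to a uniform constant. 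These two points, rather than the bookkeeping you flag as the main obstacle, are where the actual work lies.
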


\begin{proof}
	We show both cases separately.

	\textbf{pure Neumann case}.
	The main difference between $\H^1_\varnothing$ and $\H^1_\CW$ is the concept of components in the definition of the former space.
	Indeed, if $O$ is unbounded and hence has infinite measure, $\ISet = \varnothing$ and all $\H^1_\varnothing$-atoms are usual and coincide with the Coifman--Weiss atoms. Therefore, $\H^1_\varnothing \oplus \ell^1(\ISet) = \H^1_\varnothing = \H^1_\CW$. Otherwise, if $O$ is bounded, constants are added to $\H^1_\CW$ and $\ISet \neq \varnothing$.

	Start with the inclusion $\H^1_\varnothing \oplus \ell^1(\ISet) \subseteq \H^1_\CW \oplus \C$.
	All $\H^1_\varnothing$-atoms are usual, so in particular contained in $\H^1_\CW$.
	Fix some $m\in \ISet$ and $c_m \in \C$. We construct a function $f \in \H^1_\CW \oplus \C$ that coincides with $c_m$ on $O_m$ and is zero otherwise as follows: let $h \in \C$ to our disposal. The function $a$ given by $h$ on $O_m$ and $-h \frac{|O_m|}{|O \setminus O_m|}$ on $O\setminus O_m$ is (up to normalization) a Coifman--Weiss atom. Put $c \coloneqq h \frac{|O_m|}{|O \setminus O_m|}$. Then $a + c$ vanishes on $O \setminus O_m$ and takes the value $h(1 + \frac{|O_m|}{|O \setminus O_m|})$ on $O_m$. An appropriate choice of $h$ gives $h(1 + \frac{|O_m|}{|O \setminus O_m|}) = c_m$ as claimed. Iterating this process gives
	the first inclusion.

	Next, we show $\H^1_\CW \oplus \C \subseteq \H^1_\varnothing \oplus \ell^1(\ISet)$. It is clear that a constant function is also locally constant. It remains to show the inclusion for an atom $a\in \H^1_\CW$.
	The discrepancy between a Coifman--Weiss atom and a usual atom for $\H^1_\varnothing$ is that the localization condition for Coifman--Weiss atoms ignores the concept of components, the same is true for the cancellation condition. However, if $a$ is associated with a ball $\ball(x,r) \cap O$, and $r$ is at most the distance between the components, then $\ball(x,r) \cap O = B(x,r)$ and the localization and cancellation conditions for a usual $\H^1_\varnothing$-atom are verified. Otherwise, $|\ball(x,r) \cap O| \simeq |O_m|$ for any $m$. Put $c = \sum_m (a)_{O_m} \1_{O_m}$ and write $a = \sum_m \1_{O_m} (a - (a)_{O_m}) + c$. The functions $\1_{O_m} (a - (a)_{O_m})$ are constant multiples of $\H^1_\varnothing$-atoms associated with $O_m$ and $c$ is locally constant, hence the second inclusion is complete.

	\textbf{pure Dirichlet case}. Observe that we could equivalently define $\H^1_{\partial O}$-atoms near $\partial O$ by the condition $5\ball \cap \partial O \neq \varnothing$. We use this throughout this proof. Now recall from Section~\ref{Section: Review of Hardy spaces on open sets} that the space $\H^1_\Miyachi$ consists of two types of atoms. (i) $a=A|_O$ is the restriction of an $\H^1(\R^d)$-atom associated with a ball $\ball$ centered in $O$ or (ii) $a$ is associated with a ball $\ball$ that satisfies $2\ball \subseteq O$, $5\ball \cap O^c \neq \varnothing$ and $a$ satisfies the localization and size conditions.
	We start with the inclusion $\H^1_\Miyachi \subseteq \H^1_{\partial O}$. Let $a$ be of type (i). Then either $2\ball \subseteq O$, in which case $a$ is a usual atom for $\H^1_{\partial O}$, or $2\ball \cap \partial O \neq \varnothing$, in which case $a$ is an $\H^1_{\partial O}$-atom near $\partial O$. Next, let $a$ be an atom of type (ii). Then automatically $a$ is an $\H^1_{\partial O}$-atom near $\partial O$.

	For the converse inclusion $\H^1_{\partial O} \subseteq \H^1_\Miyachi$, let $a$ be an $\H^1_{\partial O}$-atom near $\partial O$ associated with $B$. If $2\ball \subseteq O$, then $a$ is immediately an atom of type (ii). Otherwise, employ the localization argument used in the proof of Proposition~\ref{prop:Miyachi_coincide1} to conclude. If $a$ is a usual $\H^1_{\partial O}$-atom, then extend $a$ from $B$ to $\ball$ by zero. Write $A$ for this function on $\ball$. By (UITC), $A$ is (up to normalization) an $\H^1(\R^d)$ atom, so $a=A|_O$ is of type (i).
\end{proof}

The space $\L^2_{\loc}$ in the following definition consists of all measurable functions that are square integrable on any bounded subset of $O$.

\begin{definition}
	\label{Definition: BMO_D}
	The \textbf{space of bounded mean oscillation} $\boldsymbol{\BMO_D}$ consists of all $f\in \L_{\loc}^2$ for which the (semi-)norm
	\begin{align*}
		\Vert f \Vert_{\BMO_D} \coloneqq \sup_{B \, \text{usual}} \left( \fint_{B} |f - (f)_{B}|^2 \, \d x \right)^{\frac{1}{2}} \; \lor \; \sup_{B \, \text{near} \, D} \left( \fint_{B} |f|^2 \, \d x \right)^{\frac{1}{2}}
	\end{align*}
	is finite.
\end{definition}

The definition of $\BMO_D$ already indicates the natural $\H_D^1 - \BMO$-duality. The key observation is the following lemma.

\begin{lemma} \label{Precise Definitions: Lemma: H1-BMO-pairing}
	It holds
	\begin{equation}
		\left| \int_O f g \, \d x \right| \leq \Vert f \Vert_{\H_D^1} \Vert g \Vert_{\BMO_D} \qquad (f \in \H_{D}^1, g \in \L^{\infty}). \label{eq: H1-BMO pairing}
	\end{equation}
\end{lemma}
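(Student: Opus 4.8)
The plan is to expand $f\in\H^1_D$ into an atomic decomposition $f = \sum_j \lambda_j a_j$ with $(\lambda_j)_j\in\ell^1$ and $\|(\lambda_j)_j\|_{\ell^1}$ arbitrarily close to $\|f\|_{\H^1_D}$, and to reduce the claim to the uniform estimate $|\int_O a\,g\,\d x| \leq \|g\|_{\BMO_D}$ for every single $\H^1_D$-atom $a$. Once this atomic estimate is in hand, the general case follows by writing $\int_O f g\,\d x = \sum_j \lambda_j \int_O a_j g\,\d x$; the interchange of sum and integral is legitimate because the decomposition converges in $\L^1$ (Remark~\ref{rem:atomic_convergence}) and $g\in\L^\infty$, so each term is bounded by $|\lambda_j|\,\|g\|_{\BMO_D}$ and the series converges absolutely. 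Taking the infimum over all atomic decompositions then yields~\eqref{eq: H1-BMO pairing}.

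For a single atom the argument splits into the two cases of Definition~\ref{def:H1D_atoms}. If $a$ is a usual $\H^1_D$-atom associated to a usual ball $B=B(x,r)$, then $\supp(a)\subseteq\overline{B}$, $\|a\|_2\leq|B|^{-1/2}$, and $\int_O a\,\d x = 0$. Using the cancellation condition we subtract the average of $g$ over $B$: since $a$ is supported in $\overline{B}$ and has mean zero,
\begin{equation*}
	\int_O a\,g\,\d x = \int_B a\,(g-(g)_B)\,\d x,
\end{equation*}
and Cauchy--Schwarz together with the size condition gives
\begin{equation*}
	\Bigl|\int_O a\,g\,\d x\Bigr| \leq \|a\|_2\,\Bigl(\int_B |g-(g)_B|^2\,\d x\Bigr)^{1/2} \leq |B|^{-1/2}\,|B|^{1/2}\Bigl(\fint_B |g-(g)_B|^2\,\d x\Bigr)^{1/2} \leq \|g\|_{\BMO_D}.
\end{equation*}
If instead $a$ is an $\H^1_D$-atom near $D$ associated to a ball $B$ near $D$, there is no cancellation, but now the second supremum in the definition of $\|g\|_{\BMO_D}$ is available: directly by Cauchy--Schwarz and the size condition,
\begin{equation*}
	\Bigl|\int_O a\,g\,\d x\Bigr| \leq \|a\|_2\,\Bigl(\int_B |g|^2\,\d x\Bigr)^{1/2} \leq |B|^{-1/2}\,|B|^{1/2}\Bigl(\fint_B |g|^2\,\d x\Bigr)^{1/2} \leq \|g\|_{\BMO_D}.
\end{equation*}

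There is essentially no hard obstacle here: the lemma is the routine half of the $\H^1$--$\BMO$ duality, and the whole point of building the $\BMO_D$-seminorm out of bounded \emph{means} near $D$ (rather than bounded mean \emph{oscillation}) is precisely to make the second case go through without any cancellation. The only mild point worth care is the justification of the termwise integration, i.e.\ that $\sum_j \lambda_j \int_O a_j g\,\d x$ really computes $\int_O f g\,\d x$; this is where Remark~\ref{rem:atomic_convergence} is used, giving $\L^1$-convergence of the partial sums so that, against $g\in\L^\infty$, the pairing passes to the limit. One should also note that $g\in\L^\infty\subseteq\L^2_{\loc}$, so $\|g\|_{\BMO_D}$ is well defined, and that $(g)_B$ makes sense since $B$ is bounded.
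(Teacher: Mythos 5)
Your proposal is correct and follows essentially the same argument as the paper: atomic decomposition, interchange justified by $\L^1$-convergence against $g\in\L^\infty$, and a case distinction on atoms (subtract $(g)_B$ for usual atoms, use bounded means for atoms near $D$), concluding by taking the infimum over decompositions. The extra detail you supply is consistent with and merely fleshes out the paper's terser presentation.
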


\begin{proof}
	Let $f = \sum_{j} \lambda_j a_j$ be an atomic decomposition of $f \in \H_D^1$. Since $g \in \L^{\infty}$ and the series converges in $\L^1$ we obtain
	\begin{equation*}
		 \int_O f g \, \d x = \sum_j \lambda_j \int_{B_j} a_j g \, \d x.
	\end{equation*}
	If $a_j$ is near $D$, then we directly apply Hölder's inequality on $\int_{B_j} a_j g \, \d x$. In the other case we exploit the moment condition of $a_j$ on $B_j$, which allows us to subtract $(g)_{B_j}$ from $g$. Afterwards, apply Hölder's inequality. Hence, using the size condition for the atoms and the very definition of the $\BMO_D$-norm we get
	\begin{equation*}
		 \left| \int_O f g \, \d x \right| \leq \Vert (\lambda_j)_j \Vert_{\ell^1} \Vert g \Vert_{\BMO_D}.
	\end{equation*}
We conclude by taking the infimum over all possible atomic decompositions of $f$.
\end{proof}

\begin{remark}[direct decomposition of $\H^1_D$ and $\BMO_D$]
	\label{rem:atomic_direct_sum}
	The localization property of $\H^1_D$-atoms and its counterpart in the $\BMO_D$-norm give rise to the topological splittings
	\begin{equation*}
		\H_D^1 = \bigoplus_{O' \in \Sigma} \H_{D \cap \partial O'}^1(O') \qquad \text{and} \qquad \BMO_D = \bigoplus_{O' \in \Sigma} \BMO_{D \cap \partial O'}(O').
	\end{equation*}
\end{remark}

As a consequence of the preceding remark we are going to impose the following assumption for the rest of this section.

\begin{assumption}
	\label{ass:atomic}
	Suppose that $O$ satisfies (UITC) and consists of only one component.
\end{assumption}

It follows from Remark~\ref{rem:atomic_direct_sum} that all results from this section stay true if $O$ consists of multiple components that all satisfy (UITC). According to Corollary~\ref{cor:ON_implies_bounded} and Lemma~\ref{lem:ITC_for_components}, this is a consequence of $(\mathrm{G}(\mu))$ in conjunction with (Fat) and (LU), so we can safely use all results developed in this section in the setting
of Theorem~\ref{Main result and applications: Theorem: H_D^1=H_L^1}.

\subsection{\boldmath$\H_D^1- \BMO_D$-duality} \label{Subsection: H1-BMO-duality}

In this part we extend the classical $\H^1 - \BMO$-duality of Fefferman \cite{Fefferman-BMO-H1-duality}. This comprises also the pure Neumann case established by Coifman and Weiss \cite{Coifman-Weiss} and we stress that no geometric assumptions are used.  %

\begin{theorem} \label{H1-BMO-duality: Theorem: Main result}
	The space $\BMO_D$ is the dual space of $\H_D^1$ in the following sense:

	\begin{enumerate}
		\item Each $g \in \BMO_D$ extends to a bounded linear functional on $\H_D^1$ via $\ell_g(f) = \int_O g f \, \d x$ for each $f \in \H_{D, a}^1$ and $\Vert \ell_g \Vert_{(\H_D^1)^*} \leq c \Vert g \Vert_{\BMO_D}$ for some generic constant $c$.

		\item For any $\ell \in (\H_D^1)^*$ there is a unique $h \in \BMO_D$ such that $\ell = \ell_h$ and $\Vert h \Vert_{\BMO_D} \leq c \Vert \ell \Vert_{(\H_D^1)^*}$  for some generic constant $c$.
	\end{enumerate}
\end{theorem}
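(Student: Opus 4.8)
The plan is to prove the two assertions in the standard way for $\H^1$--$\BMO$ dualities à la Fefferman and Coifman--Weiss, adapting the argument to the presence of atoms near $D$ that lack a cancellation condition. Part (i) is essentially already contained in Lemma~\ref{Precise Definitions: Lemma: H1-BMO-pairing}: given $g \in \BMO_D$, the formula $\ell_g(f) = \int_O g f \, \d x$ is well defined on the dense subspace $\H^1_{D,a}$ of finite atomic combinations, and the estimate $|\ell_g(f)| \leq c \Vert f \Vert_{\H^1_D} \Vert g \Vert_{\BMO_D}$ follows from the pairing inequality (applied to $g$ which is locally bounded; one splits a given atomic decomposition, applies Hölder directly on atoms near $D$ and exploits the moment condition on usual atoms, then takes the infimum over decompositions). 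Since $\H^1_{D,a}$ is dense in $\H^1_D$, $\ell_g$ extends uniquely to a bounded functional with the claimed norm bound.

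For part (ii), the heart of the matter, I would first fix an exhaustion of $O$ by bounded open sets $O_k \uparrow O$ and consider, for a given $\ell \in (\H^1_D)^*$, the action of $\ell$ on functions supported in a fixed usual ball $B$. On such a ball, the subspace of $\L^2(B)$ consisting of functions $a$ with $\int_O a = 0$ (if $B$ is usual and not near $D$) — respectively all of $\L^2(B)$ if $B$ is near $D$ — embeds continuously into $\H^1_D$ after normalization by $|B|^{-1/2}$; hence $\ell$ restricts to a bounded functional on this closed subspace of the Hilbert space $\L^2(B)$. Riesz representation then produces a function $h_B \in \L^2(B)$ (with $\int_B h_B = 0$ in the first case, to make it unique) such that $\ell(a) = \int_B h_B a \, \d x$ for all admissible $a$. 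A compatibility argument over a nested sequence of balls — using uniqueness of the Riesz representative up to the appropriate additive constant — glues the $h_B$ into a single $h \in \L^2_{\loc}$, well defined modulo the locally constant functions that are killed by the atomic moment conditions. This is the step I expect to be the main obstacle: one must check that the local representatives are genuinely consistent across components and across balls near versus away from $D$, so that a single global $h$ emerges, and that the normalization (mean value free on usual balls, unrestricted near $D$) is the correct one matching Definition~\ref{Definition: BMO_D}.

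Once $h$ is constructed, it remains to show $h \in \BMO_D$ with $\Vert h \Vert_{\BMO_D} \leq c \Vert \ell \Vert_{(\H^1_D)^*}$. For a usual ball $B$, pick $a = |B|^{-1/2} \Vert h - (h)_B \Vert_{\L^2(B)}^{-1} \,\overline{(h - (h)_B)}\,\1_B$, which is (up to the usual normalization) an $\H^1_D$-atom with $\int_O a = 0$; evaluating $\ell(a) = \int_B h a \, \d x$ gives exactly $|B|^{-1/2}\Vert h - (h)_B\Vert_{\L^2(B)} \leq \Vert \ell \Vert$, i.e.\ $\bigl(\fint_B |h - (h)_B|^2\bigr)^{1/2} \leq \Vert \ell \Vert$. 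For a ball $B$ near $D$ one repeats this with $a = |B|^{-1/2}\Vert h\Vert_{\L^2(B)}^{-1}\,\overline{h}\,\1_B$, which is an admissible atom near $D$ \emph{without} cancellation, yielding $\bigl(\fint_B |h|^2\bigr)^{1/2} \leq \Vert\ell\Vert$. Taking the supremum over both families of balls gives $\Vert h \Vert_{\BMO_D} \leq \Vert \ell \Vert_{(\H^1_D)^*}$. Finally $\ell = \ell_h$ on $\H^1_{D,a}$ by construction, hence on all of $\H^1_D$ by density, and uniqueness of $h$ in $\BMO_D$ follows because the pairing with $\H^1_{D,a}$ separates points of $\BMO_D$ (two candidates differ by something pairing to zero against all atoms, hence locally constant, hence zero in $\BMO_D$ — here one uses that on components meeting $D$ the near-$D$ balls force the function itself, not just its oscillation, to vanish).
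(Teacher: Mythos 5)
Your proposal for part~(i) coincides with the paper's, and for part~(ii) you have the right global strategy: use Riesz representation to produce local $\L^2$ representatives (mean-value-free on usual balls, without any normalization on balls near $D$), glue them into a function $h \in \L^2_{\loc}$, and then read off the two kinds of $\BMO_D$ bounds by testing $\ell$ against atoms built from $h$ itself. The uniqueness argument at the end is also correct and is essentially the paper's.

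However, the gluing step, which you rightly flag as \enquote{the main obstacle}, is left unresolved, and this is exactly where the new content of the theorem lies. The issue is not merely cosmetic. After you construct $h$ from the usual-ball Riesz representatives, $h$ is only determined modulo a normalizing constant. On a ball $\widetilde B$ near $D$, what the atom test requires is $\ell(a) = \int_{\widetilde B} h\,a$ for \emph{all} $a \in \L^2(\widetilde B)$, not just mean-value-free ones. This is strictly stronger than what the usual-ball representation gives, and is precisely the assertion that your $h$ coincides (on the nose, no additive constant allowed) with the unique near-$D$ Riesz representative $g_{\widetilde B}$. Without proving this, the displayed estimate $\bigl(\fint_{\widetilde B}|h|^2\bigr)^{1/2} \leq \Vert\ell\Vert$ is circular: it presupposes the identity $\ell(a) = \int_{\widetilde B} h\,a$ for the non-cancellative atom $a \propto \overline{h}\1_{\widetilde B}$, which is what needs to be established. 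The paper resolves this by an explicit device: pick a fixed reference ball $B'$ near $D$ (hence also usual), normalize all usual-ball representatives by subtracting their average over $B'$ to get a function $f$, separately assemble an auxiliary function $g$ from the near-$D$ representatives (these are genuinely unique), and show by testing $g - f$ against mean-value-free functions and then averaging over $B'$ that $g = f + (g)_{B'}$ globally. Setting $h = f + (g)_{B'}$ then makes both families of local identities hold simultaneously, so the atom test goes through on both usual and near-$D$ balls. Your proposal would become a complete proof once this reconciliation argument is inserted; as written, the existence half of part~(ii) has a gap at its central point.
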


\begin{proof}
	\textbf{Proof of (i).}
	With \eqref{eq: H1-BMO pairing} at our disposal, the argument in \cite[p.\,632]{Coifman-Weiss} applies \textit{verbatim}.

	\textbf{Proof of (ii).} Let $\ell \in (\H_D^1)^*$. If $D$ is empty, the argument is classical~\cite[p.\,632--634]{Coifman-Weiss}. Otherwise, we have to put in some extra effort to improve the classical estimate on balls near $D$. That being said, pick a reference ball $B'$ near $D$, which is possible in the current case. By definition, $B'$ also qualifies as a usual ball. Following the classical proof in the case $D = \varnothing$, there exists for any usual ball $B$ a mean value free function $F_B \in \L^2(B)$ such that $\| F_B \|_2 \leq |B|^{\nicefrac{1}{2}} \Vert \ell \Vert_{(\H_D^1)^*}$. We set $f_B \coloneqq F_B - (F_B)_{B'}$. As in the classical case, the function $f(x) \coloneqq f_B(x)$, whenever $x\in B$ is a usual ball, is well-defined. We claim that there is a constant $c$ depending on $\ell$ and $B'$ such that $h \coloneqq f + c \in \BMO_D$ and $\ell_h = \ell$.

	First, it follows readily that
	\begin{align}
		\left( \fint_{B} |f + (F_B)_{B'}|^2 \, \d x \right)^{\frac{1}{2}} = \left( \fint_{B} |F_{B}|^2 \, \d x \right)^{\frac{1}{2}} \leq \Vert \ell \Vert_{(\H_D^1)^*}.
	\end{align}
	Therefore, the $\BMO_D$-norm of $f$ is under control on usual balls. The same is true if we add a constant to $f$ later on.

	Second, to improve the bound on balls $\widetilde B$ near $D$ we construct an auxiliary function. For any such $\widetilde B$ there is a unique function $g_{\widetilde B} \in \L^2(\widetilde B)$ that represents $\ell$ restricted to $\L^2(\widetilde B)$ and fulfills the bound $\| g_{\widetilde B} \|_2 \leq |\widetilde B|^{\nicefrac{1}{2}} \Vert \ell \Vert_{(\H_D^1)^*}$.
	Now we can define a function $g$ by $g(x) = g_{\widetilde B}(x)$ whenever $x \in \widetilde B$ is a ball near $D$. The difference to the classical construction is that $g_{\widetilde B}$ is unique without the necessity to perform a normalization with respect to a reference ball.
	Testing $g-f$ with the mean value free functions of $\L^2(\widetilde B)$ shows that $g-f$ is constant on $\widetilde B$.
	Since $f$ is mean value free over the reference ball $B'$, averaging $g-f$ over $B'$ shows that this constant is $(g)_{B'}$ independent of the ball $\widetilde B$. Exhausting $O$ by such balls $\widetilde B$ yields $g = f + (g)_{B'}$. Therefore,
	\begin{align}
		\left( \fint_{\widetilde B} |f + (g)_{B'}|^2 \, \d x \right)^{\frac{1}{2}}
		\leq \left( \fint_{\widetilde B} |g|^2 \, \d x \right)^{\frac{1}{2}}
		\leq \Vert \ell \Vert_{(\H_D^1)^*}.
	\end{align}
	Thus, with $c = (g)_{B'}$ one has $h = f + c \in \BMO_D$ with $\Vert h \Vert_{\BMO_D} \leq c \Vert \ell \Vert_{(\H_D^1)^*}$ as claimed.
\end{proof}

	\subsection{Predual of the atomic Hardy space}
	\label{Subsection: Predual of H1}

	The main goal of this subsection is to determine a predual of $\H^1_D$. To this end, we introduce a $D$-adapted version of the space of vanishing mean oscillation in the spirit of Coifman and Weiss.

	\begin{definition}
		The \textbf{space of vanishing mean oscillation $\boldsymbol{\VMO_D}$} is the closure of %
		$\rC_{\cc}$
		with respect to $\Vert \cdot \Vert_{\BMO_D}$.
	\end{definition}

	Note that $\VMO_D$ is separable as $\rC_{\cc}$ contains even a $\Vert \cdot \Vert_{\infty}$-dense countable subset.

	\begin{remark}
		There is some flexibility in the definition of $\VMO_D$. Indeed, it is possible to take the closure of a suitable space of regular functions that vanish only in $D$. We will not pursue this direction in this article.
	\end{remark}

	The following is the main result of this section.

	\begin{theorem} \label{Predual of H1: Theorem: VMO* = H1}
		Under $\mathrm{(UITC)}$ the dual space of $\VMO_D$ is $\H^1_D$ up to equivalent norms.
	\end{theorem}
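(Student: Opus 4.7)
The strategy is the Coifman--Weiss predual argument (cf.~\cite[Thm.~B]{Coifman-Weiss}), adapted to accommodate both species of $\H^1_D$-atoms. The easy direction is to show that $\H^1_D$ embeds contractively into $(\VMO_D)^*$ via $f \mapsto \ell_f$, $\ell_f(g) \coloneqq \int_O fg \,\d x$. For $g \in \rC_{\cc}$ one has $\|g\|_{\BMO_D} \leq \|g\|_\infty$ by a direct inspection of the two suprema in Definition~\ref{Definition: BMO_D}, so Lemma~\ref{Precise Definitions: Lemma: H1-BMO-pairing} gives the bound $|\ell_f(g)| \leq \|f\|_{\H^1_D}\|g\|_{\BMO_D}$ on $\rC_{\cc}$, which extends to $\VMO_D$ by density. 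Injectivity is immediate from $\H^1_D \subseteq \L^1$ (Remark~\ref{rem:atomic_convergence}) and the fact that $\rC_{\cc}$ separates $\L^1$.

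For surjectivity, given $\ell \in (\VMO_D)^*$ I produce an $f \in \H^1_D$ representing it with $\|f\|_{\H^1_D} \les \|\ell\|$. First, fix a countable family $\mathcal{B}$ of balls (both usual and near $D$) rich enough that $\|\cdot\|_{\BMO_D}$ is comparable to the corresponding supremum over $\mathcal{B}$; such a family exists using a dyadic Whitney-type enumeration centered at a countable dense subset of $O$ together with (UITC) to control balls near $D$. For $B \in \mathcal{B}$ set $E_B \coloneqq \L^2_0(B)$ if $B$ is usual and $E_B \coloneqq \L^2(B)$ if $B$ is near $D$, each endowed with the rescaled norm $\|F\|_{E_B} \coloneqq |B|^{-\nicefrac{1}{2}} \|F\|_2$, and form the $c_0$-direct sum $X \coloneqq \bigl(\bigoplus_{B \in \mathcal{B}} E_B\bigr)_{c_0}$. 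The assignment
\begin{equation*}
\iota \colon \VMO_D \to X, \qquad \iota(g)_B \coloneqq \begin{cases} (g - (g)_B)\1_B, & B \text{ usual}, \\ g\1_B, & B \text{ near } D, \end{cases}
\end{equation*}
is then an isometric embedding up to equivalence: for $g \in \rC_{\cc}$ the local norms vanish on small balls by uniform continuity, on large balls by compact support, and on boundary balls whose center leaves a fixed compact set; hence $\iota(\rC_{\cc}) \subseteq X$, and passage to $\iota(\VMO_D) \subseteq X$ is by density.

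By Hahn--Banach, extend $\ell \circ \iota^{-1}$ from $\iota(\VMO_D)$ to $\tilde\ell \in X^*$ with comparable norm. The identification $X^* = \bigl(\bigoplus_{B \in \mathcal{B}} E_B^*\bigr)_{\ell^1}$ together with the self-duality $E_B^* \cong \L^2(B)$ under the unscaled pairing represents $\tilde\ell$ as a family $(\lambda_B a_B)_{B \in \mathcal{B}}$ with $\sum_B |\lambda_B| = \|\tilde\ell\|_{X^*}$ and where each $a_B$ is supported in $\overline{B}$, satisfies $\|a_B\|_2 \leq |B|^{-\nicefrac{1}{2}}$, and has mean zero when $B$ is usual --- in other words, $a_B$ is an $\H^1_D$-atom of the appropriate type. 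Setting $f \coloneqq \sum_B \lambda_B a_B$ thus yields an atomic decomposition of an element of $\H^1_D$ with $\|f\|_{\H^1_D} \leq \sum |\lambda_B| \les \|\ell\|_{(\VMO_D)^*}$, and $\ell = \ell_f$ follows by unwinding the duality pairings and absorbing the spurious $(g)_B$-term using the mean-value zero of usual atoms.

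The main technical obstacle is the construction of $\mathcal{B}$ and the verification that $\iota$ lands in the $c_0$-part of $X$: one needs $\mathcal{B}$ to simultaneously recover the $\BMO_D$-seminorm up to a universal constant and to be meager enough that for each $g \in \rC_{\cc}$ only finitely many $B \in \mathcal{B}$ produce a local norm exceeding a given threshold. The extra care required for balls near $D$ --- which contribute to the $\BMO_D$-norm via a plain $\L^2$-average rather than a mean oscillation --- is what forces the $\ell^1$-representation of $\tilde\ell$ to naturally split into atoms of both species, matching Definition~\ref{def:H1D_atoms}.
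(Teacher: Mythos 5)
Your proposal takes a genuinely different route from the paper's. The paper reduces the surjectivity of the map $\H^1_D\to(\VMO_D)^*$ to a weak-$*$ compactness statement: starting from $\ell\in(\VMO_D)^*$ it takes a Hahn--Banach extension to $(\BMO_D)^*$, invokes the already-established duality $(\H^1_D)^*=\BMO_D$ (Theorem~\ref{H1-BMO-duality: Theorem: Main result}) to land in $(\H^1_D)^{**}$, applies Goldstine's theorem and separability of $\VMO_D$ to approximate $\ell$ by a bounded sequence in $\H^1_D$, and then proves a sequential weak-$*$ compactness result for bounded sets of $\H^1_D$ inside $(\VMO_D)^*$ (Lemmas~\ref{Predual of H1: Lemma: H1-representation of f via dyadic grid structure} and~\ref{Predual of H1: Lemma: Bounded sequence in H1}, using a dyadic regridding of atomic decompositions, Bolzano--Weierstrass, a diagonalization, and a $3\varepsilon$-argument treating small/medium/large scales separately). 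Your proof instead works at the level of the domain: it realizes $\VMO_D$ as a closed subspace of a $c_0$-direct sum $X$ of local spaces $\L^2_0(B)$ and $\L^2(B)$ over a countable ball family $\mathcal{B}$, extends $\ell\circ\iota^{-1}$ by Hahn--Banach to $X^*=(\bigoplus_B E_B^*)_{\ell^1}$, and reads off the atomic decomposition directly from the $\ell^1$-tuple. This is more self-contained --- it bypasses the bidual machinery and does not rely on Theorem~\ref{H1-BMO-duality: Theorem: Main result} --- and the $\ell^1$-representation automatically produces atoms of the correct species (mean-zero resp.\ plain $\L^2$) because of the choice of $E_B$. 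The price is the construction of $\mathcal{B}$: it must be countable and sparse enough that $\iota(\rC_{\cc})\subseteq X$ (the $c_0$-requirement, which you correctly reduce to uniform continuity and the distance of $\supp g$ from $D$), yet rich enough to recover the $\BMO_D$-norm. The comparison of arbitrary balls against $\mathcal{B}$ via enlargement is exactly where (UITC) enters, and for the ``near $D$''-branch one must also check that an enlargement of a ball near $D$ can be chosen both in $\mathcal{B}$ and again near $D$; this is where the same geometric work appears that the paper routes through its dyadic preprocessing in Lemma~\ref{Predual of H1: Lemma: H1-representation of f via dyadic grid structure}. You flag this as the main open point, and I agree: it is doable, but it is a genuine lemma that the final write-up would need to contain, not a remark. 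Modulo that, the argument is sound, and the unwinding step $\ell=\ell_f$ via the mean-zero property of the usual atoms works as you describe (the interchange of sum and integral being justified by $\sum_B|\lambda_B|\,\|a_B\|_1\leq\sum_B|\lambda_B|<\infty$).
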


	This result naturally extends \cite[Thm.~4.1]{Coifman-Weiss} and we will need it in Section~\ref{Subsection: Inclusion of Operator-adapted space into H_D^1(O)} to show the inclusion $\H_L^1 \sub \H_D^1$. For its proof, we adapt the arguments in \cite{Coifman-Weiss}.
	We are going to use a trick to circumvent the \textit{a priori} norming property of $\rC_{\cc}$ for $\H_D^1$ used in~\cite{Coifman-Weiss}, which we learned from~\cite[p.\,85]{Zorin-Kranich_Master-Thesis}.

	\textbf{The inclusion $\boldsymbol{\H_D^1 \sub (\VMO_D)^*}$.} This one is rather easy to see. Indeed, the proof of Lemma~\ref{Precise Definitions: Lemma: H1-BMO-pairing} reveals that this inclusion map is a contraction. Moreover, injectivity follows from the fact that $\rC_{\cc}$ separates the points of $\H_D^1 \sub \L^1$.

	\textbf{The inclusion $\boldsymbol{(\VMO_D)^* \sub \H_D^1}$.} By the closed graph theorem, it suffices to show the inclusion in the set theoretic sense. The proof consists of two parts. The first one is a purely functional analytic reduction argument and the second one is a particular $D$-adapted modification of the argument in \cite{Coifman-Weiss}.

Write $\ball_X$ for the closed unit ball in a Banach space $X$.

\textbf{Step 1.} Pick $\ell \in (\VMO_D)^*$ and assume without loss of generality that $\ell \neq 0$. After normalization we can assume that $\Vert \ell \Vert_{(\VMO_D)^*} \leq 1$. Let $\widetilde{\ell} \in (\BMO_D)^*$ be a Hahn--Banach extension of $\ell$. By Theorem~\ref{H1-BMO-duality: Theorem: Main result} there is a universal constant $c > 0$ such that $\widetilde{\ell} \in c \ball_{(\H_D^1)^{**}}$. Now, Goldstine's theorem \cite[Thm.~8.3.17]{Werner_FA} implies that $c \ball_{\H_D^1}$ is $w^*$-dense in $c \ball_{(\H_D^1)^{**}}$. Using this and the separability of $\VMO_D$, we find a sequence $(f_n)_n \sub c \ball_{\H_D^1}$ such that $f_n \to \ell$ in the $w^*$-topology of $(\VMO_D)^*$.

\textbf{Step 2.} The upshot of the previous discussion is that the proof of the inclusion $(\VMO_D)^* \sub \H_D^1$ boils down to proving that any bounded sequence in $\H_D^1$ admits a subsequence which converges to some $f \in \H_D^1$ in the $w^*$-topology of $(\VMO_D)^*$.

The first auxiliary result is in the spirit of \cite[Lem.~4.4]{Coifman-Weiss}.
When $D = \varnothing$, our proof gives a technically less involved version of their result in the Euclidean setting.
For non-trivial $D$ an additional argument is needed.

\begin{lemma}\label{Predual of H1: Lemma: H1-representation of f via dyadic grid structure}
    Suppose Assumption~\ref{ass:atomic}. Then any $f \in \H^1_D$ admits an atomic decomposition
    \begin{equation*}
    		f = \sum_{(k, i) \in \Z \times \Z^d} \lambda_{k,i} a_{k,i}  \quad \text{with} \quad \Vert f \Vert_{\H_D^1} \simeq \Vert (\lambda_{k,i})_{k,i} \Vert_{\ell^1}.
    	\end{equation*}
    	In addition, it holds:

    		\begin{enumerate}
    		\item  If $a_{k,i}$ is associated with $B_{k,i}$, then $r(\ball_{k,i}) \simeq 2^k$.

    		\item For fixed $k \in \N$, only finitely many balls of $\{ B_{k,i} \}_{i \in \Z^d}$ can intersect a compact set in $\overline{O}$.
    	\end{enumerate}
 \end{lemma}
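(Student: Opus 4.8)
The goal is to upgrade an arbitrary atomic decomposition of $f\in\H^1_D$ to one where the underlying balls are organized according to a dyadic grid in $\R^d$: radius comparable to $2^k$ for a scale parameter $k\in\Z$, and a location parameter $i\in\Z^d$ with a local finiteness property. The plan is to treat one atom at a time and then concatenate.

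\textbf{Step 1: reduction to a single atom.} Given $f=\sum_j\mu_j b_j$ with $\sum_j|\mu_j|\le(1+\varepsilon)\|f\|_{\H^1_D}$, it suffices to decompose each atom $b_j$, associated with a ball $B_j=B(y_j,s_j)$, as a sum $b_j=\sum_{(k,i)}\nu^{(j)}_{k,i}c^{(j)}_{k,i}$ of $\H^1_D$-atoms attached to dyadic-type balls, with $\sum_{(k,i)}|\nu^{(j)}_{k,i}|\lesssim 1$, in such a way that the collection of balls produced from a single $b_j$ already satisfies (i) and (ii). Summing over $j$ and relabeling then gives the claimed global decomposition, with $\|(\lambda_{k,i})\|_{\ell^1}\lesssim\sum_j|\mu_j|\lesssim\|f\|_{\H^1_D}$; the reverse inequality $\|f\|_{\H^1_D}\lesssim\|(\lambda_{k,i})\|_{\ell^1}$ is automatic since each $c^{(j)}_{k,i}$ is (a bounded multiple of) an $\H^1_D$-atom.

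\textbf{Step 2: decomposing a single atom along a dyadic ball.} Fix an atom $b$ on $B=B(y,s)$ inside the single component $O$ (Assumption~\ref{ass:atomic}). Choose $k\in\Z$ with $2^{k-1}\le s<2^k$ and a dyadic cube $Q\in\Z^d$-grid of sidelength $\simeq 2^k$ containing $y$; let $\ball_{k,i}$ be a fixed ball comparable to $Q$ (say the ball with the same center and radius a fixed multiple of $2^k$ chosen so that $\ball(y,s)\subseteq\ball_{k,i}$), and set $B_{k,i}=\ball_{k,i}\cap O$. If $b$ is a usual atom, then $\supp b\subseteq\overline B\subseteq\overline{B_{k,i}}$, $\|b\|_2\le|B|^{-1/2}\lesssim|B_{k,i}|^{-1/2}$ by (UITC) (both $|B|$ and $|B_{k,i}|$ are comparable to $2^{kd}$), and the moment condition $\int_O b=0$ persists; so $b$ is, up to a bounded constant, a single usual $\H^1_D$-atom on $B_{k,i}$. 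If $b$ is an atom near $D$, one must check that the enlarged ball $\ball_{k,i}$ is still near $D$ in the sense of Definition~\ref{def:balls}: since $\ball(y,2s)\cap(D\cap\partial O)\neq\varnothing$ and $\ball_{k,i}\supseteq\ball(y,s)$ with comparable radius $\simeq 2^k\simeq s$, the doubled ball $2\ball_{k,i}$ still meets $D\cap\partial O$ after adjusting the fixed multiplicative constants; hence $b$ is again a bounded multiple of an $\H^1_D$-atom near $D$ on $B_{k,i}$. This already gives property (i). I would take care here to fix, once and for all, the geometric constant relating $Q$, $\ball_{k,i}$ and $s$, so that "near $D$" is preserved uniformly.

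\textbf{Step 3: the local finiteness property (ii) and assembling.} For a single atom $b$ we have produced exactly one ball, so (ii) is trivial \emph{per atom}; the content of (ii) is that after collecting balls over all $j$ and relabeling by $(k,i)\in\Z\times\Z^d$, for each fixed $k$ the balls $B_{k,i}$ with $i$ ranging over the indices actually used are all distinct dyadic-grid balls of scale $2^k$, hence any compact $K\subseteq\overline O$ meets only finitely many of them — because a compact set is covered by finitely many cubes of sidelength $2^k$. The one subtlety is that several original atoms $b_j$ may land on the \emph{same} dyadic ball $B_{k,i}$; then I would simply add their (rescaled) contributions, noting that a sum $\sum_m t_m c_m$ of usual atoms on a common ball $B_{k,i}$ with $\sum_m|t_m|<\infty$ is again $\|(t_m)\|_{\ell^1}$ times a usual atom on $B_{k,i}$ (linearity of support, size, and moment conditions), and likewise for atoms near $D$; the $\ell^1$-bookkeeping is preserved. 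Finally, index the resulting balls by $\Z\times\Z^d$: the scale gives $k$, and within scale $k$ the dyadic cubes give $i\in\Z^d$ after a fixed bijection.

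\textbf{Main obstacle.} The routine part is the $\ell^1$-bookkeeping and the comparability of measures via (UITC). The genuinely delicate point — and the reason the statement records a separate case for non-trivial $D$ — is Step 2 for atoms near $D$: one must enlarge the ball of an atom near $D$ to a dyadic ball \emph{without losing the defining condition} $\ball(\cdot,2r)\cap(D\cap\partial O)\neq\varnothing$. This forces the dyadic ball to be chosen with a radius that is simultaneously large enough to contain the original support and small enough (up to fixed constants) that its double still reaches $D$; keeping all the implied constants consistent, and checking they do not degenerate, is where I expect to spend the real effort.
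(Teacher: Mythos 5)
Your proposal is correct and follows essentially the same strategy as the paper: starting from an arbitrary atomic decomposition, you assign each atom to a dyadic cube of comparable scale containing its center, associate a dyadic-type ball to each cube, combine all atoms mapped to the same cube into one new atom, and verify that the size condition is preserved via (UITC) and that the \enquote{near $D$} property propagates from the original ball to the enlarged dyadic ball (so that the moment condition is needed precisely when all contributing atoms already satisfy it). This matches the paper's argument, including the key containment $\ball(x_j,2r_j)\subseteq C\,\ball_{k,i}$ that you correctly identified as the delicate point.
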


 \begin{proof}
   	Take an atomic decomposition $f = \sum_j  \lambda_j a_j$ with $\lambda_j \in (0,\infty)$ satisfying $\sum_j \lambda_j \leq 2 \| f \|_{\H^1_D}$ and let $a_j$ be associated with $B_j \coloneqq B(x_j,r_j)$. For each $j \in \N$ there is a unique $k \in \Z$ such that $2^{k-1} < r_j \leq 2^k$. Now let $Q_{k,i}$ be the unique cube in the standard half open dyadic grid of cubes with sidelength $2^{k+1}$ that contains $x_j$. Express this relation by the notation $j \equiv (k,i) \in \Z \times \Z^d$.

	Put $J_{k,i} \coloneqq \{ j : j \equiv (k,i) \}$ and let $j \in J_{k,i}$. We denote by $\ball_{k,i}$ a ball centered in $Q_{k,i} \cap O$ of smallest radius such that $2Q_{k,i}$ is contained. By construction, its radius is comparable to $2^k$, so (i) is satisfied. Also, since for fixed $k \in \Z$ cubes from the dyadic grid $\{Q_{k,i} \}_{i \in \Z^d}$ are disjoint, a counting argument yields (ii).

   	To define a new atomic decomposition of $f$ put
   	\begin{align}
 		\lambda_{k,i} \coloneqq \sum_{j\in J_{k,i}} \lambda_j \quad \& \quad a_{k,i} \coloneqq \frac{\sum_{j\in J_{k,i}} \lambda_j a_j}{\lambda_{k,i}}.
	\end{align}
   	We check that $a_{k,i}$ is (up to a multiplicative constant) an $\H^1_D$-atom. First, since by construction $B_j \subseteq 2Q_{k,i} \subseteq \ball_{k,i}$ for $j\in J_{k,i}$, $a_{k,i}$ is supported in $\overline{B_{k,i}}$. Second, for $j\in J_{k,i}$ it follows from $r(\ball_j) \simeq 2^k \simeq r(\ball_{k,i})$ and (UITC) that $|B_j| \geq c^{-2} |B_{k,i}|$ for an absolute constant $c > 0$. Hence,
   	\begin{align}
 		\| a_{k,i} \|_2 \leq (\lambda_{k,i})^{-1} \sum_{j\in J_{k,i}} |\lambda_j| \| a_j \|_2 \leq (\lambda_{k,i})^{-1} \sum_{j\in J_{k,i}} |\lambda_j| |B_j|^{-\frac{1}{2}} \leq c |B_{k,i}|^{-\frac{1}{2}}.
   	\end{align}
   	Third, if $B_{k,i}$ is usual, then already $B_j$ was usual for all $j\in J_{k,i}$, so $a_{k,i}$ still has mean value zero.
   	In summary, $a_{k,i}$ is a constant multiple of an $\H^1_D$-atom.

	In summary, we deduce the atomic decomposition
   	\begin{align}
   		f = \sum_{k, i} (c \lambda_{k,i}) ( c^{-1} a_{k,i}) \quad \text{with} \quad \sum_{k, i} c \lambda_{k,i} \leq 2 c \| f\|_{\H^1_D}. & \qedhere
	\end{align}

 \end{proof}

The second key lemma is an extension of \cite[Lem.~4.2]{Coifman-Weiss}.

 \begin{lemma} \label{Predual of H1: Lemma: Bounded sequence in H1}
 	 Assume $\mathrm{(UITC)}$. Any bounded sequence $(f_n)_n \sub \H_D^1$ has a subsequence, which converges to some $f \in \H_D^1$ in the $w^*$-topology of $(\VMO_D)^*$. Moreover, it holds $\| f \|_{\H^1_D} \lesssim \sup_{n \to \infty} \| f_n \|_{\H^1_D}$.
 \end{lemma}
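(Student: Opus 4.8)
\textbf{Proof plan for Lemma~\ref{Predual of H1: Lemma: Bounded sequence in H1}.}
The plan is to mimic the classical argument of Coifman--Weiss \cite[Lem.~4.2]{Coifman-Weiss}, but organized around the dyadic-type atomic decomposition provided by Lemma~\ref{Predual of H1: Lemma: H1-representation of f via dyadic grid structure}. First I would normalize so that $\sup_n \| f_n \|_{\H^1_D} \leq 1$. By Lemma~\ref{Predual of H1: Lemma: H1-representation of f via dyadic grid structure}, each $f_n$ admits an atomic decomposition $f_n = \sum_{(k,i)} \lambda_{k,i}^{(n)} a_{k,i}^{(n)}$ indexed by the dyadic parameter set $\Z \times \Z^d$, with $\sum_{k,i} |\lambda_{k,i}^{(n)}| \lesssim 1$, with $r(\ball_{k,i}^{(n)}) \simeq 2^k$, and with the local finiteness property (ii). The key point of using this decomposition is that the index set is \emph{fixed} and countable, so a diagonal argument over $(k,i) \in \Z \times \Z^d$ is available.

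The main steps would then be as follows. For each fixed $(k,i)$, the atoms $a_{k,i}^{(n)}$ are bounded in $\L^2(\ball_{k,i}^{(n)})$; after passing to a subsequence we can also assume the centers and radii $\ball_{k,i}^{(n)}$ converge (radii bounded between two multiples of $2^k$), so the relevant balls stabilize up to harmless enlargement, and the coefficients $\lambda_{k,i}^{(n)} \to \lambda_{k,i}$ converge. Then $\lambda_{k,i}^{(n)} a_{k,i}^{(n)}$ converges weakly in $\L^2$ to some function $b_{k,i}$ supported in a fixed ball comparable to $\ball_{k,i}$, and because weak $\L^2$-limits preserve the size bound, the cancellation condition (for usual balls), and \enquote{nearness to $D$} (which is a property of the ball, not the function), the limit $\lambda_{k,i}^{-1} b_{k,i}$ is again a constant multiple of an $\H^1_D$-atom with $\sum_{k,i} |\lambda_{k,i}| \lesssim 1$ by Fatou for sums. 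Performing a diagonal extraction over the countable set $\Z \times \Z^d$ yields a single subsequence along which this holds simultaneously for all $(k,i)$. Set $f \coloneqq \sum_{k,i} b_{k,i}$; this is an $\H^1_D$-atomic decomposition, so $f \in \H^1_D$ with $\| f \|_{\H^1_D} \lesssim 1$.

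It remains to identify $f$ as the $w^*$-limit of (the relevant subsequence of) $(f_n)_n$ in $(\VMO_D)^*$, i.e.\ to show $\int_O f_n g \to \int_O f g$ for every $g \in \VMO_D$. By density it suffices to test against $g \in \rC_{\cc}$, and by the $\H^1_D$-$\BMO_D$ pairing \eqref{eq: H1-BMO pairing} the tails of the atomic series are uniformly small: for any $\varepsilon$ there is a finite set $F \subseteq \Z \times \Z^d$ with $\sum_{(k,i)\notin F} |\lambda_{k,i}^{(n)}| < \varepsilon$ uniformly in $n$ (using that $\sum_{k,i}|\lambda^{(n)}_{k,i}|\lesssim 1$ and, for the uniformity, a slightly more careful bookkeeping — the balls with $k$ large or $|i|$ large meeting $\supp g$ are controlled via the local finiteness (ii) and the compact support of $g$). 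On the finite part $F$ we use the weak $\L^2$-convergence $\lambda_{k,i}^{(n)} a_{k,i}^{(n)} \rightharpoonup b_{k,i}$ tested against $g \in \L^2$. Combining a $3\varepsilon$-argument closes the convergence.

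The main obstacle I anticipate is exactly this last uniform tail control: the series $\sum_{k,i}\lambda^{(n)}_{k,i} a^{(n)}_{k,i}$ converges in $\L^1$ but not in $\H^1_D$, so one cannot simply truncate; the argument needs that for $g \in \rC_{\cc}$ only finitely many balls $\ball_{k,i}^{(n)}$ (with multiplicity bounded uniformly in $n$) meet $\supp g$ at each scale $k$, and that the scales $2^k$ that matter are confined to a fixed range because a ball of radius $\gg \diam(\supp g) + \dO$ contributing to $\int g\, b$ would force, via (UITC) and the size bound $\|a_{k,i}\|_2 \le |B_{k,i}|^{-1/2}$, a contribution $\lesssim |\lambda^{(n)}_{k,i}| 2^{-kd/2}|\supp g|^{1/2}\cdot\|g\|_\infty$ which is summable and small for $k$ large. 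Handling the small-$k$ (small ball) tail is analogous using local finiteness in space. This is where the precise structural output (i)--(ii) of Lemma~\ref{Predual of H1: Lemma: H1-representation of f via dyadic grid structure} is essential, and it is the part that requires the most care.
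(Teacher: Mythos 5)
Your overall architecture follows the paper's: normalize, invoke the dyadic decomposition of Lemma~\ref{Predual of H1: Lemma: H1-representation of f via dyadic grid structure}, extract coefficient limits by Bolzano--Weierstrass plus a diagonal argument over the fixed countable index set $\Z\times\Z^d$, pass to weak $\L^2$-limits of the atoms, observe that the size/moment conditions and \enquote{near $D$} are stable under weak limits, and close with a $3\varepsilon$-argument splitting the index set by scale. Two remarks, one minor and one substantive.

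Minor: the balls $B_{k,i}$ produced by the dyadic lemma come from a fixed dyadic grid (the proof of that lemma places $\ball_{k,i}$ around the fixed cube $Q_{k,i}$), so they do not depend on $n$. Your extra step of stabilizing centers and radii along a subsequence is therefore unnecessary, and you can take weak limits directly in the fixed spaces $\L^2(B_{k,i})$, which is what the paper does.

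Substantive gap: your plan for the small-scale tail $(k<-K)$ is incorrect. You propose to control it via \enquote{local finiteness in space}, but local finiteness~(ii) of the dyadic lemma only says that at each fixed scale $k$ finitely many $B_{k,i}$ meet $\supp\phi$; that finite number grows like $2^{-kd}$ as $k\to-\infty$, so counting balls cannot be uniform across scales. Moreover, the $\ell^1$-bound $\sum_{k,i}\lambda_{k,i}(n)\lesssim 1$ does not by itself give uniform tail-smallness in $n$ (mass may shift among indices). The correct mechanism is different: one keeps the full $\ell^1$-bound in $n$ and gains smallness from $\phi$ itself. For a \emph{usual} ball $B_{k,i}$ the atom is mean value free and
\begin{align}
\Bigl|\int_{B_{k,i}} a_{k,i}(n)\,\phi\,\d x\Bigr| \leq \int_{B_{k,i}} |a_{k,i}(n)|\,|\phi-\phi(x_{k,i})|\,\d x \lesssim \osc_{B_{k,i}}\phi,
\end{align}
which is $\leq\varepsilon$ uniformly for all $k<-K$ by uniform continuity of $\phi$. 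For a ball \emph{near} $D$ the atom has no cancellation, and the crucial new ingredient for the mixed-boundary setting is that $\phi\in\rC_{\cc}$ vanishes on a neighbourhood of $\partial O\supseteq D$; so for $B_{k,i}$ near $D$ one may take $y\in D\cap 2B_{k,i}$ and uniform continuity gives $|\phi|\leq\varepsilon$ on $B_{k,i}$ for $k<-K$, again yielding a contribution $\lesssim\varepsilon\lambda_{k,i}(n)$. Summing these over all indices with $k<-K$ and using $\sum\lambda_{k,i}(n)\lesssim1$ is what makes $(\mathrm{I})$ small uniformly in $n$. This is precisely the point the authors flag as the new difficulty compared to Coifman--Weiss, and it is absent from your plan. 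Your treatment of the large-scale tail $(k>K)$ via the size bound and $(\mathrm{UITC})$ is correct and coincides with the paper's.
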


    \begin{proof}
    	 After normalization we can assume that $\sup_n \Vert f_n \Vert_{\H_D^1} \leq 1$. Lemma~\ref{Predual of H1: Lemma: H1-representation of f via dyadic grid structure} furnishes for all $n \in \N$ a decomposition $f_n = \sum_{k, i} \lambda_{k,i}(n) a_{k,i}(n)$ with $\lambda_{k,i}(n) \in (0,\infty)$ and $\sup_n \sum_{k,i} \lambda_{k,i}(n) \les 1$. After passing to an appropriate subsequence, the Bolzano--Weierstrass theorem joint with a diagonalization argument allows us to assume that $\lambda_{k,i} \coloneqq \lim_{n \to \infty} \lambda_{k,i} (n)$ exists for all $(k,i)$ and $\sum_{k, i} \lambda_{k,i} \les 1$, see also~\cite[Lem.~4.3]{Coifman-Weiss}. Since $(a_{k,i}(n))_n \sub \L^2(B_{k,i})$ is bounded, there is some $a_{k,i} \in \L^2(B_{k,i})$ such that $a_{k,i}(n) \to a_{k,i}$ weakly in $\L^2(B_{k,i})$ along a subsequence. A diagonalization argument ensures that we can choose the same subsequence for all $(k,i)$. The weak convergence implies that $a_{k,i}$ inherits the size and (potentially) moment conditions from the atoms $(a_{k,i}(n))_n$. Thus, each $a_{k,i}$ is an $\H_D^1$-atom associated with $B_{k,i}$. We define the candidate
     \begin{equation*}
     	f \coloneqq \sum_{k, i} \lambda_{k,i} a_{k,i} \in \H_D^1.
     \end{equation*}
     By construction, $\| f \|_{\H^1_D} \lesssim \sup_{n \to \infty} \| f_n \|_{\H^1_D}$.
     It remains to show convergence.
     By a $3 \varepsilon$-argument it suffices to show
     \begin{equation*}
     	\langle f_n \, | \, \phi \rangle \to \langle f \, | \, \phi \rangle \mathrlap{\qquad (\phi \in \rC_{\cc}).}
     \end{equation*}
     Let $K \in \N$ to be chosen. Split
     \begin{equation*}
     	\langle f_n \, | \, \phi \rangle = \left(\sum_{k < - K} + \sum_{|k| \leq K}  + \sum_{k > K} \right) \sum_i \lambda_{k,i} (n) \int_{B_{k,i}} a_{k,i}(n) \phi \, \d x \eqqcolon \mathrm{(I)} + \mathrm{(II)} + \mathrm{(III)}.
     \end{equation*}
    Compared to the argument in \cite{Coifman-Weiss}, the major change lies in the treatment of the small cubes (I) as we have to consider atoms near $D$. Let $\varepsilon > 0$. We claim that $K$ can be chosen independently of $(k,i)$ such that $$|(\mathrm{I})| + |(\mathrm{III})| \les \varepsilon.$$ The same argument will also entail a similar estimate for the decomposition of $f$.

    \textbf{Estimate for (I).} If $B_{k,i}$ is usual, then $a_{k,i}(n)$ is average free over $B_{k,i}$. Denote by $x_{k,i}$ the center of $B_{k,i}$.
    Since $\phi$ is uniformly continuous, there is some $K \in \N$ such that $|\phi(x) - \phi(y)| \leq \varepsilon$ for all $x,y \in 2B_{k,i}$, where $i \in \Z^d$ and $k < -K$. Hence, with $y = x_{k,i}$ and using also the size condition for atoms, we infer
    \begin{equation*}
     \int_{B_{k,i}} |a_{k,i}(n)| |\phi - \phi(x_{k,i})| \, \d x \les \varepsilon.
    \end{equation*}
    Now, let $B_{k,i}$ be near $D$. Since $\phi$ vanishes in $D$, we can apply uniform continuity with $y\in D \cap 2B_{k,i}$ to give $|\phi(x)| \leq \varepsilon$ for all $x \in B_{k,i}$, $i \in \Z^d$ and $k < - K$. Consequently,
    \begin{equation*}
    	 \int_{B_{k,i}} |a_{k,i}(n) | |\phi | \, \d x \les \varepsilon.
    \end{equation*}
    Combining the last two estimates, we obtain
    \begin{equation*}
    	 |\mathrm{(I)}| \les \varepsilon \sum_{k < - K} \sum_i \lambda_{k,i}(n) \les \varepsilon.
    \end{equation*}
    \textbf{Estimate for (III).} Using the size condition for atoms, (UITC) and Lemma~\ref{Predual of H1: Lemma: H1-representation of f via dyadic grid structure} (i), we get
    \begin{equation*}
    	\mathrm{(III)} \leq \Vert \phi \Vert_2 \sum_{k > K} \sum_i \lambda_{k,i}(n) |B_{k,i}|^{-\frac{1}{2}} \les |\ball_{K,i}|^{-\frac{1}{2}} \Vert \phi \Vert_2 \sum_{k, i} \lambda_{k,i}(n) \les 2^{- K \frac{d}{2}} \Vert \phi \Vert_2.
    \end{equation*}
The right-hand side can be made smaller than $\varepsilon$ provided that $K$ is large enough.

Next, by Lemma~\ref{Predual of H1: Lemma: H1-representation of f via dyadic grid structure} (ii) and the compact support of $\phi$,
\begin{equation*}
	\mathrm{(II)} \to \int_O \sum_{|k| \leq K, i} \lambda_{k,i} a_{k,i} \phi \, \d x \qquad (n \to \infty).
\end{equation*}
It follows the estimate $$\limsup_{n \to \infty} |\langle f - f_n \, | \, \phi \rangle | \les \varepsilon,$$ which completes the proof.
    \end{proof}

\section{Identification of operator-adapted and atomic Hardy spaces}  \label{Section: Identification of operator-adapted and atomic Hardy spaces}

We introduce the operator-adapted spaces $\H^1_L$ and state and proof our main result (Theorem~\ref{Main result and applications: Theorem: H_D^1=H_L^1}). The proof heavily relies on the duality theory for atomic spaces developed in the last section. Throughout, we assume $(\mathrm{G}(\mu))$, (UITC), (Fat) and (LU).
Also (D) is used, but only once. We will make its usage explicit.

\subsection{Relevant definitions} \label{Subsection: Relevant definitions}

\begin{definition}[cones and square functions]
	Let $x \in O$ and $h > 0$.
	For a measurable function $f \colon (0, \infty) \times O \to \C$ define its \textbf{local square function} by
	\begin{equation*}
		(S^{\loc} f)(x) \coloneqq \left( \int_0^\dO \fint_{O(x,t)} |f(t,y)|^2 \, \frac{\d t \, \d y}{t} \right)^{\frac{1}{2}} \mathrlap{\qquad (x \in O).}
	\end{equation*}
	For either $\j(z) = \sqrt{z} \e^{-\sqrt{z}}$ or $\j(z) = z \e^{-z}$ set
	\begin{equation*}
		S_{\j, L}^{\loc}(f)(x) \coloneqq S^{\loc} (\j(\t^2 L) f)(x)  \mathrlap{\qquad (f \in \L^1, x \in O).}
	\end{equation*}
	Analogously, $S$ and $S^{\glob}$ are defined by the same expression as $S^{\loc}$ but with $(0,\dO)$ replaced by $(0,\infty)$ or $(\dO, \infty)$ in the $t$-integral, respectively. These square functions lead to the $L$-adapted square functions $S_{\j, L}$ and $S_{\j, L}^{\glob}$.
\end{definition}

\begin{remark}
	\label{rem:sf_normalization}
	Due to (UITC), the local square function can equivalently be expressed via
	\begin{align}
		(S^{\loc} f)(x) = \left( \int_0^{\dO} \int_{O(x,t)} |f(t,y)|^2 \, \frac{\d t \, \d y}{t^{1+d}} \right)^{\frac{1}{2}}, \quad x\in O.
	\end{align}
	This is not possible for $S^{\glob}$. We will come back to this later in~\eqref{eq:strange_monotonicity}.
\end{remark}

\begin{definition}

	The \textbf{operator-adapted Hardy space $\boldsymbol{\H_L^1}$} consists of all $f \in \L_0^1$ for which $S_{\j, L}^{\loc}(f) \in \L^1$, where either $\j(z) = \sqrt{z} \e^{-\sqrt{z}}$ or $\j(z) = z \e^{-z}$. Endow $\H_L^1$ with the norm
	\begin{align*}
		\Vert f \Vert_{\H_L^1} \coloneqq
			 \Vert S_{\j, L}^{\loc}(f) \Vert_1 + \Vert f \Vert_1.
	\end{align*}
\end{definition}

\begin{remark}[dependence on $\j$]
	At this stage, the definition of $\H_L^1$ using either the function $\j(z) = \sqrt{z} \e^{-\sqrt{z}}$ or $\j(z) = z \e^{-z}$ is a slight abuse of notation. We will of course treat both of them in all proofs. As a consequence of Theorem~\ref{Main result and applications: Theorem: H_D^1=H_L^1} it will turn out  that the definition of $\H_L^1$ is indeed independent of the choice of $\j$ up to equivalent norms.
\end{remark}

\begin{remark}[relation to the kernel--range splitting]
	\label{rem:fc_on_H1L}
	Suppose that $f\in \H^1_L \cap \L^2$. Then in particular $f\in \L^2_0$. According to Proposition~\ref{prop:kernel} this gives $f\in \overline{\sR(L)}$. A practical consequence is that if $\psi \in \H^\infty$ on a suitable sector, then $\psi(L) f \in \overline{\sR(L)} = \L^2_0$. Indeed, the functional calculus respects the kernel--range splitting.
\end{remark}

Let us show that $\H_L^1$ can be equipped with an equivalent norm. In the sequel we are going to use this fact freely.

\begin{proposition}[equivalent norm]
	\label{prop:H1L_renorming}
	The space $\H_L^1$ can be equipped with the equivalent norm $$\| f \|_{\H_L^1} \simeq \| S_{\j, L}(f) \|_1.$$
\end{proposition}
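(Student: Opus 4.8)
The plan is to compare the three square functions $S_{\j, L}$, $S_{\j, L}^{\loc}$ and $S_{\j, L}^{\glob}$. By their very definition one has the pointwise identity $S_{\j, L}(f)(x)^2 = S_{\j, L}^{\loc}(f)(x)^2 + S_{\j, L}^{\glob}(f)(x)^2$, and since $\sqrt{a^2 + b^2} \simeq a + b$ for $a, b \geq 0$, integrating over $O$ gives
\begin{equation*}
	\Vert S_{\j, L}(f) \Vert_1 \simeq \Vert S_{\j, L}^{\loc}(f) \Vert_1 + \Vert S_{\j, L}^{\glob}(f) \Vert_1 .
\end{equation*}
If $O$ is unbounded, then $\dO = \infty$, so $S_{\j, L}^{\glob}(f) \equiv 0$ and $S_{\j, L}^{\loc} = S_{\j, L}$, and the statement is immediate. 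So I would assume from now on that $O$ is bounded. In view of the displayed equivalence, the claim then reduces to the two estimates
\begin{equation*}
	\text{(a)} \quad \Vert S_{\j, L}^{\glob}(f) \Vert_1 \les \Vert f \Vert_1 \qquad \text{and} \qquad \text{(b)} \quad \Vert f \Vert_1 \les \Vert S_{\j, L}(f) \Vert_1 ,
\end{equation*}
because (a) yields $\Vert S_{\j, L}(f) \Vert_1 \les \Vert f \Vert_{\H_L^1}$ and (b) yields $\Vert f \Vert_{\H_L^1} \leq \Vert S_{\j, L}(f) \Vert_1 + \Vert f \Vert_1 \les \Vert S_{\j, L}(f) \Vert_1$.

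For (a) I would observe that for $t > \dO$ and $x \in O$ the ball $\ball(x,t)$ contains $O$, so $O(x,t) = O$ and hence $S_{\j, L}^{\glob}(f)(x)^2 \leq \int_\dO^\infty \Vert \j(t^2 L) f \Vert_\infty^2 \, \frac{\d t}{t}$. By Lemma~\ref{Preliminary results: Lemma: Linfty-extension of the semigroup} and the kernel bounds recalled there, $\Vert \j(t^2 L) f \Vert_\infty \les t^{-d} \Vert f \Vert_1$, so that $S_{\j, L}^{\glob}(f)(x)^2 \les \dO^{-2d} \Vert f \Vert_1^2$ uniformly in $x$; integrating over $O$ and using $|O| \les \dO^d$ gives (a).

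Estimate (b) is the main obstacle and I expect it to require the heavier machinery. I would first reduce to $f \in \H_L^1 \cap \L^2$ by a standard approximation argument (truncating the $t$-integral, or applying $\e^{-\varepsilon^2 L}$ and passing to the limit via Fatou), a step that needs some care to check that the approximants stay in $\H_L^1$ with controlled norm. For such $f$, Remark~\ref{rem:fc_on_H1L} places $f$ in $\L^2_0 = \overline{\sR(L)}$, so a Calderón reproducing formula $f = c \int_0^\infty \psi(t^2 L) \j(t^2 L) f \, \frac{\d t}{t}$ holds in $\L^2$ for a suitable holomorphic $\psi$ with enough decay at $0$ and $\infty$. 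Pairing with $g \in \L^\infty$, $\Vert g \Vert_\infty \leq 1$ (which norm $\L^1$ since $O$ is bounded), and Fubini give $\int_O f \overline{g} \, \d x = \iint_{(0,\infty) \times O} \j(t^2 L) f(y) \, \overline{\psi(t^2 L)^* g(y)} \, \frac{\d y \, \d t}{t}$. The key point is that $|\psi(t^2 L)^* g(y)|^2 \, \frac{\d y \, \d t}{t}$ is a Carleson measure with norm $\les \Vert g \Vert_\infty^2$; this follows from $(\mathrm{G}(\mu))$ by the usual localization of $g$ into its restriction to a dilate $3B$ of a ball and the annular pieces $C_j(B)$, controlling the local part by the $\L^2$-boundedness of the square function and the tails by the Gaussian off-diagonal decay together with the cancellation of $\psi$. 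Carleson's embedding theorem and Cauchy--Schwarz in the corresponding tent spaces then yield $|\int_O f \overline{g} \, \d x| \les \Vert S_{\j, L}(f) \Vert_1 \Vert g \Vert_\infty$, and taking the supremum over $g$ proves (b). The delicate parts are thus the Carleson measure estimate and the reduction to $\L^2$; everything else is elementary.
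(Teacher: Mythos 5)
Your decomposition $S_{\j,L}(f)^2=S^{\loc}_{\j,L}(f)^2+S^{\glob}_{\j,L}(f)^2$ is correct, but there are two genuine gaps.

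First, the unbounded case is \emph{not} immediate. When $O$ is unbounded you correctly observe $S^{\loc}_{\j,L}=S_{\j,L}$ and $S^{\glob}_{\j,L}\equiv 0$, but the norm $\|f\|_{\H^1_L}$ still carries the summand $\|f\|_1$, so your inequality (b), namely $\|f\|_1\lesssim\|S_{\j,L}(f)\|_1$, must be established there as well. You discard this case too quickly; in the paper it is in fact the starting point of the argument.

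Second, your estimate (a) invokes an $\L^1\to\L^\infty$ bound $\|\j(t^2L)f\|_\infty\lesssim t^{-d}\|f\|_1$ for $t>\dO$. But property $(\mathrm{G}(\mu))$ and Lemma~\ref{Preliminary results: Lemma: Linfty-extension of the semigroup} only supply kernel bounds for $0<t<\dO$, and this restriction is essential: when $O$ is bounded with a non-trivial pure Neumann part the kernel of the semigroup cannot decay in time, since $\sN(L)$ contains the locally constant functions. You must therefore factor through a time inside the admissible window. The paper writes $\e^{-t\sqrt{L}}=\e^{-(t-\dO/2)\sqrt{L}}\e^{-\dO/2\sqrt{L}}$, controls the inner factor by $(\mathrm{G}(\mu))$ at the fixed scale $\dO/2<\dO$, and the outer factor by the $\L^2$ square function estimate (which requires no kernel bounds); your argument needs an analogous modification before it is complete.

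Regarding (b) itself, your reproducing formula plus Carleson measure route is a valid alternative, but it differs from the paper's, which invokes the theory of pre-adapted spaces from \cite{Hardy-Gaffney} (see Lemma~\ref{lem:pre-adapted_embed_L1}), i.e.\ the atomic/molecular decomposition of $\mathbb{H}^1_L$; the Carleson argument is in effect outsourced to that reference. The reduction to $\H^1_L\cap\L^2$ that you flag as delicate is indeed the crux: the paper settles it by the uniform bound $\sup_{0<s<\dO}\|P_sf\|_{\H^1_L}\lesssim\|f\|_{\H^1_L}$ obtained in Step~2 of the proof of $\H^1_L\subseteq\H^1_D$ in Section~\ref{Subsection: Inclusion of Operator-adapted space into H_D^1(O)}, and you would need to supply a comparable argument.
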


\begin{proof}
	Let $f\in \H_L^1$.

	\textbf{(1) $\boldsymbol{O}$ is unbounded}. Observe that $S_{\j, L}^{\loc} = S_{\j, L}$ since $O$ is unbounded.

	To control the $\L^1$-norm, suppose first that additionally $f\in \L^2$. Then the theory of pre-adapted spaces that we are going to introduce in Section~\ref{sec:maximal} yields $\| f \|_1 \lesssim \| S_{\j,L}(f) \|_1$, see Lemma~\ref{lem:pre-adapted_embed_L1}.

	Now, we treat a general $f$ in the Poisson case $\j(z) = \sqrt{z} \e^{-\sqrt{z}}$, with obvious modifications for $\j(z) = z \e^{-z}$. Owing to $\mathrm{G}(\mu)$ one has the convergence $\e^{-s \sqrt{L}} f \to f$ in $\L^1$ as $s\to 0$. For this approximation, one has the uniform bound $\sup_{s > 0} \| \e^{-s \sqrt{L}} f \|_{\H^1_L} \lesssim \| f \|_{\H^1_L}$ which we are going to present in full detail in Step~2 of the proof of the inclusion $\H_L^1 \sub \H_D^1$ (Section~\ref{Subsection: Inclusion of Operator-adapted space into H_D^1(O)}). Therefore, since $\e^{-s \sqrt{L}} f \in \L^2$, it follows
	\begin{align}
		\| f \|_1 = \lim_{s\to 0} \| \e^{-s \sqrt{L}} f \|_1 \lesssim \liminf_{s\to 0} \| \e^{-s \sqrt{L}} f \|_{\H^1_L} \lesssim \| f \|_{\H^1_L}.
	\end{align}
	Observe that this calculation did not use that $O$ is unbounded.

	\textbf{(2) $\boldsymbol{O}$ is bounded}. Recall that $S_{\j,L}(f) \leq S_{\j,L}^{\loc}(f) + S_{\j,L}^\glob(f)$. The same argument as in Step~1 gives $\| f \|_1 \lesssim \| S_{\j,L}(f) \|_1$, hence it suffices to show $\| S_{\j, L}^{\mathrm{glob}} (f) \|_1 \lesssim \| f \|_1$.

	Indeed, using Hölder's inequality, the averaging trick, the substitution $t \mapsto t + \nicefrac{\d(O)}{2}$, the square function estimate in $\L^2$ and $\mathrm{G}(\mu)$, we obtain
	\begin{align}
		\Vert S_{\j, L}^{\mathrm{glob}} (f) \Vert_1 &\leq |O|^{\frac{1}{2}} \Vert S_{\j, L}^{\mathrm{glob}} (f) \Vert_2
		\\&\simeq |O|^{\frac{1}{2}} \left(  \int_{\d(O)}^{\infty} \int_O | \sqrt{L} \e^{- (t - \frac{\d(O)}{2}) \sqrt{L}} \e^{- \frac{\d(O)}{2} \sqrt{L}}f(y)|^2 \, \d y \, \d t \right)^{\frac{1}{2}}
		\\&=  |O|^{\frac{1}{2}} \left(  \int_{\frac{\d(O)}{2}}^{\infty} \int_O | \sqrt{L} \e^{- t \sqrt{L}} \e^{- \frac{\d(O)}{2} \sqrt{L}}f(y)|^2 \, \d y \, \d t \right)^{\frac{1}{2}}
		\\&\les |O|^{\frac{1}{2}} \Vert S_{\j, L}(\e^{- \frac{\d(O)}{2} \sqrt{L}} f) \Vert_2
		\\&\les |O|^{\frac{1}{2}} \Vert \e^{- \frac{\d(O)}{2} \sqrt{L}} f \Vert_2
		\\&\les \Vert f \Vert_1. \qedhere
	\end{align}
\end{proof}

\subsection{Precise result and strategy of the proof}   \label{Subsection: Outline of the strategy}

Finally, we are able to state our main result.

\begin{theorem}  \label{Main result and applications: Theorem: H_D^1=H_L^1}
	Let $O \subseteq \R^d$ open and $D \subseteq \partial O$ closed. Suppose the geometric conditions $(\mathrm{UITC})$, $(\mathrm{LU})$, (D) and $(\mathrm{Fat})$. Moreover, suppose that $L$ satisfies $(\mathrm{G}(\mu))$.
	Then $$\H_L^1(O) = \H_D^1(O).$$
\end{theorem}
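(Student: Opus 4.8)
The plan is to prove the two inclusions $\H^1_D \sub \H^1_L$ and $\H^1_L \sub \H^1_D$ separately, with comparable norms. By Remark~\ref{rem:atomic_direct_sum} both spaces decouple over the components of $O$, and by Corollary~\ref{cor:ON_implies_bounded} together with Lemma~\ref{lem:ITC_for_components} each component again satisfies $(\mathrm{UITC})$ and sits at positive distance from the others; hence it suffices to argue when $O$ is a single component, i.e.\ under Assumption~\ref{ass:atomic}, so that the full duality theory of Section~\ref{Section: Duality theory for D-adapted spaces} is at our disposal.

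\textbf{The inclusion $\H^1_D \sub \H^1_L$.} Since every atomic decomposition converges in $\L^1$ (Remark~\ref{rem:atomic_convergence}), since $\j(\t^2 L)$ is bounded on $\L^1$ (Lemma~\ref{Preliminary results: Lemma: Linfty-extension of the semigroup}), and since $f \mapsto \|S^{\loc}_{\j,L} f\|_1$ is sublinear and lower semicontinuous along $\L^1$-convergence (Fatou), it is enough to establish the uniform bound $\|S^{\loc}_{\j,L} a\|_1 \les 1$ over all $\H^1_D$-atoms $a$; the bounds $\|a\|_1 \les 1$ and $a\in\L^1_0$ are immediate from the size and localization conditions together with $(\mathrm{UITC})$. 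Fix $a$ associated with $B = B(x_0,r)$ and split $\int_O S^{\loc}_{\j,L} a = \int_{\ball(x_0,4r)\cap O} + \sum_{j\ge 2}\int_{C_j(\ball(x_0,r))\cap O}$. On the bounded near part one applies the Cauchy--Schwarz inequality, the bound $|\ball(x_0,4r)\cap O|\les r^d$, the $\L^2$-boundedness of $S^{\loc}_{\j,L}$ (a consequence of the square-function estimate afforded by $(\mathrm{G}(\mu))$), and the size condition $\|a\|_2\le|B|^{-1/2}$. On the far annuli one must extract decay from the kernel $q_t$ of $\j(\t^2 L)$: if $a$ is a \emph{usual} atom one subtracts its mean value and writes $\j(t^2L)a(y)=\int_O (q_t(y,z)-q_t(y,x_0))\,a(z)\,\d z$, gaining a factor $(r/t)^\nu$ from~\eqref{eq: Hoelder Kernel bound for Poisson semigroup} and Remark~\ref{Property G(mu): Remark: G(mu) stability}; if $a$ is an atom \emph{near $D$} one instead picks $x_D\in D$ with $|x_0-x_D|\les r$ and uses $q_t(x_D,\cdot)\equiv 0$ (Lemma~\ref{The geometric setup: Lemma: Kernel vanishes in D}) to write $\j(t^2L)a(y)=\int_O (q_t(y,z)-q_t(x_D,z))\,a(z)\,\d z$, again gaining Hölder decay. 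Both situations are made quantitative and uniform by Lemma~\ref{Inclusion of H_D^1(O) into Operator-adapted space: Lemma: Interpolation bound}. Summing the geometric series over the annuli --- using Lemma~\ref{The geometric setup: Lemma: Assumption (D), upper estimate for measure} to control the relevant measures, which is the single place where $(D)$ enters --- yields $\|S^{\loc}_{\j,L}a\|_1\les 1$.

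\textbf{The inclusion $\H^1_L \sub \H^1_D$.} Here I would argue by duality, using Theorem~\ref{Predual of H1: Theorem: VMO* = H1}: it suffices to show that every $f\in\H^1_L$ induces a bounded functional on $\VMO_D$. As $\rC_{\cc}$ is dense in $\VMO_D$ and $\H^1_L\sub\L^1$, this reduces to the \emph{a priori} estimate
\begin{equation*}
	\Bigl|\int_O f\,g\,\d x\Bigr|\les\|f\|_{\H^1_L}\,\|g\|_{\BMO_D}\qquad(g\in\rC_{\cc}),
\end{equation*}
which then extends $f$ to a functional on $\VMO_D$; comparing with the $\H^1_D$-function representing that functional on $\rC_{\cc}$ (which separates $\L^1$) identifies $f$ as an element of $(\VMO_D)^*=\H^1_D$ with $\|f\|_{\H^1_D}\les\|f\|_{\H^1_L}$. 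To prove the a priori estimate, assume first $f\in\H^1_L\cap\L^2$, so $f\in\L^2_0=\overline{\sR(L)}$ by Proposition~\ref{prop:kernel} and Remark~\ref{rem:fc_on_H1L}. Insert a Calderón reproducing formula $f=c_\psi\int_0^{\dO}\psi(t^2L)\,\j(t^2L)f\,\frac{\d t}{t}$, valid in $\L^2_0$ for a suitable admissible $\psi$ (chosen so that $\psi\j$ has nonzero integral against $\frac{\d t}{t}$; when $O$ is unbounded the time window is $(0,\infty)$, and when $O$ is bounded the exponentially small global tail is controlled exactly as in the proof of Proposition~\ref{prop:H1L_renorming}). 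Moving $\psi(t^2L)$ onto $g$ and invoking tent-space duality --- the $T^1$--$T^\infty$ pairing, available over $O$ since $(\mathrm{UITC})$ makes $O$ a space of homogeneous type and Remark~\ref{rem:sf_normalization} identifies the area integral of $\j(\t^2 L)f$ with $S^{\loc}_{\j,L}f$ --- reduces matters to the Carleson-measure estimate
\begin{equation*}
	\sup_{B\,\text{usual}}\ \frac{1}{|B|}\int_0^{r(B)}\!\!\int_B|\psi(t^2L)^*g(y)|^2\,\frac{\d y\,\d t}{t}\les\|g\|_{\BMO_D}^2.
\end{equation*}
For usual balls this is the Fefferman--Stein argument: split $g$ into its average over a fixed dilate of $B$ plus a mean-free remainder, and estimate the remainder by $\L^2$-boundedness on the dilate and by kernel decay off it. For balls near $D$ one does \emph{not} subtract an average: instead one uses that $\BMO_D$ directly controls $|B|^{-1/2}\|g\|_{\L^2(B)}$ there, together with the fact that the kernel of $\psi(t^2L)^*$ vanishes on $D$ --- the analogue of Lemma~\ref{The geometric setup: Lemma: Kernel vanishes in D} for the transpose of $L$, which is licit because $(\mathrm{G}(\mu))$ and the kernel-vanishing property are stable under taking adjoints (Remark~\ref{Property G(mu): Remark: G(mu) stability}). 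Finally, a general $f\in\H^1_L$ is treated by running the above for $\e^{-s\sqrt L}f\in\L^2$, $s>0$, using $\e^{-s\sqrt L}f\to f$ in $\L^1$ (from $(\mathrm{G}(\mu))$) together with the uniform bound $\sup_{s>0}\|\e^{-s\sqrt L}f\|_{\H^1_L}\les\|f\|_{\H^1_L}$ (itself established as a by-product of the atomic estimates, cf.\ the proof of Proposition~\ref{prop:H1L_renorming}), and passing to the limit.

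\textbf{Where the difficulty lies.} The delicate points occur near $D$ in both directions: one must reconcile the missing cancellation of atoms near $D$ --- respectively the ``bounded means'' clause of the $\BMO_D$-seminorm --- with the vanishing of the kernels $q_t$ and $\psi(t^2L)^*$ on $D$, and make the resulting Hölder/interpolation decay quantitative enough that the annular sums, respectively the Carleson estimate, actually converge; this is precisely the role of Lemma~\ref{Inclusion of H_D^1(O) into Operator-adapted space: Lemma: Interpolation bound}. A secondary technical nuisance is carrying the local-in-$t$ truncation of the square function through the reproducing formula and the tent-space pairing when $O$ is bounded, and justifying the $\L^2\cap\H^1_L\to\H^1_L$ approximation step.
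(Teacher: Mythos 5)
Your overall strategy matches the paper's: the inclusion $\H^1_D\sub\H^1_L$ via the atomic estimate $\|S^{\loc}_{\j,L}a\|_1\les 1$ with a local/global split and interpolation (Lemma~\ref{Inclusion of H_D^1(O) into Operator-adapted space: Lemma: Interpolation bound}), and the inclusion $\H^1_L\sub\H^1_D$ via the $\VMO_D$-duality of Theorem~\ref{Predual of H1: Theorem: VMO* = H1}, a reproducing formula, tent-space duality, a Carleson estimate, and approximation by $\e^{-s\sqrt L}f$. Two points need correction.

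\textbf{Missing key lemma: the $D$-adapted conservation property.} In the Carleson estimate, for a \emph{usual} ball $B$ away from $D$ you split $g=(g)_B+(g-(g)_B)$ and invoke ``the Fefferman--Stein argument'' for the remainder --- fine --- but you never explain what happens to the constant term $(g)_B$. On $\R^d$ or a pure Neumann domain $\psi(t^2L^*)$ annihilates constants (exact conservation), so the term vanishes and the Fefferman--Stein argument closes. On a domain with a non-trivial Dirichlet part this fails: $\1_{O'}\notin\D(L^*)$, so $\psi(t^2L^*)\1_{O'}\neq 0$. The paper's central technical device here is the $D$-adapted conservation property (Lemma~\ref{Inclusion of Operator-adapted space into H_D^1(O): Lemma: D-adapted conservation property}), which gives the quantitative bound $\bigl|\int_{O'}q_t(x,y)\,\d x\bigr|\les\bigl(1+\d_{D\cap\partial O'}(y)/t\bigr)^{-1}$; combined with the telescope bound $|(g)_B|\les\log\bigl(\d_D(x_B)/r\bigr)\,\|g\|_{\BMO_D}$ this closes the estimate. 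Your proposal does not mention this lemma at all, nor does the kernel vanishing on $D$ (Lemma~\ref{The geometric setup: Lemma: Kernel vanishes in D}) substitute for it: the first concerns pointwise vanishing in the $x$-variable, while the conservation statement is an integral bound in $x$ uniform over $y$ near $D$. Without it, the usual-ball case of your Carleson estimate does not go through.

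\textbf{Misattribution of assumption $(D)$.} You place the use of porosity (via Lemma~\ref{The geometric setup: Lemma: Assumption (D), upper estimate for measure}) in the annular sum for $\|S^{\loc}_{\j,L}a\|_1$. In the paper that sum closes using only $(\mathrm{UITC})$, $(\mathrm{Fat})$, $(\mathrm{LU})$ and $(\mathrm{G}(\mu))$: the far-field integral is simply $\sum_i r^{\gamma_i}\int_{4r}^\infty s^{-\gamma_i}\,\d s/s\simeq 1$. Porosity enters on the \emph{other} inclusion, in the near-$D$ branch of the Carleson estimate, where one must integrate $(1+\d_D(y)/t)^{-2}$ over a boundary ball and control the measure of the level sets $\{\d_D<s\}$.

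\textbf{A genuine simplification in your proposal worth noting.} Your treatment of balls \emph{near} $D$ --- do not subtract an average, simply use that $\fint_{2^jB}|g|^2\les\|g\|^2_{\BMO_D}$ on every dilate (all dilates of a near-$D$ ball remain near $D$) and run the local/annular decomposition directly, with kernel decay and $\L^2$-boundedness --- does avoid both the conservation property and the porosity bound in that branch, and appears to work. This is cleaner than the paper's argument there. It does not, however, fill the gap in the usual-ball branch described above, which remains the place where the conservation property is indispensable.
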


To prove it, we show both inclusions separately. The proof for $\H_L^1 \sub \H_D^1$ builds on the ideas in \cite[Sec.~3.4]{Auscher-Russ}. For the converse inclusion we choose a different route: the aforementioned work takes a detour through a maximal function characterization, which relies on the Lipschitz property of the domain. Taking a direct approach is one of the reasons why we can drastically lower the geometric requirements in our main result. In the self-adjoint case, we obtain nevertheless a maximal function characterization in Section~\ref{sec:maximal}. The strategy for the individual inclusions is as follows.

\textbf{$\boldsymbol{\H_D^1 \sub \H_L^1}$.} Taking Remark~\ref{rem:atomic_convergence} into account, this inclusion can be reinterpreted as proving boundedness of
\begin{equation*}
	S^{\loc}_{\j, L} \colon \H_D^1 \to \L^1.
\end{equation*}
Using the atomic decomposition for $\H_D^1$, it turns out that this follows from the uniform control on $\H_D^1$-atoms
\begin{equation*}
	\Vert S_{\j, L}^{\loc} (a) \Vert_{1} \les 1.
\end{equation*}
Let $a$ be associated with $B = \ball \cap O'$. To prove this bound, we split the $\L^1$-integral into a local and a global part, $4 B$ and $O' \setminus 4B$. The \enquote{local} part can be purely treated by the $\L^2$-theory of the operator. To control the \enquote{global} part, we split the $t$-integral in $(S_{\j, L}^{\loc} (a))(x)$ into several parts that let us exploit decay properties of the kernel stemming from $\mathrm{G}(\mu)$ combined with either the cancellation condition of $a$ or the vanishing of the kernel of $\j(t^2 L)$ in $D$ (obtained in Lemma~\ref{The geometric setup: Lemma: Kernel vanishes in D}), depending on whether the atom is usual or near $D$.

\textbf{$\boldsymbol{\H_L^1 \sub \H_D^1}$.}
We explain the case of unbounded $O$ first.
The main tool to show this inclusion is Theorem~\ref{Predual of H1: Theorem: VMO* = H1}. To make it applicable, we reproduce $f$ in the functional calculus for $L$.
After combining the reproducing formula with a duality estimate in tent spaces, it remains to prove that the (local) Carleson functional
\begin{equation*}
	C^{\loc}(\j(\t^2 L^*) \, \cdot \, ) \colon \VMO_D \to \L^{\infty}
\end{equation*}
that we are going to introduce in~\eqref{eq:def_carleson} is bounded. To this aim, we fix a ball $\ball$ centered in $O$ and decompose the test function $\phi \in \rC_{\cc}$ as $\phi = (\phi)_{B} + (\phi - (\phi)_{B})$. Split $\phi - (\phi)_{B}$ further into a \enquote{local} part $(\phi - (\phi)_{B}) \1_{2 B}$ and a \enquote{global} part $(\phi - (\phi)_{B}) \1_{O \setminus 2\ball}$. Similarly as in the proof of $\H_D^1 \sub \H_L^1$, the local part $C^{\loc}(\j(\t^2 L^*)(\phi - (\phi)_{B})\1_{2 B})$ can be controlled by the $\L^2$-theory of $L$ and the global part $C^{\loc}(\j(\t^2 L^*) (\phi - (\phi)_{B}) \1_{O \setminus 2 \ball})$ using decay properties of the kernel $q_t$. The main innovation lies in controlling $C^{\loc}(\j(\t^2 L^*)(\phi)_{B})$. There are three ingredients. %
First, we use a $D$-adapted conservation property for $(\j(t^2 L))_{t > 0}$ (see Lemma~\ref{Inclusion of Operator-adapted space into H_D^1(O): Lemma: D-adapted conservation property}). Second, uniform porosity of $D$ is used to estimate the size of boundary strips close to $D$. Let us mention that this is the first and last time in the present work that we need this information. Third, our definition of $\H^1_D$-atoms lets us treat certain large balls as boundary terms of a telescopic sum. At this point, we should mention that this change of definition also leads to a simplified argument in the case of pure Dirichlet boundary conditions compared to~\cite[Lem.~15]{Auscher-Russ}.

If $O$ is bounded, more technical challenges appear, notably a reduction to small scales in the reproducing formula and a localization of $B$ to multiple components. In each component, a different constant is subtracted in the decomposition of $\phi$.

The rest of this section is devoted to filling in the details.

\subsection{The inclusion \boldmath$\H_D^1 \sub \H^1_L $} \label{Section: Inclusion of H_D^1(O) into Operator-adapted space}

For this part we need (UITC), (Fat), (LU) and $(\mathrm{G}(\mu))$. The following lemma is central. Recall the kernel $q_t$ from Definition~\ref{def:qt}.

\begin{lemma} \label{Inclusion of H_D^1(O) into Operator-adapted space: Lemma: Interpolation bound}
	Assume $\mathrm{(Fat)}$, $\mathrm{(LU)}$ and $(\mathrm{G}(\mu))$. Let $\theta \in [0,1]$ and $a$ be an $\H_D^1$-atom associated with $B = B(x,r)$. Then it holds
	\begin{equation*}
		\left| \int_{O(x,r)} q_t (z,y) a(z) \, \d z \right| \les t^{-d} \left( \frac{r}{t} \right)^{\theta \mu} \left( 1 + \frac{\d_B(y)}{t} \right)^{-(d+1)(1 - \theta)}
	\end{equation*}
	for all $y \in O$ and $0 < t < \dO$.
\end{lemma}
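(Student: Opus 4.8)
The estimate is a logarithmically convex combination of two extreme cases $\theta = 1$ and $\theta = 0$, so the plan is to establish each endpoint separately and then interpolate (i.e.\ bound the integral by the geometric mean of the two endpoint bounds, which is immediate once both hold).

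\emph{The case $\theta = 1$.} Here we must show $|\int_{O(x,r)} q_t(z,y) a(z)\,\d z| \lesssim t^{-d}(r/t)^\mu$. If $a$ is usual, it is mean value free on $B = B(x,r)$; subtract $q_t(x,y)$ from $q_t(\cdot,y)$ inside the integral and use the Hölder regularity of $q_t$ in the first variable (Lemma~\ref{Property G(mu): Lemma: Poisson kernel} resp.\ Remark~\ref{Property G(mu): Remark: Heat kernel inherits properties to poisson kernel}, which transfer (G3) to $q_t$) to get the factor $(|z-x|/t)^\mu \leq (r/t)^\mu$, together with $\|a\|_2 \leq |B|^{-1/2}$ and Cauchy--Schwarz over $B$ (using (UITC) to absorb $|B|$ against $r^d$). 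If $a$ is near $D$, there is no cancellation, but then $\ball(x,2r)$ meets $D \cap \partial O'$, so there is a point $x_0 \in D$ with $|x - x_0| \lesssim r$; by Lemma~\ref{The geometric setup: Lemma: Kernel vanishes in D} we have $q_t(x_0,y) = 0$, so we may again subtract $q_t(x_0,y)$ for free and run the same Hölder-regularity estimate, now with $|z - x_0| \lesssim r$. This disposes of $\theta = 1$.

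\emph{The case $\theta = 0$.} Here we need $|\int_{O(x,r)} q_t(z,y) a(z)\,\d z| \lesssim t^{-d}(1 + \d_B(y)/t)^{-(d+1)}$. We simply apply Cauchy--Schwarz and the size condition: the integral is bounded by $\|a\|_2 (\int_{O(x,r)} |q_t(z,y)|^2\,\d z)^{1/2} \leq |B|^{-1/2} \|q_t(\cdot,y)\|_{\L^2(B)}$. Now invoke the pointwise bound $|q_t(z,y)| \lesssim t^{-d}(1 + |z-y|/t)^{-(d+1)}$ from Lemma~\ref{Preliminary results: Lemma: Linfty-extension of the semigroup} / Lemma~\ref{Property G(mu): Lemma: Poisson kernel} (valid in both cases, the Poisson bound being the weaker of the two). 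For $z \in B$ we have $|z - y| \geq \d(y, B)$, so $(1 + |z-y|/t)^{-(d+1)} \leq (1 + \d_B(y)/t)^{-(d+1)}$ pointwise, and integrating the constant over $B$ contributes $|B|^{1/2}$, which cancels the normalization; the leftover $t^{-d}$ is exactly what we want. One has to be a little careful that $\d_B(y)$ here means the distance from $y$ to the ball $B$ (localized to the component), matching the notation in the statement; since $z$ ranges over $O(x,r) \subseteq B$ this is consistent.

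\emph{Interpolation and expected obstacle.} With both endpoints in hand, write $I$ for the integral; then $|I| \le (\text{bound}_{\theta=1})^{\theta}(\text{bound}_{\theta=0})^{1-\theta} = t^{-d}(r/t)^{\theta\mu}(1 + \d_B(y)/t)^{-(d+1)(1-\theta)}$, using $|I| = |I|^\theta |I|^{1-\theta}$ and monotonicity of $x \mapsto x^\theta$. I expect the only genuinely delicate point to be the case of an atom near $D$: one must make sure the kernel-vanishing Lemma~\ref{The geometric setup: Lemma: Kernel vanishes in D} applies at a point $x_0 \in D$ at controlled distance from the support of $a$, and that the component-localized balls $B$ (Definition~\ref{def:balls}) interact correctly with the hypotheses (Fat), (LU) feeding into that lemma. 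Everything else is routine Cauchy--Schwarz plus the kernel bounds already recorded in Section~\ref{Subsection: Property G(mu)}.
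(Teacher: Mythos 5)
Your proposal is correct and matches the paper's proof essentially line for line: the $\theta=0$ endpoint is the Poisson decay bound \eqref{eq: Kernel bound for Poisson semigroup} against the size condition, the $\theta=1$ endpoint subtracts $q_t(x,y)$ (usual atom, moment condition) or $q_t(x_B,y)=0$ with $x_B\in 2\ball\cap D$ (atom near $D$, via Lemma~\ref{The geometric setup: Lemma: Kernel vanishes in D}) and then uses the Hölder bound \eqref{eq: Hoelder Kernel bound for Poisson semigroup}, and the general $\theta$ is the logarithmic convex combination of the two. The only cosmetic difference is that the paper bounds $\|a\|_1\le\|a\|_2|B|^{1/2}\le 1$ directly without invoking (UITC), since the atom is already normalized against $|B|$ rather than $r^d$.
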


\begin{proof}
	Fix $y \in O$ and $0 < t < \dO$. By \eqref{eq: Kernel bound for Poisson semigroup} and the size condition on $a$ we obtain
	\begin{align}
		\label{eq:kernel_atom1}
		\begin{split}
			\left| \int_{O(x,r)} q_t(z,y) a(z) \, \d z \right| &\les t^{-d} \left( 1 + \frac{\d_B(y)}{t} \right)^{-(d+1)} \Vert a \Vert_1 \\
			&\leq t^{-d} \left( 1 + \frac{\d_B(y)}{t} \right)^{-(d+1)}.
		\end{split}
	\end{align}
	Next, if $B$ is away from $D$, then $a$ satisfies the moment condition. Hence, the version of~\eqref{eq: Hoelder Kernel bound for Poisson semigroup} for $q_t$ delivers
	\begin{align}
		\label{eq:kernel_atom2}
		\begin{split}
			\left| \int_{O(x,r)} q_t(z,y) a(z) \, \d z \right| &= \left| \int_{O(x,r)} (q_t(z,y) - q_t(x, y) ) a(z) \, \d z \right| \\
		&\les t^{-d} \left( \frac{r}{t} \right)^{\mu} \Vert a \Vert_1 \\
		&\leq  t^{-d} \left( \frac{r}{t} \right)^{\mu}.
		\end{split}
	\end{align}
	If $B$ is near $D$, let $x_B \in 2 \ball \cap D$. Lemma~\ref{The geometric setup: Lemma: Kernel vanishes in D} gives $q_t(x_B, y) = 0$. Then a similar calculation as before in which $q_t(x_B, y)$ takes the role of $q_t(x, y)$ gives~\eqref{eq:kernel_atom2} in this situation.

	We conclude by taking logarithmic convex combinations between~\eqref{eq:kernel_atom1} and~\eqref{eq:kernel_atom2}.
\end{proof}

\begin{proof}[\rm\bf{Proof of the inclusion $\boldsymbol{\H_D^1 \sub \H_L^1}$.}] Since $\H_D^1 \sub \L^1$ is a contraction it suffices to prove that
\begin{equation*}
	\Vert S_{\j, L}^{\loc} (f) \Vert_1 \les \Vert f \Vert_{\H_D^1}.
\end{equation*}
We tacitly use Remark~\ref{rem:sf_normalization} in this proof.
Let us assume for the moment that there is some $C > 0$ such that
	\begin{equation}
		\Vert S_{\j, L}^{\loc}(a) \Vert_1 \leq C \label{eq: Inclusion of H_D^1(O) into Operator-adapted space: Theorem: Main result, estimate for atom}
	\end{equation}
	holds true for any $\H_D^1$-atom $a$. Let $f \in \H_D^1$ and fix some atomic decomposition $f = \sum_j \lambda_j a_j$ with $\Vert (\lambda_j)_j \Vert_{\ell^1} \leq 2 \Vert f \Vert_{\H_D^1}$. As $f = \sum_j \lambda_j a_j$ converges in $\L^1$, property $\mathrm{G}(\mu)$ implies for all $0 < t < \dO$ fixed that
	\begin{equation*}
		\j(t^2 L) f = \sum_j \lambda_j \j(t^2 L) a_j
	\end{equation*}
	converges in $\L^2$. This yields for every $x \in O$ the estimate
	\begin{equation*}
		\Vert \j(t^2 L) f \Vert_{\L^2(O(x,t))}^2 \leq \Bigl( \sum_j |\lambda_j| \Vert \j(t^2 L) a_j \Vert_{\L^2(O(x,t))} \Bigr)^2,
	\end{equation*}
	which (together with monotone convergence) entails that
	\begin{align*}
		S_{\j, L}^{\loc} (f)(x) &= \left( \int_0^{\dO} \Vert \j(t^2 L) f \Vert_{\L^2(O(x,t))}^2 \, \frac{\d t}{t^{1+d}} \right)^{\frac{1}{2}}
		\\&\leq \lim_{n \to \infty} \left( \int_0^{\dO} \left( \sum_{j=1}^n |\lambda_j| \Vert \j(t^2 L) a_j \Vert_{\L^2(O(x,t))} \right)^2 \, \frac{\d t}{t^{1+d}} \right)^{\frac{1}{2}}
		\\&\leq \sum_j |\lambda_j| S_{\j, L}^{\loc}(a_j)(x).
	\end{align*}
	Taking $\L^1$-norms on both sides, applying monotone convergence and using \eqref{eq: Inclusion of H_D^1(O) into Operator-adapted space: Theorem: Main result, estimate for atom} gives
	\begin{equation*}
		\Vert S_{\j, L}^{\loc}(f) \Vert_1 \leq C \sum_j |\lambda_j| \leq 2 C \Vert f \Vert_{\H_D^1}.
	\end{equation*}
	\textbf{Proof of \eqref{eq: Inclusion of H_D^1(O) into Operator-adapted space: Theorem: Main result, estimate for atom}.} Let $a$ be an $\H_D^1$-atom associated with $B = \ball(x_0, r) \cap O'$. The proof is divided into two parts.

	\textbf{Local part.} By Hölder's inequality and the $\L^2$-boundedness of $S_{\j, L}$, we get
	\begin{equation*}
		\Vert S_{\j, L}^{\loc}(a) \Vert_{\L^1(4\ball \cap O)} \leq |4\ball \cap O|^{\frac{1}{2}} \Vert S_{\j, L}^{\loc} (a) \Vert_2 \les |4\ball \cap O|^{\frac{1}{2}} \Vert a \Vert_2.
	\end{equation*}
	Now, (UITC) for $O$ and all its components along with the size condition of $a$ provide the uniform bound
	\begin{equation*}
		\Vert S_{\j, L}^{\loc}(a) \Vert_{\L^1(4\ball \cap O)} \les 1.
	\end{equation*}
	\textbf{Global part.} Fix $x \in O \setminus 4\ball$ and note that $r < \nicefrac{\d_B(x)}{2}$. To find upper bounds for
	\begin{equation*}
		S_{\j, L}^{\loc}(a)(x) = \left( \int_0^{\dO} \int_{O(x,t)} \left| \int_{B} q_t(z,y) a(z) \, \d z \right|^2 \, \frac{\d y \, \d t}{t^{1+d}} \right)^{\frac{1}{2}},
	\end{equation*}
	we split the time integral into the three parts
	\begin{equation*}
		0 < t \leq r, \quad r < t \leq \nicefrac{\d_B(x)}{2} \quad \& \quad \nicefrac{\d_B(x)}{2} \leq t < \dO.
	\end{equation*}
	We denote the corresponding integrals by (I), (II) and (III), respectively. Hence,
	\begin{equation*}
		S_{\j, L}^{\loc}(a)(x) \leq \mathrm{(I)} + \mathrm{(II)} + \mathrm{(III)}.
	\end{equation*}
	Before we start estimating the right-hand side, we observe that if $t \leq \nicefrac{\d_B(x)}{2}$ and $y \in O(x,t)$, then
	\begin{equation}
			\d_B(y) \geq \d_B(x) - |x-y| \geq \d_B(x) - t \geq \frac{\d_B(x)}{2}. \label{eq: H_D^1 sub H_L^1: Distance estimate for small times}
	\end{equation}
	\textbf{Estimate for (I).} Lemma~\ref{Inclusion of H_D^1(O) into Operator-adapted space: Lemma: Interpolation bound} with $\theta = 0$ joint with \eqref{eq: H_D^1 sub H_L^1: Distance estimate for small times} gives
	\begin{align*}
		\mathrm{(I)} \les \left( \int_0^{r} \int_{O(x,t)} \left( t^{-d} \d_B(x)^{-(d+1)} t^{d+1} \right)^2 \, \frac{\d y \, \d t}{t^{1+d}} \right)^{\frac{1}{2}} \les r \, \d_B(x)^{-(d+1)}.
	\end{align*}
	\textbf{Estimate for (II).} Fix $\theta \in (0,1)$ such that $d + \theta \mu > (d+1) (1 - \theta) > d$. We use this $\theta$ in Lemma~\ref{Inclusion of H_D^1(O) into Operator-adapted space: Lemma: Interpolation bound} along with \eqref{eq: H_D^1 sub H_L^1: Distance estimate for small times} to obtain
	\begin{align*}
		\mathrm{(II)} &\les \left( \int_{r}^{\frac{\d_B(x)}{2}} \int_{O(x,t)} \left( t^{-d} \left( \frac{r}{t} \right)^{\mu \theta} t^{(d+1)(1 - \theta)} \d_B(x)^{-(d+1)(1- \theta)} \right)^2 \, \frac{\d y \, \d t}{t^{1+d}} \right)^{\frac{1}{2}}
		\\& \les r^{\mu \theta} \d_B(x)^{-(d+1)(1-\theta)} \left( \int_{r}^{\frac{\d_B(x)}{2}} t^{-2( d + \mu \theta - (d+1)(1- \theta))} \, \frac{\d t}{t} \right)^{\frac{1}{2}}
		\\& \les  r^{(d+1)(1 - \theta) - d} \d_B(x)^{-(d+1)(1-\theta)}.
	\end{align*}
	\textbf{Estimate for (III).} Lemma~\ref{Inclusion of H_D^1(O) into Operator-adapted space: Lemma: Interpolation bound} with $\theta = 1$ gives
	\begin{align*}
		\mathrm{(III)} &\les \left( \int_{\frac{\d_B(x)}{2}}^{\infty} \int_{O(x,t)} \left( t^{-d} \left( \frac{r}{t} \right)^{\mu} \right)^2 \, \frac{\d y \, \d t}{t^{1+d}} \right)^{\frac{1}{2}}
		\\&\les  r^{\mu} \left( \int_{\frac{\d_B(x)}{2}}^{\infty} t^{-2 (d + \mu)} \, \frac{\d t}{t} \right)^{\frac{1}{2}}
		\\&\simeq r^{\mu} \d_B(x)^{-(d + \mu)}.
	\end{align*}
	Let $\gamma_1 \coloneqq 1$, $\gamma_2 \coloneqq (d+1)(1-\theta) -d$, $\gamma_3 \coloneqq \mu$ and recall that $\gamma_2 > 0$. Combining all three bounds and integrating over $O \setminus 4\ball$ yields
	\begin{align*}
		\Vert S_{\j, L}^{\loc}(a) \Vert_{\L^1(O \setminus 4\ball)} &\les \sum_{i=1}^3 r^{\gamma_i} \int_{O \setminus 4\ball} \d_B(x)^{-(d + \gamma_i)} \, \d x \les \sum_{i=1}^3 r^{\gamma_i} \int_{4 r}^{\infty} s^{-\gamma_i} \, \frac{\d s}{s}  \simeq 1.
	\end{align*}
	This completes the proof.
\end{proof}

	\begin{corollary}
		\label{cor:H1z_inclusion}
		If $O$ is unbounded and $D = \varnothing$, then one has the inclusion $\H^1_\zero \subseteq \H^1_L$.
	\end{corollary}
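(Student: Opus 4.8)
The plan is to deduce the corollary by threading together three facts that are now at our disposal: the identification of the zero-extension space with the Coifman--Weiss space, the identification of the latter with the atomic space $\H^1_\varnothing$, and the inclusion $\H^1_D \sub \H^1_L$ just established. Concretely, I would first invoke Theorem~\ref{thm:H1z}, which under the standing hypotheses $(\mathrm{UITC})$ and $(\mathrm{LU})$ gives $\H^1_\zero = \H^1_\CW$. Next, since $O$ is unbounded, Corollary~\ref{cor:ON_implies_bounded} forces $\ON = \varnothing$, so the index set $\ISet$ of bounded pure Neumann components is empty and $O$ reduces to the single component $\OD = O$. In this situation Proposition~\ref{prop:atomic_pure_cases} in the pure Neumann case degenerates to $\H^1_\CW = \H^1_\varnothing \oplus \ell^1(\ISet) = \H^1_\varnothing$, i.e.\ the Coifman--Weiss atoms are exactly the usual $\H^1_\varnothing$-atoms. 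Finally, applying the inclusion $\H^1_D \sub \H^1_L$ above with $D = \varnothing$ yields $\H^1_\zero = \H^1_\varnothing = \H^1_D \sub \H^1_L$, which is the claim.

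The only point that requires a word of care is the reduction $\H^1_\CW = \H^1_\varnothing$: it is precisely the emptiness of $\ISet$ in the unbounded case that removes the locally constant functions from $\H^1_\CW$ and makes the two spaces literally coincide rather than differ by a copy of $\ell^1(\ISet)$; this is where Corollary~\ref{cor:ON_implies_bounded}, hence $(\mathrm{G}(\mu))$ together with $(\mathrm{Fat})$ and $(\mathrm{LU})$, enters. Everything else is a formal concatenation, so there is no genuine obstacle here — the substance is entirely carried by Theorem~\ref{thm:H1z}, whose proof encapsulates the uniform-domain geometry, and by the atomic estimate $\Vert S_{\j, L}^{\loc}(a) \Vert_1 \les 1$ proven above.

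As a more self-contained alternative one could instead argue directly at the level of the square function: given $f \in \H^1_\zero$, fix a classical atomic decomposition $\E_0 f = \sum_j \lambda_j A_j$ in $\H^1(\R^d)$, restrict to $O$, and bound $\Vert S_{\j, L}^{\loc}(A_j|_O) \Vert_1$ uniformly by combining the Poisson-type kernel bounds for $q_t$ from Lemma~\ref{Property G(mu): Lemma: Poisson kernel} with the conservation property $\int_O q_t(\cdot, z) \, \d z = 0$, which holds in the pure Neumann case. The difficulty with this route is that $A_j|_O$ generally loses the cancellation of $A_j$ across $\partial O$, so one would be forced to reintroduce it by a Whitney/Vitali localization near the boundary, exactly as in the proof of Theorem~\ref{thm:H1z}; since that localization is the heart of the matter, the first route via Theorem~\ref{thm:H1z} is the economical one.
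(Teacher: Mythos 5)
Your main route is circular. Theorem~\ref{thm:H1z}, which you plan to invoke as step one, is proved in the paper precisely by \emph{using} Corollary~\ref{cor:H1z_inclusion}: its proof reads ``Admitting our abstract main result (Theorem~\ref{Main result and applications: Theorem: H_D^1=H_L^1}), Corollary~\ref{cor:H1z_inclusion} lets us conclude $\H^1_\zero \subseteq \H^1_L = \H^1_\varnothing$.'' Moreover, the paper explicitly deferred the proof of Theorem~\ref{thm:H1z} to the end of Section~\ref{Section: Inclusion of H_D^1(O) into Operator-adapted space}, i.e.\ to the point where Corollary~\ref{cor:H1z_inclusion} becomes available. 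So one cannot use Theorem~\ref{thm:H1z} to prove the present corollary. The remaining ingredients you cite (Corollary~\ref{cor:ON_implies_bounded} to get $\ISet = \varnothing$, Proposition~\ref{prop:atomic_pure_cases} to identify $\H^1_\varnothing$ with $\H^1_\CW$ in the unbounded case, and the atom estimate $\|S^{\loc}_{\j,L}(a)\|_1 \lesssim 1$) are fine, but they only reduce matters to the inclusion $\H^1_\zero \subseteq \H^1_\CW$, which is exactly the non-trivial direction that the whole chain of reasoning is meant to establish. The premise ``$\H^1_\zero = \H^1_\CW$'' cannot be assumed at this stage.

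Your ``self-contained alternative'' is in fact the paper's argument, but you dismiss it for a reason that does not apply. You write that $A_j|_O$ loses the cancellation of $A_j$ across $\partial O$, so one would have to restore it by a Whitney/Vitali localization. That obstacle does not arise here, and spotting why is the whole point of the corollary. The trick, borrowed from~\cite[Sec.~3.2]{Auscher-Russ}, is not to restrict the atom but to \emph{extend the kernel}: one extends $q_t$ on $O\times O$ to a kernel $\widetilde q_t$ on $\R^d\times\R^d$ still satisfying the Poisson-type size bound~\eqref{eq: Kernel bound for Poisson semigroup} and the H\"older bound~\eqref{eq: Hoelder Kernel bound for Poisson semigroup}. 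Since $f \in \H^1_\zero$ means precisely that the zero extension $F = \E_0 f$ lies in $\H^1(\R^d)$, the identity
\begin{equation}
  \int_{O(x_0,r)} q_t(z,y)\, f(z)\,\d z \;=\; \int_{\ball(x_0,r)} \widetilde q_t(z,y)\, F(z)\,\d z
\end{equation}
shows that the operator acting on $f$ can be read as an operator on $\R^d$ with kernel $\widetilde q_t$ acting on $F$. One then decomposes $F = \sum_j \lambda_j A_j$ into classical $\H^1(\R^d)$-atoms $A_j$ (supported in $\R^d$, not in $\overline O$) and rerun the estimate for $\|S^{\loc}_{\j,L}(a)\|_1$ \emph{verbatim}, now with $\widetilde q_t$ and the $A_j$'s; the cancellation of $A_j$ is exploited directly on the full Euclidean ball via the H\"older bound for $\widetilde q_t$, so no Vitali step is ever needed. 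The hypothesis that $O$ is unbounded is used exactly here: it is needed so that the kernel bounds on $O$ — and hence the extended ones on $\R^d$ — hold for all times $0<t<\infty$, matching $\diam(\R^d)=\infty$.
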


	\begin{proof}
		Using a trick from~\cite[Sec.~3.2]{Auscher-Russ}, the conclusion follows from the same calculation as presented above. The argument is as follows. The kernel $q_t$ on $O$ can be extended to a kernel $\widetilde q_t$ on $\R^d$ still satisfying~\eqref{eq: Kernel bound for Poisson semigroup} and~\eqref{eq: Hoelder Kernel bound for Poisson semigroup}. We need that $O$ is unbounded since otherwise the kernel bounds for $\widetilde q$ are only valid for finite times $t$ and not up to $\diam(\R^d) = \infty$. Now take $f\in \H^1_\zero$. Its zero extension $F$ from $O$ to $\R^d$ is in $\H^1(\R^d)$. As is classical, $F$ admits an atomic decomposition $F = \sum_j \lambda_j a_j$ into usual $\H^1_{\varnothing}(\R^d)$-atoms. Observe that
		\begin{align}
			\int_{O(x_0,r)} q_t(z,y) f(z) \, \d z = \int_{\ball(x_0,r)} \widetilde q_t(z,y) F(z) \, \d z
		\end{align}
		by the support property of $f$. Therefore, all arguments from before apply verbatim when the operator $\j(t^2L)$ is replaced by the operator on $\R^d$ given by the kernel $\widetilde q_t$.
	\end{proof}

	\begin{proof}[\rm\bf{Proof of Theorem~\ref{thm:H1z}.}]
		Admitting our abstract main result (Theorem~\ref{Main result and applications: Theorem: H_D^1=H_L^1}), Corollary~\ref{cor:H1z_inclusion} lets us conclude $\H^1_\zero \subseteq \H^1_L = \H^1_{\varnothing}$. The latter space coincides with $\H^1_\CW$, see also Proposition~\ref{prop:atomic_pure_cases}. We have already mentioned in Section~\ref{Section: Duality theory for D-adapted spaces} that the converse inclusion $\H^1_\CW \subseteq \H^1_\zero$ is immediate from the definitions, which completes the proof.
	\end{proof}

	\subsection{The inclusion \boldmath$\H^1_L \sub \H^1_D$}   \label{Subsection: Inclusion of Operator-adapted space into H_D^1(O)}

   In this part we explicitly make use of (D), (UITC) and $(\mathrm{G}(\mu))$, even though the other assumptions from Theorem~\ref{Main result and applications: Theorem: H_D^1=H_L^1} are implicitly used in some geometric reductions.
   We start with the $D$-adapted conservation property alluded in the strategy.

   \begin{lemma}[$D$-adapted conservation property] \label{Inclusion of Operator-adapted space into H_D^1(O): Lemma: D-adapted conservation property}
   	Let $O'$ be a component of $O$. It holds
   	\begin{equation}
   		\label{eq:D_conservation}
   		\left| \int_{O'} q_t(x,y) \, \d x \right| \les \left( 1 + \frac{\d_{D \cap \partial O'}(y)}{t} \right)^{-1} \qquad (0 < t < \dO, y \in O').
   	\end{equation}
   If $D \cap \partial O' = \varnothing$ one has $| \int_{O'} q_t(x,y) \, \d x | = 0$. With the convention $\d_{\varnothing}(y) = \infty$, this is consistent with the formula given above.
   \end{lemma}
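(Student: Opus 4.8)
The plan is to read the integral as an application of the transposed operator and then estimate it. Let $\widetilde L \coloneqq -\Div(A^{\mathsf{T}} \nabla \cdot)$ be the divergence form operator with transposed coefficient matrix and the \emph{same} mixed boundary conditions; it satisfies the Gårding inequality and, by Remark~\ref{Property G(mu): Remark: G(mu) stability}(i) (the kernel bounds are on absolute values), also $\mathrm{G}(\mu)$. The bilinear identity $\int_O g\, \j(t^2 L) f \,\d x = \int_O f\, \j(t^2 \widetilde L) g \,\d x$ holds for $f, g \in \L^2 \cap \L^\infty$ — equivalently, $\j(t^2 \widetilde L)$ has kernel $q_t(y,x)$ — by transposing the holomorphic functional calculus, using Fubini and the integrability $q_t(\cdot, y) \in \L^1$ from Lemma~\ref{Preliminary results: Lemma: Linfty-extension of the semigroup} to pass from $\L^2$ to $\L^\infty$. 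Taking $g = \1_{O'}$ gives $\int_{O'} q_t(x,y) \,\d x = (\j(t^2 \widetilde L) \1_{O'})(y)$, so it suffices to bound $(\j(t^2 \widetilde L) \1_{O'})(y)$; I drop the tilde below.

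If $D \cap \partial O' = \varnothing$ then $\1_{O'}$ is constant on each connected component of $O$ that it charges and none of those carries a Dirichlet boundary, so $\1_{O'} \in \sN(L)$ by Proposition~\ref{prop:kernel} (for an unbounded pure Neumann piece, which can only occur when $D = \varnothing$, one uses the conservation property $\e^{-sL}\1_{O'} = \1_{O'}$ instead). Since $\j$ vanishes at $0$ and the functional calculus respects the kernel--range splitting, $\j(t^2 L) \1_{O'} = 0$, consistently with the convention $\d_\varnothing \equiv \infty$. Assume henceforth $D' \coloneqq D \cap \partial O' \neq \varnothing$ and set $R \coloneqq \d_{D'}(y)$. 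For $R \leq t$ the Poisson bound~\eqref{eq: Kernel bound for Poisson semigroup} gives $|(\j(t^2 L)\1_{O'})(y)| \leq \int_O |q_t(x,y)| \,\d x \lesssim 1 \simeq (1 + R/t)^{-1}$, so assume $R > t$, whence $1 + R/t \simeq R/t$. Decompose $\1_{O'} = \1_{O' \cap \ball(y, R/2)} + \1_{O' \setminus \ball(y, R/2)}$; by~\eqref{eq: Kernel bound for Poisson semigroup} and $R > t$ the second term contributes at most $\int_{|x-y| \geq R/2} |q_t(x,y)| \,\d x \lesssim t \int_{R/2}^\infty s^{-2}\,\d s \simeq t/R$ (even with Gaussian decay in $R/t$ in the heat case). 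Everything thus reduces to controlling $(\j(t^2 L) \1_{O' \cap \ball(y, R/2)})(y)$, where the ball $\ball(y, R/2)$ sits at distance $\geq R/2$ from $D'$.

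For this near term the idea is that at scales $\ll R$ around $y$ the operator cannot feel the Dirichlet part, so it behaves like a conservative (Neumann-type) operator, for which $\j(t^2 L)$ annihilates the constant $\1$. One introduces a comparison operator $\widehat L$ with the same differential expression but with the Dirichlet condition dropped in a neighbourhood of $\ball(y, R/2)$, where the geometry is uniform because of $\mathrm{(LU)}$ near $N$, so that $\widehat L$ is well behaved there. The difference $\j(t^2 L) - \j(t^2 \widehat L)$ has a kernel supported essentially within distance $\lesssim t$ of $D$, hence at distance $\geq R/2$ from $y$; one estimates it by combining Davies--Gaffney off-diagonal bounds for $\e^{-sL}$ and $\e^{-s\widehat L}$ (available from ellipticity alone) with the $\L^2$--$\Cdot^\nu$ bound of Lemma~\ref{lem:Hoelder_boundedness}, which contributes $O(t/R)$ at $y$. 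Since $\j(t^2\widehat L)\1 = 0$ near $y$, we get $(\j(t^2 \widehat L) \1_{O' \cap \ball(y, R/2)})(y) = -(\j(t^2 \widehat L) \1_{O \setminus \ball(y, R/2)})(y)$, which is again $O(t/R)$ by the far-field bound. Summing the three contributions yields $|(\j(t^2 L) \1_{O'})(y)| \lesssim t/R \simeq (1 + \d_{D'}(y)/t)^{-1}$.

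The step I expect to be the main obstacle is precisely this interior comparison — equivalently, a direct proof that $\j(t^2 L)\1_{\OD}$ decays away from $D$. Since $\mathrm{G}(\mu)$ only provides upper bounds one cannot observe the conservation of mass pointwise, and as the computations above show, every naive splitting of $\1_{O'}$ merely reproduces the quantity one is after, so the argument must genuinely inject either a comparison with a conservative operator (as sketched) or a self-improving Gronwall-type bootstrap based on the reproducing formula for $\1_{\OD} \in \L^2_0 = \overline{\sR(L)}$ when $O$ is bounded, the vanishing of $q_t$ on $D$ (Lemma~\ref{The geometric setup: Lemma: Kernel vanishes in D}), and the Hölder bound~\eqref{eq: Hoelder Kernel bound for Poisson semigroup}. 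One also has to distinguish the bounded and unbounded cases for $O$ (in the latter $\OD = O$ by Corollary~\ref{cor:ON_implies_bounded}), which affects whether $\1_{O'}$ lies in $\L^2$.
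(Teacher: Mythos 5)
Your setup is sound: reading the integral as $(\varphi(t^2 \widetilde L)\1_{O'})(y)$ via the transposed operator is valid, the argument that $\widetilde L$ inherits $\mathrm{G}(\mu)$ (the kernel bounds are on absolute values) is correct, the handling of $D\cap\partial O' = \varnothing$ via Proposition~\ref{prop:kernel} and the kernel--range splitting works, and the far-field bound for $\1_{O'\setminus\ball(y,R/2)}$ by kernel estimates is fine.

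However, there is a genuine gap in your near-field step, and you have in fact flagged it yourself. You sketch a comparison operator $\widehat L$ with Dirichlet dropped near $\ball(y, R/2)$ and claim that $\varphi(t^2 L) - \varphi(t^2\widehat L)$ has an effectively localized kernel that contributes $O(t/R)$ at $y$. This is not proved; it would require a Duhamel-type perturbation argument comparing two semigroups with different boundary conditions, including establishing that $\widehat L$ itself is well-defined and satisfies off-diagonal bounds, and a careful estimate of the resulting commutator term. None of this is carried out, and it is not at all clear that the resulting error is $O(t/R)$ rather than merely $O(1)$. As you correctly observe, $\mathrm{G}(\mu)$ alone gives only upper bounds on the kernel and hence no cancellation, so some structural mechanism is required; but the one you propose is not the one you deliver.

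The mechanism the paper actually uses is different and much more direct. It works with the identity function $1$ (not $\1_{O'\cap\ball(y,R/2)}$) and a smooth cutoff $\psi$ with $\1_{\ball(y,r/4)} \le \psi \le \1_{\ball(y,r/2)}$, $\|\nabla\psi\|_\infty \lesssim r^{-1}$ (with $r = \d_{D\cap\partial O'}(y)$, and working on the heat side with an $\L^1$-normalized test mass $f$ supported in $\ball(y,r/8)$). The point is the form identity
\begin{align}
(t^2 L\, \e^{-t^2 L} f \mid \psi)_2 = t^2 \int_O A\nabla \e^{-t^2 L} f \cdot \overline{\nabla\psi} \,\d x ,
\end{align}
which converts the nonlocal operator $L$ acting against a function into the gradient of the \emph{cutoff}. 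Since $\nabla\psi$ is supported in the annulus $\ball(y,r/2)\setminus\ball(y,r/4)$, at distance $\gtrsim r$ from $\supp f$, one invokes $\L^1$--$\L^2$ off-diagonal estimates for $t\nabla\e^{-t^2 L}$ to obtain $\e^{-\beta(r/t)^2}$; the complementary term $(t^2 L\,\e^{-t^2 L} f \mid 1-\psi)_2$ is handled by $\L^1$ off-diagonal estimates directly. The Poisson case follows by subordination, and the pointwise statement by letting $f$ run through an approximate identity at $y$. So the missing idea is to exploit the divergence-form structure through the sesquilinear form rather than constructing a conservative comparison operator: integration by parts relocates the derivative onto a localized test function, and off-diagonal decay finishes the job. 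This avoids any perturbation or bootstrap argument and needs no auxiliary operator.

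A smaller remark: the estimate one gets this way is the Gaussian decay $\e^{-\beta(r/t)^2}$ on the heat side, which is strictly stronger than the $(1+r/t)^{-1}$ you aim for; only after subordination do you lose down to the algebraic rate.
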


   \begin{proof}
   	The case $D \cap \partial O' = \varnothing$ either follows from the classical conservation property in the case of pure Neumann boundary conditions or from the subsequent proof with obvious modifications. Hence, we assume that $\partial O'$ hits $D$.

   	Let $O'$ be such a component of $O$ and fix $y \in O'$. Let $r \coloneqq \d_{D \cap \partial O'}(y)$ and $B \coloneqq B(y,r)$.

   	\textbf{(1) Case} $\boldsymbol{\varphi(z) = z \mathrm{e}^{-z}}$.
   	We show in a first step that
   	\begin{equation}
   		\left| \int_{O'} t^2 \partial_t K_t(x,y) \, \d x \right| \les \e^{- \beta \left( \frac{r}{t} \right)^2}  \qquad (0 < t < \dO). \label{eq: D-adapted conservation property: Estimate for K}
   	\end{equation}
   	In particular, this implies~\eqref{eq:D_conservation} when $\varphi(z) = z \e^{-z}$.
   	Let $f \in \L^1$ with $\supp (f) \sub B(y, \nicefrac{r}{8})$ and $\Vert f \Vert_1 =1$. It suffices to prove that
   	\begin{align}
   		\label{eq: D-adapted conservation property: Estimate for K, splitting}
   		\Bigl| \int_{O'} t^2 L \e^{-t^2 L} f \,\d x \Bigr| \les \e^{- \beta \left( \frac{r}{t} \right)^2}.
   	\end{align}
   As usual, $\beta$ is allowed to change from line to line.
   Indeed, assuming \eqref{eq: D-adapted conservation property: Estimate for K, splitting} and taking Remark~\ref{Property G(mu): Remark: G(mu) stability} into account, we get
   \begin{align}
   		\label{eq:cons_exp_K}
   		\left| \int_{O'} \int_{B(y, \frac{r}{8})} t^2 \partial_t K_t(x,z) f(z) \, \d z \, \d x \right| \les \e^{- \beta \left( \frac{r}{t} \right)^2}.
   	\end{align}
   	Now, pick a non-negative $g \in \smooth[\ball(y,1)]$ with $\int_{\R^d} g \, \d x =1$, put $g_n(x) \coloneqq n^d g(nx)$ for all $n \in \N$ and choose $N \in \N$ such that $\nicefrac{1}{N} \leq \nicefrac{r}{8}$ and $\ball(y, \nicefrac{2}{N}) \sub O'$. For $n \geq N$ calculate
   	\begin{align}
   		\label{eq:cons_split}
   		&\left| \int_{O'} \int_{\ball(y, \frac{1}{n})} t^2 \partial_t K_t (x,z) g_n(z) \, \d z \, \d x - \int_{O'} t^2 \partial_t K_t (x,y) \, \d x \right|
   		\\&\leq \int_{O'} \int_{\ball(y, \frac{1}{n})} |t^2 \partial_t K_t (x,z) - t^2 \partial_t K_t(x,y)| g_n(z) \, \d z \, \d x.
   	\end{align}
	By construction,~\eqref{eq:cons_exp_K} can be applied with $f = g_n$ for all $n \geq N$. Therefore,~\eqref{eq: D-adapted conservation property: Estimate for K} follows if the right-hand side of~\eqref{eq:cons_split} goes to $0$ as $n\to \infty$.
   	To this end, split its $x$-integral into the two parts $\ball(y, \nicefrac{2}{n})$ and $O' \setminus \ball(y, \nicefrac{2}{n})$. We denote the corresponding integrals by $\mathrm{H}_{\loc}$ and $\mathrm{H}_{\mathrm{glob}}$, respectively.

   	\textbf{Estimate for $\boldsymbol{\mathrm{H}_{\loc}}$.} We use the Hölder estimate (G3) for $t \partial_t K_t$ to get
   	\begin{align*}
   		\mathrm{H}_{\loc} \les t^{- d - \mu} \int_{\ball(y, \frac{2}{n})} \int_{\ball(y, \frac{1}{n})} |y-z|^{\mu} g_n(z) \, \d z \, \d x \les t^{- d - \mu} n^{- \mu -2d} \longrightarrow 0  \qquad (n \to \infty).
   	\end{align*}
   	\textbf{Estimate for $\boldsymbol{\mathrm{H}_{\mathrm{glob}}}$.} Let $\nu \in (0, \mu)$ and $x \in O' \setminus \ball(y, \nicefrac{2}{n})$. Consequently, $|y-z| \leq \nicefrac{1}{n} \leq \nicefrac{|x-y|}{2}$. Remark~\ref{Property G(mu): Remark: G(mu) stability} (ii) delivers the estimate
   	\begin{align*}
   		|t^2 \partial_t K_t(x,y) - t^2 \partial_t K_t (x, z)| &\les t^{-d - \nu} |y-z|^{\nu} \e^{- \beta \left( \frac{|x-y|}{t} \right)^2} \les_t \frac{1}{n^{\nu}} \e^{- \beta \left( \frac{|x-y|}{t} \right)^2}.
   	\end{align*}
   	Hence, we obtain that
   	\begin{align*}
   		\mathrm{H}_{\mathrm{glob}} \les_t n^{-\nu-d} \int_{O' \setminus \ball(y, \nicefrac{2}{n})} \e^{- \beta \left( \frac{|x-y|}{t} \right)^2} \, \d x \les n^{-\nu-d} \int_{0}^{\infty} \e^{- \beta \left( \frac{r}{t} \right)^2} r^d \, \frac{\d r}{r} \longrightarrow 0  \quad (n \to \infty).
   	\end{align*}
   	Therefore, it only remains to show \eqref{eq: D-adapted conservation property: Estimate for K, splitting}. To this end, let $\psi \in \smooth[\R^d]$ such that $ \1_{\ball(y, \nicefrac{r}{4})} \leq \psi \leq \1_{\ball(y, \nicefrac{r}{2})}$ and $\Vert \nabla \psi \Vert_{\infty} \les r^{-1}$. Decompose
   	\begin{align}
   		\Bigl| \int_O t^2 L \e^{-t^2 L} f \,\d x \Bigr| &\leq |( t^2 L \e^{-t^2 L} f \, | \, \psi)_2| + |( t^2 L \e^{-t^2 L} f \, | \, (1 - \psi))_2| \\
   		&\eqqcolon \mathrm{(I)} + \mathrm{(II)}.
   	\end{align}

   	\textbf{Estimate for (I).} For brevity write $\L^2 = \L^2(O')$ in the following. We bound
   	\begin{align*}
   		|( t^2 L \e^{-t^2 L} f \, | \, \psi)_2| &= t^2 |( A \nabla \e^{-t^2 L} f \, | \, \nabla \psi)_2| \les t^2 \Vert \1_{O' \setminus \frac{1}{4} B} \nabla \e^{- t^2 L} f \Vert_2 \Vert \nabla \psi \Vert_2
   		\\&\les r^{\frac{d}{2} -1} t \Vert \1_{O' \setminus \frac{1}{4} B} t \nabla \e^{- t^2 L} f \Vert_2.
   	\end{align*}
   	A particular consequence of $(\mathrm{G}(\mu))$ are $\L^1 - \L^2$ off-diagonal estimates for $(\e^{-t^2 L})_{t > 0}$.
   	It is well-known~\cite[Chap.~4]{Block} that then also $(t \nabla \e^{- t^2 L})_{t > 0}$ satisfies $\L^1 - \L^2$ off-diagonal estimates.
   	Hence,
   	\begin{equation*}
   		|( t^2 L \e^{-t^2 L} f \, | \, \psi)_2| \les \left( \frac{r}{t} \right)^{\frac{d}{2} -1} \e^{- \beta \left( \frac{r}{t} \right)^2} \les \e^{- \beta \left( \frac{r}{t} \right)^2}.
   	\end{equation*}
   	\textbf{Estimate for (II).} Taking~\cite[Rem.~4.8]{Block} into account, $(\mathrm{G}(\mu))$ implies that $(t^2 L \e^{- t^2 L})_{t > 0}$ satisfies $\L^1$ off-diagonal estimates. It follows
   	\begin{equation*}
   		|( t^2 L \e^{-t^2 L} f \, | \, (1-\psi))_2| \leq \Vert \1_{O' \setminus \ball(y, \nicefrac{r}{4})} t^2 L \e^{- t^2 L} f \Vert_1 \les \e^{- \beta \left(\frac{r}{t} \right)^2}.
   	\end{equation*}

   	\textbf{(2) Case} $\boldsymbol{\varphi(z) = \sqrt{z} \mathrm{e}^{-\sqrt{z}}}$.
   	We differentiate the subordination formula \eqref{eq: Property G(mu): Subordination for kernel} to get
   	\begin{align*}
   		t \partial_t p_t(x,y) &= \frac{2}{\sqrt{\pi}} \int_0^{\infty} \tfrac{t}{2 \sqrt{u}} (\partial_t K)_{\frac{t}{2 \sqrt{u}}}(x,y) \e^{-u} \, \frac{\d u}{\sqrt{u}}
   		\\&= \frac{2}{\sqrt{\pi}} \int_0^{\nicefrac{t^2}{4\dO}} \tfrac{t}{2 \sqrt{u}} (\partial_t K)_{\frac{t}{2 \sqrt{u}}}(x,y) \e^{-u} \, \frac{\d u}{\sqrt{u}}
   		\\&\quad + \frac{2}{\sqrt{\pi}} \int_{\nicefrac{t^2}{4\dO}}^{\infty} \tfrac{t}{2 \sqrt{u}} (\partial_t K)_{\frac{t}{2 \sqrt{u}}}(x,y) \e^{-u} \, \frac{\d u}{\sqrt{u}}.
   	\end{align*}
   	Now, we use Fubini's theorem, \cite[Lem.~A.1]{Auscher-Russ} for the first integral and \eqref{eq: D-adapted conservation property: Estimate for K} for the second one in order to deduce
   	\begin{align*}
   		\left| \int_O t \partial_t p_t(x,y) \, \d x \right| &\les \int_0^{\nicefrac{t^2}{4\dO}} \e^{-u} \, \frac{\d u}{\sqrt{u}} + \int_0^{\infty} \e^{- (1 + c \left( \frac{r}{t} \right)^2 )u} \, \frac{\d u}{\sqrt{u}}
   		\\&\les t \1_{[\dO < \infty]} + \left( 1 + \frac{r}{t} \right)^{-1}
   		\\&\les \left( 1 + \frac{r}{t} \right)^{-1},
   	\end{align*}
   	where we used in the last step that $O$ is bounded.
   \end{proof}

\begin{proof}[\rm\bf{Proof of the inclusion $\boldsymbol{\H_L^1 \sub \H_D^1}$.}]
	The proof is divided into two parts.

	\textbf{Step 1. $\boldsymbol{f \in \H^1_L \cap \overline{\sR(L)}}$.} Let $0<b<\infty$. We start with the reproducing formula
	\begin{align}
		f = (2 b \sqrt{L} +1) \e^{- 2 b \sqrt{L}} f + 4 \int_0^{b} \j(t^2 L) \j(t^2 L) f \, \frac{\d t}{t} \eqqcolon f_1 + f_2,
	\end{align}
	which can be obtained from an integration by parts. For $b \leq \dO$ and $\phi \in \rC_{\cc}$ we claim
	\begin{align}
		\label{eq:f_tent_estimate}
		\left| \int_0^b ( \j(t^2 L) f \, | \, \j(t^2 L^*) \phi )_2 \, \frac{\d t}{t} \right| \lesssim \| f \|_{\H^1_L} \| \phi \|_{\BMO_D}.
	\end{align}
	Then we can conclude as follows.
	If $O$ is bounded, we use the reproducing formula with $b \coloneqq \dO$. Owing to~\eqref{eq:f_tent_estimate}, Theorem~\ref{Predual of H1: Theorem: VMO* = H1} yields an atomic decomposition of $f_2$.
	Next, we show that $f_1$ is a finite linear combination of $\H^1_D$-atoms. Write $f_1 = \sum_{O' \in \Sigma} \1_{O'} f_1$.
	First, we deduce with $(\mathrm{G}(\mu))$ the bound
	\begin{align}
		\label{eq:f_1_size}
		\| (2 \dO \sqrt{L} +1) \e^{- 2 \dO \sqrt{L}} f \|_2 \lesssim \dO^{-\frac{d}{2}} \| f \|_1 \lesssim \dO^{-\frac{d}{2}} \| f \|_{\H^1_L}.
	\end{align}
	Note that $O' = B(x,\dO)$ for any $x\in O'$.
	Keep in mind that all components satisfy (ITC) by Lemma~\ref{lem:ITC_for_components}.
	Consequently, $\dO^d \simeq |\ball(x,\dO) \cap O'| = |B(x,\dO)|$.
	It follows that $\1_{O'} f_1$ satisfies (up to normalization) the localization and size conditions of an $\H^1_D$-atom associated with the ball $B(x,\dO)$. Moreover, $(2 \dO \sqrt{L} +1) \e^{- 2 \dO \sqrt{L}}$ preserves $\overline{\sR(L)} = \L^2_0$ by Remark~\ref{rem:fc_on_H1L}. Therefore, in the case $O' = O_m$, the cancellation condition is fulfilled as well.

	Next, consider the case that $O$ is unbounded. Let $\phi \in \rC_{\cc}$ and consider $|(f \, | \, \phi)_2|$. Arguing as in~\eqref{eq:f_1_size}, we deduce $|(f_1 \, | \, \phi)_2| \lesssim b^{-\frac{d}{2}} \| f \|_{\H^1_L} \| \phi \|_2$, which vanishes as $b \to \infty$. Since~\eqref{eq:f_tent_estimate} holds for all $0<b<\infty$, we infer $|(f \, | \, \phi)_2| \lesssim \| f \|_{\H^1_L} \| \phi \|_{\BMO_D}$.
	This allows to conclude using Theorem~\ref{Predual of H1: Theorem: VMO* = H1} right away.

	It only remains to prove~\eqref{eq:f_tent_estimate}. To do so, we rely on a duality estimate in tent spaces. This duality estimate involves the so-called \textbf{local Carleson functional} defined for a measurable function $g \colon (0, \dO) \times O \to \C$ by
	\begin{equation}
		\label{eq:def_carleson}
		C^{\loc} (g)(x) \coloneqq \sup_{\ball \ni x} \left( \int_0^{r(\ball)} \fint_{\ball \cap O} |g(t,y)|^2 \, \d y \, \frac{\d t}{t} \right)^{\frac{1}{2}} \qquad (x \in O).
	\end{equation}
	Here, the supremum is taken over all Euclidean balls centered in $O$ that contain $x$ and are of radius $r(\ball) < \dO$.
	The duality estimate reads then as follows.
	\begin{lemma}[{\cite[Lem.\@ 3.16 \& Cor.\@ 3.17]{Amenta-Tent-Spaces}}] \label{Inclusion of Operator-adapted space into H_D^1(O): Lemma: T1-Tinfty-Pairing}
		Assume $\mathrm{(UITC)}$. Let $S^{\loc}(f) \in \L^1$ and $C^{\loc} (g) \in \mathrm{L}^{\infty}$. Then we have for all $0<b\leq \dO$ the estimate
		\begin{equation*}
			\left| \int_0^b \int_O f(t,x) g(t,x) \, \d x \, \frac{\d t}{t} \right| \les \Vert S^{\loc} (f) \Vert_1 \Vert C^{\loc} (g) \Vert_{\infty}.
		\end{equation*}
	\end{lemma}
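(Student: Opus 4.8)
This is the local, truncated form of the classical tent‑space duality pairing between $T^{2,1}$ and $T^{2,\infty}$, carried out over the space of homogeneous type $(O,|\cdot|,\d x)$ — which is of homogeneous type thanks to (UITC) — and it is precisely \cite[Lem.~3.16 \& Cor.~3.17]{Amenta-Tent-Spaces}. The plan is therefore to run the Coifman--Meyer--Stein stopping‑time argument in the form used there. First I would reduce to the case where $f$ is supported in $\{\varepsilon<t<(1-\varepsilon)\dO\}\times K$ with $K$ a compact subset of $O$, recovering the general statement by monotone convergence; for such $f$ one has $\iint|fg|\,\tfrac{\d y\,\d t}{t}<\infty$ and one may take $b=\dO$. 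Unfolding the cone integral and using $|\ball(y,t)\cap O|\simeq t^d$ for $t<\dO$ (UITC) gives
\begin{equation*}
	\int_0^{\dO}\!\!\int_O f\,g\,\d x\,\frac{\d t}{t}=\int_O\Bigl(\iint_{\Gamma^{\loc}(x)}f\,g\,\frac{\d y\,\d t}{t\,|\ball(y,t)\cap O|}\Bigr)\d x,\qquad \iint_{(0,\dO)\times O}|f|^2\,\frac{\d y\,\d t}{t}\simeq\|S^{\loc}f\|_2^2,
\end{equation*}
with $\Gamma^{\loc}(x)=\{(t,y):0<t<\dO,\ |x-y|<t\}$; the localisation of the second identity to a tent, which is just Remark~\ref{rem:sf_normalization}, is the building block for the square‑function side.

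Set $v:=S^{\loc}(f)\in\L^1$ and, for $k\in\Z$, put $E_k:=\{v>2^k\}$ and $E_k^*:=\{M\1_{E_k}>c_0\}$, where $M$ is the uncentred maximal operator on $O$ and $c_0\in(0,1)$ is a small fixed constant. Then $E_k\subseteq E_k^*$, the $E_k^*$ are open and nested decreasing, and $|E_k^*|\les|E_k|\le 2^{-k}\|v\|_1$ by the weak $(1,1)$ bound for $M$. Let $\widehat U$ denote the tent over an open set $U\subseteq O$, normalised (fixing dilation constants once and for all) so that $(t,y)\in\widehat U$ forces $\ball(y,2t)\cap O\subseteq U$ and $t<\dO$. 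Two soft facts drive the proof: (a) $f=0$ a.e.\ off $\bigcup_k\widehat{E_k^*}$, since $f(t,y)\ne 0$ forces $v>2^k$ on a ball about $y$ of radius comparable to $t$ once $k$ is sufficiently negative, hence $(t,y)\in\widehat{E_k^*}$; and (b) if $(t,y)\notin\widehat{E_{k+1}^*}$ there is $x\in\ball(y,2t)\cap O$ with $M\1_{E_{k+1}}(x)\le c_0$, so — after choosing $c_0$ small — a definite proportion of the cone vertices $x'\in\ball(y,t)\cap O$ of $(t,y)$ satisfy $v(x')\le 2^{k+1}$.

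Now decompose $\iint_{(0,\dO)\times O}|fg|\,\tfrac{\d y\,\d t}{t}=\sum_k\iint_{\Delta_k}|fg|\,\tfrac{\d y\,\d t}{t}$ with $\Delta_k:=\widehat{E_k^*}\setminus\widehat{E_{k+1}^*}$, and apply Cauchy--Schwarz in each term with the common measure $\tfrac{\d y\,\d t}{t}$:
\begin{equation*}
	\iint_{\Delta_k}|fg|\,\frac{\d y\,\d t}{t}\le\Bigl(\iint_{\Delta_k}|f|^2\,\frac{\d y\,\d t}{t}\Bigr)^{1/2}\Bigl(\iint_{\widehat{E_k^*}}|g|^2\,\frac{\d y\,\d t}{t}\Bigr)^{1/2}.
\end{equation*}
For the $g$‑factor, cover the open set $E_k^*$ by a Whitney‑type family $\{\ball_i\}$ of bounded overlap with $\bigcup_i\ball_i=E_k^*$ and $r(\ball_i)\simeq\d(\ball_i,O\setminus E_k^*)<\dO$; a standard geometric fact gives $\widehat{E_k^*}\subseteq\bigcup_i\widehat{C\ball_i}$ for a dimensional $C$, and then the definition of $C^{\loc}$ and bounded overlap yield $\iint_{\widehat{E_k^*}}|g|^2\,\tfrac{\d y\,\d t}{t}\les\|C^{\loc}g\|_\infty^2\sum_i|\ball_i|\les\|C^{\loc}g\|_\infty^2|E_k^*|$. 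For the $f$‑factor, write $\iint_{\Delta_k}|f|^2\,\tfrac{\d y\,\d t}{t}\simeq\iint_{\Delta_k}|f|^2\,\tfrac{|\ball(y,t)\cap O|}{t^{1+d}}\,\d y\,\d t$, insert the proportion from (b), and use Tonelli together with Remark~\ref{rem:sf_normalization}:
\begin{equation*}
	\iint_{\Delta_k}|f|^2\,\frac{\d y\,\d t}{t}\les\int_{\{v\le 2^{k+1}\}}\Bigl(\iint_{\Gamma^{\loc}(x')\cap\Delta_k}|f|^2\,\frac{\d y\,\d t}{t^{1+d}}\Bigr)\d x'\le 2^{2(k+1)}\,|G_k|,
\end{equation*}
where $G_k:=\{x':\Gamma^{\loc}(x')\cap\Delta_k\ne\varnothing\}\subseteq E_k^*$, so $|G_k|\les|E_k|$. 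Multiplying the two factors gives $\iint_{\Delta_k}|fg|\,\tfrac{\d y\,\d t}{t}\les 2^k|E_k|\,\|C^{\loc}g\|_\infty$; summing over $k$ and using $\sum_k 2^k|\{v>2^k\}|\simeq\|v\|_1=\|S^{\loc}f\|_1$ concludes.

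I expect the genuine difficulty to lie not in this arithmetic but in the soft infrastructure over the homogeneous‑type space $O$: a workable notion of tent over an open subset of $O$ with the correct dilation‑constant calculus, the Whitney covering with the inclusion $\widehat{E_k^*}\subseteq\bigcup_i\widehat{C\ball_i}$, the a.e.\ identification $\{f\ne 0\}\subseteq\bigcup_k\widehat{E_k^*}$ for merely measurable $f$, and respecting the constraint $r(\ball_i)<\dO$ imposed by the \emph{local} Carleson functional (automatic when $O$ is unbounded, requiring the extra truncation $t<(1-\varepsilon)\dO$ when $O$ is bounded). These are exactly the points settled in \cite{Amenta-Tent-Spaces}, which is why it is safe to simply cite the result from there.
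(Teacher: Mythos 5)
The paper gives no proof of this lemma beyond the citation to Amenta's tent-space duality (Lem.~3.16 \& Cor.~3.17), and your proposal correctly identifies that source and faithfully reproduces the standard Coifman--Meyer--Stein stopping-time argument that underlies it, including the points (averaging trick under (UITC), Whitney covering of $E_k^*$, good-proportion of cone vertices, and the $t<\dO$ truncation) where care is needed. Your sketch is correct and is essentially the same approach as the paper's, which is to defer to the cited result.
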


	Now, let $0< b \leq \dO$ and $\phi \in \rC_{\cc}$. For convenience, we assume that $\phi$ is normalized in $\BMO_D$. We invoke Lemma~\ref{Inclusion of Operator-adapted space into H_D^1(O): Lemma: T1-Tinfty-Pairing} to infer
		\begin{equation*}
			\left| \int_0^b ( \j(t^2 L) f \, | \, \j(t^2 L^*) \phi )_2 \, \frac{\d t}{t} \right| \les \Vert S_{\j, L}^{\loc} (f) \Vert_1 \Vert C^{\loc}( \j(\t^2L^*) \phi) \Vert_{\infty}.
		\end{equation*}
		To bound $\Vert C^{\loc}( \j(\t^2L^*) \phi) \Vert_{\infty}$, let $x \in O$ and $\ball$ be a ball centered in $x_\ball \in O$ that contains $x$ with $r = r(\ball) < \dO$.
		Recall that either $O$ consists of only one component or $O$ is bounded and all components satisfy (ITC) as well.
		Let $x_\ball \in O''$ where $O'' \in \Sigma$. It follows $$\ball \cap O \subseteq B \cup \Bigl( \bigcup_{z \in O' \cap B} B(z, 2r) \Bigr).$$
		To be more precise, the union is taken over all $O' \in \Sigma$ distinct to $O''$ such that $O' \cap B \neq \varnothing$. Whenever we are in this situation, we pick one fixed $z$ from this non-empty intersection.
		Write $$\phi =  \sum_{O' \in \Sigma} \phi \1_{O'} \eqqcolon \sum_{O' \in \Sigma} \phi_{O'}.$$
		Fix one component $O'$ and put either $\widetilde B = B$ if $O' = O''$ or $\widetilde B = B(z, 2r)$, where $z \in O'$ from the above decomposition. We split
		\begin{align}
			\label{eq:decomp_phi_O'}
			\phi_{O'} = (\phi)_{\widetilde B} \1_{O'} + (\phi - (\phi)_{\widetilde B}) \1_{2\widetilde B} + (\phi - (\phi)_{\widetilde B}) \1_{O' \setminus 2\widetilde \ball}.
		\end{align}
		The functions on the right-hand side belong to $\L^{\infty}$, but $\j(t^2 L^*)$ extends its action linearly to $\L^{\infty}$ for each $0 < t \leq r(\ball)$ according to Lemma~\ref{Preliminary results: Lemma: Linfty-extension of the semigroup} and taking $r(\ball) < \dO$ into account.
		We concentrate first on the case $O' \neq O_m$ and explain the necessary changes for $O' = O_m$ afterwards. Write $\phi_i \coloneqq \phi_{i,O'}$ for the three terms on the right-hand side of~\eqref{eq:decomp_phi_O'}.
		That being said, it suffices to bound
		\begin{equation*}
			\sum_{i=1}^3 \int_0^{r} \fint_{\ball \cap O} |(\j(t^2 L^*) \phi_i)(y)|^2 \, \frac{\d y \, \d t}{t} \eqqcolon \mathrm{(I)} + \mathrm{(II)}_{\loc} + \mathrm{(II)}_{\mathrm{glob}} \les 1.
		\end{equation*}
		\textbf{Estimate for $\boldsymbol{\mathrm{(II)}_{\loc}}$.} McIntosh's theorem and (UITC) yield
		\begin{equation*}
			\mathrm{(II)}_{\loc} \les |\ball \cap O|^{-1} \Vert \phi_2 \Vert_2^2 \simeq |2\widetilde B|^{-1} \Vert \phi_2 \Vert_2^2 = \fint_{2 \widetilde B} |\phi - (\phi)_{\widetilde B}|^2 \, \d x \les 1.
		\end{equation*}
			\textbf{Estimate for $\boldsymbol{\mathrm{(II)}_{\mathrm{glob}}}$.} Assumption $(\mathrm{G}(\mu))$ implies that
		\begin{align*}
			\mathrm{(II)}_{\mathrm{glob}} &\leq \int_0^{r} \fint_{\ball \cap O} \left( \int_{O' \setminus 2 \widetilde \ball} | q_t (z,y)| |\phi(z) - (\phi)_{\widetilde B}| \, \d z \right)^{2} \, \d y \, \frac{\d t}{t}
			\\&\les \int_0^{r} \fint_{\ball \cap O} \left( \int_{O' \setminus 2 \widetilde \ball} t^{-d} \left( 1 + \frac{|z-y|}{t} \right)^{-(d+1)} |\phi(z) - (\phi)_{\widetilde B}| \, \d z \right)^{2} \, \d y \, \frac{\d t}{t}.
		\end{align*}
		Recall the notation $C_j(\ball)$ for annuli. We decompose $O' \setminus 2 \widetilde \ball = \bigcup_{j \geq 1} C_j(\widetilde \ball) \cap O'$. Since $|z-y| \simeq 2^j r$ for all $y \in \ball \cap O$ and $z \in C_j(\widetilde \ball) \cap O'$, we can further bound the latter by
		\begin{align}
			\label{eq: Adapted spaces contained in classical: Estimate for phi_3}
			\mathrm{(II)_{\mathrm{glob}}}&\les \int_0^{r} \Biggl( \sum_{j \geq 1} t^{-d} \left( 1 + 2^j \frac{r}{t} \right)^{-(d+1)} \int_{C_j(\widetilde \ball) \cap O'}  |\phi(z) - (\phi)_{\widetilde B}| \, \d z \Biggr)^{2} \, \frac{\d t}{t} \notag
			\\&\les \left( \int_0^{r} \left( \frac{t}{r} \right)^2 \, \frac{\d t}{t} \right) \Biggl( \sum_{j \geq 1}  2^{-j} \fint_{2^{j+1} \widetilde B} |\phi(z) - (\phi)_{\widetilde B}| \, \d z \Biggr)^{2} \notag\\&\simeq \Biggl( \sum_{j \geq 1}  2^{-j} \fint_{2^{j+1} \widetilde B} |\phi(z) - (\phi)_{\widetilde B}| \, \d z \Biggr)^{2}.
		\end{align}
		Next, using (UITC), we estimate
		\begin{align*}
			&\quad \fint_{2^{j+1} \widetilde B} |\phi(z) - (\phi)_{\widetilde B}| \, \d z \\
			&\leq  \fint_{2^{j+1} \widetilde B} |\phi(z) - (\phi)_{2^{j+1} \widetilde B}| \, \d z + \sum_{k=0}^{j} |(\phi)_{2^{k+1} \widetilde B} - (\phi)_{2^k \widetilde B}|
			\\&\les 1 + \sum_{k=0}^j \fint_{2^{k} \widetilde B} |\phi(z) - (\phi)_{2^{k+1} \widetilde B}| \, \d z
			\\&\les 1 + \sum_{k=0}^j \fint_{2^{k+1} \widetilde B} |\phi(z) - (\phi)_{2^{k+1} \widetilde B}| \, \d z \\
			&\les j.
		\end{align*}
		Inserting this back into \eqref{eq: Adapted spaces contained in classical: Estimate for phi_3} results in the upper bound
		\begin{align*}
			\mathrm{(II)}_{\mathrm{glob}} \les \Biggl( \sum_{j \geq 1}  j 2^{-j} \Biggr)^{2} \les 1.
		\end{align*}
		\textbf{Estimate for (I).} %
		In view of Lemma~\ref{Inclusion of Operator-adapted space into H_D^1(O): Lemma: D-adapted conservation property} applied with $O'$ we have
		\begin{align*}
			\mathrm{(I)} \les \left( \int_0^{r} \fint_{\ball \cap O} \left( 1 + \frac{\d_{D \cap \partial O'}(y)}{t} \right)^{-2} \, \d y \, \frac{\d t}{t} \right) |(\phi)_{\widetilde B}|^2.
		\end{align*}
		Write $x_{\widetilde{B}}$ for the center of $\widetilde{B}$. To bound the integral on the right, we distinguish two cases.

		\textbf{(1) Case $\boldsymbol{\d_{D \cap \partial O'}(x_{\widetilde{B}}) < 2r }$.} By assumption $\widetilde{B}$ is  near $D$. Let
		\begin{equation*}
			f(y) \coloneqq \left( 1 + \frac{\d_{D \cap \partial O'}(y)}{t} \right)^{-2}  \qquad (y \in \widetilde B),
		\end{equation*}
		and note that
		\begin{equation*}
			f(y) > \lambda \quad \iff \quad \d_{D \cap \partial O'}(y) < t (\lambda^{- \frac{1}{2}} -1) \eqqcolon s.
		\end{equation*}
		The layer cake formula joint with the substitution $\d s = - \frac{t}{2} \lambda^{- \frac{3}{2}} \, \d \lambda$ gives
		\begin{align*}
			&\int_0^{r} \int_{\ball \cap O} \left( 1 + \frac{\d_{D \cap \partial O'}(y)}{t} \right)^{-2} \, \d y \, \frac{\d t}{t} = \int_0^{r} \int_0^1 |\{ y \in \ball \cap O :  f(y) > \lambda \}| \, \d \lambda \, \frac{\d t}{t}
			\\&= 2 \int_0^{r} \int_0^{\infty} |\{ y \in \ball \cap O: \d_{D \cap \partial O'}(y) < s \}| \left( 1 + \frac{s}{t} \right)^{-3} \, \d s \, \frac{\d t}{t^2}.
		\end{align*}
		Now, Lemma~\ref{The geometric setup: Lemma: Assumption (D), upper estimate for measure} furnishes some $\eta \in (0,1)$ such that
		\begin{align*}
			|\{ y \in \ball \cap O: \d_{D \cap \partial O'}(y) < s \}| \les r^{d - \eta} s^{\eta}.
		\end{align*}
		Hence, using the substitution $s \mapsto t s$, the fact that $\eta < 2$ and (UITC) we get the upper estimate
		\begin{align*}
			r^{d - \eta} \int_0^{r} \left( \int_0^{\infty} s^{\eta} \left( 1 + \frac{s}{t} \right)^{-3} \, \d s \right) \, \frac{\d t}{t^2} \simeq r^{d- \eta} \int_0^{r} t^{\eta} \, \frac{\d t}{t} \simeq r^d \les |\ball \cap O|.
		\end{align*}
		In summary, we have $\mathrm{(I)} \les |(\phi)_{\widetilde B}|^2$.
		But since $\widetilde B$ is near $D$, $|(\phi)_{\widetilde B}| \leq 1$ by Hölder's inequality and the very definition of the $\BMO_D$-norm. Therefore $\mathrm{(I)} \les 1$ and we are done with this case. Observe that the localization of $\phi$ to the components of $O$ is used in a crucial way here, for the $\BMO_D$-norm only gives control on $\widetilde B = \widetilde \ball \cap O'$ and not on  the full $\ball \cap O$.

		\textbf{(2) Case $\boldsymbol{2r \leq \d_{D \cap \partial O'}(x_{\widetilde{B}})}$.} In the current case one has $\d_{D \cap \partial O'}(x_{\widetilde{B}}) \simeq \d_{D \cap \partial O'}(y)$ for all $y \in \widetilde B$. Hence
		\begin{align}
			\label{eq:const1}
			\int_0^{r} \fint_{\ball \cap O} \left( 1 + \frac{\d_{D \cap \partial O'}(y)}{t} \right)^{-2} \, \d y \, \frac{\d t}{t} \les \d_{D \cap \partial O'}(x_{\widetilde{B}})^{-2} \int_0^{r} t^2 \, \frac{\d t}{t} \simeq \left( \frac{r}{\d_{D \cap \partial O'}(x_{\widetilde{B}})} \right)^2.
		\end{align}
		Now we treat $|(\phi)_{\widetilde B}|$. To this end, let $k \geq 1$ be minimal such that $2^{k+1} \widetilde \ball \cap (D \cap \partial O') \neq \varnothing$. Such $k$ exists since $O' \neq O_m$ and it follows that $2^{k} \widetilde B$ is a ball near $D$. Note that $2^k r \simeq \d_{D \cap \partial O'}(x_{\widetilde B})$. Thus, we get by a telescope sum and (UITC) that
		\begin{align}
			\label{eq:const2}
			|(\phi)_{\widetilde B}| &\leq \sum_{j=0}^{k-1} |(\phi)_{2^j \widetilde B} - (\phi)_{2^{j+1} \widetilde B}| + |(\phi)_{2^k \widetilde B}|
			\les k
			\simeq \ln \left(\frac{\d_{D \cap \partial O'}(x)}{r} \right).
		\end{align}
		Now since $\sup_{\lambda \geq 1} \nicefrac{\ln(\lambda)}{\lambda}$ is finite, we derive $\mathrm{(I)} \les 1$ from the last two estimates as desired.

		Finally, if $O' = O_m$ the terms $\mathrm{(II)}_{\mathrm{loc}}$ and $\mathrm{(II)}_{\mathrm{glob}}$ are handled the same way. The treatment of $(\mathrm{I})$ is even easier: this term vanishes right away in virtue of the conservation property on $O_m$. This completes the proof of Step~1.

		\textbf{Step 2. $\boldsymbol{f \in \H^1_L}$.} We reduce the general case to Step~1 by and suitable approximation of $f$ via functions in $\H^1_L \cap \overline{\sR(L)}$. We concentrate on the case $\j(z) = \sqrt{z} \e^{-\sqrt{z}}$, the other case follows with the obvious modifications. That being said, we claim that
		\begin{align}
			P_s f \in \H^1_L \cap \overline{\sR(L)} \quad (s > 0) \quad \& \quad  \sup_{0<s<\dO} \Vert P_s f \Vert_{\H^1_L} \les \Vert f \Vert_{\H^1_L}.   \label{eq: Adapted spaces contained in classical: Step 2, first claim}
		\end{align}
		In order to prove the claim, we modify an argument from \cite[Prop.\@ 4.5]{Auscher-Stahlhut} to our needs.
		First of all, $\mathrm{G}(\mu)$ and Remark~\ref{rem:fc_on_H1L} yield $P_s f \in \overline{\sR(L)}$ and $\Vert P_s f \Vert_1 \les \Vert f \Vert_1$ for all $0 < s < \dO$. Hence, in the light of Proposition~\ref{prop:H1L_renorming}, it suffices to show $\Vert S_{\j, L} (P_s f) \Vert_1 \les \| f \|_{\H^1_L}$.
		Fix $N > d+1$ and define $\j_N(z) \coloneqq z^{\nicefrac{N}{2}} \e^{- \sqrt{z}}$.
		Since $P_s f \in \H^1_L \cap \overline{\sR(L)}$, a result from the theory of pre-adapted spaces~\cite[Prop.~4.4]{Amenta-Auscher} provides $$\Vert S_{\j, L} (P_s f) \Vert_1 \les \Vert S_{\j_N, L}(P_s f) \Vert_1.$$
		Now we are going to deal with the issue announced in Remark~\ref{rem:sf_normalization}.
		Instead of simply working with the power function $\t^{N-d-1}$, we have to consider the function
		\begin{align}
			\label{eq:strange_monotonicity}
			\Phi: t \mapsto \frac{t^N}{t|O(x,t)|}.
		\end{align}
		We claim that it is essentially increasing, that is to say, $s < t$ implies $\Phi(s) \leq C \Phi(t)$ for an absolute constant $C>0$. Indeed, by (UITC) we only have to deal with the situation $s < \dO \leq t$. Then estimate with (UITC) and monotonicity properties
		\begin{align}
			\frac{s^N}{s|O(x,s)|} \lesssim s^{N-1-d} \leq \dO^{N-1-d} = \frac{\dO^{N-1}}{|O(x,t)|} \leq \frac{t^N}{t|O(x,t)|},
		\end{align}
		which gives the claim.
		It follows from~\eqref{eq:strange_monotonicity} and the substitution $t' = t+s$ that
		\begin{equation*}
			\Vert S_{\j_N, L}(P_s f) \Vert_1 \leq \Vert S_{\j_N,L}(f) \Vert_1 \lesssim \Vert S_{\j,L}(f) \Vert_1,
		\end{equation*}
		where the last step follows again from~\cite[Prop.~4.4]{Amenta-Auscher}. Note that this time we have to split $P_t = P_{\nicefrac{t}{2}} P_{\nicefrac{t}{2}}$ and change the aperture of the cone to ensure the technical constraint that~\cite[Prop.~4.4]{Amenta-Auscher} only applies to $\H^1_L \cap \overline{\sR(L)}$.
		This completes the proof of~\eqref{eq: Adapted spaces contained in classical: Step 2, first claim}.

		Now, we complete Step~2. By~\eqref{eq: Adapted spaces contained in classical: Step 2, first claim} and Step~1, $P_s f \in \H^1_D$ with $\| P_s f \|_{\H^1_D} \lesssim \| P_s f \|_{\H^1_L} \lesssim \| f \|_{\H^1_L}$. Therefore, we can apply Lemma~\ref{Predual of H1: Lemma: H1-representation of f via dyadic grid structure} to the sequence $( P_{\nicefrac{1}{n}} f )_n$. This yields $g \in \H^1_D$ with $\| g \|_{\H^1_D} \lesssim \| f \|_{\H^1_L}$ such that $\langle P_{\nicefrac{1}{n}} f \, | \, \phi \rangle \to \langle g \, | \, \phi \rangle$ for every $\phi \in \rC_{\cc}$ along a subsequence. But owing to $\mathrm{G}(\mu)$, $(P_s)_{s \geq 0}$ is strongly continuous on $\L^1$. Therefore, also $\langle P_{\nicefrac{1}{n}} f \, | \, \phi \rangle \to \langle f \, | \, \phi \rangle$. Since $\rC_{\cc}$ separates the points of $\L^1$, $f = g$ follows and completes the proof.
	\end{proof}

	\section{Maximal characterization for hermitian coefficients}
	\label{sec:maximal}

	Throughout, assume that $L$ satisfies $\mathrm{G}(\mu)$ and is self-adjoint. The aim of this section is to enrich the result derived in Theorem~\ref{Main result and applications: Theorem: H_D^1=H_L^1} by a maximal function characterization. This extends the results given in~\cite[Sec.~4]{JFA-Hardy} on the pure Dirichlet and Neumann cases to mixed boundary conditions. For the negative Laplacian subject to mixed boundary conditions on a bounded domain, our very mild geometric assumptions let us recover the full picture of classical Hardy space theory in this way, see Theorem~\ref{thm:laplace}.

	\begin{definition}
		\label{def:max_fctn}
		For $f\in \L^2$ we define the \textbf{non-tangential maximal function} adapted to $L$ by
		\begin{align}
			f_L^*(x) \coloneqq \sup_{t > 0} \sup_{y\in O(x,t)} |\e^{-tL} f(y)| \mathrlap{\qquad (x\in O),}
		\end{align}
		and the \textbf{radial maximal function} adapted to $L$ by
		\begin{align}
			f_L^+(x) \coloneqq \sup_{t > 0} |\e^{-tL} f(x)| \mathrlap{\qquad (x\in O).}
		\end{align}
	\end{definition}

	\begin{remark}
		As a consequence of~$\mathrm{G}(\mu)$, pointwise evaluation of $\e^{-tL} f$ is well-defined.
	\end{remark}

	This enables us to define $L$-adapted Hardy spaces using a maximal function characterization.

	\begin{definition}
		\label{def:H1_max}
		The \textbf{non-tangential} and \textbf{radial maximal pre-Hardy spaces} $\mathbb{H}^1_{L,\mathrm{max}}$ and $\mathbb{H}^1_{L,\mathrm{rad}}$ are defined as
		\begin{align}
			\mathbb{H}^1_{L,\mathrm{max}} \coloneqq \bigl\{ f\in \L^2 \colon f_L^* \in \L^1 \bigr\}, \qquad
			\|f\|_{\mathbb{H}^1_{L,\mathrm{max}}} \coloneqq \| f_L^* \|_1,
		\end{align}
		and
		\begin{align}
			\mathbb{H}^1_{L,\mathrm{rad}} \coloneqq \bigl\{ f\in \L^2 \colon f_L^+ \in \L^1 \bigr\}, \qquad
			\|f\|_{\mathbb{H}^1_{L,\mathrm{rad}}} \coloneqq \| f_L^+ \|_1.
		\end{align}
		Then the \textbf{non-tangential} and \textbf{radial maximal Hardy spaces} $\H^1_{L,\mathrm{max}}$ and $\H^1_{L,\mathrm{rad}}$ are defined as the completions of $\mathbb{H}^1_{L,\mathrm{max}}$ and $\mathbb{H}^1_{L,\mathrm{rad}}$, respectively.
	\end{definition}

	The abstract main result of this section is the following.

	\begin{theorem}
		\label{thm:max}
		Assume that the operator $L$ is self-adjoint and satisfies~$\mathrm{G}(\mu)$ for some $\mu \in (0,1]$. Suppose that the geometric requirements~$(\mathrm{UITC})$, $(D)$, $(\mathrm{LU})$ and $(\mathrm{Fat})$ are fulfilled. Then one has the identity
		\begin{align}
			\H^1_{L,\mathrm{max}} = \H^1_{L,\mathrm{rad}} = \H^1_L \oplus \ell^1(\ISet) = \H^1_{D} \oplus \ell^1(\ISet),
		\end{align}
		where $\ell^1(\ISet)$ is the space of locally constant functions introduced in Definition~\ref{def:locally_constant}.
	\end{theorem}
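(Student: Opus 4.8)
The last identity $\H^1_L \oplus \ell^1(\ISet) = \H^1_D \oplus \ell^1(\ISet)$ is just Theorem~\ref{Main result and applications: Theorem: H_D^1=H_L^1}, so only the first three equalities remain. Everything is organised along the splitting $\L^2 = \L^2_0 \oplus \sN(L) = \overline{\sR(L)} \oplus \sN(L)$ from Proposition~\ref{prop:kernel}, where $\sN(L)$ is exactly the space of locally constant functions, i.e.\ $\ell^1(\ISet)$. By Corollary~\ref{cor:ON_implies_bounded} a non-empty $\ISet$ forces $O$ to be bounded; Lemma~\ref{lem:ITC_for_components} then gives a uniform lower bound for the pairwise distances of the components, so a packing argument leaves only finitely many of them and $\ell^1(\ISet)$ is finite-dimensional, with all norms on it equivalent. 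Since the heat semigroup fixes $\sN(L)$ pointwise, every locally constant function, and each of its radial and non-tangential maximal functions, lies in $\L^\infty \cap \L^1$; hence there is nothing to prove on the locally constant part, and the whole content sits on $\overline{\sR(L)}$. Note also that every $\H^1_D$-atom lies in $\overline{\sR(L)} \cap \L^2$, being square integrable (size condition) and of vanishing average over each $O_m$ (its support lies in a single component).

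The crux are the two $\L^1$-comparisons
\begin{align}
	\label{eq:max_plan_A}
	\| f_L^* \|_1 &\lesssim \| S_{\j,L}(f) \|_1 \quad (f \in \H^1_L \cap \L^2), \\
	\label{eq:max_plan_B}
	\| S_{\j,L}(f) \|_1 &\lesssim \| f_L^+ \|_1 \quad (f \in \overline{\sR(L)} \cap \L^2,\ f_L^+ \in \L^1).
\end{align}
Granting these, together with the pointwise bounds $|f| \le f_L^+ \le f_L^*$ and Lemma~\ref{lem:pre-adapted_embed_L1} (the estimate $\|f\|_1 \lesssim \|S_{\j,L}(f)\|_1$ from the pre-adapted theory), the proof is assembled as follows. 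On $\overline{\sR(L)} \cap \L^2$ one obtains the circle $\|f_L^+\|_1 \le \|f_L^*\|_1 \lesssim \|f\|_{\H^1_L} \lesssim \|f_L^+\|_1$, so these three norms are equivalent there. To include the kernel, take $f$ with $f_L^+ \in \L^1$ and split $f = f_0 + f_1$ along the decomposition; self-adjointness makes $\e^{-tL}$ converge strongly to the projection onto $\sN(L)$ as $t \to \infty$, hence $\e^{-tL}f \to f_1$ in $\L^2$ and $|f_1| \le f_L^+$ a.e., so $f_1 \in \ell^1(\ISet)$ with controlled norm; then $(f_0)_L^+ \le f_L^+ + |f_1| \in \L^1$, and \eqref{eq:max_plan_B} plus Lemma~\ref{lem:pre-adapted_embed_L1} put $f_0 \in \H^1_L$ (with $f_0 \in \L^1$ either because $O$ is bounded or by that same lemma). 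In this way one checks that $\H_{D,a}^1 \oplus \ell^1(\ISet)$ is dense in each of $\mathbb{H}^1_{L,\mathrm{max}}$ and $\mathbb{H}^1_{L,\mathrm{rad}}$ (as well as, by construction, in $\H^1_L \oplus \ell^1(\ISet)$) and that the three (pre-)norms agree on it up to constants; passing to completions, and noting that all these spaces embed continuously into $\L^1$, yields $\H^1_{L,\mathrm{max}} = \H^1_{L,\mathrm{rad}} = \H^1_L \oplus \ell^1(\ISet)$.

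It thus remains to prove \eqref{eq:max_plan_A} and \eqref{eq:max_plan_B}, where self-adjointness is indispensable and where one adapts the arguments of~\cite[Sec.~4]{JFA-Hardy} from the pure to the mixed case. For \eqref{eq:max_plan_A} I would use $\H^1_L = \H^1_D$ to reduce to the uniform bound $\|a_L^*\|_1 \lesssim 1$ for one $\H^1_D$-atom $a$ associated with $B = \ball \cap O'$: on a fixed dilate of $\ball$ this is Hölder's inequality together with $(\mathrm{UITC})$ and the $\L^2$-boundedness of the maximal operator (a consequence of $(\mathrm{G}(\mu))$), while away from $\ball$ one runs the split-the-$t$-integral estimate of the proof of $\H^1_D \subseteq \H^1_L$ --- now invoking, for large $t$, the Gaussian decay of $\e^{-tL}a$ that is available since $a \in \overline{\sR(L)}$, and replacing the moment condition of $a$ by the vanishing of the heat kernel on $D$ (Lemma~\ref{The geometric setup: Lemma: Kernel vanishes in D}) whenever $a$ is near $D$; this is the only point where mixed boundary conditions enter \eqref{eq:max_plan_A}. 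For \eqref{eq:max_plan_B} I would follow the pre-adapted/tent-space route of~\cite{Amenta-Auscher} already used in Section~\ref{Subsection: Inclusion of Operator-adapted space into H_D^1(O)}: writing $\t^2 L \e^{-\t^2 L}$ through differences of the self-adjoint factor $\e^{-\t^2 L/2}$, using the Caccioppoli-type and $\L^1$--$\L^2$ off-diagonal estimates coming from $(\mathrm{G}(\mu))$, and controlling the large-$t$ (low-frequency) part of $S_{\j,L}(f)$ by the $D$-adapted conservation property of Lemma~\ref{Inclusion of Operator-adapted space into H_D^1(O): Lemma: D-adapted conservation property} in place of the classical one. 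I expect \eqref{eq:max_plan_B} to be the main obstacle: one cannot simply cite a connected pure-boundary-conditions statement, and the $D$-adapted conservation property must be threaded through the whole estimate of the low-frequency tail; once that substitution is in place, the good-$\lambda$ argument of~\cite[Sec.~4]{JFA-Hardy} runs on the space of homogeneous type $O$ and delivers \eqref{eq:max_plan_B}.
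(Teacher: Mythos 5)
Your plan is essentially sound and leads to the conclusion, but it takes a noticeably harder and longer route than the paper's proof, and it is driven by a misconception about how much adaptation the cited results need. The paper never reproves the two $\L^1$-comparisons you set up in \eqref{eq:max_plan_A}--\eqref{eq:max_plan_B}. Instead it observes that $(\mathrm{UITC})$ makes $O$ a space of homogeneous type and that $(\mathrm{G}(\mu))$ implies Davies--Gaffney estimates; these two facts put the present constellation \emph{verbatim} into the hypotheses of the abstract results of Hofmann--Lu--Mitrea--Mitrea--Yan and Bui--Duong--Ly, which are formulated for self-adjoint operators on spaces of homogeneous type and make no reference to boundary conditions whatsoever. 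From there the paper's argument is purely structural: $\mathbb{H}^1_L = \mathbb{H}^1_{L,\mathrm{at},M}$ by \cite[Prop.~4.16]{Hardy-Gaffney}; $\mathbb{H}^1_{L,\mathrm{at},M} \oplus \ker(L) = \mathbb{H}^1_{L,\mathrm{max}} = \mathbb{H}^1_{L,\mathrm{rad}}$ by \cite[Cor.~1.5]{JFA-Hardy} (with the small correction $\C \rightsquigarrow \ker(L)$, recorded in Remark~\ref{rem:L_atomic_2} and the footnote); $\ker(L)=\ell^1(\ISet)$ by Proposition~\ref{prop:kernel}; and finally $\H^1_L$ is identified as a completion of $\mathbb{H}^1_L$ (Proposition~\ref{prop:completion}, which is where Theorem~\ref{Main result and applications: Theorem: H_D^1=H_L^1} and the $\VMO_D$-duality actually enter). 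Taking completions of $\mathbb{H}^1_L \oplus \ell^1(\ISet) = \mathbb{H}^1_{L,\mathrm{max}} = \mathbb{H}^1_{L,\mathrm{rad}}$ componentwise then gives the claim.

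The belief expressed in your last paragraph --- that ``one cannot simply cite a connected pure-boundary-conditions statement'' and that the $D$-adapted conservation property must be threaded through the good-$\lambda$ argument --- is the main misalignment. Lemma~\ref{Inclusion of Operator-adapted space into H_D^1(O): Lemma: D-adapted conservation property} is a tool for the atomic identification $\H^1_L = \H^1_D$ and plays no role in the maximal-function theory; the abstract statements you would be re-deriving never invoke any conservation property. What does require attention is the kernel $\ker(L)$, and the paper's only genuine adjustment to the citations is precisely this: to write $\ker(L)$ in place of $\C$ in the finite-measure case. Your handling of the kernel part (fixed pointwise by the semigroup, hence $u_L^+ = u_L^* = |u|$, and $\ell^1(\ISet)\subseteq\L^1$) is correct, and so is your reduction of \eqref{eq:max_plan_A} to a uniform bound on atoms. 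But reproving the maximal/square-function equivalence \eqref{eq:max_plan_B} from scratch is a substantial undertaking that is unnecessary here, and your sketch stops well short of carrying it out.
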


	The framework developed by the second-named author together with Ciani and Egert~\cite{BCE-Gauss-for-MBC} lets us conclude a powerful application for the negative Laplacian on a bounded open set.

	\begin{theorem}
		\label{thm:laplace}
		Let $O \subseteq \R^d$ be open and bounded, and assume $(\mathrm{ITC})$, $(D)$, $\mathrm{(Fat)}$ and $(\mathrm{LU})$. If $-\Delta_D$ denotes the negative Laplacian subject to mixed boundary conditions, then we have the identities
		\begin{align}
			\{ f\in \L^1 \colon S_{-\Delta_D}(f) \in \L^1 \} = \H^1_D \oplus \ell^1(\ISet) = \H^1_{-\Delta_D,\mathrm{max}} = \H^1_{-\Delta_D,\mathrm{rad}},
		\end{align}
		where the space on the left-hand side is equipped with the norm $\| f \| \coloneqq \| f \|_1 + \| S_{-\Delta_D}(f) \|_1$.
	\end{theorem}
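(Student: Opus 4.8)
The plan is to deduce the statement from the abstract results of this paper applied to $L = -\Delta_D$, together with the Gaussian estimates for the Laplacian subject to mixed boundary conditions. First I would record that $-\Delta_D$ fits the framework of Section~\ref{Subsection: Operator theoretic setup}: it is the divergence form operator with $A = \mathrm{Id}$ realised via the form on $\W^{1,2}_D$, hence self-adjoint, and it satisfies the homogeneous Gårding inequality~\eqref{eq:garding} trivially. Under the present geometric hypotheses, property $\mathrm{G}(\mu)$ for $-\Delta_D$ (for some $\mu \in (0,1]$) is provided by~\cite{BCE-Gauss-for-MBC}. Since $O$ is bounded, $(\mathrm{ITC})$ is equivalent to $(\mathrm{UITC})$, so all hypotheses of Theorems~\ref{Main result and applications: Theorem: H_D^1=H_L^1} and~\ref{thm:max} are in force.

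Granting this, Theorem~\ref{Main result and applications: Theorem: H_D^1=H_L^1} gives $\H^1_{-\Delta_D} = \H^1_D$, and Theorem~\ref{thm:max} upgrades this to
\begin{equation*}
	\H^1_{-\Delta_D,\mathrm{max}} = \H^1_{-\Delta_D,\mathrm{rad}} = \H^1_{-\Delta_D} \oplus \ell^1(\ISet) = \H^1_D \oplus \ell^1(\ISet).
\end{equation*}
Thus three of the four spaces in the statement are already identified, and it remains to prove
\begin{equation*}
	\{ f \in \L^1 \colon S_{-\Delta_D}(f) \in \L^1 \} = \H^1_D \oplus \ell^1(\ISet)
\end{equation*}
with equivalent norms.

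For this I would argue in three small steps. First, $S_{-\Delta_D}$ coincides with $S_{\varphi, L}$ for $\varphi(z) = \sqrt{z}\,\e^{-\sqrt{z}}$, because $\varphi(t^2 L) = t\sqrt{L}\,\e^{-t\sqrt{L}} = -t\partial_t \e^{-t\sqrt{L}}$; combining this with the pointwise bound $S_{\varphi, L}^{\loc} \leq S_{\varphi, L}$ and Proposition~\ref{prop:H1L_renorming} yields $\H^1_{-\Delta_D} = \{ f \in \L^1_0 \colon S_{-\Delta_D}(f) \in \L^1 \}$ with equivalent norms. Second, as $O$ is bounded one has $\sum_m |\int_{O_m} f \, \d x| \leq \Vert f \Vert_1$, so every $f \in \L^1$ splits uniquely as $f = g + c$ with $g \coloneqq f - \sum_m (f)_{O_m} \1_{O_m} \in \L^1_0$ and $c \coloneqq \sum_m (f)_{O_m} \1_{O_m}$ locally constant in the sense of Definition~\ref{def:locally_constant}; the associated projections are $\L^1$-bounded, realising $\L^1 = \L^1_0 \oplus \ell^1(\ISet)$ as a topological splitting. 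Third, any locally constant $c \in \L^1$ is fixed by the Poisson semigroup: for $c \in \L^2$ this is Proposition~\ref{prop:kernel}, and it extends to all of $\ell^1(\ISet)$ by approximating $c$ in $\L^1$ by its finite (hence $\L^2$) truncations together with the $\L^1$-boundedness of $(\e^{-t\sqrt{L}})_{t > 0}$ coming from $\mathrm{G}(\mu)$. Consequently $S_{-\Delta_D}(c) = 0$, so $S_{-\Delta_D}(f) = S_{-\Delta_D}(g)$, and $f$ belongs to the left-hand side precisely when $g \in \H^1_{-\Delta_D}$; combined with the splitting above this gives the set equality, and tracking the norms through Proposition~\ref{prop:H1L_renorming} and Theorem~\ref{Main result and applications: Theorem: H_D^1=H_L^1} gives the norm equivalence.

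The genuine mathematical content lies entirely in the invoked results — Theorems~\ref{Main result and applications: Theorem: H_D^1=H_L^1} and~\ref{thm:max} and the Gaussian bounds of~\cite{BCE-Gauss-for-MBC} — so the present argument is essentially bookkeeping. The only mildly delicate point is the passage from $\L^2$ to $\L^1$ in the last step, which is nonetheless routine given the strong continuity and $\L^1$-boundedness of the Poisson semigroup.
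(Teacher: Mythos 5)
Your proposal is correct and takes essentially the same route as the paper: verify the hypotheses (self-adjointness, Gaussian bounds from~\cite{BCE-Gauss-for-MBC}, the ITC/UITC coincidence for bounded $O$), invoke Theorem~\ref{thm:max} for three of the four identities, and obtain the last one from the conservation property $S_{-\Delta_D}(c) = 0$ for locally constant $c$ together with the splitting $\L^1 = \L^1_0 \oplus \ell^1(\ISet)$. Your Step~3 approximation argument is in fact not needed since for bounded $O$ every locally constant function in $\ell^1(\ISet)$ already lies in $\L^2$; also note (as the paper does) that the Gaussian bounds in~\cite{BCE-Gauss-for-MBC} carry an exponential growth factor $\e^{\omega t}$, which is precisely what makes boundedness of $O$ the relevant assumption to deduce $\mathrm{G}(\mu)$ in the form of Definition~\ref{Property G(mu): Definition: G(mu)}.
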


	\begin{remark}
		\begin{enumerate}
			\item We wish to emphasize that both the space $\H^1_D \oplus \ell^1(\ISet)$ and the assumptions of the theorem are purely geometrical and independent of any operator.
			\item The theorem is formulated for the negative Laplacian for convenience, but it is true for any elliptic operator with real and self-adjoint coefficient matrix.
		\end{enumerate}
	\end{remark}

	We summarize as corollaries the pure Dirichlet and Neumann cases. This takes Proposition~\ref{prop:atomic_pure_cases} into account.

	\begin{corollary}[pure Dirichlet case]
		Let $O \subseteq \R^d$ be open and bounded, and assume $(\mathrm{ITC})$, $(\mathrm{Fat})$, and that $\partial O$ is porous.
		If $-\Delta_0$ denotes the negative Laplacian subject to pure Dirichlet boundary conditions, then we have the identities
		\begin{align}
			\{ f\in \L^1 \colon S_{-\Delta_0}(f) \in \L^1 \} = \H^1_\Miyachi = \H^1_{-\Delta_0,\mathrm{max}} = \H^1_{-\Delta_0,\mathrm{rad}},
		\end{align}
		where the space on the left-hand side is equipped with the norm $\| f \| \coloneqq \| S_{-\Delta_0}(f) \|_1$.
	\end{corollary}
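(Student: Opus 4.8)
This corollary is the specialization of Theorem~\ref{thm:laplace} to the pure Dirichlet case $D = \partial O$, so the plan is simply to check that the hypotheses and the conclusion of that theorem collapse to the statement given here. First I would note that with $D = \partial O$ one has $N = \partial O \setminus D = \varnothing$, hence $N_\delta = \varnothing$ for every $\delta \in (0,\infty]$; consequently both parts of $(\mathrm{LU})$ hold vacuously, as there are neither points of $N_\delta \cap O$ to be joined by a cigar nor connected components of $O$ meeting $N$. The hypothesis $(D)$ becomes precisely ``$D = \partial O$ is uniformly porous'', and for the bounded set $O$ the constraint $r < \dO$ can be relaxed to $r < r_0$ up to a change of the porosity constant, exactly as in the remark comparing $(\mathrm{ITC})$ with $(\mathrm{UITC})$; thus the phrasing ``$\partial O$ porous'' is the same condition. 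Therefore Theorem~\ref{thm:laplace} is applicable.

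Next I would identify the objects occurring in its conclusion. Every connected component $O'$ of the bounded open set $O$ has nonempty boundary with $\partial O' \subseteq \partial O = D$, so $O$ possesses no bounded connected component with $\partial O' \cap D = \varnothing$; hence $\ISet = \varnothing$, $\ON = \varnothing$, $\OD = O$, $\ell^1(\ISet) = \{0\}$ and $\L^1_0 = \L^1$. By Proposition~\ref{prop:atomic_pure_cases} --- whose hypothesis $(\mathrm{UITC})$ coincides with $(\mathrm{ITC})$ for bounded $O$ --- one has $\H^1_{\partial O} = \H^1_\Miyachi$ with equivalent norms, while $-\Delta_{\partial O} = -\Delta_0$ by definition. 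Feeding these identifications into Theorem~\ref{thm:laplace} yields the claimed chain of set-theoretic identities $\{f \in \L^1 \colon S_{-\Delta_0}(f) \in \L^1\} = \H^1_\Miyachi = \H^1_{-\Delta_0,\mathrm{max}} = \H^1_{-\Delta_0,\mathrm{rad}}$.

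It remains to observe that on the left-hand side the norm $\Vert f\Vert_1 + \Vert S_{-\Delta_0}(f)\Vert_1$ furnished by Theorem~\ref{thm:laplace} may be replaced by $\Vert S_{-\Delta_0}(f)\Vert_1$. Since $S^{\loc} \leq S$ pointwise and, for bounded $O$, $\Vert S^{\glob}_{-\Delta_0}(f)\Vert_1 \lesssim \Vert f\Vert_1$ by the computation in Step~(2) of the proof of Proposition~\ref{prop:H1L_renorming}, the requirement $S_{-\Delta_0}(f) \in \L^1$ is equivalent to $f \in \L^1_0 = \L^1$ together with $S^{\loc}_{-\Delta_0}(f) \in \L^1$, i.e.\ to $f \in \H^1_{-\Delta_0}$; then Proposition~\ref{prop:H1L_renorming}, recalling that $\Vert f\Vert_{\H^1_{-\Delta_0}} = \Vert S^{\loc}_{-\Delta_0}(f)\Vert_1 + \Vert f\Vert_1$ by definition, gives $\Vert f\Vert_1 + \Vert S_{-\Delta_0}(f)\Vert_1 \simeq \Vert f\Vert_{\H^1_{-\Delta_0}} \simeq \Vert S_{-\Delta_0}(f)\Vert_1$, which is the stated norm. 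There is no genuine obstacle here: the only points demanding a moment's care are the vacuous verification of $(\mathrm{LU})$ for empty Neumann part --- which is exactly what lets Theorem~\ref{thm:laplace} be invoked without any connectedness assumption on $O$ --- and the tracking of the norm equivalences so that the $\Vert f\Vert_1$-summand can be dropped; all the analytic substance already resides in Theorems~\ref{Main result and applications: Theorem: H_D^1=H_L^1}, \ref{thm:max} and~\ref{thm:laplace} and in Propositions~\ref{prop:atomic_pure_cases} and~\ref{prop:H1L_renorming}.
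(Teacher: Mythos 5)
Your proposal is correct and is exactly the route the paper takes (the paper dispatches this corollary with the single remark that it is the specialization of Theorem~\ref{thm:laplace} combined with Proposition~\ref{prop:atomic_pure_cases}): you correctly note that $(\mathrm{LU})$ is vacuous for $N=\varnothing$, that $\partial O'\subseteq\partial O=D$ for every bounded connected component forces $\ISet=\varnothing$ and hence $\ell^1(\ISet)=\{0\}$ and $\L^1_0=\L^1$, and that Proposition~\ref{prop:atomic_pure_cases} converts $\H^1_{\partial O}$ into $\H^1_\Miyachi$. Your extra paragraph justifying the removal of the $\Vert f\Vert_1$-summand from the norm via Proposition~\ref{prop:H1L_renorming} fills in a detail the paper leaves implicit, and it is correct.
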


	\begin{corollary}[pure Neumann case]
		Let $O \subseteq \R^d$ be open and bounded, and assume $(\mathrm{ITC})$ and $(\mathrm{LU})$.
		If $-\Delta_N$ denotes the negative Laplacian subject to pure Neumann boundary conditions, then we have the identities
		\begin{align}
			\{ f\in \L^1 \colon S_{-\Delta_N}(f) \in \L^1 \} = \H^1_\CW = \H^1_{-\Delta_N,\mathrm{max}} = \H^1_{-\Delta_N,\mathrm{rad}},
		\end{align}
		where the space on the left-hand side is equipped with the norm $\| f \| \coloneqq \| f \|_1 + \| S_{-\Delta_N}(f) \|_1$.
	\end{corollary}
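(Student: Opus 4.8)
The plan is to obtain this statement as the specialization $D = \varnothing$ of Theorem~\ref{thm:laplace}, combined with Proposition~\ref{prop:atomic_pure_cases}.

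First I would verify that the hypotheses of Theorem~\ref{thm:laplace} are met with $D = \varnothing$. Since $O$ is bounded, $(\mathrm{ITC})$ is equivalent to $(\mathrm{UITC})$ by the remark following Assumptions~\ref{Assumption: H_D^1=H_L^1}. With $D = \varnothing$ the Dirichlet part is empty, so $(D)$ (uniform porosity of $D$) holds vacuously; moreover $N = \partial O$, so $(\mathrm{LU})$ near $N$ is exactly the assumption $(\mathrm{LU})$ of the corollary, and "away from $N = \partial O$" there is nothing to control, whence $(\mathrm{Fat})$ imposes no restriction. In this situation $-\Delta_D = -\Delta_N$ and $\H^1_D = \H^1_\varnothing$, so Theorem~\ref{thm:laplace} yields
\begin{align*}
	\{ f\in \L^1 \colon S_{-\Delta_N}(f) \in \L^1 \} = \H^1_\varnothing \oplus \ell^1(\ISet) = \H^1_{-\Delta_N,\mathrm{max}} = \H^1_{-\Delta_N,\mathrm{rad}},
\end{align*}
with the norm $\| f \| = \| f \|_1 + \| S_{-\Delta_N}(f) \|_1$ on the left-hand side.

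It then remains to identify the atomic space. Because $O$ is bounded and satisfies $(\mathrm{ITC})$ and $(\mathrm{LU})$, Lemma~\ref{lem:ITC_for_components} ensures that the components of $O$ have strictly positive mutual distance, which is precisely the side condition appearing in the pure Neumann case of Proposition~\ref{prop:atomic_pure_cases}. That proposition therefore gives $\H^1_\varnothing \oplus \ell^1(\ISet) = \H^1_\CW$ with equivalence of norms, and substituting this identity into the display above completes the proof. There is no serious obstacle here: the argument is essentially formal, and the only points requiring attention are the bookkeeping of which geometric conditions survive the specialization $D = \varnothing$ and the check that the hypothesis of Proposition~\ref{prop:atomic_pure_cases} is satisfied — both of which are immediate from the results already established.
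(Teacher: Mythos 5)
Your proof is correct and takes essentially the same route as the paper: the paper introduces these two corollaries with the remark that they follow from Theorem~\ref{thm:laplace} by taking Proposition~\ref{prop:atomic_pure_cases} into account, which is precisely the specialization to $D = \varnothing$ and identification $\H^1_\varnothing \oplus \ell^1(\ISet) = \H^1_\CW$ that you carry out. Your bookkeeping of which geometric hypotheses become vacuous when $D = \varnothing$, and the appeal to Lemma~\ref{lem:ITC_for_components} to supply the positive-distance hypothesis of Proposition~\ref{prop:atomic_pure_cases}, are exactly the right checks.
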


	\subsection{Bridging the theories using pre-adapted spaces}

	We introduce a pre-adapted version $\mathbb{H}^1_L$ of our space $\H^1_L$. Such spaces can be defined and studied only assuming that the operator $L$ has a decent $\L^2$ theory, %
	which is notably weaker than our requirement~$(\mathrm{G}(\mu))$. %

	For the rest of this section, we suppose at least that $L$ is self-adjoint and that the set $O$ satisfies (UITC). Then $O$ is a space of homogeneous type and the semigroup generated by $-L$ satisfies Davies--Gaffney estimates. Hence, our constellation falls into the scope of~\cite{Hardy-Gaffney}.

	\begin{definition}
		The \textbf{pre-adapted Hardy space $\boldsymbol{\mathbb{H}^1_L}$} is defined by
		\begin{align}
			\mathbb{H}^1_L \coloneqq \bigl \{ f\in \overline{\sR(L)} \colon S_L(f) \in \L^1 \bigr\}, \qquad \| f \|_{\mathbb{H}^1_L} \coloneqq \| S_L(f) \|_1.
		\end{align}
	\end{definition}

	\begin{remark}
		The square function $S_L(f)$ can either denote the heat or Poisson version. The respective spaces in~\cite{Hardy-Gaffney} are denoted by $\mathbb{H}^1_{L,S_h}$ and $\mathbb{H}^1_{L,S_P}$. It turns out that they coincide~\cite[Thm.~2.5]{Hardy-Gaffney}, so we do not bother to distinguish them and simply write $\mathbb{H}^1_L$ and $S_L$.
	\end{remark}

	\begin{remark}
		\label{rem:pre_adapted_vs_adapted}
		In virtue of Proposition~\ref{prop:H1L_renorming}, the spaces $\mathbb{H}^1_L$ and $\H^1_L$ are precisely the functions $f$ such that $S_L(f) \in \L^1$ equipped with the same norm, where in the first case $f$ is quantified over $\overline{\sR(L)} = \L^2_0$ and in the second case over $\L^1_0$.
	\end{remark}

	In a nutshell, the line of reasoning to obtain Theorem~\ref{thm:max} is as follows.

	\begin{enumerate}
		\item The space $\H^1_L$ is complete and its intersection with $\overline{\sR(L)}$ is a dense subspace. It follows that $\H^1_L$ is a completion of $\mathbb{H}^1_L$.

		\item It holds $\mathbb{H}^1_L \oplus \ell^1(\ISet) = \mathbb{H}^1_{L,\mathrm{max}} = \mathbb{H}^1_{L,\mathrm{rad}}$. Therefore, taking completions gives $$\H^1_L \oplus \ell^1(\ISet) = \H^1_{L,\mathrm{max}} = \H^1_{L,\mathrm{rad}}.$$
	\end{enumerate}

	The next proposition takes care of the first item.

	\begin{proposition}[$\H^1_L$ is a completion of $\mathbb{H}^1_L$]
		\label{prop:completion}
		Assume that $L$ satisfies $(\mathrm{G}(\mu))$ as well as the geometric assumptions $(\mathrm{UITC})$, $(D)$, $(\mathrm{Fat})$ and $(\mathrm{LU})$.
		Then the space $\H^1_L$ is complete and contains $\mathbb{H}^1_L$ as a dense subspace. Phrased differently, $\H^1_L$ is a completion of the pre-adapted space $\mathbb{H}^1_L$.
	\end{proposition}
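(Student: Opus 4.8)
The plan is to establish the two claims—completeness of $\H^1_L$, and density of $\mathbb{H}^1_L$ in it—separately, using the main result $\H^1_L = \H^1_D$ from Theorem~\ref{Main result and applications: Theorem: H_D^1=H_L^1} to transfer completeness, and the decomposition developed in Step~2 of the inclusion $\H^1_L \subseteq \H^1_D$ for density. For completeness, I would argue that $\H^1_D$ is a Banach space: it is by construction a quotient of $\ell^1$ by the kernel of the (bounded, by Remark~\ref{rem:atomic_convergence}) summation map into $\L^1$, hence complete; alternatively, one may invoke the duality $(\VMO_D)^* = \H^1_D$ from Theorem~\ref{Predual of H1: Theorem: VMO* = H1}, which exhibits $\H^1_D$ as a dual space and therefore complete. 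Since Theorem~\ref{Main result and applications: Theorem: H_D^1=H_L^1} identifies $\H^1_L = \H^1_D$ with equivalence of norms under the present hypotheses, $\H^1_L$ is complete as well.

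For the density statement, recall from Remark~\ref{rem:pre_adapted_vs_adapted} that $\mathbb{H}^1_L = \H^1_L \cap \overline{\sR(L)} = \H^1_L \cap \L^2_0$, carrying the same norm. So it suffices to show that every $f \in \H^1_L$ can be approximated in $\H^1_L$-norm by elements of $\H^1_L \cap \L^2$. This is precisely the content of the approximation $P_s f \to f$ carried out in Step~2 of the proof of $\H^1_L \subseteq \H^1_D$: there we verified, via~\eqref{eq: Adapted spaces contained in classical: Step 2, first claim}, that $P_s f \in \H^1_L \cap \overline{\sR(L)}$ for $s > 0$ with a uniform bound $\sup_{0<s<\dO}\|P_s f\|_{\H^1_L} \lesssim \|f\|_{\H^1_L}$. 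The remaining point is to upgrade this uniform boundedness to genuine norm convergence $P_s f \to f$ in $\H^1_L$ as $s \to 0$. I would first establish this on the dense subspace $\H^1_{D,a}$ of finite linear combinations of atoms (using $\H^1_L = \H^1_D$), where convergence of $P_s a \to a$ in $\H^1_L$-norm for a single atom $a$ can be extracted by revisiting the atomic estimate~\eqref{eq: Inclusion of H_D^1(O) into Operator-adapted space: Theorem: Main result, estimate for atom} together with the strong continuity of $(P_s)_{s\geq 0}$ on $\L^2$ and a dominated-convergence argument on the square-function integral; then the uniform bound plus a standard $3\varepsilon$-argument promotes this to all of $\H^1_L$. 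Since each $P_s f$ lies in $\L^2$, hence in $\mathbb{H}^1_L$, density follows.

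The main obstacle is the norm convergence $P_s f \to f$ in $\H^1_L$: boundedness of the family $(P_s)$ on $\H^1_L$ is already in hand, but passing to convergence requires care because the square function $S_{\j,L}$ is nonlinear and the $t$-integral defining it couples the approximation parameter $s$ with the integration variable in a way that must be controlled uniformly (this is the same delicate monotonicity point flagged in Remark~\ref{rem:sf_normalization} and handled via the essentially increasing function $\Phi$ in~\eqref{eq:strange_monotonicity}). I would handle it by reducing to atoms as above rather than attempting a direct estimate on general $f$; on an atom the support and size conditions localize the problem and make the dominated-convergence argument clean. A minor secondary point is to confirm that $\H^1_L \cap \L^2 = \mathbb{H}^1_L$ as normed spaces—but this is immediate from Proposition~\ref{prop:H1L_renorming} and Remark~\ref{rem:pre_adapted_vs_adapted}, so no real work is needed there.
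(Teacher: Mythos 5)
Your completeness argument is correct and, in the second of its two variants (duality with $\VMO_D$), is exactly what the paper does. The density argument, however, takes a substantially harder road than necessary, and the hard road contains an unresolved technical step.

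The key observation you already have in hand but do not exploit: since atoms are by definition supported, bounded in $\L^2$, and mean-value-free over each $O_m$, every $\H^1_D$-atom belongs to $\L^2_0 = \overline{\sR(L)}$ (Proposition~\ref{prop:kernel}) and satisfies $S_L(a)\in\L^1$ by Theorem~\ref{Main result and applications: Theorem: H_D^1=H_L^1}. Hence $\H^1_{D,a} \subseteq \mathbb{H}^1_L$. Combined with $\mathbb{H}^1_L \subseteq \L^1$ (Lemma~\ref{lem:pre-adapted_embed_L1}) giving $\mathbb{H}^1_L \subseteq \H^1_L$, you get the chain $\H^1_{D,a} \subseteq \mathbb{H}^1_L \subseteq \H^1_L$. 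Since $\H^1_{D,a}$ is by construction dense in $\H^1_D = \H^1_L$ (Definition~\ref{Definition: H_D^1}), density of $\mathbb{H}^1_L$ is immediate by sandwiching — no Poisson-semigroup approximation is needed at all. This is the paper's argument.

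Your alternative, proving $P_s f \to f$ in $\H^1_L$-norm, is not merely longer: the crucial step of showing $P_s a \to a$ in $\H^1_L$-norm for a single atom by ``dominated convergence on the square-function integral'' is genuinely nontrivial and is left unresolved. The uniform bound $\sup_s \|S_{\j,L}(P_s a)\|_1 \lesssim 1$ gives no pointwise dominant for $S_{\j,L}(P_s a - a)(x)$, and producing one would require re-running the kernel estimates of Section~\ref{Section: Inclusion of H_D^1(O) into Operator-adapted space} uniformly in $s$ with an explicit decaying majorant — real work that the sandwich argument sidesteps. In short, your plan is salvageable but reinvents a harder lemma; once you observed $\H^1_{D,a}\subseteq\L^2_0$, you should have stopped there.
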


	\begin{proof}
		Many functional analytic properties of $\H^1_L$ are not evident from its very definition. Instead, we use the atomic viewpoint developed in Theorem~\ref{Main result and applications: Theorem: H_D^1=H_L^1}.

		Indeed, $\H^1_D$ is the dual space of $\VMO_D$ by Theorem~\ref{Predual of H1: Theorem: VMO* = H1} and therefore complete. Also, finite linear combinations of $\H^1_D$-atoms are dense in $\H^1_D = \H^1_L$. Recall that we write $\H^1_{D, a}$ for the space of finite linear combinations of $\H^1_D$-atoms. The second claim in the proposition follows if we can show the chain of inclusions
		\begin{align}
			\label{eq:incl_completion}
			\H^1_{D,a} \subseteq \mathbb{H}^1_L \subseteq \H^1_L.
		\end{align}
		Indeed, keeping Remark~\ref{rem:pre_adapted_vs_adapted} in mind, the second inclusion in~\eqref{eq:incl_completion} follows readily from the fact that $\mathbb{H}^1_L \subseteq \L^1$. We will discuss this in a moment using the notions of atomic or molecular decompositions, see Lemma~\ref{lem:pre-adapted_embed_L1}. For the first inclusion of~\eqref{eq:incl_completion} note that atoms are by definition elements of $\L^2_0 = \overline{\sR(L)}$ (see Proposition~\ref{prop:kernel}) and satisfy $S_L(a) \in \L^1$ by Theorem~\ref{Main result and applications: Theorem: H_D^1=H_L^1}.
	\end{proof}

	\begin{remark}
		This observation could have been drawn already in~\cite{Auscher-Russ} but was not explicitly stated.
	\end{remark}

	In both~\cite{Hardy-Gaffney} and~\cite{JFA-Hardy} they derive an $L$-adapted atomic decomposition, in the first case coming from $\mathbb{H}^1_L$, in the second case coming from $\mathbb{H}^1_{L,\mathrm{max}}$. In~\cite{Hardy-Gaffney}, they give the following definition.
	\begin{definition}[{$L$-atomic pre-Hardy spaces according to~\cite[Def.~2.1 \& 2.2]{Hardy-Gaffney}}]
		\label{def:L_adapted_atoms}
		Let $M \in \N$. A function $a \in \L^2$ is called a $(1,2,M)$-atom associated to $L$ if there exists $b \in \D(L^M)$ and a ball $\ball$ of radius $r > 0$ centered in $O$ such that
		\begin{align}
			(\mathrm{i}) \; a = L^M b, \quad (\mathrm{ii}) \; \supp(L^k b) \subseteq \ball \cap O, \quad (\mathrm{iii}) \; \| (r^2 L)^k b \|_2 \leq r^{2M} |\ball \cap O|^{-\frac{1}{2}},
		\end{align}
		where $k = 0, \dots, M$.

		The atomic pre-Hardy space $\mathbb{H}^1_{L,\mathrm{at},M}$ consists of all $f \in \L^2$ that admit a $(1,2,M)$-atomic representation $f = \sum_j \lambda_j a_j$, where $(\lambda_j)_j \in \ell^1$, each $a_j$ is a $(1,2,M)$-atom, and the sum converges in $\L^2$. The space $\mathbb{H}^1_{L,\mathrm{at},M}$ is equipped with the quotient norm
		\begin{align}
			\| f \|_{\mathbb{H}^1_{L,\mathrm{at},M}} \coloneqq \inf \bigl\{ \| (\lambda_j)_j \|_{\ell^1} \colon f = \sum_j \lambda_j a_j \text{ is a } (1,2,M) \text{-atomic representation} \bigr\}.
		\end{align}
	\end{definition}

	\begin{remark}
		By (i), $\mathbb{H}^1_{L,\mathrm{at},M} \subseteq \overline{\sR(L)}$, in accordance with the definition of $\mathbb{H}^1_L$.
	\end{remark}

	\begin{remark}[comparison with~\cite{JFA-Hardy}]
		\label{rem:L_atomic_2}
		Definition~\ref{def:L_adapted_atoms} coincides with the definition given in~\cite[Def.~1.1 \&~1.2]{JFA-Hardy} up to two remarks.

		\begin{enumerate}
			\item The definition in~\cite{JFA-Hardy} is formulated with $(1,\infty,M)$-atoms, in which condition~(iii) is formulated with a size condition in $\L^\infty$. However, the results of~\cite{JFA-Hardy} are true for $(1,q,M)$-atoms if $1<q\leq \infty$. Indeed, having a $(1,\infty,M)$-atomic representation is stronger than having a $(1,q,M)$-atomic representation and the inclusion $\mathbb{H}^1_{L,\mathrm{at},M} \subseteq \mathbb{H}^1_{L,\mathrm{max}}$ was already formulated for more general $q$.

			\item If $O$ has finite measure, then $\mathbb{H}^1_{L,\mathrm{at},M}$ is replaced\footnote{In the published version, $\ker(L)$ is replaced by $\C$. This is true in their applications but has to be changed in the abstract formulation.} by $\mathbb{H}^1_{L,\mathrm{at},M} \oplus \ker(L)$ in~\cite{JFA-Hardy}. In our case, $\ker(L) = \ell^1(\ISet)$. We will have to take this into account by a case distinction later on.

		\end{enumerate}

	\end{remark}

	The following is one of the main results of~\cite{Hardy-Gaffney}, see \cite[Prop.~4.16]{Hardy-Gaffney}.

	\begin{proposition}[$L$-atomic characterization of $\mathbb{H}^1_L$]
		Assume $(\mathrm{G}(\mu))$ and that $L$ is self-adjoint.
		For $M > \nicefrac{d}{4}$ it holds $$\mathbb{H}^1_{L,\mathrm{at},M} = \mathbb{H}^1_L.$$
	\end{proposition}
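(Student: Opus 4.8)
The statement is precisely~\cite[Prop.~4.16]{Hardy-Gaffney}: under~$(\mathrm{G}(\mu))$ and self-adjointness of $L$, the set $O$ is a doubling metric measure space (by (UITC)), $L$ is a non-negative self-adjoint operator whose heat semigroup obeys Gaussian -- in particular Davies--Gaffney -- off-diagonal estimates, and $L$ has a bounded $\H^{\infty}$-calculus with quadratic estimates on $\overline{\sR(L)} = \L^2_0$. This is exactly the abstract framework of~\cite{Hardy-Gaffney}, so the plan is to invoke that reference after checking its hypotheses. For completeness let me indicate the mechanism behind the two inclusions.

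For $\mathbb{H}^1_{L,\mathrm{at},M} \subseteq \mathbb{H}^1_L$ it suffices, exactly as in the proof of $\H_D^1 \subseteq \H_L^1$ above (using $\L^2$-convergence of atomic representations together with monotone convergence), to establish the uniform bound $\Vert S_L(a)\Vert_1 \les 1$ for every $(1,2,M)$-atom $a = L^M b$ associated with a ball $\ball$ of radius $r$. Split $O = 4\ball \cup \bigcup_{j \geq 2} C_j(\ball)$. On $4\ball$ one bounds $\Vert S_L(a)\Vert_{\L^1(4\ball \cap O)} \leq |4\ball \cap O|^{1/2}\Vert S_L(a)\Vert_2 \les |\ball \cap O|^{1/2}\Vert a\Vert_2 \les 1$ by Hölder's inequality, the $\L^2$-boundedness of $S_L$ (McIntosh) and the size condition (iii) with $k=0$. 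On an annulus $C_j(\ball)$ write (for the heat realization) $\psi(t^2 L)a = t^{-2M}(t^2 L)^M\psi(t^2 L)b$ and apply the $\L^2$ off-diagonal bounds for the family $((t^2 L)^M\psi(t^2 L))_t$ between $\ball$ and the ball $O(x,t)$ with $x \in C_j(\ball)$; this produces a factor decaying in $2^j r/t$, and combined with (iii) it gives $S_L(a)(x) \les 2^{-j(M+1-d/2)}|\ball|^{-1/2}$ up to harmless logarithmic factors, so $M > \nicefrac{d}{4}$ makes $\sum_j 2^{jd}\cdot 2^{-2j(M+1)}$ converge, whence $\Vert S_L(a)\Vert_{\L^1(O\setminus 4\ball)} \les 1$.

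For the reverse inclusion $\mathbb{H}^1_L \subseteq \mathbb{H}^1_{L,\mathrm{at},M}$ -- the substantial one -- I would use a tent space argument. Fix $\psi(z) = z^{M+1}\e^{-z}$ and normalize the Calderón reproducing formula $f = c_\psi \int_0^\infty \psi(t^2 L)\,\widetilde\psi(t^2 L)f\,\frac{\d t}{t}$ on $\overline{\sR(L)}$, convergent in $\L^2$ by quadratic estimates. The map $Qf \coloneqq ((t,y) \mapsto \widetilde\psi(t^2 L)f(y))$ sends $\mathbb{H}^1_L$ isometrically (up to constants) into the tent space $T^1(O)$, $\Vert Qf\Vert_{T^1} \simeq \Vert S_L(f)\Vert_1$. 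Apply the atomic decomposition of $T^1$ over the space of homogeneous type $O$ (Coifman--Meyer--Stein, in this generality due to Russ): $Qf = \sum_j \lambda_j A_j$ with $(\lambda_j)_j \in \ell^1$, $\sum_j |\lambda_j| \les \Vert Qf\Vert_{T^1}$, each $A_j$ supported in a Carleson box over a ball $\ball_j$ and normalized in $T^2$. Then $a_j \coloneqq c_\psi \int_0^\infty \psi(t^2 L)A_j(t,\cdot)\,\frac{\d t}{t}$ is, up to a uniform constant, a $(1,2,M)$-atom: since $\psi$ vanishes to order $M+1$ at $0$ one writes $a_j = L^M b_j$ with $b_j \coloneqq c_\psi \int_0^\infty t^{2(M+1)} L\e^{-t^2 L}A_j(t,\cdot)\,\frac{\d t}{t} \in \D(L^M)$, the support of $A_j$ forces $\supp(L^k b_j) \subseteq C\ball_j \cap O$, and the size bounds (iii) follow from $\Vert A_j\Vert_{T^2}$ together with the $\L^2$ off-diagonal estimates. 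Checking $f = \sum_j \lambda_j a_j$ in $\L^2$ and collecting $\ell^1$-bounds closes the inclusion.

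The main obstacle is this second inclusion: passing from a generic $T^1$-atom to an honestly \emph{supported} $(1,2,M)$-atom (rather than merely an $L$-molecule whose tails must then be resummed) is exactly where the threshold $M > \nicefrac{d}{4}$ enters, both for the $\L^2$-convergence of $\sum_j\lambda_j a_j$ and for the molecular tails to be $\ell^1$-summable; one must also track carefully the topology in which the reproducing formula and the atomic series converge on the non-closed subspace $\overline{\sR(L)} \subseteq \L^2$, and, when $O$ has finite measure, remember that the whole argument lives on $\L^2_0 = \overline{\sR(L)}$, which is already built into the definition of $\mathbb{H}^1_L$.
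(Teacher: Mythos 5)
Your proposal matches the paper's treatment: the proposition is quoted verbatim from~\cite[Prop.~4.16]{Hardy-Gaffney}, and the paper, like you, only sketches the mechanism (the first inclusion mirrors the argument showing $\H^1_D \subseteq \H^1_L$, the second goes via tent spaces). One inaccuracy in your sketch of the harder inclusion $\mathbb{H}^1_L \subseteq \mathbb{H}^1_{L,\mathrm{at},M}$ is worth flagging: the heat semigroup $\e^{-t^2 L}$ does \emph{not} preserve supports, so the claim that \enquote{the support of $A_j$ forces $\supp(L^k b_j) \subseteq C\ball_j \cap O$} is false. Pulling back tent-space atoms through the Calder\'on reproducing formula yields $L$-\emph{molecules} (with rapid but merely algebraic decay across annuli $C_j(\ball)$), not compactly supported $(1,2,M)$-atoms. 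As the paper points out, the passage from the square function to a molecular decomposition requires only a bounded $\H^\infty$-calculus on $\overline{\sR(L)}$ (available via the Crouzeix--Delyon theorem, without self-adjointness), whereas the subsequent decomposition of $L$-molecules into honestly supported $L$-atoms is the step that crucially uses self-adjointness --- you flag this as \enquote{the main obstacle} but leave it unresolved, and your sketch would need that extra layer to close.
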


	The inclusion $\mathbb{H}^1_{L,\mathrm{at},M} \subseteq \mathbb{H}^1_L$ is the simpler one and similar to the arguments in Section~\ref{Section: Inclusion of H_D^1(O) into Operator-adapted space}. For the converse inclusion, they first derive a so-called molecular decomposition. Molecules are a generalization of $(1,2,M)$-atoms in which the function $a$ is not supported in $\ball \cap O$ anymore, but has rapid decay over annuli around $\ball$. This does not yet require self-adjointness of $L$ but a bounded $\H^\infty$-calculus on $\overline{\sR(L)}$ is sufficient. This is always the case for $L$ by the Crozeix--Delyon theorem and does not require self-adjointness.
	Later, they decompose $L$-molecules into $L$-atoms. Only this latter part relies heavily on self-adjointness.

	We record a lemma that was already mentioned and used in Section~\ref{Section: Identification of operator-adapted and atomic Hardy spaces}.

	\begin{lemma}
		\label{lem:pre-adapted_embed_L1}
	     One has the estimate
		\begin{align}
			\| f \|_1 \lesssim \| S_L(f) \|_1 \mathrlap{\qquad (f\in \mathbb{H}_L^1).}
		\end{align}
		In other words, $\mathbb{H}^1_L \subseteq \L^1$ with continuous inclusion.
	\end{lemma}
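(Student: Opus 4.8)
The statement to prove is that $\mathbb{H}^1_L \subseteq \L^1$ with $\|f\|_1 \lesssim \|S_L(f)\|_1$. The natural route is to pass through the $L$-adapted atomic (or molecular) decomposition for $\mathbb{H}^1_L$ recalled just above: by the $L$-atomic characterization $\mathbb{H}^1_{L,\mathrm{at},M} = \mathbb{H}^1_L$ for $M > d/4$, every $f \in \mathbb{H}^1_L$ admits a $(1,2,M)$-atomic representation $f = \sum_j \lambda_j a_j$ converging in $\L^2$ with $\|(\lambda_j)_j\|_{\ell^1} \lesssim \|S_L(f)\|_1$. Since $\|\cdot\|_1$ is subadditive and the sum also converges in $\L^1$ on bounded sets, it suffices to show a \emph{uniform} bound $\|a\|_1 \lesssim 1$ for every $(1,2,M)$-atom $a$.

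First I would establish this uniform $\L^1$-bound on atoms. If $a$ is a $(1,2,M)$-atom associated with a ball $\ball$ of radius $r$ centered in $O$, then by construction $\supp(a) \subseteq \ball \cap O$ and $\|a\|_2 \leq |\ball \cap O|^{-1/2}$ (the case $k = 0$ in condition (iii)). Hölder's inequality then gives
\begin{equation*}
	\|a\|_1 = \int_{\ball \cap O} |a| \, \d x \leq |\ball \cap O|^{1/2} \|a\|_2 \leq |\ball \cap O|^{1/2} |\ball \cap O|^{-1/2} = 1.
\end{equation*}
This bound is uniform over all atoms and, crucially, does not even use (UITC). Summing over the atomic representation yields $\|f\|_1 \leq \sum_j |\lambda_j| \|a_j\|_1 \leq \|(\lambda_j)_j\|_{\ell^1}$, and taking the infimum over all $(1,2,M)$-atomic representations of $f$ gives $\|f\|_1 \lesssim \|f\|_{\mathbb{H}^1_{L,\mathrm{at},M}} = \|f\|_{\mathbb{H}^1_L} = \|S_L(f)\|_1$.

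The only subtlety — and the main thing to get right — is the mode of convergence of the atomic sum: it is given as convergence in $\L^2$, whereas we want to conclude about the $\L^1$-norm of $f$. One clean way around this: the $\L^2$-convergent partial sums $f_n = \sum_{j \le n} \lambda_j a_j$ satisfy $\|f_n\|_1 \le \sum_{j \le n} |\lambda_j| \le \|(\lambda_j)_j\|_{\ell^1}$, so $(f_n)_n$ is bounded in $\L^1$; passing to a subsequence converging a.e.\ (extracted from the $\L^2$-convergent one) and applying Fatou's lemma gives $\|f\|_1 \le \liminf_n \|f_n\|_1 \le \|(\lambda_j)_j\|_{\ell^1}$. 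This is routine, so I would state it in one line. If one prefers to avoid invoking the atomic characterization, an essentially identical argument runs with $L$-molecules in place of atoms, since molecules also obey $\|a\|_1 \lesssim 1$ by Hölder combined with the rapid off-diagonal decay (summing a convergent geometric-type series over annuli); but the atomic route is shortest and I expect no genuine obstacle here beyond bookkeeping the convergence.
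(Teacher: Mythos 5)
Your proof is essentially the paper's proof: pass to the $(1,2,M)$-atomic (or molecular) decomposition, observe that each atom has $\|a\|_1 \le 1$ by H\"older and the size condition, sum against the $\ell^1$ coefficients, and handle the mode of convergence by Fatou. One small slip: the bound $\|a\|_2 \le |\ball\cap O|^{-1/2}$ comes from taking $k=M$ in condition~(iii), not $k=0$, since $a = L^M b$ and $\|(r^2L)^M b\|_2 \le r^{2M}|\ball\cap O|^{-1/2}$ gives $\|L^M b\|_2 \le |\ball\cap O|^{-1/2}$; the case $k=0$ only bounds $\|b\|_2$, which is not what you need. The conclusion you wrote is correct, but the citation of $k=0$ is wrong. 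The paper additionally notes that the molecular route makes the lemma independent of the self-adjointness of $L$ (the atomic characterization $\mathbb{H}^1_{L,\mathrm{at},M}=\mathbb{H}^1_L$ as you invoke it does require self-adjointness), which matters because the lemma is also cited in the proof of Proposition~\ref{prop:H1L_renorming} where $L$ is not assumed self-adjoint; you do mention the molecular alternative, so this is covered in spirit.
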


	The argument is well-known to the expert and was also mentioned in~\cite[p.~75]{Hardy-Gaffney}.

	\begin{proof}
		Take $k=M$ in (iii) in the definition of a $(1,2,M)$-atom associated to $L$ to find $\| a \|_2 \leq |\ball \cap O|^{-\nicefrac{1}{2}}$. Thus, $\mathbb{H}^1_{L,\mathrm{at},M} \subseteq \L^1$ follows with the same argument as for traditional atomic Hardy spaces. The argument using a molecular decomposition works the same up to an additional summation over the annuli. Therefore, the result is true without assuming that $L$ is self-adjoint.
	\end{proof}

	To conclude the second claim from the outline we use~\cite[Cor.~1.5]{JFA-Hardy}. If $O$ has finite measure, this is their main result~\cite[Thm.~1.4]{JFA-Hardy}. The case of infinite measure was already known in the literature, see their references. Recall also the second part from Remark~\ref{rem:L_atomic_2}.

	\begin{proposition}[maximal characterization]
		Assume that $L$ is self-adjoint and satisfies $(\mathrm{G}(\mu))$ as well as the geometric assumptions $(\mathrm{UITC})$, $(D)$, $(\mathrm{Fat})$ and $(\mathrm{LU})$.
		Let $M \in \N$. We have the identities  $$\mathbb{H}^1_{L,\mathrm{at},M} \oplus \ell^1(\ISet) = \mathbb{H}^1_{L,\mathrm{max}} = \mathbb{H}^1_{L,\mathrm{rad}}.$$
	\end{proposition}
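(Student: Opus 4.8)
The plan is to reduce the statement to the maximal characterization of $L$-adapted Hardy spaces established in~\cite{JFA-Hardy}, whose hypotheses are all met in the present setting. First I would record the structural facts needed: under $(\mathrm{UITC})$ the set $O$, equipped with the Euclidean distance and Lebesgue measure, is a space of homogeneous type, and $\mathrm{G}(\mu)$ supplies the pointwise Gaussian upper bound (G2) together with the Hölder continuity (G3) of the heat kernel; this is precisely the package under which~\cite{JFA-Hardy} develops its theory (and it is much stronger than the Davies--Gaffney estimates that suffice for the pre-adapted spaces). In particular the pointwise evaluation of $\e^{-tL}f$ used to define $f_L^*$ and $f_L^+$ is meaningful, as already noted after Definition~\ref{def:max_fctn}. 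Next I would invoke Remark~\ref{rem:L_atomic_2}: our space $\mathbb{H}^1_{L,\mathrm{at},M}$ coincides with the $L$-atomic pre-Hardy space of~\cite{JFA-Hardy}, the discrepancy between the $\L^2$- and $\L^\infty$-size conditions on atoms being immaterial by the first item there, and the correction by $\ker(L)$ in the finite-measure case being exactly what produces the summand $\ell^1(\ISet)$ by the second item.

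With these identifications in place the argument splits according to whether $|O|$ is finite, recalling that under $(\mathrm{UITC})$ this is equivalent to $O$ being bounded. \textbf{Case $|O|=\infty$.} Then $O$ is unbounded, so Corollary~\ref{cor:ON_implies_bounded} forces $\ON=\varnothing$, whence $\ISet=\varnothing$ and $\ell^1(\ISet)=\{0\}$; this is consistent with $\sN(L)=\{0\}$ from Lemma~\ref{lem:Gmu_kernel}. The chain $\mathbb{H}^1_{L,\mathrm{at},M}=\mathbb{H}^1_{L,\mathrm{max}}=\mathbb{H}^1_{L,\mathrm{rad}}$ is then exactly~\cite[Cor.~1.5]{JFA-Hardy}, which collects the infinite-measure results from the literature. \textbf{Case $|O|<\infty$.} Then $O$ is bounded and Proposition~\ref{prop:kernel} identifies $\sN(L)$ with the locally constant functions, i.e. with $\ell^1(\ISet)$ in the sense of Definition~\ref{def:locally_constant}. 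Now~\cite[Thm.~1.4]{JFA-Hardy}, read with the footnote correction recorded in Remark~\ref{rem:L_atomic_2}(ii) that the correct summand is $\ker(L)$ rather than $\C$, yields $\mathbb{H}^1_{L,\mathrm{at},M}\oplus\ell^1(\ISet)=\mathbb{H}^1_{L,\mathrm{max}}=\mathbb{H}^1_{L,\mathrm{rad}}$. Since in the first case the summand $\ell^1(\ISet)$ is trivial, the two cases together give the claimed identities for every $M\in\N$.

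The routine parts — the well-definedness of the maximal functions and the elementary inclusions between atomic and maximal pre-Hardy spaces — are absorbed into the cited results, so no new analysis is required. The one point that demands care, and which I expect to be the main obstacle, is the bookkeeping that reconciles the finite-measure correction of~\cite{JFA-Hardy} with our component-based description of $\sN(L)$: one must check that the extra summand appearing in their theorem is recognized as precisely $\ell^1(\ISet)$, and that the translation of hypotheses (homogeneous type from $(\mathrm{UITC})$; the structure of $\sN(L)$ from $(\mathrm{Fat})$, $(\mathrm{LU})$ via Proposition~\ref{prop:kernel} and Corollary~\ref{cor:ON_implies_bounded}) is faithful. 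Beyond that, the proof is a direct citation.
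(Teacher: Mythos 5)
Your proposal is correct and follows essentially the same route as the paper: the paper also argues by citing \cite[Cor.~1.5]{JFA-Hardy} (the infinite-measure case being already in the literature and the finite-measure case being \cite[Thm.~1.4]{JFA-Hardy}) and invokes Remark~\ref{rem:L_atomic_2} for the $\L^2$-vs-$\L^\infty$ atom and the $\ker(L)=\ell^1(\ISet)$ identifications. You merely make explicit the case split according to $|O|$ and the use of Proposition~\ref{prop:kernel} and Corollary~\ref{cor:ON_implies_bounded}, which is bookkeeping the paper leaves implicit.
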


	Let us record the following corollary, which is the second claim from the outline.

	\begin{corollary}
		\label{cor:max_equal_atomic}
		One has the identities $$\mathbb{H}^1_L \oplus \ell^1(\ISet) = \mathbb{H}^1_{L,\mathrm{max}} = \mathbb{H}^1_{L,\mathrm{rad}}.$$
	\end{corollary}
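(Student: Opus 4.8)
The plan is to obtain this corollary purely by combining the two preceding propositions, so no new analysis is required. First I would fix an integer $M > \nicefrac{d}{4}$; this is harmless because the maximal characterization proposition holds for every $M \in \N$, whereas the $L$-atomic characterization of $\mathbb{H}^1_L$ is stated precisely for $M > \nicefrac{d}{4}$. Under the standing hypotheses of this section — $L$ self-adjoint, $(\mathrm{G}(\mu))$, and the geometric conditions $(\mathrm{UITC})$, $(D)$, $(\mathrm{Fat})$, $(\mathrm{LU})$ — both propositions apply for this value of $M$.

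Then I would simply chain the identities: the $L$-atomic characterization gives $\mathbb{H}^1_{L,\mathrm{at},M} = \mathbb{H}^1_L$ with equivalence of norms, and feeding this into the maximal characterization yields
\[
	\mathbb{H}^1_L \oplus \ell^1(\ISet) = \mathbb{H}^1_{L,\mathrm{at},M} \oplus \ell^1(\ISet) = \mathbb{H}^1_{L,\mathrm{max}} = \mathbb{H}^1_{L,\mathrm{rad}},
\]
all equalities understood in the set-theoretic sense with equivalence of norms. Since each norm equivalence involved is uniform in the data, the topological direct sum decomposition is preserved throughout.

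I do not expect a genuine obstacle here; this is a bookkeeping step whose content is entirely carried by the two cited propositions. The only point deserving a sentence of care is reconciling the decomposition $\oplus\,\ker(L)$ implicitly inherited from \cite{JFA-Hardy} (see Remark~\ref{rem:L_atomic_2}(ii)) with the concrete description $\ker(L) = \ell^1(\ISet)$ used throughout this paper, which is furnished by Proposition~\ref{prop:kernel} together with Corollary~\ref{cor:ON_implies_bounded}. Once that identification is recorded, there is nothing left to prove.
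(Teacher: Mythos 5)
Your argument is correct and is exactly the one the paper intends: the corollary is stated without a separate proof precisely because it follows from chaining the $L$-atomic characterization $\mathbb{H}^1_{L,\mathrm{at},M} = \mathbb{H}^1_L$ (valid for $M > \nicefrac{d}{4}$) with the maximal characterization $\mathbb{H}^1_{L,\mathrm{at},M}\oplus\ell^1(\ISet) = \mathbb{H}^1_{L,\mathrm{max}} = \mathbb{H}^1_{L,\mathrm{rad}}$ (valid for all $M$), after fixing a common admissible $M$. Your remark that one must reconcile $\ker(L)$ from the source~\cite{JFA-Hardy} with $\ell^1(\ISet)$ via Proposition~\ref{prop:kernel} is well placed, though the paper has already performed this translation when stating the maximal characterization proposition (see Remark~\ref{rem:L_atomic_2}(ii)), so by the time the corollary is reached nothing further needs to be said.
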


	From Proposition~\ref{prop:completion} and Corollary~\ref{cor:max_equal_atomic} we conclude Theorem~\ref{thm:max}.

	\begin{proof}[\rm\bf{Proof of Theorem~\ref{thm:max}}]
		We perform the argument sketched in the outline. Corollary~\ref{cor:max_equal_atomic} yields for instance
		\begin{align}
			\label{eq:sqf_equal_max}
			\mathbb{H}^1_L \oplus \ell^1(\ISet) = \mathbb{H}^1_{L,\mathrm{max}}.
		\end{align}
		We take the completion of both sides of~\eqref{eq:sqf_equal_max}.
		According to Proposition~\ref{prop:completion}, $\H^1_L$ is a completion of $\mathbb{H}^1_L$. Since we can take the completion componentwise, $\H^1_L \oplus \ell^1(\ISet)$ is a completion of the left-hand side of~\eqref{eq:sqf_equal_max}. But by definition, $\H^1_{L,\mathrm{max}}$ is a completion of the right-hand side of~\eqref{eq:sqf_equal_max}, so $\H^1_L \oplus \ell^1(\ISet) = \H^1_{L,\mathrm{max}}$ follows. The argument for $\H^1_{L,\mathrm{rad}}$ is the same.
	\end{proof}

	\subsection{Application to the negative Laplacian}
	\label{subsec:appl_laplace}

	We apply Theorem~\ref{thm:max} to $-\Delta_D$ defined on a bounded and open set $O$ satisfying our geometric assumptions to conclude Theorem~\ref{thm:laplace}.

	\begin{proof}[\rm\bf{Proof of Theorem~\ref{thm:laplace}}]
		We appeal to Theorem~\ref{thm:max}. The geometric requirements are all fulfilled by assumption. Also, $-\Delta_D$ is self-adjoint. To invoke the theorem, it remains to check $\mathrm{G}(\mu)$. Since the coefficients of $-\Delta_D$ are real, this follows from~\cite[Thm.~1.2 \& Thm.~1.1]{BCE-Gauss-for-MBC}. Note that in this paper $\mathrm{G}(\mu)$ is defined with a growth factor $\e^{\omega t}$, where $\omega$ can be strictly positive. This is why we work with $O$ bounded. %
		Therefore, Theorem~\ref{thm:max} can be applied and yields all identities except
		\begin{align}
			\{ f\in \L^1 \colon S_{-\Delta_D}(f) \in \L^1 \} = \H^1_{-\Delta_D} \oplus \ell^1(\ISet).
		\end{align}
		Since $S_{-\Delta_D}(c) = 0$ for all $c \in \ell^1(\ISet)$ by the conservation property on $\ON$, it follows that the mapping $$f \mapsto \bigg(f - \sum_m \1_{O_m} (f)_{O_m} \bigg) \oplus ((f)_{O_m})_m$$ is an isomorphism between both spaces.
	\end{proof}


\begin{thebibliography}{10}
		\providecommand{\url}[1]{{\tt #1}}
		\providecommand{\urlprefix}{URL}
		\providecommand{\eprint}[2][]{\url{#2}}

		\bibitem{Hedberg}
		\textsc{D.~Adams} and \textsc{L.~Hedberg}.
		\newblock Function spaces and potential theory, Grundlehren der Mathematischen Wissenschaften, vol.~314,
		\newblock Springer, Berlin, 1995.

		\bibitem{Amenta-Tent-Spaces}
		\textsc{A.~Amenta}.
		{\em Tent spaces over metric measure spaces under doubling and related assumptions}.
		\newblock Operator theory in harmonic and non-commutative analysis. 23rd international workshop in operator theory and its applications, IWOTA, Sydney, Australia, July 16--20, 2012, Cham: Birkh{\"a}user/Springer (2014), 1--29.

		\bibitem{Amenta-Auscher}
		\textsc{A.~Amenta} and \textsc{P.~Auscher}.
		{\em Elliptic Boundary Value Problems with Fractional Regularity Data: The First Order Approach}. CRM Monogr. Ser., American Mathematical Society, \textbf{37} (2018).

		\bibitem{Auscher_Heat-Kernel}
		\textsc{P.~Auscher}.
		{\em Regularity theorems and heat kernel for elliptic operators}. J. Lond. Math. Soc., II. Ser. \textbf{54} (1996), no.~2, 284--296.

		\bibitem{AE-mixed}
		\textsc{P.~Auscher} and \textsc{M.~Egert}.
		\newblock {\em Mixed boundary value problems on cylindrical domains\/}.
		\newblock Adv. Differential Eq. \textbf{22} (2017), no.~1-2, 101--168.

		\bibitem{Block}
		\textsc{P.~Auscher} and \textsc{M.~Egert}.
		\newblock Boundary Value Problems and Hardy Spaces for Elliptic Systems with Block Structure. Progress in Mathematics, vol.~346,
		\newblock Birkhäuser, Cham, 2023.

	    \bibitem{Auscher-Russ}
		\textsc{P.~Auscher} and \textsc{E.~Russ}.
		{\em Hardy spaces and divergence operators on strongly {Lipschitz} domains of {{\(\mathbb R^n\)}}}. J. Funct. Anal. \textbf{201} (2003), no.~1, 148--184.


		\bibitem{Auscher-Stahlhut}
		\textsc{P.~Auscher} and \textsc{S.~Stahlhut}.
		{\em Functional calculus for first order systems of {Dirac} type and boundary value problems.} M{\'e}m. Soc. Math. Fr., Nouv. S{\'e}r. \textbf{144} (2016), 1--127, 157--164.



		\bibitem{Auscher_Tchamitchian_Domains}
		\textsc{P.~Auscher} and \textsc{P.~Tchamitchian}.
		\newblock Gaussian estimates for second order elliptic divergence operators on Lipschitz and $C^1$ domains, Evolution equations and their applications in physical and life sciences (Bad Herrenalb, 1998), Lecture Notes in Pure and Appl.~Math., vol.~215, 15--32,
		\newblock Dekker, New York, 2001.

		\bibitem{Bechtel_PhD}
		\textsc{S.~Bechtel}.
		\newblock On mixed boundary conditions, function spaces, and Kato's square root problem, PhD Thesis,
		\newblock TU Darmstadt, 2021.

		\bibitem{Kato_Mixed}
		\textsc{S.~Bechtel}, \textsc{M.~Egert} and \textsc{R.~Haller--Dintelmann}.
		{\em The Kato square root problem on locally uniform domains}. Adv. Math., \textbf{375} (2020), id/no. 107410, 37.


		\bibitem{BCE-Gauss-for-MBC}
		\textsc{T.~Böhnlein}, \textsc{S.~Ciani} and \textsc{M.~Egert}.
		{\em Gaussian estimates vs.~elliptic regularity on open sets}. Preprint (2023),
		\url{https://arxiv.org/abs/2307.03648v2}.

		\bibitem{JFA-Hardy}
		\textsc{T.~Bui}, \textsc{X.~Duong}, and \textsc{F.~Ly}.
		\newblock {\em Maximal function characterizations for Hardy spaces on spaces of homogeneous type with finite measure and applications.} Journal of Functional Analysis. \textbf{278} (2020), 1--55.


		\bibitem{Carbonaro-Dragicevic_Sg_Max_Operator}
		\textsc{A.~Carbonaro} and \textsc{O.~Dragičević}.
		{\em On semigroup maximal operators associated with divergence-form operators with complex coefficients}. Preprint (2022),
		\url{https://arxiv.org/abs/2207.11045v2}.

		\bibitem{CKS}
		\textsc{D-C.~Chang}, \textsc{S.~Krantz}, and \textsc{E.~Stein}.
		\newblock {\em $H^p$ Theory on a Smooth Domain in $\R^N$ and Elliptic Boundary Value Problems\/}.
		\newblock Journal of Functional Analysis \textbf{114} (1993), 286--347.

		\bibitem{Coifman-Weiss}
		\textsc{R.~Coifman} and \textsc{G.~Weiss}.
		{\em Extensions of Hardy spaces and their use in analysis}. Bull. Amer. Math. Soc. \textbf{83} (1977), no.~4, 569--645.


		\bibitem{Fattorini-Subordination}
		\textsc{H.~Fattorini}.
		\newblock Second Order Linear Differential Equations in Banach Spaces, North-Holland Math. Stud., vol.~108,
		\newblock Elsevier, Amsterdam, 1985.

		\bibitem{Fefferman-BMO-H1-duality}
		\textsc{C.~Fefferman}.
		{\em Characterizations of bounded mean oscillation}. Bull. Amer. Math. Soc. \textbf{77} (1971), 587--588.

		\bibitem{Hardy-Gaffney}
		\textsc{S.~Hofmann}, \textsc{G.~Lu}, \textsc{D.~Mitrea}, \textsc{M.~Mitrea}, and \textsc{L.~Yan}.
		{\em Hardy spaces associated to non-negative self-adjoint operators satisfying {D}avies-{G}affney estimates}.
		\newblock Mem. Amer. Math. Soc. \textbf{1214} (2011), vi+78 pp.

	    \bibitem{Jones}
		\textsc{P.~Jones}.
		{\em Quasiconformal mappings and extendability of functions in {Sobolev} spaces}. Acta Math. \textbf{147} (1981), 71--88.

		\bibitem{Kato}
		\textsc{T.~Kato}.
		\newblock Perturbation theory for linear operators, Classics in Mathematics,
		\newblock Springer-Verlag, Berlin, 1995.


		\bibitem{MMM}
		\textsc{D.~Mitrea}, \textsc{I.~Mitrea}, and \textsc{M.~Mitrea}.
		\newblock Geometric Harmonic Analysis I. A {S}harp {D}ivergence {T}heorem with {N}ontangential {P}ointwise {T}races. Developments in Mathematics, vol.~72,
		\newblock Springer, Cham, 2022.

	    \bibitem{Miyachi-Atoms}
		\textsc{A.~Miyachi}.
		{\em H{{\({}^ p\)}} spaces over open subsets of {{\(R^ n\)}}}. Stud. Math. \textbf{96} (1990), no.~3, 205--228.


        \bibitem{Werner_FA}
        \textsc{D.~Werner}.
        \newblock Funktional-analysis, ed.~8th,
        \newblock Springer-Verlag, Berlin, 2018.

		\bibitem{Zorin-Kranich_Master-Thesis}
		\textsc{P.~Zorin--Kranich}.
		{\em Interpolation between Banach spaces and continuity of Radon-like integral transforms}. Diploma thesis (2011),
		\url{https://arxiv.org/pdf/1301.1025.pdf}.

	\end{thebibliography}
\end{document}